\documentclass[arxiv,reqno,twoside,a4paper,12pt]{amsart}

\usepackage{amsmath, verbatim}
\usepackage{amssymb,amsfonts,mathrsfs,mathtools}
\usepackage[colorlinks=true,linkcolor=blue,urlcolor=blue,citecolor=blue]{hyperref}
\usepackage[driver=pdftex,heightrounded=true,centering]{geometry}
\usepackage{longtable, slashed}
\usepackage{tabularx}
\usepackage{multirow}
\usepackage{caption}
\usepackage{latexsym,enumitem}
\usepackage[all]{xy}
\usepackage[latin2]{inputenc}
\usepackage{t1enc, indentfirst}
\usepackage{graphicx, graphics}
\usepackage{pict2e, epic, amssymb }
\usepackage{tikz, pst-node}
\usepackage{tikz-cd, pgfplots}
\usetikzlibrary{arrows}
\usetikzlibrary{calc, patterns}
\pgfplotsset{compat=1.9}
\usepackage{epstopdf}

\usepackage[toc]{appendix}
\usepackage[mathbf]{euler}

\usepackage[UKenglish]{babel}
\usepackage{tikz}
\usepackage{tikz-3dplot}

\usepackage{diagmac2} 
\usepackage{array} 
\usetikzlibrary{positioning}
\usetikzlibrary{calc}
\usetikzlibrary{through}
\usepackage{subcaption}

\usepackage[scaled]{helvet} 
\usepackage{courier}

\linespread{1.05}

\theoremstyle{break}
\newtheorem{thm}{Theorem}[section]
\newtheorem{lem}[thm]{Lemma}%
\newtheorem{prop}[thm]{Proposition}
\newtheorem{cor}[thm]{Corollary}
\newtheorem{defn}[thm]{Definition}
\newtheorem{rmk}[thm]{Remark} 

\newtheorem{assump}[thm]{Assumptions}
\newtheorem{setting}[thm]{Setting}
\newtheorem{notation}[thm]{Notation}

\DeclareMathOperator{\trace}{trace}
\DeclareMathOperator{\sff}{II}
\DeclareMathOperator{\Id}{id}
\DeclareMathOperator{\ssff}{h}

\DeclareMathOperator{\smc}{H}
\DeclareMathOperator{\divergence}{div}

\DeclareMathOperator{\dvol}{dvol}
\DeclareMathOperator{\di}{d}

\DeclareMathOperator{\ric}{Ric}
\newcommand{\PBD}{\nabla^{F^*TN}}
\newcommand{\SPBD}{\nabla^{\mathcal{F}^*TN}}
\newcommand{\gbar}{\overline{g}}
\newcommand{\pr}[1]{\left(#1\right)}

\DeclareMathOperator{\vol}{Vol}
\DeclareMathOperator{\R}{\mathbb{R}}
\DeclareMathOperator{\N}{\mathbb{N}}
\DeclareMathOperator{\NT}{\widetilde{\nabla}}

\setcounter{tocdepth}{1}
\numberwithin{equation}{section}

\definecolor{qqwuqq}{rgb}{0,0,0}

\begin{document}

\title[Mean curvature flow of non-compact Cauchy hypersurfaces]{Prescribed mean curvature flow of non-compact space-like Cauchy hypersurfaces}

\author{Giuseppe Gentile}
\address{Universit\"{a}t Hannover, Germany}
\email{giuseppe.gentile@math.uni-hannover.de}

\author{Boris Vertman}
\address{Universit\"{a}t Oldenburg, Germany}
\email{boris.vertman@uni-oldenburg.de}

\subjclass[2000]{53E10; 58J35; 83C05}

\maketitle
\begin{abstract}
In this paper we consider the prescribed mean curvature flow of a 
non-compact space-like Cauchy hypersurface of bounded geometry in a generalized Robertson-Walker space-time.
We prove that the flow preserves the space-likeness condition and exists for infinite time. We also prove convergence 
in the setting of manifolds with boundary. Our discussion generalizes previous work by Ecker, Huisken, Gerhardt and others with respect to a crucial aspects:
we consider any non-compact Cauchy hypersurface under the assumption of bounded geometry.
Moreover, we specialize the aforementioned works by considering globally hyperbolic Lorentzian space-times equipped with a specific class of warped product metrics.
\end{abstract}

\tableofcontents

\section{Introduction and statement of the main result}

We are interested in maximal space-like Cauchy hypersurfaces, where maximality refers to vanishing mean curvature,
and more generally in space-like Cauchy hypersurfaces with prescribed mean curvature in globally hyperbolic Lorentzian space-times.
These play an important role in gravitational physics, such as in the first proof of the positive mass theorem by Schoen and Yau \cite{SY1, SY2} and the
analysis of the Cauchy problem for asymptotically flat space-times by Choquet-Bruhat and York \cite{Choquet} and Lichnerowicz 
\cite{Lic}. We also refer the reader e.g. to Bartnik \cite{bartnik1986maximal} 
and references therein for an overview. \medskip

Construction of such Cauchy hypersurfaces using the prescribed mean curva\-ture flow
has been pioneered by Ecker and Huisken \cite{ecker1991parabolic}. The prescribed 
mean curvature flow in a semi-Riemannian manifold $(N,\gbar)$ 
is a family of embeddings $F(s)\equiv F(\cdot, s):M\rightarrow N$ of a smooth manifold $M$ with
parameter $s \in [0,T]$ in some interval, satisfying an initial value problem 
\begin{equation}\label{PMCFIntro}
\partial_s F(s)=-(\smc-\mathcal{H})\mu \quad F(s=0)=F_0,
\end{equation}
where $\mathcal{H}:M\rightarrow\mathbb{R}$ is the prescribing function, $\smc$ is the mean curvature of $F(s)M \subset N$
and $F_0$ is some initial embedding. Under the mean curvature flow, for every point $p$ in $M$ the normal velocity at which $F(p,s)$ moves is given 
by the mean curvature of $F(s)(M)$ at $F(s,p)$ minus $\mathcal{H}$. If $\mathcal{H} \equiv 0$, the flow is referred to as 
the (usual) mean curvature flow. 
\medskip

Mean curvature flows have been extensively studied in various scenarios. Though we are rather interested in the Lorentzian 
setting, let us mention some results in case of $M$ being a compact hypersurface of a
Riemannian manifold $N$. 
Mean curvature flows in this setting has been studied  e.g. by Huisken 
\cite{Hui84, Hui90}, Ecker \cite{Eck04}, Colding and Minicozzi \cite{CoMi12}, 
White \cite{Whi05}, Mantegazza \cite{mantegazza2011lecture}) and Smoczyk \cite{smoczyk2012mean}, to cite just a few. 
The list is far from complete. \medskip

Prescribed mean curvature flows in a globally hyperbolic Lorentzian space-time $N$
have been studied for compact hypersurfaces $M$ by Ecker and 
Huisken \cite{ecker1991parabolic} and Gerhardt \cite{gerhardt2000hypersurfaces}.
Without spacial compactness, Ecker \cite{ecker1997interior} proved long time existence and convergence for \eqref{PMCFIntro} 
when $N$ is the Minkowski space-time $\R \times \mathbb{R}^m$. Recently in \cite{kroncke2020mean} the authors proved 
convergence of \eqref{PMCFIntro} with $\mathcal{H}=0$ under the assumption 
$N=\mathbb{R}\times M$ with Lorentzian product metric $\gbar=-dt^2+\widetilde{g}$
and $(M,\widetilde{g})$ being asymptotically flat. 

\subsection{Setting and notation}

Up to isometry,
a globally hyperbolic Lorentzian space-time $(N,\gbar)$ is given by
a product $\mathbb{R}\times M$ with Lorentzian metric 
$$
\gbar= e^{\phi} \Bigl( -dt^2+\widetilde{g}_t\Bigr),
$$
where $M$ is a smooth space-like Cauchy hypersurface,
the natural projection $t:\mathbb{R}\times M \to \R$ is a time-function, 
$\phi \in C^\infty(\R \times M)$ is a smooth function and $\widetilde{g}_t$ 
restricts to a a Riemannian metric on $\{t\} \times M$. This statement
is due to Bernal and Sanchez \cite[Theorem 1.1]{BeSa}. \medskip

In the preceeding work \cite{mio}, the first named author has
studied the prescribed mean curvature flow \eqref{PMCFIntro} 
in the special case where the metric $\gbar$ is given by a warped
product 
\begin{equation}\label{GRWSTMetricIntro}
\gbar=-dt^2+f(t)^2\widetilde{g}
\end{equation}
for some positive smooth function $f:\mathbb{R}\rightarrow\mathbb{R}^+$, bounded away from zero.
Assume that for each $s \in [0,T]$ the embedding $F(s)(M) \subset N$ is a space-like Cauchy hypersurface given by the graph of a function 
$u(s):M\to \R$. Then the flow \eqref{PMCFIntro} can be written as an evolution equation for
$u(s)$. As asserted by \cite[Proposition 3.1]{mio}, the evolution is explicitly given by
\begin{equation}\label{PGMCF}
\begin{split}
&\partial_s u + \Delta u= 
\frac{f'(u)}{f(u)}\left(m+\frac{|\widetilde{\nabla}u|_{\widetilde{g}}^2}{f(u)^2-|\widetilde{\nabla} u|_{\widetilde{g}}^2}\right) 
+\mathcal{H} \, \frac{f(u)}{\sqrt{f(u)^2-|\widetilde{\nabla }u|_{\widetilde{g}}^2}}, \\ &u(0, \cdot)=u_0,
\end{split}
\end{equation} 
where $m= \dim M$, $\widetilde{\nabla}$ is the gradient on $M$ defined by $\widetilde{g}$
and $\Delta$ is the (positive) Laplace Beltrami operator induced by the $s$-dependent 
metric $g=F(s)^*\gbar$, which is Riemannian by the space-likeness assumption. \medskip

\noindent In this paper we will always stay in the following setting:

\begin{setting}\label{setting} Consider the following setting. 
\begin{enumerate}
\item Assume $(M,\widetilde{g})$, to be a stochastically complete Riemannian manifold of bounded geometry. 
Assume furthermore that its embedding $F_0(M) \subset N$ is 
a space-like Cauchy hypersurface given by the graph of a function $u_0:M \to \R$.
\medskip

\item Let $f:\mathbb{R}\rightarrow\mathbb{R}^+$ be smooth, uniformly bounded away from zero,
with uniformly bounded first and second derivatives.  We consider a warped product Lorentzian metric 
(usually referred to as generalized Robertson-Walker metric) on $N=\mathbb{R}\times M$
\begin{equation}
\gbar=-dt^2+f(t)^2\widetilde{g}
\end{equation}  

\item The solution $u=u(\cdot, s)$ to \eqref{PGMCF}, if it exists, defines a family of 
embeddings 
$$
F=F(\cdot, s):M\times [0,T]\rightarrow N, \quad F(p,s) := (p, u(p,s)), p \in M.
$$
The induced family of metrics on $M$ is defined by $g = F^*\overline{g}$.
\end{enumerate}
\end{setting}

We want to point out that bounded geometry of $(M,\widetilde{g})$, 
see Definition \ref{bounded-geom-def}, is required in order to apply parabolic Schauder and Krylov-Safonov estimates;
and it is required in \cite{kroncke2020mean} as well. Stochastic completeness, see \S \ref{O-YSec}, allows for 
applications of the Omori-Yau maximum principle.

\begin{rmk}\label{f-uniform-bound}
We will prove long time existence and convergence of \eqref{PGMCF}
under the assumption that $f$ as well as its derivatives are uniformly bounded and
$f\geq \varepsilon >0$ on $\R$. However, once we deduce existence of a uniformly bounded $u$, a posteriori
uniform bounds on $f$ and its derivatives are not necessary, since $f$ appears in \eqref{PGMCF} only as $f(u)$; thus the only relevant values of $f$ are over the bounded range of $u$. 
\end{rmk}

\noindent We will consistently use the following notation and conventions:

\begin{notation}\label{notation} 
\begin{enumerate}
\item Sometimes we drop the 
$s$-dependence notationally. When referring to the evolution in $s$, we will refer to 
the parameter $s$ as \emph{time} as well. \\[-2mm]

\item The upper script $\sim$, as for $\widetilde{\nabla},\widetilde{\Delta}$, stands for differential operators defined in terms of the metric $\widetilde{g}$ on $M$.
We omit any upper script, as for $\nabla$, $\Delta$, to denote the $s$-dependent operators defined with respect to the induced metric $g=g(s)$ on $M$.
The upper script ${}^-$, as for $\overline{\nabla}$ will be used for operators on $(N,\gbar)$.
\medskip

\item We use ummation convention on repeated indices. 
Latin indices will run in $\{1,\dots,m\}$ while the Greek ones are ranging in $\{0,\dots,m\}$.
Finally, we will write $f(u)$ instead of $f\circ u$; we will write $\partial_i$ instead of 
$\partial/\partial {x^i}$ and, as a convention, we will use $\partial_t$ for $\partial_0$ in $N$.
\medskip

\item We will consider $\Delta$ to be the positive Laplace-Beltrami operator, that is 
\begin{equation}
\Delta u=-\divergence(\nabla u).
\end{equation}
\end{enumerate}
\end{notation}

\subsection{Statement of the main result}

\noindent In this paper we present three main results, one on short time existence of the flow, the second on long time existence, and the third one on convergence. These results will require
varying sets of analytic assumptions, which we now list. 

\begin{assump}\label{assumptions} Consider the classical H\"older spaces $C^{k,\alpha}(M)$ with 
integer $k \in \N_0$ and $\alpha \in (0,1)$, defined with respect to the Riemannian metric $\widetilde{g}$. 
We impose

\begin{enumerate}
\item \textbf{\textup{initial regularity:}} $u_0\in C^{3,\alpha}(M)$ and $\mathcal{H} \in C^{\ell,\alpha}(M)$ with $\ell \geq 2$. \\[-4mm]

\item \textbf{\textup{upper barrier:}} $\smc(s=0)-\mathcal{H} \geq \delta > 0$ for some positive $\delta$. \\[-4mm]

\item \textbf{\textup{Time-like convergence:}} $\ric^N(X,X) > 0$ for any time-like $X \in TN$.

\end{enumerate}
\end{assump} \ \\[-7mm]

\noindent While the initial regularity assumption is natural, the other two assumptions are 
rather restrictive. Still, they already appear in \cite{ecker1991parabolic}, cf. also page 606 therein
for the time-like convergence assumption. Gerhardt \cite{gerhardt2000hypersurfaces} studies 
mean curvature flow without time-like convergence assumption, however the authors did 
not succeed in extending his arguments to the non-compact setting.\medskip

\noindent Our first main result is on the short time existence and it is proved in Theorem \ref{short-time-theorem}. 

\begin{thm}\label{theorem-main-short} Impose Assumptions \ref{assumptions} (1).
Then the solution $u$ to the mean curvature flow \eqref{PGMCF} exists 
with $u \in C^{\ell + 2,\alpha}(M \times [0,T])$ for $T>0$ sufficiently small.
The embeddings $F(s)M$ are space-like Cauchy hypersurfaces in $N$.
\end{thm}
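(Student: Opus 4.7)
The plan is to treat \eqref{PGMCF} as a quasilinear parabolic Cauchy problem on the non-compact manifold of bounded geometry $(M,\widetilde{g})$ and to produce $u$ via a Banach fixed-point argument in parabolic Hölder spaces. First I would rewrite the principal part of \eqref{PGMCF} intrinsically in terms of $\widetilde{g}$. Since $F(x,s)=(x,u(x,s))$, the induced metric is $g_{ij} = f(u)^2\widetilde{g}_{ij} - \partial_i u\,\partial_j u$, so the coefficients of $\Delta = -g^{ij}\nabla_i\nabla_j$ depend smoothly on $u$ and $\widetilde{\nabla}u$, and the full right-hand side of \eqref{PGMCF} is a smooth function of $u$ and $\widetilde{\nabla}u$. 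The principal symbol is uniformly elliptic precisely on the open region $|\widetilde{\nabla}u|_{\widetilde{g}}^2 < f(u)^2$, where space-likeness holds. By Assumptions \ref{assumptions}(1) the initial datum $u_0$ lies strictly inside this region, with a uniform gap by the bounded-geometry hypothesis and the lower bound on $f$.

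Next, for small $T>0$ and small $\rho>0$, I would consider the closed subset
\[
B_{\rho,T}=\bigl\{v\in C^{\ell+2,\alpha}(M\times[0,T]):\, v(\cdot,0)=u_0,\ \|v-u_0\|_{C^{\ell+2,\alpha}}\leq\rho\bigr\},
\]
and define $\Phi(v)=u$ to be the unique solution of the \emph{linear} Cauchy problem obtained from \eqref{PGMCF} by freezing $u$ to $v$ in all non-principal terms and in the $v$-dependent coefficients of the Laplacian. For $v\in B_{\rho,T}$ with $\rho$ small, these coefficients lie in a bounded set of $C^{\ell,\alpha}(M\times[0,T])$ and the linearised operator remains uniformly parabolic. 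Existence of $u\in C^{\ell+2,\alpha}(M\times[0,T])$ and the a priori estimate
\[
\|u\|_{C^{\ell+2,\alpha}}\leq C\bigl(\|u_0\|_{C^{\ell+2,\alpha}}+\|v\|_{C^{\ell+2,\alpha}}\bigr),
\]
uniform in the base point of $M$, follows from parabolic Schauder theory on manifolds of bounded geometry, together with the Krylov--Safonov estimates mentioned in the setting.

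I would then verify that $\Phi$ is a self-map of $B_{\rho,T}$ and a contraction in the $C^{\ell+2,\alpha}$-norm after possibly shrinking $T$. Self-mapping comes from continuity at $s=0$: $\Phi(v)-u_0$ vanishes at $s=0$ and is $C^{\ell+2,\alpha}$, so its norm is $O(T^\alpha)$. Contractivity is obtained by subtracting the linearised equations for $\Phi(v_1)$ and $\Phi(v_2)$ and exploiting smooth dependence of the coefficients on $v$, again picking up a factor $O(T^\alpha)$ in front of $\|v_1-v_2\|$. The Banach fixed-point theorem then yields a unique $u\in B_{\rho,T}$ with $\Phi(u)=u$, which is the desired solution of \eqref{PGMCF}. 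Strict space-likeness at $s=0$, combined with the $C^{\ell+2,\alpha}$-smallness of $u-u_0$, persists on $[0,T]$ after further shrinking $T$, so each $F(s)M$ is a space-like graph and hence a space-like Cauchy hypersurface in $N$.

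The main obstacle is the non-compactness of $M$: one cannot rely on compactness to produce Schauder or Krylov--Safonov estimates on an ambient chart, and one must instead apply their bounded-geometry versions with constants that are uniform on the whole of $M\times[0,T]$. This is exactly where the bounded-geometry hypothesis on $(M,\widetilde{g})$ and the uniform bounds on $f$, $f'$, $f''$ of \textbf{Setting} \ref{setting} enter decisively, providing the uniform local charts and uniform coefficient bounds needed to run the linear theory globally; all remaining estimates are then standard.
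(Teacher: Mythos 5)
Your overall strategy---contraction mapping in parabolic H\"older spaces on the bounded-geometry manifold $M$, using uniform parabolic Schauder and Krylov--Safonov estimates to run the linear theory globally, then continuity to preserve space-likeness---is the same strategy as the paper. The cosmetic difference that you freeze the coefficients at a general $v\in B_{\rho,T}$ while the paper writes $u=u_0+\omega$ and linearises the full equation around $u_0$, isolating linear terms $F_1(\omega,\NT\omega)$ and quadratic-and-higher terms $F_2(\omega,\NT\omega,\NT^2\omega)$, is of no consequence; both are standard quasilinear set-ups.

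However, there is a genuine gap in the regularity bookkeeping. You run the fixed-point argument directly in $C^{\ell+2,\alpha}(M\times[0,T])$, constructing the ball
\[
B_{\rho,T}=\bigl\{v:\, v(\cdot,0)=u_0,\ \|v-u_0\|_{C^{\ell+2,\alpha}}\leq\rho\bigr\}.
\]
But Assumptions \ref{assumptions}(1) only give $u_0\in C^{3,\alpha}(M)$, while $\ell\ge 2$ forces $\ell+2\ge 4$, so $u_0\notin C^{\ell+2,\alpha}(M)$ in general; the quantity $\|v-u_0\|_{C^{\ell+2,\alpha}}$ is not finite and the ball is not well-defined. The paper avoids this by performing the contraction in $C^{2,\alpha}(M\times[0,T])$ only (so that $u_0\in C^{3,\alpha}\subset C^{2,\alpha}$ suffices), and then \emph{separately} bootstrapping: once a solution $u\in C^{2,\alpha}$ is in hand, the coefficients of $\Delta$ and the right-hand side of \eqref{PGMCF} become $C^{1,\alpha}$, so Proposition \ref{krylov-safonov-lemma}~(2) gives $u\in C^{3,\alpha}$, and iterating yields $u\in C^{\ell+2,\alpha}$. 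This split---low-regularity existence, then parabolic smoothing---is what your proposal is missing.

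A second, related inaccuracy is the self-mapping and contraction step: you assert that since $\Phi(v)-u_0$ vanishes at $s=0$ and is $C^{\ell+2,\alpha}$, its $C^{\ell+2,\alpha}$-norm is $O(T^\alpha)$. That is false for the top-order seminorms: for instance $w(x,s)=s\phi(x)$ vanishes at $s=0$, yet $\|w\|_{C^{k,\alpha}(M\times[0,T])}$ does not shrink as $T\to 0$ because $\partial_s w=\phi$. The smallness the paper exploits is more structural: the parametrix $Q$ of the frozen-coefficient operator is shown (Proposition \ref{Q-mapping-properties}) to map $C^{k+2,\alpha}\to s\,C^{k+2,\alpha}$, and the nonlinear remainder splits into a linear piece $F_1$ and an at-least-quadratic piece $F_2$, so the factor of $s$ and the small ball radius together furnish the contraction. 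You should either invoke such a refined mapping property of the parametrix, or obtain the contraction factor from the quadratic structure of the nonlinearity rather than from generic vanishing at $s=0$.
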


\noindent Our next result concerns the long time existence and it is proved in Theorem \ref{long-time-result} (i). 

\begin{thm}\label{theorem-main-long} Impose Assumptions \ref{assumptions} (1) and (2). 
Then the solution $u$ to the mean curvature flow \eqref{PGMCF} exists for all times
with $u \in C^{\ell + 2,\alpha}(M \times I)$ for any compact intervall $I \subset [0,\infty)$.
Moreover, $F(s)M$ are space-like Cauchy hypersurfaces in $N$.
\end{thm}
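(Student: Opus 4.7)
The plan is to combine the short-time existence Theorem \ref{theorem-main-short} with a continuation argument based on uniform a priori estimates. Let $[0, T_{\max})$ denote the maximal interval on which a space-like solution $u \in C^{\ell+2,\alpha}(M \times [0,T])$ exists for each $T < T_{\max}$, and assume for contradiction that $T_{\max} < \infty$. To conclude that $T_{\max} = \infty$ it suffices to establish three uniform a priori bounds on $[0, T_{\max})$: a $C^{0}$-bound on $u$, a gradient estimate preserving space-likeness, and higher-order estimates closing the bootstrap.

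For the $C^{0}$-bound I would study the evolution of $\Phi := \smc - \mathcal{H}$ along the flow. Assumption \ref{assumptions}(2) provides $\Phi(0) \geq \delta > 0$, and the parabolic equation $\Phi$ satisfies, combined with the Omori-Yau maximum principle (available thanks to stochastic completeness from Setting \ref{setting}(1)), shows that $\Phi \geq \delta$ persists on $[0, T_{\max})$. Equation \eqref{PGMCF} then gives $\partial_s u \leq 0$, so $u \leq u_0$ uniformly; a matching lower barrier follows by integrating $\partial_s u$ in $s$ once the tilt factor is controlled. The decisive step is the gradient estimate: one introduces the tilt factor
\[
v := \frac{f(u)}{\sqrt{f(u)^{2} - |\widetilde{\nabla} u|_{\widetilde{g}}^{2}}},
\]
whose pointwise finiteness is exactly space-likeness, and derives a parabolic inequality of the form $(\partial_s + \Delta) v \leq C\,v + (\text{lower order})$ with coefficient $C$ depending only on the $C^{0}$-norm of $u$, on the first and second derivatives of $f$ (uniformly controlled by Setting \ref{setting}(2)), and on geometric terms arising from the warped product structure \eqref{GRWSTMetricIntro}. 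A second application of Omori-Yau then yields a uniform bound $v \leq V(T_{\max})$, hence uniform ellipticity of \eqref{PGMCF} on $[0, T_{\max})$.

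Once uniform ellipticity and a $C^{0}$-bound are in place, \eqref{PGMCF} is uniformly parabolic with bounded coefficients; interior Krylov-Safonov and parabolic Schauder estimates, applied on geodesic balls of uniform radius afforded by the bounded geometry hypothesis, produce uniform $C^{\ell+2,\alpha}$-bounds on $M \times [0, T_{\max})$. Consequently $u(\cdot, T_{\max})$ extends as a space-like $C^{\ell+2,\alpha}$ graph, and Theorem \ref{theorem-main-short} restarts the flow past $T_{\max}$, contradicting maximality. The principal obstacle is the gradient estimate: on the non-compact $M$ the classical Hopf-type maximum principle is unavailable, so Omori-Yau (possibly via an auxiliary cut-off) must be applied to $v$ itself, and one has to exploit the warped product form of $\gbar$ together with the uniform bounds on $f$ and its derivatives from Setting \ref{setting}(2) in order to control the sign and size of the coefficient $C$ without invoking the time-like convergence Assumption \ref{assumptions}(3), which is not hypothesized here.
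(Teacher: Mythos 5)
Your overall scheme — maximal interval of existence, uniform $C^0$, $C^1$ and higher order a priori estimates, then restart via short time existence — matches the paper's structure in \S\ref{uniform-section} through \S\ref{conv-section} and the proof of Theorem \ref{long-time-result}(i). The $C^0$-upper bound via persistence of $\smc-\mathcal{H}>0$, the Omori--Yau maximum principle and $\partial_s u = -(\smc-\mathcal{H})v\le 0$ is exactly Proposition \ref{upper-bound-u}.

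However, the gradient estimate as you describe it would not go through. You propose an inequality $(\partial_s+\Delta)v \le C\,v + (\text{lower order})$ with $C$ controlled, followed by Omori--Yau applied directly to $v$. The actual evolution equation for $v$ (Theorem \ref{EvolutionOfvTHM}) contains $-\ric^N(\mu,\mu)\,v$, which by Proposition \ref{RicciEstimateProp} is only bounded by a constant times $v^3$, plus $-(f'(u)/f(u))\mathcal{H}\,v^2$ and the first-order term $2(f'(u)/f(u))\,g(\nabla u,\nabla v)$. Under the weak Omori--Yau available from stochastic completeness one does not control $\nabla v$ along the sup sequence, and the cubic growth in $v$ means the comparison ODE blows up in finite time: no Gronwall-type argument can bound $v$ on $[0,T_{\max})$ from the raw evolution equation. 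The paper instead carries out Gerhardt's device in Theorem \ref{MainTheorem}: set $\varphi=e^{\rho e^{\lambda u}}$ — well-defined and bounded precisely because of the $C^0$-bound on $u$ — and work with $w=\varphi v$. The extra negative term $-\rho\lambda^2 e^{\lambda u}(1+\rho e^{\lambda u})|\nabla u|_g^2\,\varphi v$ coming from $(\partial_s+\Delta)\varphi$ is of size $O(v^3)$, since $|\nabla u|_g^2 = v^2-1$, and for $\lambda$ large it dominates the bad cubic terms; Omori--Yau is then applied to $w$, not to $v$. This weighted test function is not a cut-off, and it is the key ingredient your plan omits. A secondary gap: your lower bound on $u$ by integrating $\partial_s u = -(\smc-\mathcal{H})v$ also requires a uniform bound on $\smc$, which the paper obtains separately from the $C^2$-estimate $\|\ssff\|\le C$ via the evolution inequality for $\|\ssff\|^2$ and Lemma \ref{UnifromBoundForPowers}, cf.\ Proposition \ref{lower-bound-u-1}.
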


\noindent Our final main result is a convergence result and it is proved in Theorem \ref{convergence-theorem}.

\begin{thm}\label{theorem-main} Impose Assumptions \ref{assumptions} (1)-(3) and assume that $M$ is 
the open interior of a compact manifold $\overline{M}$ with boundary $\partial M$. 
Then the solution $u$ to the mean curvature flow \eqref{PGMCF} exists for all times
with $u \in C^{\ell + 2,\alpha}(M \times [0,\infty))$ and converges to $u^* \in C^{\ell+2}(M)$.
The induced embedding is a space-like Cauchy hypersurface in $N$ with mean curvature given by $\mathcal{H}$.
\end{thm}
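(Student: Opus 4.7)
The plan is to combine the long-time existence already established in Theorem \ref{theorem-main-long} with the time-like convergence hypothesis: the latter will force $w := \smc - \mathcal{H}$ to decay exponentially in $s$, and hence $u(\cdot,s)$ to converge in $C^{\ell+2}(M)$ to a stationary solution of \eqref{PGMCF} whose graph has mean curvature $\mathcal{H}$.

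The central object is $w := \smc - \mathcal{H}$. A direct computation in the generalized Robertson-Walker setting, using the Gauss-Codazzi equations together with the warped product structure of $\gbar$, yields a parabolic evolution of the schematic form
$$
(\partial_s - \Delta)\, w = -\bigl(|\sff|^2 + \ric^N(\mu,\mu)\bigr)\, w,
$$
where $\mu$ is the future-directed unit time-like normal. By Assumption \ref{assumptions}(3), and the uniformly time-like character of $\mu$ coming from the estimates of Theorem \ref{theorem-main-long}, the parenthesized coefficient admits a strictly positive uniform lower bound $\lambda > 0$. The Omori-Yau maximum principle, available by stochastic completeness of $(M,\widetilde{g})$, then upgrades Assumption \ref{assumptions}(2) to the preservation of $w \geq 0$ along the flow and to the exponential decay
$$
\sup_M w(\cdot, s) \leq \bigl(\sup_M w(\cdot, 0)\bigr)\, e^{-\lambda s}.
$$

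Since $|\partial_s u|$ is controlled by $w$ through \eqref{PGMCF} and the uniform bounds on $u$ and $\widetilde{\nabla} u$, the exponential decay of $w$ makes $u(\cdot,s)$ uniformly Cauchy in $s$, and hence converges uniformly on $M$ to some $u^* \in C^0(M)$. The uniform $C^{\ell+2,\alpha}$-estimates from Theorem \ref{theorem-main-long}, combined with the compactness of $\overline{M}$ and standard H\"older interpolation, promote this to convergence in $C^{\ell+2}(M)$. Passing to the limit in \eqref{PGMCF} using $w \to 0$, we conclude that $u^*$ solves the stationary version of the equation, i.e. the hypersurface $\gra u^*$ is a space-like Cauchy hypersurface in $N$ whose mean curvature equals $\mathcal{H}$.

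The main obstacle is the second step: to extract exponential decay of $w$ one needs a genuinely \emph{uniform} lower bound on $|\sff|^2 + \ric^N(\mu,\mu)$ along the flow, which forces $\mu$ to remain uniformly time-like and $\sff$ to stay bounded. Both are supplied by the uniform estimates of Theorem \ref{theorem-main-long}, but verifying them in the non-compact warped product setting, and then combining them with the Omori-Yau principle to obtain a pointwise decay rate, is the delicate ingredient. Without Assumption \ref{assumptions}(3) the reaction coefficient can fail to have a definite sign and the entire monotonicity scheme collapses, which is why time-like convergence appears in the convergence theorem but not in the short- or long-time existence results.
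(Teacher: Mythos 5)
Your overall strategy mirrors the paper's: exponential decay of $\smc-\mathcal{H}$ forces $\|\partial_s u\|_\infty$ to decay, hence $u(\cdot,s)$ converges, and one then upgrades the regularity of the limit and identifies its mean curvature. However, the first half of your argument (deriving the reaction-diffusion equation for $\smc-\mathcal{H}$, invoking Omori--Yau, getting exponential decay) is material the paper has already finished inside the proof of Theorem~\ref{theorem-main-long}, in particular \eqref{H-exp}; the genuinely new content of Theorem~\ref{theorem-main} starts where you are briefest.

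The genuine gap is in the step where you claim that ``the uniform $C^{\ell+2,\alpha}$-estimates $\ldots$ combined with the compactness of $\overline{M}$ and standard H\"older interpolation'' promote $L^\infty$-convergence to convergence in $C^{\ell+2}(M)$. The H\"older spaces in this paper are defined with respect to $\widetilde{g}$, a \emph{complete} bounded-geometry metric on the non-compact manifold $M$; this metric does not extend to $\overline{M}$ and the boundary lies at infinite $\widetilde{g}$-distance, so ``compactness of $\overline{M}$'' is not usable directly. In fact the authors flag exactly this as a key subtlety of the non-compact setting: on a manifold of bounded geometry the inclusion $C^{k,\alpha}(M)\subset C^{k,\beta}(M)$, $\beta<\alpha$, is \emph{not} compact, which is precisely why the naive interpolation argument fails and why Theorem~\ref{theorem-main} imposes the manifold-with-boundary assumption in the first place. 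The paper's resolution is to pass to weighted H\"older spaces: for a boundary-defining function $x$ on $\overline{M}$ the inclusion
$$
\iota\colon C^{\ell+2,\alpha}(M)\hookrightarrow x^{-\varepsilon}C^{\ell+2,\alpha}(M)
$$
is compact (cf.\ \cite[Proposition~11.2]{bruno}), and one extracts a subsequence $u(s_n)$ converging in $x^{-\varepsilon}C^{\ell+2,\alpha}(M)$, identifies the limit with $u^*$ (already known from the $L^\infty$-convergence), and concludes $u^*\in x^{-\varepsilon}C^{\ell+2,\alpha}(M)\subset C^{\ell+2}(M)$. Your proposal does not identify this mechanism, and without it the regularity of $u^*$ does not follow from the bounds you cite. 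Once that step is in place, the identification $H^*=\mathcal{H}$ from \eqref{H-exp} proceeds as you describe.
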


\begin{rmk}
The strict positivity $\ric^N(X,X) > 0$ for any time-like $X \in TN$ in the time-like convergence assumption can be relaxed. Alternatively, one may only assume $\ric^N(X,X) \geq 0$ for any time-like $X \in TN$,
and require additionally $\mathcal{H} \geq \delta > 0$.
Then the results of Theorem \ref{theorem-main} still hold.
\end{rmk}

\subsection{Distinct arguments due to non-compactness}

We should emphasize here that the arguments in our basic references
\cite{ecker1991parabolic} and \cite{gerhardt2000hypersurfaces} in fact do not simply carry over to the non-compact setting. 
Therefore it might be beneficial for the reader to list those points where the arguments had to be adapted
to the non-compact setting. \bigskip
 
\emph{i)} As is usual in the analysis of geometric flows, a priori estimates are a consequence of the maximum principle. 
In the non-compact setting, we apply the Omori-Yau maximum principle on stochastically complete
manifolds. In particular we prove that the (graphical) mean curvature flow stays stochastically complete. \medskip

\emph{ii)} The a priori $C^0$ estimates, as derived e.g. in \cite{Ger96}, use barrier functions. This approach is not easily 
adapted to the non-compact setting, compare the very intricate barrier function construction \cite{kroncke2020mean}. 
That barrier function argument does not carry over to a general bounded geometry setting in any obvious way.\medskip

\emph{iii)} The a priori $C^2$ estimates, as derived e.g. in \cite{gerhardt2000hypersurfaces}, require certain local coordinates 
around some maximum point. In the non-compact setting we cannot expect the maximum to be attained. Instead one works
with the supremum of a solution, which may lie "at infinity" of the manifold. Thus, a different argument, cf. \cite{ecker1991parabolic}, without using special 
coordinates is necessary. \medskip

\emph{iv)} Convergence of the flow in the compact setting is usually a consequence of a compact embedding of 
H\"older spaces. On manifolds with bounded geometry the embedding $C^{k,\alpha}(M) \subset C^{k,\beta}(M)$
with $\beta < \alpha$, is not necessarily compact. We overcome this difficulty by specializing to the case of manifolds
with boundary, where a similar compact embedding holds in the setting of weighted H\"older spaces.

\subsection{Outline of the paper}

We begin in \S \ref{GRWSTGeoSec} with the geometry of generalized Robertson-Walker space-times 
and their space-like hypersurfaces. In \S \ref{section-schauder} we discuss parabolic Schauder and Krylov Safonov estimates on 
manifolds of bounded geometry. These estimates are applied twice. 
First in order to establish short time existence of the flow, and later to turn a priori estimates into H\"older regularity, concluding long time existence.
In \S \ref{O-YSec} we discuss the Omori-Yau maximum principle on 
stochastically complete manifolds. \medskip

In \S \ref{EvEqSec} we derive the evolution equation of the main object of our 
analysis, the gradient function. The proof of the aforementioned evolution equation will be divided in two 
steps in \S \ref{TimeDerSec} and \S \ref{LapSec}. This is due to a lack of literature about (prescribed) mean 
curvature flows in warped product-type Lorentzian manifolds. Thus all the "classical" evolution equations, 
e.g. in \cite{ecker1991parabolic} had to be re-derived. Evolution equations for the mean curvature and for 
the scalar second fundamental form are derived in \S \ref{EvolutionEquationSec} and \S \ref{EvolutionEquationSec2}.\medskip

Uniform a priori bounds are derived in the subsequent three sections, \S \ref{uniform-section}, \S \ref{Final} and \S \ref{C2EstimatesSec}.
The upper bound in the $C^0$-estimates follows a classical argument, while the lower bound uses a trick to overcome
absence of a lower barrier. For the $C^1$-estimates in \S \ref{Final} we follow Gerhardt's argument, cf. \cite{gerhardt2000hypersurfaces}, to conclude 
that a a space-like prescribed graphical mean curvature flow stays uniformly space-like. From here we deduce
long time existence and convergence in \S \ref{conv-section}.

\subsubsection*{Acknowledgements}

The authors wish to thank the University of Oldenburg for the financial support and hospitality.

\section{Geometry of generalized Robertson-Walker space-times}\label{GRWSTGeoSec}

In this work we are interested in generalized Robertson-Walker space-times,
abbreviated as (GRWST), whose definition we now state explicitly once again, 
before continuing in studying its intrinsic geometry. 

\begin{defn}\label{GRWST}
Let $(M,\widetilde{g})$ be an $m$-dimensional Riemannian manifold. 
A generalized Robertson-Walker space-time (GRWST) is an $(m+1)$-dimensional Lorentzian manifold $(N,\overline{g})$ 
satisfying the following conditions:
\begin{enumerate}
\item there exists a diffeomorphism $\Phi: \mathbb{R}\times M \to N$,
\item there exists $f\in C^\infty(\mathbb{R}, \R^+)$ such that $\gbar$ is a warped product, i.e.
\begin{equation}\label{GRWSTMetric}
\Phi^*\gbar=-dt^2+f(t)^2\widetilde{g}.
\end{equation}
\end{enumerate}
Below we will always identify $(N,\overline{g})$ with $(\mathbb{R}\times M, \Phi^*\gbar)$.
GRWST's are automatically time-oriented, 
i.e. admit a nowhere vanishing time-like vector field $T$. Here, we can 
obviously take $T = \partial_t$. 
\end{defn}

\noindent We continue in the setting of Definition \ref{GRWST} and consider a family of embeddings
$F(\cdot, s):M \rightarrow N$ arising as graphs of a family of functions 
$u(\cdot, s) :M\rightarrow\mathbb{R}$ with $s \in [0,T]$ so that 
\begin{equation}
F(p,s)=(u(p,s),p).
\end{equation}
The induced metric on $M$ is given by $g=F^*\gbar$ and is 
explicitly determined in terms of $u$ and $\widetilde{g}$, as
asserted by the next lemma, cf. \cite[Proposition 2.2]{mio} for the proof.

\begin{lem}
The induced metric $g=F^*\gbar$ is given in local coordinates by
\begin{equation}\label{indmetr}
g_{ij}=-u_iu_j+f(u)^2 \widetilde{g}_{ij}.
\end{equation}
The inverse of the metric tensor $g$ can be locally expressed as 
\begin{equation}\label{invindmet}
g^{ij}=\frac{1}{f(u)^2}\widetilde{g}^{ij}+\frac{1}{f(u)^2}\frac{\widetilde{g}^{jl}u_l\widetilde{g}^{im}u_m}{f(u)^2-|\widetilde{\nabla} u|^2}.
\end{equation}
\end{lem}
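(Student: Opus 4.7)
The lemma is a purely local computation, so my plan is to prove both formulas in local coordinates $\{x^i\}$ on $M$, with the induced coordinates $(t, x^i)$ on $N = \mathbb{R} \times M$. Write the embedding componentwise as $F^0 = u$, $F^i = x^i$, so that the differential has components $\partial_i F^0 = u_i$ and $\partial_i F^j = \delta_i^j$. The warped product structure gives $\bar g_{00} = -1$, $\bar g_{0i} = 0$ and $\bar g_{ij} = f(u)^2 \tilde g_{ij}$ along the image of $F$.

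For formula \eqref{indmetr}, I would just use the pullback definition
\begin{equation*}
g_{ij} = (F^*\bar g)_{ij} = \bar g_{\alpha\beta}\, \partial_i F^\alpha \partial_j F^\beta,
\end{equation*}
and substitute the values above; the cross terms drop out since $\bar g_{0i}=0$, leaving exactly $-u_i u_j + f(u)^2 \tilde g_{ij}$. This step is immediate and needs no further ingredient than the warped product form of $\bar g$.

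For formula \eqref{invindmet}, the natural approach is to guess the answer from the structure of $g_{ij}$ as a rank-one perturbation of $f(u)^2 \tilde g_{ij}$ and then verify by direct contraction. Writing $(\tilde\nabla u)^i := \tilde g^{im}u_m$ and setting $W := f(u)^2 - |\tilde\nabla u|^2_{\tilde g}$, I would compute
\begin{equation*}
g^{ik} g_{kj} = \Bigl(\tfrac{1}{f^2}\tilde g^{ik} + \tfrac{1}{f^2 W}(\tilde\nabla u)^i (\tilde\nabla u)^k\Bigr)\bigl(-u_k u_j + f^2 \tilde g_{kj}\bigr),
\end{equation*}
expand the four terms, and collect the coefficient of $(\tilde\nabla u)^i u_j$. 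That coefficient simplifies to
\begin{equation*}
-\tfrac{1}{f^2} - \tfrac{|\tilde\nabla u|^2_{\tilde g}}{f^2 W} + \tfrac{1}{W} = \tfrac{-W - |\tilde\nabla u|^2_{\tilde g} + f^2}{f^2 W} = 0,
\end{equation*}
while the remaining $\tilde g^{ik}\tilde g_{kj}$ term produces $\delta^i_j$. This yields $g^{ik}g_{kj} = \delta^i_j$ as required.

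The proof involves no real obstacle beyond careful bookkeeping of indices; the one place where it could be mildly delicate is ensuring $W = f(u)^2 - |\tilde\nabla u|^2_{\tilde g} > 0$ so that \eqref{invindmet} is well-defined, but this is exactly the space-likeness of the graph $F(M)$, which is built into Setting \ref{setting}(1) and is why the induced metric $g = F^*\bar g$ is Riemannian in the first place.
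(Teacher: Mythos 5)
Your proof is correct. The paper itself does not spell out an argument for this lemma — it simply cites \cite[Proposition 2.2]{mio} — but the computation you give is the standard direct one and it checks out: the pullback formula $g_{ij} = \bar g_{\alpha\beta}\,\partial_i F^\alpha \partial_j F^\beta$ with $\bar g_{0i}=0$ immediately yields \eqref{indmetr}, and your contraction $g^{ik}g_{kj}$ collapses the coefficient of $(\widetilde\nabla u)^i u_j$ to
\begin{equation*}
-\frac{1}{f^2}-\frac{|\widetilde\nabla u|^2_{\widetilde g}}{f^2 W}+\frac{1}{W}
=\frac{-W-|\widetilde\nabla u|^2_{\widetilde g}+f^2}{f^2 W}=0,
\end{equation*}
leaving only $\delta^i_j$, so \eqref{invindmet} is indeed the inverse. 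Your closing remark about $W>0$ being exactly the space-likeness condition \eqref{space-like}, which is what makes the formula and the Riemannian character of $g$ meaningful, is also the right observation. This is essentially the same Sherman--Morrison-type verification for a rank-one perturbation of $f(u)^2\widetilde g$ that the cited reference carries out; no gap.
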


The prescribed mean curvature flow is a family of metrics on $M$, embedded into $N$
as a space-like graphs, satisfying some mean curvature flow evolution equation. In order
to be precise, we need to gather some geometric quantities and present some useful facts about 
graphical space-like hypersurfaces.

\subsection{Space-like graphs}

A space-like hypersurface of a Lorentzian manifold $(N,\overline{g})$ is a codimension $1$ 
submanifold so that the induced metric is Riemannian. Equivalently, a hypersurface 
is space-like if its unit normal $\mu$ is time-like. We choose, as a convention, that 
the unit normal $\mu$ is future oriented, i.e.
\begin{equation}
- \overline{g} ( T,\mu ) \equiv - \overline{g} ( \partial_t ,\mu ) >0.
\end{equation}
We codify this expression as the \emph{gradient function} in the next definition.
Our analysis will revolve around that gradient function, as e.g. in \cite{ecker1991parabolic}.

\begin{defn}\label{GradFctDef}
Let $A$ be a space-like hypersurface of a time orientable Lorentzian manifold $(N,\overline{g})$
with a nowhere vanishing time-like vector field $T$. The gradient function $v$ is then defined by
\begin{equation}\label{OldGradFct}
v:=-\overline{g} ( T,\mu).
\end{equation}
\end{defn}

We now provide an explicit expression for the gradient function of the hypersurface
$F(M, s) \subset N$ in the GRWST $(N,\overline{g})$. In local coordinates, induced from 
$M$, the (future oriented) unit normal $\mu$ of $F(M, s) \subset N$ is given by (see 
\cite{mio} for more details and computations)
\begin{equation}\label{NOR}
\mu=\frac{f(u)}{\sqrt{f(u)^2-|\widetilde{\nabla}u|^2}} \Bigl( \partial_t+\frac{1}{f(u)^2}\widetilde{g}^{ij}u_j\partial_{i} \Bigr). 
\end{equation}
From there, using $T=\partial_t$, we obtain by Definition \ref{GradFctDef} 
\begin{equation}\label{expressionofv}
v=\frac{f(u)}{\sqrt{f(u)^2-|\widetilde{\nabla}u|^2}}.
\end{equation}

\begin{rmk}
The graph of a function $u:M\rightarrow \mathbb{
R}$ immersed in the GRWST $(N,\gbar)$ is space-like if and only if
\begin{equation}\label{space-like}
|\widetilde{\nabla}u|^2_{\widetilde{g}}<f(u)^2;
\end{equation}
See \S 2 in \cite{mio} for more details.
\end{rmk}

We emphasize that, the geometry induced on $M$ by the embeddings $F\equiv F(\cdot, s)$ 
is different from the geometry arising from the metric $\widetilde{g}$. Therefore we distinguish 
the geometric quantities associated to $g=F^*\gbar$ from those associated to $\widetilde{g}$:
those associated to the latter are indicated by a subscript $\sim$. For instance 
$\nabla$ and $\widetilde{\nabla}$ denote the gradient (or covariant derivative) on $M$ 
with respect to $g$ and $\widetilde{g}$ respectively. We compute
\begin{equation}\label{NormGradMg}
\nabla u=\frac{\widetilde{\nabla}u}{f(u)^2-|\widetilde{\nabla}u|_{\widetilde{g}}^2}, 
\quad
|\nabla u|_g^2=\frac{|\widetilde{\nabla}u|_{\widetilde{g}}^2}{f(u)^2-|\widetilde{\nabla}u|_{\widetilde{g}}^2},
\end{equation} 
where $|\cdot|_g$ denotes the pointwise norm with respect to $g$, while $|\cdot|_{\widetilde{g}}$ refers to the 
pointwise norm with respect to $\widetilde{g}$. From \eqref{expressionofv} and \eqref{NormGradMg} we conclude
the following list of properties for the gradient function. 

\begin{prop}\label{1234} The gradient function $v$ in \eqref{expressionofv}
satisfies the following properties. 
\begin{itemize}
\item[(i)] the gradient function $v$ and the gradient of $u$ are related by 
\begin{equation}\label{v2nablau}
|\nabla u|_g^2=\frac{f(u)^2}{f(u)^2-|\widetilde{\nabla}u|_{\widetilde{g}}^2}-1=v^2-1.
\end{equation}
\item[(ii)] The gradient function $v$ satisfies $v\ge 1$.
\item[(iii)] The pointwise $g$-norm of $\nabla u$ is bounded from above by
\begin{equation}\label{UpperBoundNormGradu}
|\nabla u|^2_g\le v^2.
\end{equation}
\item[(iv)] The following equality holds
\begin{equation}\label{WillBeUsefulForRic}
v^2|\widetilde{\nabla}u|_{\widetilde{g}}^2=f(u)^2|\nabla u|^2_g.
\end{equation}
\end{itemize}
\end{prop}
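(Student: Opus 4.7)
The plan is to observe that all four statements are direct algebraic consequences of the two formulas collected immediately before the proposition, namely the expression \eqref{expressionofv} for the gradient function,
\[
v=\frac{f(u)}{\sqrt{f(u)^2-|\widetilde{\nabla}u|_{\widetilde{g}}^2}},
\]
and the formula \eqref{NormGradMg} for the $g$-norm of $\nabla u$, together with the space-likeness inequality \eqref{space-like}. Since each item is a short computation, I would simply present the verifications in order.

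For (i), I would square the expression for $v$ to get $v^2=f(u)^2/(f(u)^2-|\widetilde{\nabla}u|_{\widetilde{g}}^2)$, subtract $1$, and compare with \eqref{NormGradMg}: the two agree, so $|\nabla u|_g^2=v^2-1$. For (ii), the space-likeness condition \eqref{space-like} says $f(u)^2-|\widetilde{\nabla}u|_{\widetilde{g}}^2>0$ and is at most $f(u)^2$, so $v^2\ge 1$; since $v>0$ by \eqref{expressionofv} (both $f(u)>0$ and the square root are positive), this yields $v\ge 1$. Item (iii) is then immediate from (i): $|\nabla u|_g^2=v^2-1\le v^2$.

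For (iv), I would multiply the formula for $v^2$ by $|\widetilde{\nabla}u|_{\widetilde{g}}^2$ and factor out $f(u)^2$ to get
\[
v^2|\widetilde{\nabla}u|_{\widetilde{g}}^2=f(u)^2\cdot\frac{|\widetilde{\nabla}u|_{\widetilde{g}}^2}{f(u)^2-|\widetilde{\nabla}u|_{\widetilde{g}}^2}=f(u)^2|\nabla u|_g^2,
\]
using \eqref{NormGradMg} in the last step. There is really no obstacle here; the proposition is a packaging of identities between $v$, $|\nabla u|_g$ and $|\widetilde{\nabla}u|_{\widetilde{g}}$ that will be convenient in later estimates. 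The only point worth flagging is that the whole statement rests on the graph being space-like so that the square root in \eqref{expressionofv} is real and positive; I would mention this once at the start so that items (ii)--(iv) can be read off without further commentary.
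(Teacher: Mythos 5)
Your proof is correct and matches the paper's approach: the paper simply asserts that these properties follow from \eqref{expressionofv} and \eqref{NormGradMg}, and your verifications are the intended ones. Flagging space-likeness up front is a sensible clarification but not a departure.
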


\subsection{Intrinsic geometry}\label{Pull-BackSubSect}

Consider local coordinates $(x^1,\dots,x^m)$ on $M$, with the corresponding local frame 
$(\partial_1,\dots,\partial_m)$ on $TM$. Identifying $N$ with $\mathbb{R}\times M$, local coordinates on $N$
are given by $(t,x^1,\dots,x^m)$ and a local frame for $TN$ is 
$$
(\partial_t,\partial_1,\dots,\partial_m).
$$
The intrinsic geometry of a GRWST $(N,\overline{g})$ is described completely by the Christoffel symbols,
which are given explicitly by the following formulae.

\begin{lem} 
The Christoffel symbols $\overline{\Gamma}^\alpha_{\beta\eta}$ of the metric tensor $\overline{g}$ over $N$ are given by
\begin{equation}\label{GRWChrisym}
\begin{split}
&\overline{\Gamma}^\alpha_{00}=0, \  \overline{\Gamma}^0_{0i}=0, \ \overline{\Gamma}^k_{ij}=\widetilde{\Gamma}^k_{ij},\\
&\overline{\Gamma}^0_{ij}=f(t)f'(t)\widetilde{g}_{ij}, \ \overline{\Gamma}^k_{0i}=\frac{f(t)f'(t)}{f(t)^2}\delta^k_i.
\end{split}
\end{equation}
\end{lem}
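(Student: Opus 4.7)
The plan is to compute each block of Christoffel symbols directly from the Koszul formula
\begin{equation*}
\overline{\Gamma}^{\gamma}_{\alpha\beta}=\tfrac{1}{2}\,\overline{g}^{\gamma\sigma}\bigl(\partial_{\alpha}\overline{g}_{\sigma\beta}+\partial_{\beta}\overline{g}_{\alpha\sigma}-\partial_{\sigma}\overline{g}_{\alpha\beta}\bigr),
\end{equation*}
exploiting the block-diagonal structure of the warped product \eqref{GRWSTMetric}. First I would record the nonzero components of $\overline{g}$ and $\overline{g}^{-1}$, namely $\overline{g}_{00}=-1$, $\overline{g}_{ij}=f(t)^{2}\widetilde{g}_{ij}$, and correspondingly $\overline{g}^{00}=-1$, $\overline{g}^{ij}=f(t)^{-2}\widetilde{g}^{ij}$, with all mixed $0i$-components vanishing. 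This makes $\overline{g}^{\gamma\sigma}$ block-diagonal, so in each case only one value of $\sigma$ contributes.

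Next I would handle the four cases appearing in \eqref{GRWChrisym} one at a time. For $\overline{\Gamma}^{\alpha}_{00}$, all relevant metric entries are either constants ($\overline{g}_{00}=-1$, $\overline{g}_{0i}=0$) or independent of $t$ via the spatial coordinate, so every term in the Koszul formula vanishes. For $\overline{\Gamma}^{0}_{0i}$, the only surviving contribution would be $-\tfrac{1}{2}\partial_{i}\overline{g}_{00}$, which is zero since $\overline{g}_{00}$ is constant. For $\overline{\Gamma}^{k}_{ij}$ one writes $\partial_{i}\overline{g}_{lj}=f(t)^{2}\partial_{i}\widetilde{g}_{lj}$ (and analogously for the other two derivatives, since $f$ depends only on $t$), so that the factor $f(t)^{2}$ cancels with $f(t)^{-2}$ from $\overline{g}^{kl}$, leaving exactly the Koszul formula for $\widetilde{g}$; hence $\overline{\Gamma}^{k}_{ij}=\widetilde{\Gamma}^{k}_{ij}$.

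The remaining two cases are where the warping shows up. For $\overline{\Gamma}^{0}_{ij}$ only the term $-\tfrac{1}{2}\overline{g}^{00}\partial_{0}\overline{g}_{ij}=\tfrac{1}{2}\partial_{t}(f(t)^{2}\widetilde{g}_{ij})$ survives, and since $\widetilde{g}_{ij}$ does not depend on $t$ this equals $f(t)f'(t)\widetilde{g}_{ij}$. For $\overline{\Gamma}^{k}_{0i}$ only $\tfrac{1}{2}\overline{g}^{kl}\partial_{0}\overline{g}_{li}=\tfrac{1}{2}f(t)^{-2}\widetilde{g}^{kl}\cdot 2f(t)f'(t)\widetilde{g}_{li}$ contributes, which simplifies to $f(t)f'(t)f(t)^{-2}\delta^{k}_{i}$, matching \eqref{GRWChrisym}.

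There is no real obstacle here: the statement is a standard warped-product computation, and the only thing one needs to be careful about is bookkeeping, specifically the symmetric pair of surviving terms in the last case (which doubles up to cancel the $\tfrac{1}{2}$) and the fact that $\widetilde{g}_{ij}$ has no $t$-dependence while $f$ has no spatial dependence. Once this is noted, the four cases each reduce to a single term in the Koszul formula and the claimed formulas follow immediately.
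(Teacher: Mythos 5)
Your proof is correct and follows the standard warped-product computation; the paper itself states this lemma without proof, so there is no alternative argument to compare against, and your Koszul-formula approach is exactly what one would write down. All five cases are handled properly: the block-diagonal structure of $\overline{g}$ and $\overline{g}^{-1}$ forces a single surviving value of $\sigma$, and the cancellation of $f(t)^2$ against $f(t)^{-2}$ in $\overline{\Gamma}^k_{ij}$ and the $\partial_t(f^2)=2ff'$ chain rule in the last two cases give precisely the claimed formulas.

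One small slip in your closing remark: you attribute the cancellation of the $\tfrac{1}{2}$ in the last case to ``the symmetric pair of surviving terms,'' but in fact for both $\overline{\Gamma}^0_{ij}$ and $\overline{\Gamma}^k_{0i}$ only a single Koszul term survives (the one involving $\partial_0\overline{g}_{ij}$ or $\partial_0\overline{g}_{li}$ respectively, since $\overline{g}_{0i}=0$); the factor of $2$ comes entirely from the chain rule $\partial_t\bigl(f(t)^2\bigr)=2f(t)f'(t)$. Your explicit computations earlier in the proposal already reflect this correctly, so this is purely a misstatement in the summary sentence and does not affect the validity of the argument.
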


\noindent By making use of the Christoffel symbols listed above, we can compute 
the local expression for the Riemannian curvature tensor on $(N,\overline{g})$. Recall, 
for $\overline{\nabla}$ being the Levi-Civita connection of $TN$, its Riemann curvature
$R^N$ is a $(1,3)$-tensor
$$R^N:\Gamma(TN)\times \Gamma(TN)\times\Gamma(TN)\rightarrow\Gamma(TN).$$
In terms of any coordinate frame $(\partial_\alpha)_\alpha$ for $TN$, the components
\begin{equation}\label{RiemannCurvatureInLocalCoordinates}
{R^N_{\alpha\beta\gamma}}^\delta\partial_\delta=R^N(\partial_\alpha,\partial_\beta)\partial_\gamma.
\end{equation}
can be expressed in terms of Christoffel symbols by
\begin{equation}\label{RTCoord}
R^{N}_{\alpha\beta\gamma}{}^\delta=\overline{\Gamma}_{\beta\gamma,\alpha}^\delta-\overline{\Gamma}_{\alpha\gamma,\beta}^\delta 
+\sum_\eta \overline{\Gamma}_{\beta\gamma}^\eta\overline{\Gamma}_{\alpha\eta}^\delta-
\overline{\Gamma}_{\alpha\gamma}^\eta\overline{\Gamma}_{\beta\eta}^\delta.
\end{equation}
By making use of the metric tensor we can contract indices gaining a $(0,4)$-tensor.
Such a $(0,4)$-tensor will be denoted by $R^N$ as well and is given by
\begin{equation}\label{RiemannCurvatureTensorFormula}
\begin{split}
R^N: & \ \Gamma(TN)\times \Gamma(TN)\times \Gamma(TN)\times\Gamma(TN)\rightarrow C^\infty(N),\\
&R^N(X,Y,Z,W):=\gbar(R^N(X,Y)Z,W), \\
&R^N_{\alpha\beta\gamma\delta}:= R^N(\partial_\alpha,\partial_\beta,\partial_\gamma, \partial_\delta)=
\gbar_{\delta\zeta}{R^N_{\alpha\beta\gamma}}^\zeta.
\end{split}
\end{equation}
Using local coordinates $(t,x^1,\dots,x^m)$, we obtain from \eqref{GRWChrisym}, \eqref{RTCoord}
and \eqref{RiemannCurvatureTensorFormula} the following list of formulas for $R^N$.

\begin{lem}\label{SomeVanishingofRicciLem}
For every $i,j,k\in\{1,\dots,m\}$ we have for the Riemann curvature tensor
\begin{equation}\label{SomeVanishingofRicci}
\begin{split}
&R^N(\partial_t,\partial_i,\partial_j,\partial_k)=  R^N_{0ijk}=0, \\
&R^N(\partial_t,\partial_i,\partial_j,\partial_t)=  R^N_{0ij0}=-f(t)f''(t)\widetilde{g}_{ij}.
\end{split}
\end{equation}
\end{lem}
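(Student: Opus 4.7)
The plan is to carry out a direct calculation from the coordinate formula \eqref{RTCoord} and the Christoffel symbols listed in \eqref{GRWChrisym}, then lower the last index using the metric. The two identities to establish are purely computational, so the work consists of isolating which of the four terms in \eqref{RTCoord} actually contribute.

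First, I would compute $R^N_{0ij}{}^l$ for $l\in\{1,\dots,m\}$. By \eqref{RTCoord},
\begin{equation*}
R^N_{0ij}{}^l = \overline{\Gamma}^l_{ij,0} - \overline{\Gamma}^l_{0j,i} + \overline{\Gamma}^\eta_{ij}\overline{\Gamma}^l_{0\eta} - \overline{\Gamma}^\eta_{0j}\overline{\Gamma}^l_{i\eta}.
\end{equation*}
From \eqref{GRWChrisym} the symbol $\overline{\Gamma}^l_{ij}=\widetilde{\Gamma}^l_{ij}$ is $t$-independent, so the first term vanishes; the second vanishes because $\overline{\Gamma}^l_{0j}=(f'/f)\delta^l_j$ is spatially constant. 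In the two quadratic terms, the contribution from $\eta=0$ is killed by $\overline{\Gamma}^l_{00}=0=\overline{\Gamma}^0_{0j}$, and what remains are the two spatial pieces $\widetilde{\Gamma}^l_{ij}(f'/f)$ and $(f'/f)\widetilde{\Gamma}^l_{ij}$, which cancel. Hence $R^N_{0ij}{}^l=0$, and since $\gbar_{k0}=0$ and $\gbar_{kl}=f(t)^2\widetilde{g}_{kl}$, the contraction $R^N_{0ijk}=\gbar_{k\zeta}R^N_{0ij}{}^\zeta$ reduces to a multiple of $R^N_{0ij}{}^l$, which is zero. This gives the first identity.

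Next I would compute $R^N_{0ij}{}^0$ by the same formula. Now only the time-derivative of $\overline{\Gamma}^0_{ij}=f(t)f'(t)\widetilde{g}_{ij}$ survives among the linear terms, giving $(f'^2+f f'')\widetilde{g}_{ij}$; the term $\overline{\Gamma}^0_{0j,i}$ vanishes because $\overline{\Gamma}^0_{0j}=0$. In the quadratic terms $\overline{\Gamma}^\eta_{ij}\overline{\Gamma}^0_{0\eta}$ every factor $\overline{\Gamma}^0_{0\eta}$ vanishes, while in $\overline{\Gamma}^\eta_{0j}\overline{\Gamma}^0_{i\eta}$ only $\eta=k$ contributes, giving $(f'/f)\delta^k_j\cdot f f'\widetilde{g}_{ik}=f'^2\widetilde{g}_{ij}$. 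Subtracting yields $R^N_{0ij}{}^0=f(t)f''(t)\widetilde{g}_{ij}$. Lowering the last index using $\gbar_{00}=-1$ and $\gbar_{0k}=0$ produces $R^N_{0ij0}=-R^N_{0ij}{}^0=-f(t)f''(t)\widetilde{g}_{ij}$, which is the second identity.

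There is no real obstacle here: the only thing to be careful about is bookkeeping in the contractions $\overline{\Gamma}^\eta\overline{\Gamma}$, namely to split the sum over $\eta\in\{0,1,\dots,m\}$ into its time and spatial parts, to use $\overline{\Gamma}^0_{00}=\overline{\Gamma}^0_{0i}=0$ to discard the time contributions, and to recognise the cancellation $\widetilde{\Gamma}^l_{ij}(f'/f)-(f'/f)\widetilde{\Gamma}^l_{ij}=0$ which produces the vanishing of $R^N_{0ijk}$. The sign at the end for $R^N_{0ij0}$ comes solely from $\gbar_{00}=-1$.
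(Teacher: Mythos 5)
Your proof is correct and is exactly the computation the paper leaves implicit: the lemma is stated as following directly from \eqref{GRWChrisym}, \eqref{RTCoord} and \eqref{RiemannCurvatureTensorFormula}, and your term-by-term evaluation of $R^N_{0ij}{}^l$ and $R^N_{0ij}{}^0$, followed by lowering the last index with the block-diagonal metric $\gbar$, is the intended derivation. The bookkeeping (splitting $\eta$ into time and spatial parts, the cancellation $(f'/f)\widetilde{\Gamma}^l_{ij}-(f'/f)\widetilde{\Gamma}^l_{ij}=0$, the residual $f'^2\widetilde{g}_{ij}$ in the second computation, and the sign from $\gbar_{00}=-1$) is all accurate.
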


\noindent We can now compute the values of the Ricci tensor $\ric^N$, defined
in terms of any coordinate frame $(\partial_\alpha)_\alpha$ for $TN$ by
(we sum over $j,k = 1, \dots, m$)
\begin{equation}
\begin{split}
\ric^N(\partial_\alpha, \partial_\beta) 
&= \gbar^{\delta\gamma}R^N(\partial_\delta,\partial_\alpha,\partial_\beta,\partial_\gamma) \\
&= \gbar^{jk}R^N(\partial_{j},\partial_\alpha,\partial_\beta,\partial_{k}) - R^N(\partial_t,\partial_\alpha,\partial_\beta,\partial_t).
\end{split}
\end{equation}

\begin{cor}\label{RicdTdi}
For every $i,j=1,\dots,m$ we have for the Ricci curvature tensor
\begin{equation}\label{Ricciofdt}
\begin{split}
&\ric^N(\partial_t,\partial_t)=-m\frac{f''(t)}{f(t)}, \quad \ric^N(\partial_t,\partial_i)=0, \\
&\ric^N(\partial_i,\partial_j)=\widetilde{\ric}(\partial_i,\partial_j)+f(t)f''(t)\widetilde{g}_{ij}+(m-1)(f'(t))^2\widetilde{g}_{ij}.
\end{split}
\end{equation}
\end{cor}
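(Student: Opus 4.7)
The plan is to prove each of the three identities directly from the contraction formula for the Ricci tensor stated just before the corollary, using Lemma~\ref{SomeVanishingofRicciLem} for the time-like contributions and an auxiliary computation of the purely spatial Riemann components $R^N_{lijk}$, which the lemma does not yet provide.

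For the first identity, I plug $\alpha=\beta=0$ into the contraction formula. The second summand $-R^N(\partial_t,\partial_t,\partial_t,\partial_t)$ is zero by antisymmetry of the Riemann tensor, and the first summand is $\overline{g}^{jk}R^N(\partial_j,\partial_t,\partial_t,\partial_k)$. Using the pair-exchange symmetry $R^N_{\alpha\beta\gamma\delta}=R^N_{\gamma\delta\alpha\beta}$ and the second line of \eqref{SomeVanishingofRicci}, these spatial-temporal components equal $-f(t)f''(t)\widetilde{g}_{jk}$, and the contraction with $\overline{g}^{jk}=f(t)^{-2}\widetilde{g}^{jk}$ yields $-mf''(t)/f(t)$ after using $\widetilde{g}^{jk}\widetilde{g}_{jk}=m$. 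For the second identity, with $\alpha=0,\beta=i$, antisymmetry kills the second summand, and the first summand involves $R^N_{j0ik}$, which has a single time index; by the pair-exchange and antisymmetry properties, any Riemann tensor component with exactly one $0$ index reduces to $R^N_{0ijk}$-type components, which vanish by the first line of \eqref{SomeVanishingofRicci}.

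The real work is the third identity. Here the contraction reads
\begin{equation*}
\ric^N(\partial_i,\partial_j)=\overline{g}^{lk}R^N(\partial_l,\partial_i,\partial_j,\partial_k)-R^N(\partial_t,\partial_i,\partial_j,\partial_t).
\end{equation*}
The second term is immediately $f(t)f''(t)\widetilde{g}_{ij}$ by Lemma~\ref{SomeVanishingofRicciLem}. For the first term I would compute the fully spatial components $R^N_{lijk}$ from \eqref{RTCoord} using the Christoffel symbols \eqref{GRWChrisym}: the spatial-spatial Christoffels coincide with $\widetilde{\Gamma}^k_{ij}$, so the purely spatial part of the curvature formula reproduces the intrinsic $(1,3)$-tensor $\widetilde{R}_{ijk}{}^l$, while the mixed terms $\overline{\Gamma}^0_{jk}\overline{\Gamma}^l_{i0}$ and $\overline{\Gamma}^0_{ik}\overline{\Gamma}^l_{j0}$ produce a contribution $(f'(t))^2(\widetilde{g}_{jk}\delta^l_i-\widetilde{g}_{ik}\delta^l_j)$. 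Lowering the last index with $\overline{g}_{lm}=f(t)^2\widetilde{g}_{lm}$ gives
\begin{equation*}
R^N_{lijk}=f(t)^2\widetilde{R}_{lijk}+f(t)^2(f'(t))^2\bigl(\widetilde{g}_{ij}\widetilde{g}_{lk}-\widetilde{g}_{lj}\widetilde{g}_{ik}\bigr).
\end{equation*}

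Finally I would contract this with $\overline{g}^{lk}=f(t)^{-2}\widetilde{g}^{lk}$. The $f(t)^2$ factor cancels, so the first piece becomes $\widetilde{\ric}(\partial_i,\partial_j)$, and the second piece simplifies via $\widetilde{g}^{lk}\widetilde{g}_{lk}=m$ and $\widetilde{g}^{lk}\widetilde{g}_{lj}=\delta^k_j$ to $(m-1)(f'(t))^2\widetilde{g}_{ij}$. Adding the $f(t)f''(t)\widetilde{g}_{ij}$ from the temporal term reproduces the asserted formula. The main obstacle is bookkeeping the Riemann-tensor symmetries and the several index raisings and lowerings with the warped metric correctly; once the spatial-spatial Riemann components are in hand, both contractions are routine.
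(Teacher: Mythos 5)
Your proof is correct, and since the paper states Corollary~\ref{RicdTdi} without a written proof (it is presented as a consequence of the contraction formula, Lemma~\ref{SomeVanishingofRicciLem}, and the Christoffel symbols), your argument supplies exactly the computation the authors omit.

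Two small remarks on the write-up rather than on the mathematics. First, you correctly identify the genuine content that Lemma~\ref{SomeVanishingofRicciLem} does not cover, namely the fully spatial components $R^N_{lijk}$, and your formula
$R^N_{lijk}=f(t)^2\widetilde{R}_{lijk}+f(t)^2(f'(t))^2\bigl(\widetilde{g}_{ij}\widetilde{g}_{lk}-\widetilde{g}_{lj}\widetilde{g}_{ik}\bigr)$
is right: the $\eta=0$ terms in \eqref{RTCoord} give $(f'(t))^2\bigl(\widetilde{g}_{ij}\delta^p_l-\widetilde{g}_{lj}\delta^p_i\bigr)$ using $\overline{\Gamma}^0_{ij}=f f'\widetilde{g}_{ij}$ and $\overline{\Gamma}^k_{0i}=(f'/f)\delta^k_i$, and lowering with $\gbar_{kp}=f^2\widetilde{g}_{kp}$ gives your expression. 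Be careful, however, to keep the index pattern fixed throughout: in your auxiliary paragraph you compute the $\eta=0$ contribution for $R^N_{ijk}{}^l$ but then state the lowered result for $R^N_{lijk}$ with the slots shuffled; the final formula is correct, but an explicit relabeling (or sticking with $\alpha=l,\beta=i,\gamma=j,\delta=k$ from the start, which is what the contraction $\gbar^{lk}R^N_{lijk}$ actually calls for) would avoid the apparent mismatch. Second, for the vanishing of $\ric^N(\partial_t,\partial_i)$, it is cleaner to invoke only antisymmetry in the first pair, $R^N_{j0ik}=-R^N_{0jik}$, and then apply the first line of \eqref{SomeVanishingofRicci} directly, rather than appeal to pair-exchange; this avoids any implicit reliance on the full set of Riemann symmetries being established for the warped metric.
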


\subsection{Extrinsic geometry}\label{ExtrinsicGeometrySub}

Consider the graphical embedding $F\equiv F(\cdot, s): M \to N$. 
Its image $F(M)$ is an hypersurface in $N$, i.e. a codimension one submanifold of $N = \R \times M$.
Consider the pullback bundle $F^*TN$ over $M$. The Riemannian metric $\gbar$ induces an inner product 
on $F^*TN$ by setting for any $p\in M$ and any $W_1=(p,w_1),W_2=(p,w_2) \in F_p^*TN$ with $w_1, w_2 \in T_{F(p)}N$
\begin{align}\label{GFTN}
\gbar_p(W_1,W_2) := \gbar_{F(p)}(w_1,w_2).
\end{align}
We will denote the pull-back connection on 
$F^*TN$ (of the Levi-Civita connection on $TN$) by $\PBD$.
It is easy to see, e.g. by computations in local coordinates, that the pull-back connection satisfies the following metric property.
\begin{lem}
For every $X\in \Gamma(TM)$ and for every $Y,Z\in\Gamma(F^*TN)$, one has
\begin{equation}\label{MetricProperty}
X\pr{\overline{g}\pr{Y,Z}}=\overline{g}\pr{\PBD_X Y,Z}+\overline{g}\pr{Y,\PBD_X Z}.
\end{equation}
\end{lem}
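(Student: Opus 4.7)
The plan is to localize the identity and exploit the fact that the Levi-Civita connection $\overline{\nabla}$ on $TN$, from which $\PBD$ is built by pullback, is itself metric-compatible. Since both sides of \eqref{MetricProperty} are $\R$-linear in $Y$ and $Z$ and satisfy Leibniz rules with respect to multiplication by $C^\infty(M)$-functions (the Leibniz rule for $\PBD$ being part of its definition), it suffices to verify the identity in a local $N$-coordinate frame.

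First, I would fix $p\in M$, choose local coordinates $(y^\alpha)_{\alpha=0}^{m}$ on $N$ around $F(p)$, and work with the induced local frame $\{\partial_\alpha\circ F\}$ of $F^*TN$ near $p$. Any sections decompose as $Y=Y^\alpha(\partial_\alpha\circ F)$ and $Z=Z^\beta(\partial_\beta\circ F)$ with $Y^\alpha, Z^\beta\in C^\infty(M)$, and by the definition of the pull-back connection one has
\begin{equation*}
\PBD_X Y = X(Y^\alpha)\,(\partial_\alpha\circ F) + Y^\alpha\,\overline{\nabla}_{F_*X}\partial_\alpha,
\end{equation*}
and analogously for $Z$. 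The coefficient-derivative contributions $X(Y^\alpha)$ and $X(Z^\beta)$ that appear on the right-hand side of \eqref{MetricProperty} cancel against the analogous Leibniz contributions produced when the product rule is applied to $X(\overline{g}(Y,Z)) = X\bigl(Y^\alpha Z^\beta\,(\overline{g}_{\alpha\beta}\circ F)\bigr)$ on the left. This reduces the lemma to the frame case $Y=\partial_\alpha\circ F$, $Z=\partial_\beta\circ F$.

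Second, in that frame case the left-hand side becomes $X(\overline{g}_{\alpha\beta}\circ F)=(F_*X)\overline{g}_{\alpha\beta}$, and the metric compatibility of $\overline{\nabla}$ on $N$ rewrites this as
\begin{equation*}
\overline{g}\bigl(\overline{\nabla}_{F_*X}\partial_\alpha,\partial_\beta\bigr) + \overline{g}\bigl(\partial_\alpha,\overline{\nabla}_{F_*X}\partial_\beta\bigr),
\end{equation*}
each of the two inner products being evaluated at $F(p)$. Via the identification \eqref{GFTN}, these terms are precisely $\overline{g}(\PBD_X(\partial_\alpha\circ F),\partial_\beta\circ F)$ and $\overline{g}(\partial_\alpha\circ F,\PBD_X(\partial_\beta\circ F))$, which is the desired right-hand side in the frame case.

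No substantial obstacle arises; the whole identity is a direct consequence of the Leibniz rule built into $\PBD$ and the metric compatibility of $\overline{\nabla}$. The only points deserving a moment of care are the bookkeeping of the summation in the frame expansion and the use of \eqref{GFTN}, which ensures that inner products of sections of $F^*TN$ at $p$ are nothing but ambient inner products on $TN$ at $F(p)$, so that no curvature- or second-fundamental-form-type terms can appear from the embedding itself.
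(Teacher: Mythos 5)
Your argument is correct and is precisely the local-coordinate computation the paper alludes to without spelling out (the paper merely remarks that ``it is easy to see, e.g.\ by computations in local coordinates''). You have filled in the details faithfully: reduce to the induced coordinate frame of $F^*TN$ via the Leibniz rule in the definition of $\PBD$, then invoke metric compatibility of $\overline{\nabla}$ on $N$ together with the identification \eqref{GFTN}.
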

\begin{rmk}
Notice that the metric property above holds in general whenever considering the pull-back connection of the Levi-Civita connection; i.e. it does not depend on the topology of $N$ nor in the codimension of $M$.
\end{rmk}

Note that the total differential $DF(p)$ maps $T_pM$ to $F_{p}^*TN$. 
If $\mu$ is the (time-like) unit normal of $F(M)$ and $(\partial_1,\dots,\partial_m)$ denotes the coordinate frame for $TM$ then 
$$
(\mu,DF(\partial_1),\dots,DF(\partial_m)),
$$ 
is a local frame for $F^*TN$. The main object needed for describing the extrinsic geometry of 
a submanifold is the second fundamental form. We recall the usual definitions here briefly and
then specify the results to our GRWST setting.

\begin{defn}
Let $F:M \to (N,\overline{g})$ be an immersion and $g=F^*\gbar$ the metric induced on $M$ 
by pulling-back $\gbar$. Denote by $\nabla$ the Levi-Civita connection on $TM$ associated with the 
induced metric $g$. The second fundamental form is defined for every $X,Y\in\Gamma(TM)$ by
\begin{equation}\label{SFFFormula}
\sff(X,Y):=\PBD_X(DF(Y))-DF(\nabla_X Y).
\end{equation}
\end{defn}

\begin{rmk}
The intuitive definition of the second fundamental form is the comparison 
between the covariant derivatives of the vectors with respect to the two different connections, 
the submanifold one and the ambient space one.
\end{rmk}

The second fundamental form is normal. That is for every $X,Y,Z\in \Gamma(TM)$
\begin{equation}\label{OrthogonalitySFF}
\overline{g}(\sff(X,Y),DF(Z))=0.
\end{equation}
In particular, $\sff(X,Y)$ lies in the $C^\infty(M)-$span of $\mu$. 
We can define the scalar second fundamental form as follows. 

\begin{defn}\label{SSFFDef}
The scalar second fundamental form of $F(M) \subset N$ is a map 
$\ssff:\Gamma(TM)\times \Gamma(TM)\rightarrow\mathbb{R}$ so that for every vector fields $X$ and $Y$ over $M$ 
\begin{equation}\label{ScalarSecondFundamentalForm}
\ssff(X,Y)=\overline{g}(\sff(X,Y),\mu).
\end{equation} 
\end{defn}

\begin{prop}\label{SFF&SSFFNOR}
Let $F(M) \subset N$ be a space-like hypersurface. 
Then, for any $X,Y\in\Gamma(TM)$ we have the 
following relations between $\sff$ and $\ssff$ and between the operator $\PBD \mu:\Gamma(TM)\rightarrow\Gamma(F^*TN)$ and $\ssff$
\begin{equation}\label{SFF=-hnu}
\sff(X,Y)=-\ssff(X,Y)\mu;
\end{equation}
\begin{equation}\label{SOandSFF}
-\gbar\pr{\PBD_X \mu,DF(Y)}=\gbar\pr{\mu,\sff(X,Y)}=\ssff(X,Y).
\end{equation}
\end{prop}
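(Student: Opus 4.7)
The plan is to prove the two identities in \eqref{SFF=-hnu} and \eqref{SOandSFF} as essentially formal consequences of the normality of $\sff$ (equation \eqref{OrthogonalitySFF}), the time-like unit normalization $\gbar(\mu,\mu)=-1$, and the metric property \eqref{MetricProperty} of the pull-back connection $\PBD$.

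For \eqref{SFF=-hnu}, I would first observe that since $F(M)\subset N$ is a space-like hypersurface, the orthogonal complement of $DF(T_pM)$ inside $F_p^*TN$ is the one-dimensional subspace $\mathrm{span}\{\mu\}$. By \eqref{OrthogonalitySFF}, $\sff(X,Y)$ is orthogonal to every $DF(Z)$, hence $\sff(X,Y)=c(X,Y)\mu$ for some scalar $c(X,Y)$. Pairing with $\mu$ and using $\gbar(\mu,\mu)=-1$, together with Definition \ref{SSFFDef}, yields
\begin{equation*}
\ssff(X,Y)=\gbar(\sff(X,Y),\mu)=c(X,Y)\gbar(\mu,\mu)=-c(X,Y),
\end{equation*}
so that $c(X,Y)=-\ssff(X,Y)$ and hence $\sff(X,Y)=-\ssff(X,Y)\mu$.

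For \eqref{SOandSFF}, the middle equality is just Definition \ref{SSFFDef}. For the left equality, I would start from the orthogonality $\gbar(\mu,DF(Y))=0$, apply $X\in\Gamma(TM)$ and use the metric property \eqref{MetricProperty} of $\PBD$:
\begin{equation*}
0=X\bigl(\gbar(\mu,DF(Y))\bigr)=\gbar(\PBD_X\mu,DF(Y))+\gbar(\mu,\PBD_X DF(Y)).
\end{equation*}
Then I would substitute the defining identity \eqref{SFFFormula}, i.e.\ $\PBD_X DF(Y)=\sff(X,Y)+DF(\nabla_X Y)$, and use $\gbar(\mu,DF(\nabla_X Y))=0$ to conclude
\begin{equation*}
-\gbar(\PBD_X\mu,DF(Y))=\gbar(\mu,\sff(X,Y))=\ssff(X,Y).
\end{equation*}

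There is no real obstacle here: the statement is purely algebraic/formal once one has the normality of $\sff$, the sign of $\gbar(\mu,\mu)$, and the metric compatibility of $\PBD$. The only point that requires attention is keeping track of the minus sign in \eqref{SFF=-hnu}, which is genuinely Lorentzian and would be absent in the Riemannian case where $\mu$ is space-like.
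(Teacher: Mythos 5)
Your proof is correct and follows essentially the same route as the paper: decomposing $\sff(X,Y)$ along the normal direction using \eqref{OrthogonalitySFF} together with $\gbar(\mu,\mu)=-1$, and differentiating the orthogonality $\gbar(\mu,DF(Y))=0$ via the metric property \eqref{MetricProperty} and \eqref{SFFFormula}. The paper writes the first step as an explicit frame decomposition rather than invoking the one-dimensional orthogonal complement, but the content is identical.
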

\begin{proof}
Since $(\mu,DF(\partial_1),\dots,DF(\partial_m))$ is a local frame for $F^*TN$, with $\mu$ orthogonal to $DF(\partial_i)$ for every $i$ and  time-like,
$$\sff(X,Y)=-\gbar(\sff(X,Y),\mu)\mu+g^{ij}\gbar(\sff(X,Y),DF(\partial_i))DF(\partial_j)$$
where $g^{ij}$ denotes the inverse of the induced metric on $M$.
Equation \eqref{SFF=-hnu} now follows from \eqref{OrthogonalitySFF}.
\medskip

\noindent
Equation \eqref{SOandSFF} follows from the metric property of $\PBD$, \eqref{SFFFormula},\eqref{SFF=-hnu} and the fact that $\mu$ is time-like.
\end{proof}

\begin{defn}\label{MCVDef}
For an immersion $F:M\rightarrow (N,\overline{g})$ we define 
\begin{enumerate}
\item the mean curvature vector $\vec{H}:=\trace\sff$,
\item the mean curvature $\smc:=\trace \ssff$.
\end{enumerate}
\end{defn}

\noindent From Proposition \ref{SFF&SSFFNOR} we conclude

\begin{cor}\label{TheWantedRelationProvedCOR}
Let $F:M\rightarrow (N,\overline{g})$ be a spacelike hypersurface. Then
$$\vec{H}=-\smc \mu.$$
\end{cor}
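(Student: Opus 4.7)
The plan is to obtain this corollary as an essentially immediate consequence of Proposition \ref{SFF&SSFFNOR}, so the main issue is bookkeeping rather than any real analytic difficulty. Recall from Definition \ref{MCVDef} that both $\vec{H}$ and $\smc$ are traces taken with respect to the \emph{induced} metric $g = F^*\gbar$ on $M$: explicitly $\vec{H} = \trace_g \sff \in \Gamma(F^*TN)$ and $\smc = \trace_g \ssff \in C^\infty(M)$. From equation \eqref{SFF=-hnu} in Proposition \ref{SFF&SSFFNOR} we already have the pointwise factorization
\[
\sff(X,Y) = -\ssff(X,Y)\, \mu,
\]
valid for all $X,Y \in \Gamma(TM)$, where the normal $\mu$ is $C^\infty(M)$-linear and independent of the two vector slots.

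First I would fix, at a point $p \in M$, a local $g$-orthonormal frame $(e_1, \dots, e_m)$ of $T_pM$. Plugging this frame into the factorization above and summing over $i$ I would obtain
\[
\vec{H}(p) = \sum_{i=1}^{m} \sff(e_i, e_i) = -\Bigl(\sum_{i=1}^{m} \ssff(e_i, e_i)\Bigr) \mu(p) = -\smc(p)\, \mu(p),
\]
where pulling the scalar factor $\mu(p) \in F_p^*TN$ out of the sum is legitimate because it is the same vector in every term. Since $p$ was arbitrary, this gives $\vec{H} = -\smc\, \mu$ on all of $M$. The only point worth being careful about is that the traces in Definition \ref{MCVDef} are both taken with respect to the same induced metric $g$, which is Riemannian by the space-like assumption and hence admits the orthonormal frame used above; no obstacle arises here, and the corollary follows without further work.
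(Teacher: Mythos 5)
Your proof is correct and follows exactly the same route as the paper: the paper simply notes the corollary follows from Proposition \ref{SFF&SSFFNOR} by taking traces of the identity $\sff(X,Y) = -\ssff(X,Y)\mu$, which is precisely what you carry out via a $g$-orthonormal frame. Nothing to add.
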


Finally, let us state a formula that will be useful later.
From the local expression of the Christoffel symbols $\Gamma_{ij}^k$ 
and of the scalar second fundamental form $\ssff_{ij}$ e.g. equations (2.6) and (2.15) in \cite{mio} we infer
\begin{equation}\label{vhij}
v\ssff_{ij}=-(u_{ij}-\Gamma_{ij}^ku_k)-f(u)f'(u)\widetilde{g}_{ij}.
\end{equation}
\begin{rmk}
We want to point out that equation \eqref{vhij} is exactly the same as equation 
(1.16) in \cite{gerhardt2000hypersurfaces} once substituting the appropriate values 
of the Christoffel symbols of $(N,\gbar)$ expressed in \eqref{GRWChrisym}.
\end{rmk}

\section{Parabolic Schauder and Krylov-Safonov estimates}\label{section-schauder}

In this section we review parabolic Schauder and Krylov-Safonov estimates
on manifolds with bounded geometry. 

\subsection{Classical H\"{o}lder spaces}

Consider a Riemannian manifold $(M,\widetilde{g})$.

\begin{defn} \label{holder}
The H\"{o}lder space $C^{\alpha} \equiv C^{\alpha}(M\times [0,T])$, for $\alpha \in (0,1)$, is defined as the space of continuous functions 
$u \in C^{0}(M\times [0,T])$ which satisfy
\begin{equation}
    [u]_{\alpha} := \sup_{M^2_T} \left\{ \dfrac{|u(p,t)-u(p',t')|}{d(p,p')^{\alpha}+|t-t'|^{\alpha/2}}\right\} < \infty,
\end{equation}
where the supremum is over $M^2_T$ with $M_T:=M \times [0,T]$; the distance $d$ is induced by the metric $\widetilde{g}$.
The H\"{o}lder norm of any $u \in C^{\alpha}(M\times [0,T])$ is defined by
\begin{equation}
    \|u\|_{\alpha} := \|u\|_{\infty} + [u]_{\alpha}.
\end{equation}
\end{defn}

\noindent The resulting normed vector space $C^{\alpha}(M\times [0,T])$ is a Banach space.
As asserted in the next result, cf. \cite[Lemma 2]{bruno} for a similar statement and its proof, an equivalent H\"older norm is 
obtained with spacial and time differences taken only within bounded local regions.

\begin{lem}\label{local-norm}
The following defines an equivalent norm on $C^{\alpha}(M\times [0,T])$
(we will not distinguish equivalent norms notationally)
\begin{equation}\label{local-norm1}
    \|u\|_{\alpha} := \|u\|_{\infty} + [u]'_{\alpha}, \quad 
     [u]'_{\alpha} := \sup_{M^2_{T,\delta}} \left\{ \dfrac{|u(p,t)-u(p',t')|}{d(p,p')^{\alpha}+|t-t'|^{\alpha/2}}\right\},
\end{equation}
where $M^2_{T,\delta} := \{(p,t), (p',t') \in M_T \mid d(p,p')^{\alpha}+|t-t'|^{\alpha/2} \leq \delta\}$.
\end{lem}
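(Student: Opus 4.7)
The plan is to establish equivalence of the two candidate norms by proving the two linear inequalities $\|u\|_\alpha^{\text{loc}} \leq \|u\|_\alpha^{\text{glob}}$ and $\|u\|_\alpha^{\text{glob}} \leq C(\delta)\,\|u\|_\alpha^{\text{loc}}$, where I temporarily use $\|u\|_\alpha^{\text{loc}} := \|u\|_\infty + [u]'_\alpha$ and $\|u\|_\alpha^{\text{glob}} := \|u\|_\infty + [u]_\alpha$ for the two norms in play. The first inequality is essentially a tautology: the supremum defining $[u]'_\alpha$ is taken over $M^2_{T,\delta} \subset M_T^2$, a strictly smaller set than the one defining $[u]_\alpha$, so $[u]'_\alpha \leq [u]_\alpha$ holds and the same inequality on the full norms follows by adding $\|u\|_\infty$ to both sides.

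For the non-trivial direction, the idea is to split the supremum defining $[u]_\alpha$ according to whether a given pair $(p,t),(p',t') \in M_T$ is \emph{close}, i.e.\ lies in $M^2_{T,\delta}$, or \emph{far}, i.e.\ $d(p,p')^{\alpha} + |t-t'|^{\alpha/2} > \delta$. On the close regime the associated H\"older quotient is, by definition, bounded above by $[u]'_\alpha$. On the far regime I use the crude numerator bound
\begin{equation*}
|u(p,t)-u(p',t')| \leq |u(p,t)| + |u(p',t')| \leq 2\|u\|_\infty,
\end{equation*}
together with the lower bound on the denominator $d(p,p')^{\alpha} + |t-t'|^{\alpha/2} > \delta$, to dominate the quotient by $(2/\delta)\|u\|_\infty$. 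Taking the supremum over both regimes gives
\begin{equation*}
[u]_\alpha \leq [u]'_\alpha + \tfrac{2}{\delta}\|u\|_\infty,
\end{equation*}
whence $\|u\|_\alpha^{\text{glob}} \leq (1 + 2/\delta)\,\|u\|_\alpha^{\text{loc}}$, proving norm equivalence with an explicit constant depending only on $\delta$ (and in particular independent of $M$, $T$, and $u$).

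I do not anticipate any real obstacle here; the argument is a clean dichotomy and does not require any bounded-geometry input, nor any use of the metric $\widetilde{g}$ beyond the fact that it induces the distance $d$. It is worth noting, however, why one cannot hope to bound $[u]_\alpha \leq C\,[u]'_\alpha$ \emph{without} including $\|u\|_\infty$: for $\alpha<1$ there is no chaining inequality of the Lipschitz type that would recover a global H\"older bound from a local one, so passing through the $L^\infty$-norm on far pairs is unavoidable. This is precisely the reason the equivalence is formulated on the level of the full norms and not on the level of the seminorms.
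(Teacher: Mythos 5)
Your proof is correct and is the standard dichotomy argument (split pairs into ``close'' and ``far,'' bound the far quotient by $2\|u\|_\infty/\delta$); the paper itself does not reproduce a proof but defers to \cite[Lemma~2]{bruno}, which proceeds along the same lines. Your closing remark about why chaining fails for $\alpha<1$ and why $\|u\|_\infty$ is therefore indispensable is a nice and correct observation.
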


\noindent We will only use the H\"older norm $ \|u\|_{\alpha}$ as in \eqref{local-norm1}. 
We also define the higher order H\"{o}lder spaces for any given $k \in \mathbb{N}$ 
in terms of the gradient $\widetilde{\nabla}$ and pointwise norms $| \cdot |_{\widetilde{g}}$ induced by $\widetilde{g}$ by setting
\begin{equation*}
    C^{k,\alpha} \equiv C^{k,\alpha} (M\times [0,T]) := 
    \left\{ u \in C^{k} \left| \
  | \widetilde{\nabla}^{\ell_1} \partial^{\ell_{2}}_{s} u |_{\widetilde{g}} \in C^{\alpha}, \hspace{2mm} \ell_{1} + 2\ell_{2} \leq k \right. \right\}
\end{equation*}
which is a Banach space with the norm
\begin{equation}
\|u\|_{k,\alpha} := \displaystyle\sum_{l_{1} + 2l_{2} \leq k} 
\left\| |\widetilde{\nabla}^{\ell_1} \partial^{\ell_{2}}_{s} u |_{\widetilde{g}} \right\|_{\alpha}.
\end{equation}
We will also use H\"{o}lder spaces for functions depending either only on 
spatial variables or only on the $s$-time variables. We denote the former by $C^{k,\alpha}(M)$ and 
the latter by $C^{k,\alpha}([0,T])$.

\subsection{Manifolds of bounded geometry}

\begin{defn}\label{bounded-geom-def}
We say that a Riemannian manifold $(M,\widetilde{g})$ has bounded geometry if its injectivity radius is bounded away from $0$
and its Ricci curvature is uniformly bounded, i.e. if for any vector field $X$ on $M$ we have $|\widetilde{\ric}(X,X)|\le c \widetilde{g}(X,X)$
for some uniform constant $c>0$.
\end{defn}

The hypothesis of bounded geometry implies in particular that for some $\delta>0$, all (open) balls $B_\delta (x)$
of radius $\delta$, centred at $x \in M$, are uniformly quasi-isometric to the Euclidean ball $B_{\delta}(0) \in \R^m$.
This means that for each $B_\delta (x)$ there exists a diffeomorphism 
$$
\Psi_x: B_\delta (0) \to B_{\delta}(x),
$$
which changes the distances 
at most by a constant factor that can be chosen independently of $x$. 
Using these quasi-isometries $\Psi_x$, we can define in view of Lemma \ref{local-norm} 
an equivalent norm on $C^{k,\alpha} (M\times [0,T])$ as follows. $\Psi_x$
defines a diffeomorphism 
$$
\Psi_x: B_\delta (0)  \times [0,T] \to B_{\delta}(x) \times [0,T].
$$ \ \\[-7mm]

\noindent We denote the H\"older norm on $C^{k,\alpha}(B_\delta (0) \times [0,T])$, defined as in Definition \ref{holder},
by $\| \cdot \|_{k,\alpha,B_\delta(0)\times[0,T]}$ and obtain an equivalent norm on $C^{k,\alpha}(M\times [0,T])$ given by
\begin{equation}\label{local-norm2}
\|u\|_{k,\alpha} = \sup_{x \in M} \left\| \left. \Psi^* u \right|_{B_\delta (x)} \right\|_{k,\alpha,B_\delta(0)\times[0,T]}.
\end{equation}

\subsection{Parabolic Schauder and Krylov-Safonov estimates}

We present the folklore consequence of the classical Krylov-Safonov estimates, see \cite{KS}, 
as well as the classical parabolic Schauder estimates, see \cite{krylov}. We also point out a nice exposition in 
\cite{picard}. We sum up over repeated indices and consider a uniformly elliptic symmetric 
differential operator $L$ acting on $C^\infty_0(M)$.
Here by uniform ellipticity we mean that in local coordinates 
\begin{equation}\label{uniform-elliptic}
\begin{split}
& \Psi^* \circ L \circ (\Psi^*)^{-1}  = - a^{ij} (s,x)\partial_{x_i} \partial_{x_j} + b^{j} (s,x) \partial_{x_j} +c(s,x), \\
&\textup{where} \quad \Lambda^{-1}  \| \xi\|^2 \leq a^{ij}(s,x) \xi_i \xi_j \leq \Lambda \| \xi\|^2, \\
&\textup{and} \qquad \| b(s,x)\| \leq \Lambda^{-1}, 
\ 0 \leq c(s,x) \leq \Lambda^{-1},
\end{split}
\end{equation}
for some uniform $\Lambda > 0$.

\begin{prop}\label{krylov-safonov-lemma}
Consider a uniformly elliptic symmetric differential operator $L$, as in \eqref{uniform-elliptic}, acting on $C^\infty_0(M)$.
Let $\varphi:M\times [0,T] \to \R$ be uniformly bounded and consider a uniformly bounded solution $\omega$ to 
\begin{equation}\label{inhom-eq}
\left( \partial_s + L \right) \omega = \varphi.
\end{equation}
\begin{enumerate}
\item Then there exists a constant $C>0$ depending only on $m$ and $\Lambda$ such that
$$
\| \omega\|_{\alpha} \leq C \Bigl( \|\omega \|_\infty + \| \varphi \|_\infty \Bigr),
$$
\item If additionally, $a^{ij}, b^j, c, \varphi \in C^{k,\alpha}(M\times[0,T])$, then there 
exists a constant $C>0$ depending only on $m, \Lambda$ and the H\"older norms of the local coefficients of $L$, such that
$$
\| \omega\|_{k+2,\alpha} \leq C \Bigl( \|\omega \|_\infty + \| \varphi \|_{k,\alpha} \Bigr).
$$
\end{enumerate}
\end{prop}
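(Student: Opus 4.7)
The plan is to reduce the assertion to the classical Euclidean parabolic Krylov-Safonov and Schauder estimates via the family of quasi-isometries $\Psi_x : B_\delta(0) \to B_\delta(x)$ provided by bounded geometry, and then to pass to the supremum over $x \in M$ using the equivalent norm \eqref{local-norm2}.

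First I would fix $x \in M$ and pull back the equation $(\partial_s + L)\omega = \varphi$ via $\Psi_x$ to obtain an equation of the same form on the fixed Euclidean cylinder $B_\delta(0) \times [0,T]$. Because $\Psi_x$ is a quasi-isometry with constants independent of $x$, the pulled-back coefficients $\Psi_x^* a^{ij}$, $\Psi_x^* b^j$, $\Psi_x^* c$ satisfy the uniform ellipticity and boundedness in \eqref{uniform-elliptic} with constants depending only on $\Lambda$ and $m$, not on $x$. Moreover, bounded geometry implies that the $C^{k,\alpha}$ norms of these pulled-back coefficients are controlled uniformly in $x$ in terms of the analogous norms of $a^{ij}, b^j, c$ on $M$.

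For part (i), I would then apply the classical parabolic Krylov-Safonov $C^\alpha$ estimate on the fixed Euclidean ball (see \cite{KS}), which gives on a slightly smaller sub-cylinder the bound
\begin{equation*}
\| \Psi_x^* \omega \|_{\alpha, B_{\delta/2}(0) \times [0,T]} \leq C\bigl( \| \Psi_x^* \omega \|_\infty + \| \Psi_x^* \varphi \|_\infty \bigr),
\end{equation*}
with $C$ depending only on $m$ and $\Lambda$. Since shrinking the ball radius from $\delta$ to $\delta/2$ is harmless for the equivalent norm (one may cover $M$ by balls of radius $\delta/2$ and apply Lemma \ref{local-norm} on this smaller scale), taking the supremum over $x$ yields the first estimate. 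For part (ii), I would instead invoke the classical parabolic Schauder estimate (see \cite{krylov}, or the exposition in \cite{picard}) on each Euclidean cylinder, which produces
\begin{equation*}
\| \Psi_x^* \omega \|_{k+2,\alpha, B_{\delta/2}(0) \times [0,T]} \leq C \bigl( \| \Psi_x^* \omega \|_\infty + \| \Psi_x^* \varphi \|_{k,\alpha} \bigr),
\end{equation*}
with $C$ now depending additionally on the H\"older norms of the coefficients. The same passage to the supremum in $x$, together with \eqref{local-norm2}, then delivers the claimed global estimate.

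The main delicate point, and the one I expect to require care, is the uniformity of the constant $C$ as $x$ varies over the non-compact manifold $M$. This is precisely where bounded geometry is essential: the quasi-isometry constants of $\Psi_x$, as well as the $C^{k,\alpha}$ control on the Christoffel symbols that enter when expressing $L$ in normal coordinates, are independent of $x$. A second, minor issue is the geometric content of the assumption that $L$ be symmetric and of the sign condition $c \geq 0$; these are not needed for Schauder but are standard hypotheses in the Krylov-Safonov framework guaranteeing that \eqref{inhom-eq} admits a maximum-principle-based H\"older modulus of continuity, and I would simply cite \cite{KS} for the Euclidean statement rather than reprove it.
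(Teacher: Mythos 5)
Your proposal follows the same route as the paper: reduce to the Euclidean parabolic Krylov--Safonov and Schauder estimates via the quasi-isometries $\Psi_x$ furnished by bounded geometry, verify the uniformity of the resulting ellipticity and coefficient bounds in $x$, and pass to the supremum via the equivalent norm \eqref{local-norm2}. The reduction step and the observation about bounded geometry providing $x$-independent constants are exactly what the paper does.

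The one place where you are less careful than the paper concerns the time variable. You write the Euclidean estimate directly on $B_{\delta/2}(0)\times[0,T]$, but the interior parabolic Krylov--Safonov and Schauder estimates that are being invoked shrink the cylinder in \emph{both} space and time: one controls the solution on $Q_{\delta/2}=B_{\delta/2}(0)\times[0,(\delta/2)^2]$ in terms of data on $Q_\delta=B_\delta(0)\times[0,\delta^2]$. Thus the estimate you obtain at first is only over a short time window $[0,(\delta/2)^2]$, not over all of $[0,T]$. The paper handles this by an explicit iteration in time: shifting $s\mapsto (\delta/2)^2+s'$ and repeating the argument gives the estimate on $[(\delta/2)^2,2(\delta/2)^2]$, and one continues until $T$ is reached; since the local constant depends only on $\delta$, $m$, $\Lambda$ (and the H\"older norms of the coefficients in part (ii)), the resulting global constant is independent of $T$. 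You acknowledge the spatial shrinking (``one may cover $M$ by balls of radius $\delta/2$\dots''), but omit the corresponding treatment of the temporal shrinking. Without it, the claim that $C$ depends only on $m$ and $\Lambda$ on the full interval $[0,T]$ is not justified by the cited interior estimates. This is a small but genuine gap; inserting the time iteration closes it.
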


\begin{proof} The solution $\omega$ satisfies for each $x\in M$
$$
\left( \partial_s + \Psi^* \circ L \circ (\Psi^*)^{-1} \right) 
\left. \Psi^* \omega \right|_{B_\delta (x)}  = \left. \Psi^* \varphi \right|_{B_\delta (x)}.
$$
Let us set $Q_\delta := B_\delta(0) \times [0,\delta^2]$. By the Krylov-Safonov estimate, 
see \cite[Theorem 4.2]{KS} and cf. \cite[Theorem 12]{picard}, we find for some uniform constant $C>0$, 
depending only on $\delta, m= \dim M$ and the ellipticity constant $\Lambda > 0$ from \eqref{uniform-elliptic}
\begin{align*}
\| \left. \Psi^* \omega \right|_{B_\delta (x)} \|_{\alpha, Q_{\delta/2}} &\leq C \Bigl(\| \left. \Psi^* \omega \right|_{B_\delta (x)}  \|_{\infty,Q_\delta}  
+ \|\left. \Psi^* \varphi \right|_{B_\delta (x)}  \|_{\infty,Q_\delta} \Bigr) \\
&\leq C \Bigl(\| \omega \|_{\infty}  + \| \varphi \|_{\infty} \Bigr).
\end{align*}
Thus, using the H\"older norm in \eqref{local-norm2}, we find
\begin{align*}
\| \omega \|_{\alpha} \leq C \Bigl(\| \omega \|_{\infty}  + \| \varphi \|_{\infty} \Bigr).
\end{align*}
By Lemma \ref{local-norm} we conclude $\omega \in C^{\alpha}(M\times [0,(\delta/2)^2])$.
We extend the regularity statement to the whole time intervall $[0,T]$ (with constants independent of $T$) 
iteratively.
By setting $s= (\delta/2)^2 + s'$, from the argument above we obtain $\omega \in C^{\alpha}(M\times [(\delta/2)^2, 2(\delta/2)^2])$.
The first statement now follows by repeating the iteration, till we reach $T$.
\medskip

For the second statement, standard parabolic Schauder estimates, see \cite[Theorem 8.12.1]{krylov} and cf. 
\cite[Theorem 6]{picard}, assert that  for some uniform constant $C>0$, 
depending only on $\delta, m, \Lambda$ and the H\"older norms of the coefficients
\begin{align*}
\| \left. \Psi^* \omega \right|_{B_\delta (x)} \|_{k+2,\alpha, Q_{\delta/2}} &\leq C \Bigl(\| \left. \Psi^* \omega \right|_{B_\delta (x)}  \|_{\infty,Q_\delta}  
+ \|\left. \Psi^* f \right|_{B_\delta (x)}  \|_{k,\alpha,Q_\delta} \Bigr) \\
&\leq C \Bigl(\| \omega \|_{\infty}  + \| f \|_{k,\alpha} \Bigr).
\end{align*}
By Lemma \ref{local-norm} we conclude $\omega \in C^{k,\alpha}(M\times [0,(\delta/2)^2])$.
Extension to $C^{k,\alpha}(M\times [0,T])$ goes exactly as before.
\end{proof}

We conclude the subsection by presenting some mapping properties for the
parametrix to the inhomogeneous heat equation \eqref{inhom-eq}.
These can be deduced from Proposition \ref{krylov-safonov-lemma} exactly as in \cite[Proposition 10.1]{bruno}, 
cf. \cite[Theorem 8.10.1]{krylov} for the first claim in \eqref{Q-parametrix-mapping} below.

\begin{prop}\label{Q-mapping-properties}
Consider an $s$-independent uniformly elliptic symmetric differential operator $L$ acting on $C^\infty_0(M)$ as above.
The inhomogeneous heat equation $(\partial_s + L) \omega = \varphi$, with $\omega(s=0)=0$ 
and $\varphi \in C^{k,\alpha}(M\times [0,T])$, has a parametrix $Q$
acting as a bounded linear map 
\begin{equation}\label{Q-parametrix-mapping}\begin{split}
&Q:C^{k,\alpha}(M\times [0,T]) \rightarrow C^{k+2,\alpha}(M\times [0,T]), \\
&Q:C^{k+2,\alpha}(M\times [0,T]) \rightarrow s \ C^{k+2,\alpha}(M\times [0,T]).
\end{split}
\end{equation}
\end{prop}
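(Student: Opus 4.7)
Following the strategy of \cite[Proposition 10.1]{bruno}, the plan is to construct $Q$ as the Duhamel propagator for $(\partial_s + L)$ and then read off both mapping properties from Proposition \ref{krylov-safonov-lemma}.

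\emph{Construction.} Since $L$ is $s$-independent, uniformly elliptic and symmetric, and $(M,\widetilde g)$ has bounded geometry and is stochastically complete, the heat semigroup $\{e^{-sL}\}_{s\geq 0}$ is a well-defined contraction on $L^\infty(M)$ via the maximum principle. I would define
\begin{equation*}
Q\varphi(s) := \int_0^s e^{-(s-\tau)L}\varphi(\tau)\,d\tau.
\end{equation*}
A direct computation yields $(\partial_s + L)(Q\varphi) = \varphi$, $Q\varphi(0)=0$, and the contraction property gives $\|Q\varphi\|_\infty \leq T\|\varphi\|_\infty$.

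\emph{First mapping property.} Given $\varphi \in C^{k,\alpha}(M\times[0,T])$, the function $\omega := Q\varphi$ is uniformly bounded and solves $(\partial_s + L)\omega = \varphi$. Proposition \ref{krylov-safonov-lemma}(2) then yields at once
\begin{equation*}
\|Q\varphi\|_{k+2,\alpha} \leq C \bigl(\|Q\varphi\|_\infty + \|\varphi\|_{k,\alpha}\bigr) \leq C'\|\varphi\|_{k,\alpha}.
\end{equation*}

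\emph{Second mapping property.} For $\varphi \in C^{k+2,\alpha}$, I would split
\begin{equation*}
Q\varphi(s) = s\,\varphi(\cdot,0) + \tilde\omega(s),
\end{equation*}
where $\tilde\omega$ satisfies $\tilde\omega(0)=0$ and
\begin{equation*}
(\partial_s + L)\tilde\omega \, = \, \varphi - \varphi(\cdot,0) - s\,L\varphi(\cdot,0).
\end{equation*}
The right-hand side lies in $C^{k,\alpha}(M\times[0,T])$ and vanishes at $s=0$ (the first difference by the extra time-regularity of $\varphi$, the second by construction). Part (i), already proved, delivers $\tilde\omega \in C^{k+2,\alpha}$; a time-rescaling $\tau = s t$ in the Duhamel integral defining $\tilde\omega$ together with Proposition \ref{krylov-safonov-lemma}(2) applied to the rescaled profile then upgrades this to $\tilde\omega \in s\,C^{k+2,\alpha}$, so that $Q\varphi \in s\,C^{k+2,\alpha}$ as claimed.

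\emph{Main obstacle.} The technical heart is the second mapping property, since H\"older spaces are not stable under division by $s$ and the parabolic scaling $|t-t'|^{\alpha/2}$ in Definition \ref{holder} must be handled with care. The vanishing of the forcing term at $s=0$ must be leveraged together with this parabolic scaling so that the gained factor of $s$ in the source translates into a gain in the solution at the same H\"older exponent. Bounded geometry is essential throughout, as it provides uniform Schauder and semigroup constants across the quasi-isometric charts $\Psi_x$ from \eqref{local-norm2}, independent of the base point $x \in M$; this uniformity is exactly what allows the local estimates of Proposition \ref{krylov-safonov-lemma} to assemble into the global statement above.
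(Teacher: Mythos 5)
Your construction of $Q$ as the Duhamel propagator $Q\varphi(s)=\int_0^s e^{-(s-\tau)L}\varphi(\tau)\,d\tau$, the use of stochastic completeness to control $\|Q\varphi\|_\infty$, and the derivation of the first mapping property by feeding this $L^\infty$ bound into Proposition \ref{krylov-safonov-lemma}(2) all match the route the paper indicates (it points to \cite[Theorem 8.10.1]{krylov} for the first claim and to \cite[Proposition 10.1]{bruno} for both). That portion of the argument is sound.

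The second mapping property, however, is not actually established. The splitting $Q\varphi=s\varphi(\cdot,0)+\tilde\omega$ correctly produces a remainder with vanishing initial data and a forcing term that vanishes at $s=0$, and part (1) gives $\tilde\omega\in C^{k+2,\alpha}$. But the decisive step, that a ``time-rescaling $\tau=st$ together with Proposition \ref{krylov-safonov-lemma}(2) applied to the rescaled profile'' upgrades this to $\tilde\omega\in s\,C^{k+2,\alpha}$, is only asserted, and it is exactly the gap you flag as the ``technical heart.'' Concretely: (a) a rescaling in time alone does not preserve the parabolic structure, since if $\tilde\omega_\sigma(s,\cdot):=\tilde\omega(\sigma s,\cdot)$ then $\partial_s\tilde\omega_\sigma+\sigma L\tilde\omega_\sigma=\sigma\tilde\varphi(\sigma s,\cdot)$, and the ellipticity constant $\sigma\Lambda$ degenerates as $\sigma\to 0$, so Proposition \ref{krylov-safonov-lemma} cannot be applied with constants uniform in the rescaling parameter; parabolic scaling would also require a spatial dilation, which is unavailable on a general manifold of bounded geometry. (b) Even after the change of variables one is left with $\tilde\omega(s)/s=\int_0^1 e^{-s(1-t)L}\tilde\varphi(st,\cdot)\,dt$, and to place this in $C^{k+2,\alpha}(M\times[0,T])$ one needs both that the semigroup $e^{-\tau L}$ preserves the spatial $C^{k+2,\alpha}(M)$ norm uniformly in $\tau\in[0,T]$ and that the $s$-averaged integrand retains the parabolic $\alpha/2$-H\"older continuity in time; neither is addressed and neither follows from Proposition \ref{krylov-safonov-lemma} alone. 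Note also that the subtraction of $s\varphi(\cdot,0)$ does not actually reduce the difficulty: the factor $s$ appears in the Duhamel formula regardless, and the obstacle is recovering the full $C^{k+2,\alpha}$ control of $Q\varphi/s$, not merely the vanishing of the source at $s=0$. What is genuinely needed here is a direct estimate on the Duhamel integral showing that when $\varphi\in C^{k+2,\alpha}$ each top-order space and time derivative of $Q\varphi$ carries an explicit factor of $s$ with uniform H\"older seminorms; this is the content of the cited \cite[Proposition 10.1]{bruno} that your argument ultimately needs but does not reproduce.
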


\subsection{Application: short time existence of the flow}

We present a weaker analogue of the main result by
the first named author \cite{mio}, proving short time
existence of the prescribed mean curvature flow with
space-like Cauchy hypersurfaces of bounded geometry.
In contrast, \cite{mio} asserts that starting with a $\Phi$-manifold,
the Cauchy hypersurfaces remain generalized $\Phi$-manifolds
for short time. The following proves Theorem \ref{theorem-main-short}.

\begin{thm}\label{short-time-theorem} Consider Setting \ref{setting} and 
impose Assumptions \ref{assumptions} (1). 
Then the (graphical) mean curvature flow \eqref{PGMCF} exists for $s \in [0,T]$, 
i.e. $u \in C^{\ell +2,\alpha}(M \times [0,T])$ for $T>0$ sufficiently small, and 
the embeddings $F(s)M$ are space-like Cauchy hypersurfaces in $N$.
\end{thm}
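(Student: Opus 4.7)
The plan is to treat \eqref{PGMCF} as a quasilinear parabolic equation whose principal part is the $u$-dependent Laplace-Beltrami operator $\Delta = \Delta_{g[u]}$, and to construct a local solution by a Banach contraction-mapping argument combining Propositions \ref{krylov-safonov-lemma} and \ref{Q-mapping-properties}. First I would set up the functional-analytic framework. Subtracting the stationary extension $\bar u(x,s) := u_0(x)$ and writing $u = \bar u + v$, the PDE \eqref{PGMCF} becomes
$$
(\partial_s + L[v])\, v \;=\; \mathcal{F}(\bar u + v, \widetilde{\nabla}(\bar u + v)) \;-\; L[v]\,\bar u, \qquad v(\cdot, 0) = 0,
$$
where $L[v] := \Delta_{g[\bar u + v]}$ and $\mathcal{F}$ is the right-hand side of \eqref{PGMCF}, a smooth local nonlinear function of $v$ and $\widetilde{\nabla} v$ (well-defined wherever space-likeness holds). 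I would then fix $\alpha' \in (0,\alpha)$ and work, for $T,R>0$ to be chosen small, in the closed set
$$
B_{T,R} \;:=\; \{\,v \in C^{\ell+2,\alpha'}(M \times [0,T]) : v(\cdot,0) = 0,\ \|v\|_{\ell+2,\alpha'} \leq R\,\}.
$$

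Next, I would define the fixed-point map $\Phi : B_{T,R} \to C^{\ell+2,\alpha'}(M\times[0,T])$ by $\Phi(v) = w$, where $w$ solves the \emph{linear} parabolic problem $(\partial_s + L[v])\, w = \mathcal{F}(\bar u + v, \widetilde{\nabla}(\bar u + v)) - L[v]\,\bar u$ with $w(\cdot,0)=0$. The key structural observations are as follows. Since $F_0M$ is space-like and $(M,\widetilde g)$ has bounded geometry, there is a uniform gap $f(u_0)^2 - |\widetilde{\nabla} u_0|_{\widetilde g}^2 \geq \eta > 0$ on $M$; consequently, if $T$ and $R$ are small, space-likeness persists for $\bar u + v$ with a uniform gap $\eta/2$. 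The formula \eqref{invindmet} then shows that $L[v]$ is uniformly elliptic in the sense of \eqref{uniform-elliptic} with coefficients lying and uniformly bounded in $C^{\ell,\alpha'}(M \times [0,T])$ (the bounded-geometry hypothesis ensures this holds uniformly in the base point $x\in M$, through the quasi-isometries $\Psi_x$ used to define the norm \eqref{local-norm2}).

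To control $\Phi$, I would freeze $L[v]$ at $s=0$, denote the resulting $s$-independent operator by $L_0[v]$, and treat the difference $(L[v]-L_0[v])w$ as an additional source term that carries a small factor of $T^{\alpha'/2}$ from parabolic H\"older continuity of the coefficients. Applying Proposition \ref{Q-mapping-properties} to $L_0[v]$, together with the gain $Q : C^{k,\alpha'} \to s\,C^{k+2,\alpha'}$, one obtains
$$
\|\Phi(v)\|_{\ell+2,\alpha'} \;\leq\; C(R, u_0, f, \mathcal{H})\, T^{\sigma}, \qquad \|\Phi(v_1) - \Phi(v_2)\|_{\ell+2,\alpha'} \;\leq\; C(R, u_0, f, \mathcal{H})\, T^{\sigma}\,\|v_1 - v_2\|_{\ell+2,\alpha'},
$$
for some $\sigma > 0$. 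For $T$ sufficiently small, $\Phi$ stabilises $B_{T,R}$ and is a strict contraction, so Banach's theorem produces a unique fixed point $v \in B_{T,R}$, giving $u := \bar u + v \in C^{\ell+2,\alpha'}(M\times[0,T])$. A bootstrap using Proposition \ref{krylov-safonov-lemma}(2) applied to the (now known) operator $L[v]$ and the now-$C^{\ell,\alpha}$ right-hand side upgrades the regularity to $C^{\ell+2,\alpha}$. Space-likeness of $F(s)M$ for all $s\in[0,T]$ is immediate from the uniform gap $f(u)^2 - |\widetilde{\nabla} u|_{\widetilde g}^2 \geq \eta/2$ maintained throughout the argument.

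The main obstacle will be to make the contraction estimate genuinely uniform on the non-compact base $M$: one cannot invoke compactness to produce uniform ellipticity constants or uniform H\"older bounds on coefficients, and must rely instead on bounded geometry via the quasi-isometries $\Psi_x$ entering the norm \eqref{local-norm2}. The delicate point is the a priori preservation of the space-likeness gap, which rests on the uniform bound $f \geq \varepsilon > 0$ and the uniform positivity of $f(u_0)^2-|\widetilde{\nabla} u_0|_{\widetilde g}^2$ coming from the space-likeness of $F_0M$ combined with the bounded geometry of $(M,\widetilde g)$; without these, the ellipticity constant $\Lambda$ in \eqref{uniform-elliptic} could degenerate at infinity and the Krylov-Safonov constants would blow up along the iteration.
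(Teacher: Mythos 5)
Your proposal reaches the theorem by a genuinely different, though standard, route. The paper expands $u = u_0 + \omega$ and \emph{linearizes at $u_0$}: the Laplacian $\Delta_{g[u]}$ is replaced by the fixed, $s$-independent, uniformly elliptic operator $L = \frac{f(u_0)^2}{f(u_0)^2-|\widetilde{\nabla} u_0|_{\widetilde g}^2}\Delta_{g(0)}$, all $\omega$-dependence is pushed into source terms $F_1$ (at most linear) and $F_2$ (at least quadratic), the fixed point $\omega \mapsto Q(F_1 + F_2)$ is established in $C^{2,\alpha}$ using $u_0 \in C^{3,\alpha}$, and then Proposition~\ref{krylov-safonov-lemma}(ii) is iterated to bootstrap the \emph{derivative count} up to $\ell+2$. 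You instead keep the full quasilinear operator $L[v] = \Delta_{g[u_0+v]}$ on the left and iterate with frozen coefficients, working in $C^{\ell+2,\alpha'}$ with $\alpha' < \alpha$ and gaining the contraction from the $T^{\sigma}$ factor that comes from freezing and from the deliberate loss of H\"older exponent, then you bootstrap the \emph{exponent} from $\alpha'$ up to $\alpha$ at the end. Both methods rely on the same two tools (Propositions~\ref{krylov-safonov-lemma} and~\ref{Q-mapping-properties}) and on bounded geometry through the norm \eqref{local-norm2} to keep the ellipticity and H\"older constants uniform on the non-compact base, which you correctly single out as the essential hypothesis. The paper's route gets a contraction in fewer derivatives first (cheaper, because only $u_0 \in C^{3,\alpha}$ is needed for the contraction map), while yours obtains all derivatives at once at the cost of the exponent and of iterating over coefficient freezing.

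One estimate in your sketch is stated imprecisely. You claim $\|\Phi(v)\|_{\ell+2,\alpha'} \leq C(R,\dots)\,T^{\sigma}$ for all $v \in B_{T,R}$, and use this to conclude that $\Phi$ stabilises $B_{T,R}$. This cannot hold at $v = 0$: the source $g(0) := \mathcal{F}(u_0, \widetilde{\nabla} u_0) - L[0]u_0$ is generically nonzero (it is the right-hand side of \eqref{PGMCF} evaluated at $u_0$), and since $\Phi(0)$ solves $(\partial_s + L[0])\Phi(0) = g(0)$ with $\Phi(0)(\cdot,0)=0$, we have $\partial_s\Phi(0)(\cdot, 0) = g(0)$, so the parabolic $C^{\ell+2,\alpha'}$ norm of $\Phi(0)$ (which controls $\partial_s$) is bounded away from zero uniformly as $T \to 0$. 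Only $\|\Phi(0)\|_{C^0}$ is $O(T)$. The fix is routine: your contraction estimate $\|\Phi(v_1)-\Phi(v_2)\| \leq C(R)T^{\sigma}\|v_1-v_2\|$ is what actually holds, and stabilisation of $B_{T,R}$ is recovered by choosing $R > 2\|\Phi(0)\|_{\ell+2,\alpha'}$ and then $T$ small enough that $C(R)T^{\sigma} \leq 1/2$, via $\|\Phi(v)\| \leq \|\Phi(0)\| + \|\Phi(v)-\Phi(0)\| \leq \|\Phi(0)\| + C(R)T^{\sigma}R \leq R$. With that correction, your argument is sound.
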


\begin{proof}
For convenience of the reader we shall repeat briefly the argument, that 
is worked out in detail in \cite{mio}. We need to linearize the evolution equation 
\eqref{PGMCF}. The most complicated term is the linearization of $\Delta u$. 
Here, $\Delta$ is the Laplace Beltrami operator on $M$ with respect to the
metric $g$ on the graph of $u$, given explicitly by
$$
g_{ij}=-u_iu_j+f(u)^2 \widetilde{g}_{ij}.
$$
From here one computes explicitly 
\begin{equation}\label{laplacian-g}
\begin{split}
\Delta h&=\frac{1}{f(u)^2}\widetilde{\Delta}h +\frac{1}{f(u)^2}\widehat{\Delta}h\\
&+\widetilde{g}(\NT u,\NT h)\frac{1}{f(u)^2(f(u)^2-|\NT u \, |_{\widetilde{g}}^2)}\widetilde{\Delta}u\\
&+\widetilde{g}(\NT u,\NT h)\frac{1}{f(u)^2(f(u)^2-|\NT u\, |_{\widetilde{g}}^2)}\widehat{\Delta}u\\
&-(m-1)\widetilde{g}(\NT u,\NT h)\frac{f'(u)}{f(u)(f(u)^2-|\NT u\, |_{\widetilde{g}}^2)}\\
&+\widetilde{g}(\NT u,\NT h)\frac{f(u)f'(u)}{(f(u)^2-|\NT u\, |_{\widetilde{g}}^2)^2}
\end{split}
\end{equation}
where in the above $\widehat{\Delta}$ is an operator acting on functions over $M$ defined by
\begin{equation}\label{deltahat}
\begin{split}
\widehat{\Delta}h = -\frac{v^2}{f(u)^2}\widetilde{\nabla}^2 h\pr{\widetilde{\nabla}u,\widetilde{\nabla}u}
= -\frac{\widetilde{g}^{jl}u_l\widetilde{g}^{im}u_m}{f(u)^2-|\NT u|_{\widetilde{g}}^2}\left(h_{ij}-\widetilde{\Gamma}_{ij}^kh_k\right),
\end{split}
\end{equation}
where $\widetilde{\nabla}^2$ denotes the Hessian and the second expression is an expression
in local coordinates. Plugging in $u=u_0 + \omega$ we obtain (writing $\widehat{\Delta}_0$ for \eqref{deltahat} with $u_0$ instead of $u$, 
and writing $\Delta_{g(0)}$ for the Laplace Beltrami operator of $g(0)$)
\begin{equation}\label{D1}
\begin{split}
\Delta u&= \Delta_{g(0)} u_0 + \Delta_{g(0)} \omega+ \frac{|\NT u_0|_{\widetilde{g}}^2}{f(u_0)^2(f(u_0)^2-|\NT u_0|_{\widetilde{g}}^2)}
\left( \widetilde{\Delta} \omega + \widehat{\Delta}_0 \omega \right) \\
&+ F'_1(\omega, \NT \omega) + F'_2(\omega, \NT \omega, \NT^2 \omega),
\end{split}
\end{equation}
where $F'_1(\omega, \NT \omega)$ denotes an expression depending at most linearly 
on the entries in brackets, with coefficients given in terms of $u_0, \NT u_0$ and $\NT^2 u_0$.
The summand $F'_2(\omega, \NT \omega, \NT^2 \omega)$ denotes an expression depending at least quadratically
on the entries in brackets, with coefficients given in terms of $u_0, \NT u_0$ and $\NT^2 u_0$.
On the other hand, \eqref{laplacian-g} implies
\begin{equation}\label{D2}
\begin{split}
\Delta_{g(0)}\omega=\frac{1}{f(u_0)^2}\left( \widetilde{\Delta} \omega + \widehat{\Delta}_0 \omega \right)
+F''_1(\omega, \NT \omega),
\end{split}
\end{equation}
where $F''_1(\omega, \NT \omega)$, similar to $F'_1(\omega, \NT \omega)$, denotes an expression depending at most linearly 
on the entries in brackets, with coefficients given in terms of $u_0, \NT u_0$ and $\NT^2 u_0$.
Combining \eqref{D1} and \eqref{D2} we obtain 
\begin{equation}\label{D2.5}
\begin{split}
\Delta u&= \Delta_{g(0)} u_0 + \frac{f(u_0)^2}{(f(u_0)^2-|\NT u_0|_{\widetilde{g}}^2)}
 \Delta_{g(0)} \omega \\
&+ F'_1(\omega, \NT \omega) - f(u_0)^2 F''_1(\omega, \NT \omega)+ F'_2(\omega, \NT \omega, \NT^2 \omega),
\end{split}
\end{equation}
Linearising similarly the remaining terms of \eqref{PGMCF} we obtain
\begin{equation}\label{D3}
\begin{split}
&\Bigl(\partial_s + \frac{f(u_0)^2}{(f(u_0)^2-|\NT u_0|_{\widetilde{g}}^2)} \Delta_{g(0)} \Bigr)\omega \\&
= - \Delta_{g(0)} u_0 +\frac{f'(u_0)}{f(u_0)}\left(m+\frac{|\widetilde{\nabla}u_0|_{\widetilde{g}}^2}{f(u_0)^2-|\widetilde{\nabla} u_0|_{\widetilde{g}}^2}\right) 
+\mathcal{H} \, \frac{f(u_0)}{\sqrt{f(u_0)^2-|\widetilde{\nabla }u_0|_{\widetilde{g}}^2}}
\\ &+ F_1(\omega, \NT \omega) + F_2(\omega, \NT \omega, \NT^2 \omega),
\end{split}
\end{equation}
where $F_1(\omega, \NT \omega)$, similar to $F_1'$ and $F_1''$, denotes an expression depending at most linearly 
on the entries in brackets, with coefficients given in terms of $\mathcal{H}, u_0, \NT u_0$ and $\NT^2 u_0$. Similarly, $F_2(\omega, \NT \omega, \NT^2 \omega)$
denotes an expression depending at least quadratically on the entries in brackets, with coefficients given in terms of $\mathcal{H}, u_0, \NT u_0$ and $\NT^2 u_0$.
\medskip

Assuming $u_0 \in C^{2,\alpha}(M)$, we find in view of \eqref{laplacian-g} and space-likeness
condition \eqref{space-like} that
$$
L :=  \frac{f(u_0)^2}{(f(u_0)^2-|\NT u_0|_{\widetilde{g}}^2)} \Delta_{g(0)},
$$
is uniformly elliptic in the sense of Proposition \ref{Q-mapping-properties}. 
Provided $\mathcal{H} \in C^{\alpha}(M \times [0,T])$, 
the solution $\omega \in C^{2,\alpha}(M \times [0,T])$ to \eqref{D3} is obtained as a fixed point of 
the bounded map
\begin{equation}\label{D3}
\begin{split}
&\Phi: C^{2,\alpha}(M \times [0,T]) \to C^{2,\alpha}(M \times [0,T]), \\
&\omega \mapsto 
Q (F_1(\omega, \NT \omega)+F_2(\omega, \NT \omega, \NT^2 \omega)). 
\end{split}
\end{equation}
The higher regularity assumption $u_0 \in C^{3,\alpha}(M)$ and $\mathcal{H} \in C^{\ell,\alpha}(M)$ with $\ell \geq 1$
implies, exactly as in \cite[Theorem 6.14]{mio}, that the mapping above is a contraction on 
a closed subset of $C^{2,\alpha}(M \times [0,T])$, if $T>0$ is sufficiently small. Thus we
have proved existence of a solution $u \in C^{2,\alpha}(M \times [0,T])$ for $T>0$ sufficiently small.
\medskip

Let us now prove that $u \in C^{3,\alpha}$, where we abbreviate $C^{k,\alpha} \equiv C^{k,\alpha}(M \times [0,T])$. That gain in regularity is not 
a consequence of a fixed point argument, but rather of the Krylov-Safonov estimates in 
Proposition \ref{krylov-safonov-lemma} (ii). More precisely, $u \in C^{2,\alpha}$ implies
in view of \eqref{laplacian-g} that $\Delta u$ is a uniformly elliptic operator with coefficients
being $C^{1,\alpha}$. Moreover, $u \in C^{2,\alpha}$ and $\mathcal{H} \in C^{\ell,\alpha}(M)$ with $\ell \geq 1$
imply that the right hand side of \eqref{PGMCF} is $C^{1,\alpha}$. Thus, applying 
Proposition \ref{krylov-safonov-lemma} (ii) directly to the evolution equation \eqref{PGMCF}
implies that $u \in C^{3,\alpha}$. \medskip

Repeating the argument of the last paragraph allows for bootstrapping:
Even if the initial data is only $u_0 \in C^{3,\alpha}(M)$, we find that, 
provided $\mathcal{H} \in C^{\ell,\alpha}(M)$ for any $\ell \in \N_0$,  $u$ 
admits the following H\"older regularity
$$
u \in C^{\ell+2,\alpha}(M \times [0,T]).
$$ 
\end{proof}

\section{Omori-Yau parabolic maximum principle}\label{O-YSec}

In order to study the behaviour of solutions of parabolic PDEs one usually proceeds by gaining a priori estimates. 
One of the tools employed to obtain such estimates is the parabolic maximum principle.
We will therefore formulate a parabolic maximum principle on stochastically complete 
manifolds. 

\subsection{Stochastically complete manifolds}

A Riemannian manifold $(X,g_X)$ is sto\-chastically complete if the heat kernel $H$ 
of the (positive) Laplace Beltrami operator $\Delta_X$, associated to $g_X$, satisfies
\begin{equation}\label{stoch-compl}
    \int_{X}H(t,p,\widetilde{p})\dvol_{g_X}(\widetilde{p}) = 1.
\end{equation}
Stochastic completeness can be equivalently characterized by a volume growth condition, due to 
Grigor'yan \cite{grigor1986stochastically}, cf. also Theorem 2.11 in \cite{alias2016maximum}.

\begin{thm}\label{Gri}
Let $(X,g_X)$ be a complete Riemannian manifold. Consider for some reference point $p\in X$
the geodesic ball $B_R(p)$ of radius $R$ around $p$. If the function 
\begin{equation}\label{Grig'sFormula}
\frac{R}{\log\left(\vol\left(B_R(p)\right)\right)}\not\in L^1(1,\infty)
\end{equation}
then $(X,g_X)$ is stochastically complete.
\end{thm}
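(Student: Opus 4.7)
The plan is to prove Grigor'yan's criterion via the standard $L^{2}$-cutoff argument, reducing stochastic completeness to a parabolic uniqueness statement and exploiting \eqref{Grig'sFormula} through an iteration in both radius and time.

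First I would reformulate stochastic completeness analytically. The identity $w(t,x) := 1 - \int_X H(t,x,y) \dvol_{g_X}(y)$ defines a bounded $[0,1]$-valued solution of the heat equation on $[0,\infty) \times X$ with zero initial data; condition \eqref{stoch-compl} holds iff $w \equiv 0$. Thus it suffices to show: under \eqref{Grig'sFormula}, every bounded nonnegative solution $w$ of $(\partial_t - \Delta_X) w = 0$ with $w(0,\cdot) = 0$ vanishes identically. Without loss of generality assume $0 \leq w \leq 1$.

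Next I would set up the basic energy inequality. For each $R > 0$ choose a cutoff $\eta_R \in C^\infty_c(X)$ with $\eta_R \equiv 1$ on $B_R(p)$, $\supp \eta_R \subset B_{2R}(p)$, and $|\widetilde{\nabla} \eta_R|_{g_X} \leq C/R$. Testing the equation with $w \eta_R^2$ and integrating by parts yields, for all $t \in [0,T]$,
\begin{equation*}
\frac{1}{2} \frac{d}{dt} \int_X w^2 \eta_R^2 \dvol_{g_X} + \int_X |\widetilde{\nabla} w|_{g_X}^2 \eta_R^2 \dvol_{g_X} \leq \frac{C}{R^{2}} \int_{B_{2R}(p)} w^2 \dvol_{g_X}.
\end{equation*}
Writing $J(t,R) := \int_{B_R(p)} w(t,\cdot)^2 \dvol_{g_X}$, integrating in time and dropping the positive gradient term produces an inequality of the form
\begin{equation*}
J(t,R) \leq \frac{Ct}{R^{2}} \sup_{0 \leq s \leq t} J(s, 2R),
\end{equation*}
valid for all $R > 0$ and $t > 0$.

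Now comes the iteration, which is the technical heart of the argument. Fixing a small $T_0 > 0$ and a base radius $R_0$, I would iterate the inequality along a geometric sequence of radii $R_k = 2^k R_0$ to get
\begin{equation*}
J(T_0, R_0) \leq \prod_{j=0}^{k-1}\frac{C T_0}{R_j^{2}} \cdot \sup_{0\leq s \leq T_0} J(s, R_k) \leq \left(\prod_{j=0}^{k-1}\frac{C T_0}{R_j^{2}}\right) \vol(B_{R_k}(p)),
\end{equation*}
where the last step uses the normalization $w \leq 1$. Taking logarithms, the product tends to $-\infty$ as $k \to \infty$ provided $\log \vol(B_{R_k}(p))$ grows slower than $\sum_j \log(R_j^2/CT_0)$, which amounts to a quantitative version of the divergence of \eqref{Grig'sFormula}. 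A refinement of this scheme—iterating also in time on intervals $[t_k, t_{k+1}]$ with time-steps $\tau_k$ matched to increments $R_{k+1} - R_k$ via $\tau_k \sim (R_{k+1}-R_k)^2$—is what exactly converts the failure of $R/\log \vol(B_R(p))$ to be integrable into the statement $J(T_0, R_0) = 0$. Since $R_0$ and $T_0$ are arbitrary, this forces $w \equiv 0$.

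The hard part will be precisely this final calibration: matching the radii increments $R_{k+1} - R_k$, the time-steps $\tau_k$, and the decay of the iterated prefactor to the integral criterion \eqref{Grig'sFormula}. The elementary cutoff and energy estimates are routine, but extracting the sharp volume-growth threshold—rather than a cruder sufficient condition such as $\vol(B_R(p)) \leq e^{CR^2}$—requires the careful optimization that Grigor'yan introduced, and is where completeness of $(X,g_X)$ enters (to guarantee the cutoffs $\eta_R$ exist with compact support and all radii are geometrically meaningful).
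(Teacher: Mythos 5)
The paper does not prove Theorem \ref{Gri}; it is a cited result (Grigor'yan 1986, and \cite[Theorem 2.11]{alias2016maximum}), so there is no in-paper proof to compare against. Your proposal has to be measured against the known argument.

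Your general plan is the right one: reduce stochastic completeness to a parabolic uniqueness statement for bounded solutions with zero initial data, attack it with a cutoff energy estimate, and iterate. But there is a concrete gap in the key estimate. The inequality you derive,
\begin{equation*}
J(t,R) \leq \frac{Ct}{R^{2}} \sup_{0\leq s\leq t} J(s,2R),
\end{equation*}
is provably too weak to reach the Grigor'yan threshold, even after the time-refinement you gesture at. Iterating it in both radius and time (with any choice of increments $\delta_k = R_{k+1}-R_k$ and time-steps $\tau_k$) contributes a logarithmic gain of order $\sum_{j<k}\log\bigl(\delta_j^{2}/(C\tau_j)\bigr)$, while the terminal volume term contributes a loss of order $\log\vol(B_{R_k})$. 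For quadratic-exponential volume growth, $\vol(B_R)\sim e^{cR^2}$ --- which does satisfy \eqref{Grig'sFormula} --- the loss grows like $R_k^{2}=(\sum\delta_j)^{2}$, and no calibration of $(\delta_j,\tau_j)$ lets the polynomially-many logarithmic gains keep up. So the scheme as written only yields stochastic completeness under a much cruder volume condition (roughly $\log\vol(B_R)=O((\log R)^{2})$), not the sharp integral criterion.

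What is missing is the weighted energy estimate (Takeda's inequality, or Grigor'yan's integrated Gaussian estimate). One must test the heat equation not against $w\eta^2$ but against $w\eta^2 e^{\xi}$, where $\xi=\xi(t,x)\leq 0$ is chosen with
\begin{equation*}
\partial_t \xi + \tfrac{1}{2}\lvert \nabla\xi \rvert^{2}_{g_X} \leq 0,
\end{equation*}
for example $\xi(t,x) = -\bigl(R - d(x,p)\bigr)_{+}^{2}/\bigl(C(t_0 - t)\bigr)$. With this weight the energy inequality acquires an explicit Gaussian factor, giving
\begin{equation*}
\int_{B_r} w(t,\cdot)^{2}\dvol_{g_X} \leq \int_{B_R} w(\tau,\cdot)^{2}\dvol_{g_X}
+ C\exp\Bigl(-\frac{(R-r)^{2}}{C(t-\tau)}\Bigr)\vol\bigl(B_R(p)\bigr)
\end{equation*}
for $0\leq\tau<t$ and $0<r<R$. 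It is precisely this exponential prefactor that, upon iteration with sequences $(r_k,t_k)$ matched to the divergence of $\int_1^\infty R\,\bigl(\log\vol(B_R)\bigr)^{-1}dR$, forces the left side to vanish. So the calibration you flag as ``the hard part'' is real, but it is preceded by a missing idea: the basic energy estimate itself must be strengthened by the Gaussian weight before any calibration can succeed.
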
 

\subsection{Weak and strong Omori-Yau maximum principles}

The Omori-Yau maximum principle for the Laplacian, defined in e.g. \cite[Definition 2.1]{alias2016maximum} means that for any function $u \in C^{2}(X)$ with 
bounded supremum there is a sequence $\{p_{k}\}_{k} \subset X$ satisfying
\begin{equation}\label{omori-sup-strong}
    u(p_{k}) > \displaystyle\sup_{X}u - \dfrac{1}{k}, \quad |\nabla u(p_k)| \leq \dfrac{1}{k}, \quad - \Delta_{X}u(p_{k}) < \dfrac{1}{k}.
\end{equation}
Similarly, provided $u$ has bounded infimum, there exists a sequence $\{p'_{k}\}_{k} \subset M$ such that 
\begin{equation} \label{omori-inf-strong}
    u(p'_{k}) < \inf_{X} u + \dfrac{1}{k}, \quad |\nabla u(p_k)| \leq \dfrac{1}{k}, \quad- \Delta_{X}u(p'_{k}) > \dfrac{1}{k}.
\end{equation}
By \cite[Theorem 2.3]{alias2016maximum}, the Omori-Yau maximum principle for the Laplacian holds on any 
$(X,g_X)$ with Ricci curvature bounded from below. We shall refer to this principle as the \emph{strong} Omori-Yau maximum
principle in order to distinguish it from another version of the principle on stochastically complete manifolds.

\begin{rmk}
We want to point out a difference with \cite{alias2016maximum} in the 
different sign convention for the Laplace-Beltrami operator.
\end{rmk}

According to \cite[Theorem 2.8 (i) and (iii)]{alias2016maximum}, a similar version of the Omori-Yau maximum principle holds for
stochastically complete manifolds. More precisely, for any $(X,g_X)$ satisfying e.g. the
volume growth condition in Theorem \ref{Gri}, and any function $u \in C^{2}(X)$, there is a sequence $\{p_{k}\}_{k} \subset X$ such that
\begin{equation}\label{omori-sup}
    u(p_{k}) > \displaystyle\sup_{X}u - \dfrac{1}{k} \;\; \mbox{and} \;\; - \Delta_{X}u(p_{k}) < \dfrac{1}{k}.
\end{equation}
Similarly, there exists a sequence $\{p'_{k}\}_{k} \subset M$ such that 
\begin{equation} \label{omori-inf}
    u(p'_{k}) < \inf_{X} u + \dfrac{1}{k} \;\; \mbox{and} \;\; - \Delta_{X}u(p'_{k}) > \dfrac{1}{k}.
\end{equation}

\subsection{An enveloping theorem and applications}

Based on the Omori-Yau maximum principle above, the second named author proved in 
\cite[Proposition 3.1]{bruno} jointly with Caldeira and Hartmann the following enveloping theorem, 
that is formulated for $\Phi$-manifolds but holds on all stochastically complete spaces with
exactly the same proof.

\begin{prop}\label{envelope}
Let $(X,g_X)$ be stochastically complete. Consider any $u \in C^{2,\alpha}(X\times [0,T])$. Then 
$$u_{\sup} (s) := \sup_{X}u(\cdot, s), \quad u_{\inf}(s):= \inf_{X}u(\cdot, s)$$ are locally Lipschitz and differentiable
almost everywhere in $(0,T)$.
Moreover, at those differentiable times $s \in (0,T)$ we find, in the notation of 
\eqref{omori-sup} and \eqref{omori-inf},
\begin{equation}\label{dini}
\begin{split}
&\frac{\partial}{\partial s}u_{\sup} (s) \leq \lim_{\ \epsilon \to \, 0^+} 
\left( \limsup_{k\to \infty} \frac{\partial u}{\partial s} \left(p_k(s+\epsilon), s + \epsilon\right)\right), \\
&\frac{\partial}{\partial s} u_{\inf} (s) \geq \lim_{\ \epsilon \to \, 0^+} 
\left( \liminf_{k\to \infty} \frac{\partial u}{\partial s} \left(p'_k(s+\epsilon), s + \epsilon\right)\right).  
\end{split}
\end{equation}
\end{prop}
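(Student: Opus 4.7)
The plan is to split the argument into two steps: first establish Lipschitz regularity, hence almost-everywhere differentiability, of the envelopes, and then derive the derivative inequality at each differentiability point.

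For the first step I would use that $u \in C^{2,\alpha}(X \times [0,T])$ implies $\partial_s u$ is uniformly bounded on $X \times [0,T]$, say $|\partial_s u| \leq K$. For any $s, s' \in [0,T]$ and $p \in X$ the pointwise estimate
$$u(p, s') \leq u(p, s) + K|s - s'| \leq u_{\sup}(s) + K|s - s'|,$$
taken to the supremum over $p$ and then symmetrized in $s, s'$, gives $|u_{\sup}(s) - u_{\sup}(s')| \leq K|s - s'|$; the same works for $u_{\inf}$. Rademacher's theorem then yields differentiability almost everywhere in $(0,T)$.

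For the derivative inequality I would fix a differentiability point $s \in (0,T)$ of $u_{\sup}$ and $\epsilon > 0$. Stochastic completeness together with \eqref{omori-sup} applied to $u(\cdot, s+\epsilon)$ produces a sequence $\{p_k(s+\epsilon)\}_k$ with $u(p_k(s+\epsilon), s+\epsilon) > u_{\sup}(s+\epsilon) - 1/k$. Combining this with the trivial bound $u(p_k(s+\epsilon), s) \leq u_{\sup}(s)$ and the fundamental theorem of calculus,
$$u_{\sup}(s+\epsilon) - u_{\sup}(s) \leq \int_0^\epsilon \frac{\partial u}{\partial s}(p_k(s+\epsilon), s+\tau)\, d\tau + \frac{1}{k}.$$
Because $\partial_s u$ belongs to the parabolic H\"older space $C^{\alpha}$, it is H\"older of exponent $\alpha/2$ in time, uniformly in the spatial variable, so for all $\tau \in [0,\epsilon]$ and $p \in X$,
$$\frac{\partial u}{\partial s}(p, s+\tau) \leq \frac{\partial u}{\partial s}(p, s+\epsilon) + C \epsilon^{\alpha/2}$$
with $C$ independent of $p$. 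I would then divide by $\epsilon$, apply $\limsup_{k \to \infty}$ (which erases the $1/(k\epsilon)$ term), and let $\epsilon \to 0^+$. Since the left-hand side converges to $\partial_s u_{\sup}(s)$ at the differentiability point, this yields the asserted bound. The inequality for $u_{\inf}$ follows symmetrically using \eqref{omori-inf} with reversed inequalities.

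The main obstacle is that the Omori-Yau sequence $\{p_k\}_k$ typically escapes to infinity in $X$, so no compactness argument is available to select a limit maximizer. What rescues the plan is the spatially uniform H\"older continuity of $\partial_s u$, which is built into the equivalent local form of the parabolic $C^{2,\alpha}$ norm via Lemma \ref{local-norm}; this is what makes the replacement of the time-averaged integrand by its value at the endpoint $s+\epsilon$ legitimate with an error that is uniform in $p$, and therefore compatible with the subsequent $\limsup_k$.
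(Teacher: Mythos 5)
The paper does not actually prove Proposition \ref{envelope}; it cites \cite[Proposition 3.1]{bruno} with the remark that the argument transfers verbatim to stochastically complete manifolds. Your argument is correct and is, as far as I can tell, the natural proof one would give, so it very likely coincides with the cited one. A few small remarks.

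The Lipschitz bound on $u_{\sup}, u_{\inf}$ via the uniform bound on $\partial_s u$ and Rademacher is clean. For the derivative inequality, the chain
$$u_{\sup}(s+\epsilon)-u_{\sup}(s) < \int_0^\epsilon \partial_s u\bigl(p_k(s+\epsilon),s+\tau\bigr)\,d\tau + \tfrac{1}{k}$$
together with the parabolic $C^\alpha$ bound on $\partial_s u$ (giving $|\partial_s u(p,s+\tau)-\partial_s u(p,s+\epsilon)|\leq C\epsilon^{\alpha/2}$ uniformly in $p$) is exactly what is needed. Two minor points worth being explicit about: first, you use only the first half of the Omori--Yau conclusion \eqref{omori-sup} (the near-maximizing property), not the Laplacian bound, which is fine but worth noting; second, strictly speaking your argument establishes
$$\partial_s u_{\sup}(s)\;\leq\;\liminf_{\epsilon\to 0^+}\Bigl(\limsup_{k\to\infty}\partial_s u\bigl(p_k(s+\epsilon),s+\epsilon\bigr)\Bigr),$$
since there is no reason the inner $\limsup$ should converge as $\epsilon\to 0^+$. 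This is the stronger (and more honest) form of the claim, and it is what is actually used in the proof of Theorem \ref{POYMP}, where the $\limsup$ is bounded by $0$ independently of $\epsilon$. You should state it that way rather than writing $\lim$. One further small correction: the uniformity in $p$ of the H\"older-in-time estimate follows already from the global definition of $[\cdot]_\alpha$ in Definition \ref{holder} (take $p=p'$); Lemma \ref{local-norm} is not the source of that uniformity, it is merely an equivalent norm. Invoking it is not wrong, but it is not what makes the step work.

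Finally, you should verify that $u(\cdot, s+\epsilon)$ has a finite supremum before invoking the Omori--Yau principle; this follows from $\|u\|_\infty < \infty$, which is part of the $C^{2,\alpha}$ norm, so the hypothesis is satisfied, but it deserves a one-line mention.
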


Finally we are in the position to prove the parabolic maximum principle for 
stochastically complete manifolds.

\begin{thm}\label{POYMP}
Let $(X,g_X(s))$ be a family of stochastically complete manifolds with $s\in [0,T]$.
Denote the corresponding family of Laplace-Beltrami operators by $\Delta_s$.
Consider solutions $u^\pm \in C^{2,\alpha}(X\times[0,T])$, solving the differential inequalities
\begin{equation}\label{PMPInequality}
\left(\frac{\partial}{\partial s}+\Delta_{s}\right)u^+ \le 0,  \quad
\left(\frac{\partial}{\partial s}+\Delta_{s}\right)u^- \ge 0.
\end{equation}
Then $u^+_{\sup} (s)\le u^+_{\sup}(0)$ and $u^-_{\inf} (s)\ge -_{\inf}(0)$ for every $s\in [0,T]$.
\end{thm}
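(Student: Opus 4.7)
The strategy is to combine the enveloping theorem (Proposition \ref{envelope}) applied at each time slice with the Omori--Yau maximum principle \eqref{omori-sup}--\eqref{omori-inf} for stochastically complete manifolds, and then convert the resulting a.e.\ monotonicity of the time envelopes into pointwise monotonicity via their local Lipschitz regularity. The key observation is that, while the spatial supremum may fail to be attained on $X$, Omori--Yau provides an approximate maximizing sequence along which the Laplacian is suitably bounded.

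First I would focus on the upper envelope $u^+_{\sup}(s):=\sup_{X}u^+(\cdot,s)$. Since $(X,g_X(s))$ is stochastically complete for every $s\in[0,T]$, Proposition \ref{envelope} applies and yields both that $u^+_{\sup}$ is locally Lipschitz (hence differentiable a.e.) and that, at each differentiable $s$,
\[
\frac{\partial}{\partial s}u^+_{\sup}(s)\le \lim_{\epsilon\to 0^+}\Bigl(\limsup_{k\to\infty}\frac{\partial u^+}{\partial s}\bigl(p_k(s+\epsilon),s+\epsilon\bigr)\Bigr),
\]
where $\{p_k(s+\epsilon)\}_{k}$ is the Omori--Yau sequence for $u^+(\cdot,s+\epsilon)$. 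Using the differential inequality $(\partial_s+\Delta_s)u^+\le 0$ together with \eqref{omori-sup},
\[
\frac{\partial u^+}{\partial s}\bigl(p_k(s+\epsilon),s+\epsilon\bigr)\le -\Delta_{s+\epsilon}u^+\bigl(p_k(s+\epsilon),s+\epsilon\bigr)<\frac{1}{k}.
\]
Passing first to $\limsup_{k\to\infty}$ and then to $\lim_{\epsilon\to 0^+}$ gives $\partial_s u^+_{\sup}(s)\le 0$ for a.e.\ $s\in(0,T)$. Local Lipschitz continuity then upgrades this a.e.\ inequality to pointwise monotonicity, so $u^+_{\sup}(s)\le u^+_{\sup}(0)$ for every $s\in[0,T]$.

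The argument for $u^-_{\inf}$ is entirely symmetric: with the infimum Omori--Yau sequence $\{p'_k\}$ of \eqref{omori-inf} and the reversed differential inequality $(\partial_s+\Delta_s)u^-\ge 0$, one obtains
\[
\frac{\partial u^-}{\partial s}\bigl(p'_k(s+\epsilon),s+\epsilon\bigr)\ge -\Delta_{s+\epsilon}u^-\bigl(p'_k(s+\epsilon),s+\epsilon\bigr)>-\frac{1}{k},
\]
which, after passing to the $\liminf$ version in \eqref{dini}, yields $\partial_s u^-_{\inf}(s)\ge 0$ a.e., and hence $u^-_{\inf}(s)\ge u^-_{\inf}(0)$.

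I expect the principal obstacle to be conceptual rather than computational: the usual maximum principle hinges on evaluating at an interior maximizer of a compact region, whereas here the sup/inf on $X$ may genuinely live at infinity. The enveloping theorem together with the stochastic-completeness version of Omori--Yau is precisely the device that replaces the missing maximizer by an approximating sequence along which $-\Delta_s u^\pm$ is controlled on the correct side. One must also verify that this sequence can legitimately be chosen slice-by-slice as the metric $g_X(s)$ varies, so that the enveloping theorem is compatible with the family $\{\Delta_s\}_{s\in[0,T]}$; once this is in place, the conclusion follows by integrating the a.e.\ sign of the derivative of the locally Lipschitz envelopes.
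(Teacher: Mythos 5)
Your proposal is correct and follows essentially the same line as the paper's own proof: apply the Omori--Yau estimates along the approximating sequences together with the differential inequality \eqref{PMPInequality} to bound $\partial_s u^\pm$ at those points, invoke the enveloping theorem (Proposition \ref{envelope}) to get an a.e.\ sign on $\partial_s u^+_{\sup}$ and $\partial_s u^-_{\inf}$, and integrate using local Lipschitz continuity. You are in fact somewhat more explicit than the paper, which cites only \eqref{omori-sup} and \eqref{omori-inf} in its opening line while silently using the differential inequality.
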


\begin{proof}
Note first by \eqref{omori-sup} and \eqref{omori-inf}
\begin{align*}
\frac{\partial}{\partial s} u^+\bigl(p_k(s),s\bigr) \leq \frac{1}{k}, \quad
\frac{\partial}{\partial s} u^-\bigl(p'_k(s),s\bigr) \geq -\frac{1}{k}.
\end{align*}
Then in view of Proposition \ref{envelope} we find almost everywhere
$$
\frac{\partial}{\partial t} u^+_{\sup}(t) \leq 0, \quad \frac{\partial}{\partial t} u^-_{\inf}(t) \geq 0.
$$
The claim now follows.
\end{proof}

\subsection{Stochastic completeness along the flow}

In order to apply the Omori-Yau maximum principle along the flow, 
we need to discuss if and under which conditions stochastic completeness holds. 
Let us begin with an easy obervation.

\begin{lem}
Let $(X,g_X)$ be a Riemannian manifold and consider a $(0,2)$-tensor $A$ over $X$.
Its norm $|A|_{g_X}$ with respect to $g_X$ is given in local coordinates by
\begin{equation}\label{TensorNorm}
|A|^2_{g_X}=g_X^{il}g_X^{jq}A_{ij}A_{lq}.
\end{equation} 
Then for any two vector fields $Y$ and $Z$, one has
\begin{equation}\label{TensorNormInequality}
|A(Y,Z)|\le |A|_{g_X}|Y|_{g_X}|Z|_{g_X}.
\end{equation}
\end{lem}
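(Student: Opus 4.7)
The plan is to reduce the statement to a pointwise linear-algebra inequality, since both sides of \eqref{TensorNormInequality} are evaluated at a single point $p \in X$ and the right-hand side involves only the values $A(p)$, $Y(p)$, $Z(p)$ together with the inner product $g_X(p)$ on $T_pX$. Thus the lemma is really a claim about a bilinear form $A$ on a finite-dimensional inner-product space, and the global Riemannian structure plays no role beyond providing that inner product.

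Working pointwise, I would choose a $g_X$-orthonormal basis $(e_1,\dots,e_n)$ of $T_pX$, in which the components of the metric satisfy $g_X^{ij}=\delta^{ij}$. In this basis the right-hand side of \eqref{TensorNorm} collapses to $|A|_{g_X}^2=\sum_{i,j}A_{ij}^2$, while the components of $Y,Z$ give $|Y|_{g_X}^2=\sum_i(Y^i)^2$ and $|Z|_{g_X}^2=\sum_j(Z^j)^2$. The quantity to estimate becomes
\begin{equation*}
A(Y,Z)=\sum_{i,j}A_{ij}Y^iZ^j.
\end{equation*}

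A single application of the Cauchy--Schwarz inequality on $\mathbb{R}^{n\times n}$, viewing this as the Euclidean inner product of the matrix $(A_{ij})$ with the rank-one matrix $(Y^iZ^j)$, yields
\begin{equation*}
|A(Y,Z)|^2\le\Bigl(\sum_{i,j}A_{ij}^2\Bigr)\Bigl(\sum_{i,j}(Y^i)^2(Z^j)^2\Bigr)=|A|_{g_X}^2\,|Y|_{g_X}^2\,|Z|_{g_X}^2,
\end{equation*}
where in the last step the double sum factors as $\bigl(\sum_i(Y^i)^2\bigr)\bigl(\sum_j(Z^j)^2\bigr)$. Taking square roots gives \eqref{TensorNormInequality}. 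The main and essentially only nontrivial observation is that \eqref{TensorNorm} is indeed an invariant (basis-independent) definition of the tensor norm, so that diagonalising the metric at $p$ is legitimate; once this is granted there is no genuine obstacle, and the argument is nothing more than Cauchy--Schwarz.
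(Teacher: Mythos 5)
Your proof is correct and follows essentially the same route as the paper: reduce to a pointwise statement via a $g_X$-orthonormal frame and then apply Cauchy--Schwarz. The only cosmetic difference is that you invoke Cauchy--Schwarz once on $\mathbb{R}^{n\times n}$ against the rank-one matrix $(Y^iZ^j)$ and then factor the double sum, whereas the paper applies Cauchy--Schwarz twice in succession (first over $j$, then over $i$); the two are equivalent.
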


\begin{proof}
Let $(e_i)_i$ be a local orthonormal frame for $g_X$. 
Clearly, equation \eqref{TensorNorm} can be written as 
($m=\dim X$)
$$|A|^2_{g_X}=\sum_{i=1}^m\sum_{j=1}^m A(e_i,e_j)^2.$$
By considering the linear decomposition of $Y$ and $Z$ with respect to the orthonormal frame $(e_i)$, one has 
\begin{align*}
A(Y,Z) =\sum_{i=1}^m g_X(Y,e_i)A(e_i,Z)=\sum_{i=1}^m \sum_{j=1}^m g_X(Y,e_i) g_X(Z,e_j)A(e_i,e_j).
\end{align*} 
A repeated application of the Cauchy-Schwarz inequality leads to the following chain of inequalities
\begin{align*}
|A(Y,Z)|&\le\sum_{i=1}^m \left|g_X(Y,e_i)\right| \sqrt{\sum_{j=1}^m g_X(Z,e_j)^2}\sqrt{\sum_{j=1}^m A(e_i,e_j)^2}\\
&\le \sqrt{\sum_{j=1}^m g_X(Z,e_j)^2} \sqrt{\sum_{i=1}^m g_X (Y,e_i)^2}\sqrt{\sum_{i=1}^m \sum_{j=1}^m A(e_i,e_j)^2}
=  |A|_{g_X}|Z|_{g_X}|Y|_{g_X}.
\end{align*}
\end{proof}

\begin{cor}\label{EquivalentDistancesCor}
Let $(M,g=F^*{\overline{g}})$ be the prescribed graphical mean curvature flow \eqref{PMCFIntro}, 
arising as a family of graphs of functions $u$ over the Riemannian manifold $(M,\widetilde{g})$. 
Let $v$ denote the associated family of gradient functions, as defined in Definition \ref{GradFctDef}.
Then, as long as the flow exists and $v$ is finite, there exist positive $c, C >0$ (depending on $v$) such that
for any $p,q \in M$
\begin{equation}
c \di_{\widetilde{g}}(p,q)\le \di_{g}(p,q)\le C \di_{\widetilde{g}}(p,q).
\end{equation}
In the above $\di_{\widetilde{g}}$ and $\di_g$ denote the distance on $M$ with respect to $\widetilde{g}$ and the 
induced metric $g=F^*\gbar$ respectively.
\end{cor}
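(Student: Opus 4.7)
The plan is to establish a pointwise equivalence of the two metrics $g$ and $\widetilde{g}$ on $TM$ (at each fixed time $s$), and then pass to distances by comparing lengths of paths. Pointwise equivalence is essentially forced by the fact that both metrics are Riemannian (the space-likeness of the graph guarantees this for $g$) and that the gradient function $v$ measures precisely how close $g$ comes to being degenerate in the direction of $\widetilde{\nabla} u$.

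I would start from the local expression \eqref{indmetr}, rewritten invariantly as
\begin{equation*}
g(X,X) = -(Xu)^2 + f(u)^2 \, \widetilde{g}(X,X), \qquad X \in T_pM.
\end{equation*}
The upper bound is immediate: since $(Xu)^2 \geq 0$ and $f$ is uniformly bounded above by the Setting \ref{setting}(2), there is a constant $C > 0$ with $g(X,X) \leq f(u)^2 \, \widetilde{g}(X,X) \leq C^2 \, \widetilde{g}(X,X)$. For the lower bound I would apply the Cauchy--Schwarz inequality to $Xu = \widetilde{g}(\widetilde{\nabla} u, X)$, giving $(Xu)^2 \leq |\widetilde{\nabla} u|_{\widetilde{g}}^2 \, \widetilde{g}(X,X)$, and hence
\begin{equation*}
g(X,X) \geq \bigl(f(u)^2 - |\widetilde{\nabla} u|_{\widetilde{g}}^2\bigr) \, \widetilde{g}(X,X) = \frac{f(u)^2}{v^2} \, \widetilde{g}(X,X),
\end{equation*}
where I have used the explicit expression \eqref{expressionofv} for $v$. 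Since $f \geq \varepsilon > 0$ uniformly on $\R$ and $v$ is assumed finite (hence, being continuous on $M$, uniformly bounded from above by a constant depending on $\sup_M v$), this yields $g(X,X) \geq c^2 \, \widetilde{g}(X,X)$ with $c = \varepsilon / \sup_M v > 0$.

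With pointwise equivalence $c^2 \, \widetilde{g} \leq g \leq C^2 \, \widetilde{g}$ in hand, the passage to distances is routine: for any piecewise smooth path $\gamma$ joining $p$ to $q$, its lengths satisfy $c \, L_{\widetilde{g}}(\gamma) \leq L_g(\gamma) \leq C \, L_{\widetilde{g}}(\gamma)$, and taking infimum over all such $\gamma$ yields the claimed two-sided bound. I do not anticipate any real obstacle here; the only point requiring care is the use of the space-likeness condition \eqref{space-like} to guarantee $f(u)^2 - |\widetilde{\nabla} u|_{\widetilde{g}}^2 > 0$ in the lower bound, which is exactly what allows $v$ to be a well-defined finite quantity and is the reason the constants must depend on $v$.
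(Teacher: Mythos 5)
Your proof is correct, and it takes a genuinely cleaner route to the same goal than the paper does, though both are built on the same skeleton (pointwise equivalence of the two metrics, then passage to distances by comparing lengths of paths).

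The paper first proves a separate lemma bounding $|A(Y,Z)|$ by the Hilbert--Schmidt norm $|A|_{g_X}$ times $|Y|_{g_X}|Z|_{g_X}$, applies it with $A = g$ (respectively $A = \widetilde{g}$) to get $\di_g \leq \sqrt{|g|_{\widetilde{g}}}\,\di_{\widetilde{g}}$ and $\di_{\widetilde{g}} \leq \sqrt{|\widetilde{g}|_g}\,\di_g$, and then computes those Frobenius norms explicitly from \eqref{indmetr} and \eqref{invindmet}, finding $|g|^2_{\widetilde{g}} \leq f(u)^4 m$ and $|\widetilde{g}|^2_g = f(u)^{-4}(m-1+v^4)$. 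Your argument bypasses the tensor-norm lemma entirely: from $g(X,X) = -(Xu)^2 + f(u)^2\,\widetilde{g}(X,X)$ you read off the upper bound immediately, and Cauchy--Schwarz on $Xu = \widetilde{g}(\widetilde{\nabla}u,X)$ together with $f(u)^2 - |\widetilde{\nabla}u|^2_{\widetilde{g}} = f(u)^2/v^2$ gives the sharp pointwise sandwich
\begin{equation*}
\frac{f(u)^2}{v^2}\,\widetilde{g}(X,X) \;\le\; g(X,X) \;\le\; f(u)^2\,\widetilde{g}(X,X),
\end{equation*}
which is actually stronger than what the Frobenius-norm route delivers (it replaces $v^4$ by $v^2$ in the relevant constant and drops the dimension factor $m$). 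Both arguments then invoke the uniform lower bound $f \geq \varepsilon > 0$, the uniform upper bound on $f$, and the finiteness of $\sup_M v$ in exactly the same way, and both conclude by taking the infimum over paths. One small phrasing caveat: on a non-compact $M$, continuity of $v$ alone does not give a uniform bound; what is used (and what the corollary's wording ``depending on $v$'' means) is precisely that $\sup_M v < \infty$, which you do invoke correctly a moment later.
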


\begin{proof}
Let $p$ and $q$ be any two fixed points on $M$ and consider 
a connecting differentiable curve $\gamma:=\gamma(\tau):[0,1]\rightarrow M$ with $\gamma(0)=p$ and $\gamma(1)=q$.
Recall, the curve length with respect to $\widetilde{g}$ is explicitly given by 
$$|\gamma|_{\widetilde{g}}=\int_0^1 \sqrt{\widetilde{g}\pr{\gamma'(\tau),\gamma'(\tau)}}\di \tau.$$
Equation \eqref{TensorNormInequality} and the fact that the distance is by definition the infimum of the lengths of paths joining $p$ and $q$ give
$$\di_{g}(p,q)\le\int_0^1\sqrt{g\pr{\gamma'(\tau),\gamma'(\tau)}}\di \tau \le\sqrt{|g|_{\widetilde{g}}}\int_0^1\sqrt{\widetilde{g}\pr{\gamma'(\tau),\gamma'(\tau)}}\di \tau.$$
Taking infimum over all such paths $\gamma$, we find
$$\di_{g}(p,q) \le\sqrt{|g|_{\widetilde{g}}}\di_{\widetilde{g}}(p,q).$$
The same holds with the roles of $g$ and $\widetilde{g}$ reversed. This leads to 
$$ \frac{1}{\sqrt{|\widetilde{g}|_g}}\di_{\widetilde{g}}(p,q)\le \di_g (p,q) \le\sqrt{|g|_{\widetilde{g}}}\di_{\widetilde{g}}(p,q).$$
The only thing left to do is to estimate $|g|_{\widetilde{g}}$ and $|\widetilde{g}|_g$.
In view of \eqref{TensorNorm}, and keeping in mind that the metric tensor $g$ can be expressed as in \eqref{indmetr}, we compute
in local coordinates
\begin{align*}
|g|^2_{\widetilde{g}}=&\widetilde{g}^{il}\widetilde{g}^{jm}\pr{-u_iu_j+f(u)^2\widetilde{g}_{ij}}\pr{-u_lu_m+f(u)^2\widetilde{g}_{lm}}\\
=&|\NT u|^4_{\widetilde{g}}-2f(u)^2|\NT u|_{\widetilde{g}}^2+f(u)^4m\\
=&(f(u)^2-|\NT u|_{\widetilde{g}})^2+f(u)^4(m-1)=\pr{\frac{f(u)^2}{v^2}}^2+f(u)^4(m-1)\\
\le &f(u)^4 +f(u)^4(m-1)=f(u)^4 m\le c_1.
\end{align*}
In the above the last inequality follows from $v\ge 1$ and assuming that $f$ is uniformly bounded.
With similar arguments we compute, recalling that the inverse of the metric tensor $g$ is expressed as in \eqref{invindmet}
\begin{align*}
|\widetilde{g}|^2_g&=\frac{1}{f(u)^4}\pr{\widetilde{g}^{il}+\frac{\widetilde{g}^{ia}u_a\widetilde{g}^{lb}u_b}{f(u)^2-|\NT u|_{\widetilde{g}}}}\pr{\widetilde{g}^{jm}+\frac{\widetilde{g}^{jc}u_c\widetilde{g}^{mk}u_k}{f(u)^2-|\NT u|_{\widetilde{g}}^2}}\widetilde{g}_{ij}\widetilde{g}_{lm}\\
&=\frac{1}{f(u)^4}\pr{m+2\frac{|\NT u|_{\widetilde{g}}^2}{f(u)^2-|\NT u|^2_{\widetilde{g}}}+\frac{|\NT u|_{\widetilde{g}}^4}{\pr{f(u)^2-|\NT u|_{\widetilde{g}}^2}^2}}\\
&=\frac{1}{f(u)^4}\pr{m-1+\pr{1+\frac{|\NT u|_{\widetilde{g}}^2}{f(u)^2-|\NT u|^2}}^2}\\
&=\frac{1}{f(u)^4}(m-1+v^4)\le c_2 v^4.
\end{align*}
From the above expression we can further notice that $|\widetilde{g}|_g$ is non zero, since $v\ge 1$, thus proving the claim.
\end{proof}

\begin{prop}\label{VolumeFormComparison}
Let $(M,g=g(s))$ be as above in Corollary \ref{EquivalentDistancesCor}.
Then 
\begin{equation}
\dvol_g=\frac{f(u)^m}{v}\dvol_{\widetilde{g}}
\end{equation}
\end{prop}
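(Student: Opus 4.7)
The plan is a direct local computation of $\sqrt{\det g}$ in terms of $\sqrt{\det \widetilde{g}}$, using the explicit expression \eqref{indmetr} for the induced metric, $g_{ij} = -u_i u_j + f(u)^2 \widetilde{g}_{ij}$. Since the correction to $f(u)^2 \widetilde{g}$ is a rank-one perturbation $-du \otimes du$, the natural tool is the matrix determinant lemma (equivalently, Sylvester's determinant identity).

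First I would factor out $f(u)^2$ from $\widetilde{g}$ and write
\begin{equation*}
\det(g_{ij}) = \det\!\left(f(u)^2 \widetilde{g}_{ij} - u_i u_j\right)
= f(u)^{2m} \det(\widetilde{g}_{ij}) \cdot \det\!\left(\delta^i{}_j - \tfrac{1}{f(u)^2}\widetilde{g}^{ik}u_k u_j\right).
\end{equation*}
The remaining determinant is that of $(\Id - A)$ where $A$ is the rank-one endomorphism $X \mapsto \frac{1}{f(u)^2}\widetilde{g}(\widetilde{\nabla}u, X)\, \widetilde{\nabla}u$. Its only nonzero eigenvalue is its trace, which equals $\frac{1}{f(u)^2}|\widetilde{\nabla}u|_{\widetilde{g}}^2$. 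Hence
\begin{equation*}
\det\!\left(\Id - A\right) = 1 - \frac{|\widetilde{\nabla}u|_{\widetilde{g}}^2}{f(u)^2} = \frac{f(u)^2 - |\widetilde{\nabla}u|_{\widetilde{g}}^2}{f(u)^2}.
\end{equation*}

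Combining these two identities gives
\begin{equation*}
\det(g_{ij}) = f(u)^{2m-2}\bigl(f(u)^2 - |\widetilde{\nabla}u|_{\widetilde{g}}^2\bigr) \det(\widetilde{g}_{ij}),
\end{equation*}
so taking square roots and using \eqref{expressionofv} in the form $\sqrt{f(u)^2 - |\widetilde{\nabla}u|_{\widetilde{g}}^2} = f(u)/v$, we obtain $\sqrt{\det g_{ij}} = (f(u)^m/v)\sqrt{\det \widetilde{g}_{ij}}$, which is exactly the claimed volume form identity. There is essentially no obstacle here; the only care needed is with the sign and rank-one structure of the perturbation, which is precisely why the determinant lemma applies cleanly. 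One could alternatively verify the formula by computing in an orthonormal frame for $\widetilde{g}$ in which $\widetilde{\nabla}u$ is aligned with the first basis vector, reducing $g$ to a block-diagonal matrix with a $2 \times 2$ upper block whose determinant is $f(u)^2(f(u)^2 - |\widetilde{\nabla}u|_{\widetilde{g}}^2)$ and $(m-1)$ diagonal entries equal to $f(u)^2$; this yields the same answer and can serve as a sanity check.
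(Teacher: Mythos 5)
Your proof is correct and follows essentially the same route as the paper's: both express the ratio of volume densities as the square root of the determinant of $f(u)^2\bigl(\Id - f(u)^{-2}\,\widetilde{\nabla}u \otimes du\bigr)$ and then apply the matrix determinant lemma to the rank-one perturbation, finishing with the identity $\sqrt{f(u)^2 - |\widetilde{\nabla}u|_{\widetilde{g}}^2} = f(u)/v$ from \eqref{expressionofv}. The paper phrases this as $\sqrt{\det(g\widetilde{g}^{-1})}$ with an explicit vector $Du^T$, while you phrase it via the endomorphism $A$, but these are cosmetic variants of the same argument.
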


\begin{proof}
There exists some $\lambda\in C^\infty(M)$ so that $\dvol_g=\lambda\dvol_{\widetilde{g}}$.
By the local expression of the volume form we conclude $\lambda=\sqrt{\det(g\widetilde{g}^{-1})}$.
By expressing the induced metric tensor $g$ in coordinates, cf. \eqref{invindmet}, one has
$$(g\widetilde{g}^{-1})_{ij}=-\widetilde{g}^{jk}u_iu_k+f(u)^2\delta_i^j=f(u)^2\pr{\delta_i^j-\frac{1}{f(u)^2}\widetilde{g}^{kj}u_ku_i}.$$
Let us set $Du^T:=-1/f(u)^2(u_1,\dots,u_m)$, where the lower indices denote partial derivatives with respect to the coordinate frame $(\partial_1,\dots,\partial_m)$.
This implies
\begin{align*}
\det\pr{g\widetilde{g}^{-1}}&=f(u)^{2m}\det\pr{\Id+\NT u\cdot Du^T}=f(u)^{2m}\pr{1+\NT u^T\cdot Du}\\
&=f(u)^{2m}\pr{1-\frac{|\NT u|_{\widetilde{g}}}{f(u)^2}}=\frac{f(u)^{2m}}{v^2}.
\end{align*}
\end{proof}

\begin{prop}
Let $(M,g=g(s))$ be a prescribed graphical mean curvature flow as above in Corollary \ref{EquivalentDistancesCor}.
Assuming that $(M,\widetilde{g})$ is stochastically complete, the flow $(M,g(s))$ stays stochastically complete 
for each fixed $s$, as long as $(M,g(s))$ are space-like, i.e. as long as the gradient function $v(s)$ is finite.
\end{prop}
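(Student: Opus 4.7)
The plan is to apply Grigor'yan's volume growth criterion, Theorem \ref{Gri}, directly to the flowing metric $g(s)$. For a fixed reference point $p\in M$, I will compare the growth of the $g$-volume of geodesic balls $B^g_R(p)$ with that of the corresponding $\widetilde{g}$-balls, and then deduce the non-integrability of $R/\log\vol_g(B^g_R(p))$ from the analogous non-integrability for $\widetilde{g}$, which holds by the stochastic completeness of $(M,\widetilde{g})$.

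The two main inputs are already available in the excerpt. Corollary \ref{EquivalentDistancesCor} yields a bi-Lipschitz equivalence $c\,d_{\widetilde{g}}\le d_g\le C\,d_{\widetilde{g}}$, so every $g$-ball of radius $R$ about $p$ is contained in the $\widetilde{g}$-ball $B^{\widetilde{g}}_{R/c}(p)$. Proposition \ref{VolumeFormComparison} gives $\dvol_g=f(u)^m v^{-1}\dvol_{\widetilde{g}}$, and since $v\ge 1$ by Proposition \ref{1234}(ii) and $f$ is uniformly bounded above by Setting \ref{setting}(2), one obtains a pointwise bound $\dvol_g\le C'\dvol_{\widetilde{g}}$ with $C'>0$ depending only on $\|f\|_\infty$ and $m$. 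Combining these inclusions and bounds produces
$$\vol_g\bigl(B^g_R(p)\bigr)\le C'\,\vol_{\widetilde{g}}\bigl(B^{\widetilde{g}}_{R/c}(p)\bigr).$$
Passing to logarithms, the additive constant $\log C'$ is absorbed into the asymptotic behaviour as $R\to\infty$, and the substitution $R=cR'$ then reduces the non-integrability condition \eqref{Grig'sFormula} for the $g$-volumes to the same condition for the $\widetilde{g}$-volumes, which holds by hypothesis. Theorem \ref{Gri} therefore delivers stochastic completeness of $(M,g(s))$ for each $s$ in the spacelike regime.

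The main technical point that I expect to require care is the uniformity of the bi-Lipschitz constants $c,C$ from Corollary \ref{EquivalentDistancesCor}. The proof of that corollary shows that these constants depend on $\|f\|_\infty$ and on a uniform bound for $v$, so one genuinely needs $v(s)$ to be uniformly bounded on $M$ rather than merely finite pointwise; this mild strengthening is precisely what the $C^1$-estimates developed later in the paper provide in the spacelike regime. A secondary subtlety is that Theorem \ref{Gri} nominally assumes geodesic completeness of the underlying manifold, which transfers from $(M,\widetilde{g})$ (complete by the bounded geometry hypothesis of Setting \ref{setting}(1)) to $(M,g(s))$ via the same uniform bi-Lipschitz equivalence of the two metrics.
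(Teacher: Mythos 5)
Your argument is essentially the paper's: both combine the bi-Lipschitz distance comparison from Corollary \ref{EquivalentDistancesCor} with the volume form comparison from Proposition \ref{VolumeFormComparison} and then invoke Grigor'yan's volume growth criterion (Theorem \ref{Gri}) to transfer stochastic completeness to $(M,g(s))$. You spell out the ball inclusion and the rescaling more explicitly, and you are right to flag that one needs a \emph{uniform} (not merely pointwise) bound on $v$; note also that both proofs tacitly rely on the bounded geometry of $(M,\widetilde{g})$, rather than its stochastic completeness per se, to guarantee that the sufficient condition \eqref{Grig'sFormula} actually holds for $\widetilde{g}$ before transferring it to $g(s)$.
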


\begin{proof}
This is a straightforward consequence of Corollary \ref{EquivalentDistancesCor} and Proposition \ref{VolumeFormComparison}.
Together they imply that volume of $R$-balls with respect to $g=g(s)$ and with respect to $\widetilde{g}$ are comparable up to 
constants depending on $v(s)$. Thus, by Theorem \ref{Gri}, $(M,g=g(s))$ is also stochastically complete as long as 
$v(s)$ is bounded, i.e. as long as $(M,g(s))$ are space-like. This proves the claim
\end{proof}

\section{Evolution equation for the gradient function}\label{EvEqSec}

Our central aim is to prove that a space-like prescribed 
graphical mean curvature flow stays uniformly space-like along the flow.
To this end we will prove that the gradient function $v$, defined in \ref{GradFctDef}, 
satisfies a partial differential inequality of the form \eqref{PMPInequality}. 
Such an inequality will follow from the next theorem. 

\begin{thm}\label{EvolutionOfvTHM}
Let $u(s)$ be a solution to the prescribed graphical mean curvature flow 
\eqref{PGMCF} of an $m$-dimensional space-like Cauchy hypersurface. 
Then the gradient function $v\equiv v(s)$ for the graph of $u(s), s\in [0,T]$ satisfies  
the following evolution equation
\begin{equation}\label{EvolutionOfv}
\begin{split}
(\partial_s+\Delta)v=&-\|\ssff\|^2v-\ric^N(\mu,\mu)v-2\frac{f'(u)}{f(u)}\smc+\frac{f'(u)}{f(u)}\mathcal{H}-V(\mathcal{H})\\
&-\frac{f'(u)}{f(u)}\mathcal{H}v^2+2\frac{f'(u)}{f(u)}g(\nabla u,\nabla v)+m\frac{f''(u)}{f(u)}v\\
&-\left(\frac{f'(u)}{f(u)}\right)^2\|\nabla u\|^2 v-\frac{f''(u)}{f(u)}\|\nabla u\|^2 v-m\left(\frac{f'(u)}{f(u)}\right)^2v.
\end{split}
\end{equation} 
In the above $V$ is a vector field over $M$ so that\footnote{Recall that $\gbar$ defines an inner product on
$F^*TN$ by \eqref{GFTN}.}
$$DF(V)=\gbar^{ij}\gbar(\partial_t,DF(\partial_i))DF(\partial_j).$$
\end{thm}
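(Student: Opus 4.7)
The plan is to prove the evolution equation \eqref{EvolutionOfv} by splitting $(\partial_s+\Delta)v$ into the two summands $\partial_sv$ and $\Delta v$, computing each separately, and only then combining them and converting everything into $g$-intrinsic quantities. This follows the announced structure of \S\ref{TimeDerSec} and \S\ref{LapSec}. Throughout, the starting identity is $v=-\gbar(\partial_t,\mu)$, in which both $\partial_t$ and $\mu$ are regarded as sections of $F^*TN$ so that all differentiations are taken with the pull-back connection $\PBD$ and its metric compatibility \eqref{MetricProperty}.

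For $\partial_sv$ I would apply the product rule to obtain $\partial_sv=-\gbar(\PBD_{\partial_sF}\partial_t,\mu)-\gbar(\partial_t,\partial_s\mu)$, substitute the flow equation $\partial_sF=-(\smc-\mathcal{H})\mu$, and evaluate the first term using the ambient Christoffel symbols \eqref{GRWChrisym}, which immediately produces $f'/f$ factors. For the second term I would derive an evolution equation for $\mu$ by differentiating the defining conditions $\gbar(\mu,\mu)=-1$ and $\gbar(\mu,DF(\partial_i))=0$ along $s$ and then invoking the flow equation; this is standard and expresses $\partial_s\mu$ tangentially in terms of $\nabla(\smc-\mathcal{H})$. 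Pairing with $\partial_t$ and decomposing $\partial_t$ into its $\mu$-component and its projection onto $F(M)$---the latter being exactly the vector field $V$ of the statement---produces the directional derivative $V(\mathcal{H})$, the cross term $2(f'/f)g(\nabla u,\nabla v)$, and the mean-curvature contributions involving $\smc$ and $\mathcal{H}v^2$.

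For $\Delta v$ I would compute $\nabla_i\nabla_jv$ by further differentiating the identity $\nabla_iv=-\gbar(\PBD_{\partial_i}\partial_t,\mu)-\gbar(\partial_t,\PBD_{\partial_i}\mu)$, rewriting $\PBD_{\partial_i}\mu$ through Proposition \ref{SFF&SSFFNOR} in terms of $\ssff$, and then contracting with $-g^{ij}$. Commuting covariant derivatives through the Ricci identity in $N$ and using the Codazzi equation brings out the two Simons-type contributions $-\|\ssff\|^2v$ and $-\ric^N(\mu,\mu)v$; this is the only place where the ambient curvature enters and it parallels the classical Riemannian case. Substituting Corollary \ref{RicdTdi} for the components of $\ric^N$ and repeatedly using \eqref{GRWChrisym} for the $\partial_t$-derivatives then produces the remaining warping terms $m(f''/f)v$, $-m(f'/f)^2v$, $-(f''/f)\|\nabla u\|^2v$ and $-(f'/f)^2\|\nabla u\|^2v$.

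The final step is to add the two formulae, convert ambient quantities into intrinsic ones on $M$ via Proposition \ref{1234}---in particular using $|\nabla u|_g^2=v^2-1$ and $v^2|\widetilde{\nabla}u|_{\widetilde g}^2=f(u)^2|\nabla u|_g^2$---and to apply \eqref{vhij} to rewrite Hessians of $u$ in terms of $\ssff$ and $\widetilde{g}$. I expect the main obstacle to be bookkeeping rather than any single conceptual difficulty: the warped-product structure, the Lorentzian sign conventions, and the positive Laplacian $\Delta=-\divergence\nabla$ together generate a large number of intermediate terms. In particular, isolating the $\ric^N(\mu,\mu)v$ contribution cleanly, verifying that all tangential pieces of $\partial_t$ collapse into the single $V(\mathcal{H})$ summand, and tracking the cancellations that compress the $\mathcal{H}$-contributions into the compact combination $(f'/f)\mathcal{H}-(f'/f)\mathcal{H}v^2-V(\mathcal{H})$ will be the most delicate parts of the calculation.
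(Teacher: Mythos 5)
Your proposal reproduces the paper's own proof strategy essentially step for step: the split into $\partial_s v$ (Proposition \ref{EvolutionHypAngleProp}) and $\Delta v$ (Proposition \ref{ExpansionLapvProp}), the derivation of $\partial_s\mu$ from the unitarity and normality constraints, the decomposition $\partial_t = v\mu + DF(V)$, and the conversion via \eqref{GRWChrisym}, Corollary \ref{RicdTdi}, \eqref{vhij}, Proposition \ref{TheVeryUsefulFormulaLemma} and Proposition \ref{1234}. The one small inaccuracy is in attributing both Simons-type contributions to the curvature commutation: in the paper's argument $-\ric^N(\mu,\mu)v$ indeed comes from commuting $\PBD$ (Lemma \ref{LemmaDerScalMeanCurvature} together with Proposition \ref{RicTF(V)NORCor}), but $-\|\ssff\|^2 v$ falls out algebraically from applying the Weingarten map twice to $\mu$ (Lemma \ref{FirstSummandLapvRevisedLemma} combined with \eqref{PreFinalFirstSummand}), without any commutation.
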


\noindent The above theorem is a direct consequence of the following propositions.

\begin{prop}\label{EvolutionHypAngleProp}
The gradient function $v$ evolves as
\begin{equation}\label{EvolutionHypAngle}
\partial_s v=V(\smc-\mathcal{H})-(\smc-\mathcal{H})\frac{f'(u)}{f(u)}+(\smc-\mathcal{H})\frac{f'(u)}{f(u)}v^2.
\end{equation}
\end{prop}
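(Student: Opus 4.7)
The key is to write $v = -\bar g(\partial_t,\mu)$ and differentiate via the pullback connection $\PBD$ along $F$, using its metric property \eqref{MetricProperty}:
\begin{equation*}
\partial_s v \;=\; -\bar g\bigl(\PBD_{\partial_s}\partial_t,\;\mu\bigr) \;-\; \bar g\bigl(\partial_t,\;\PBD_{\partial_s}\mu\bigr).
\end{equation*}
Both $\partial_t$ and $\mu$ are viewed as sections of $F^*TN$, and the flow equation gives $\PBD_{\partial_s}W = \PBD_{-(\smc-\mathcal H)\mu} W$ for any vector field $W$ on $N$.

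First, I would compute $\PBD_\mu \partial_t$. Using the Christoffel symbols \eqref{GRWChrisym}, the only non-vanishing $\bar\Gamma^{\bullet}_{\bullet 0}$ is $\bar\Gamma^k_{i0}=(f'/f)\delta^k_i$, so for any vector $X=X^0\partial_t+X^i\partial_i$ one gets $\bar\nabla_X\partial_t = (f'(t)/f(t))X_{\text{spa}}$, where $X_{\text{spa}}:=X^i\partial_i$ denotes the spatial part. For $X=\mu$ the spatial part is exactly $\mu-v\partial_t$ (the time coefficient of $\mu$ is $v$ by \eqref{expressionofv}), so
\begin{equation*}
\PBD_\mu \partial_t \;=\; \frac{f'(u)}{f(u)}\bigl(\mu - v\,\partial_t\bigr).
\end{equation*}
Therefore the first summand becomes
\begin{equation*}
-\bar g\bigl(-(\smc-\mathcal H)\tfrac{f'(u)}{f(u)}(\mu-v\partial_t),\mu\bigr)
=(\smc-\mathcal H)\frac{f'(u)}{f(u)}\bigl(-1+v^2\bigr),
\end{equation*}
using $\bar g(\mu,\mu)=-1$ and $\bar g(\partial_t,\mu)=-v$.

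For the second summand I would show $\PBD_{\partial_s}\mu = -DF(\nabla(\smc-\mathcal H))$. Since $\bar g(\mu,\mu)=-1$, $\PBD_{\partial_s}\mu$ is $\bar g$-orthogonal to $\mu$, so it is tangent to $F(M)$; hence it suffices to compute $\bar g(\PBD_{\partial_s}\mu,DF(\partial_i))$. Using the torsion-free identity $\PBD_{\partial_s}DF(\partial_i)=\PBD_{\partial_i}DF(\partial_s)=\PBD_{\partial_i}(-(\smc-\mathcal H)\mu)$ together with $\bar g(\mu,DF(\partial_i))=0$, one obtains
\begin{equation*}
\bar g\bigl(\PBD_{\partial_s}\mu,DF(\partial_i)\bigr) \;=\; -\,\bar g\bigl(\mu,\PBD_{\partial_s}DF(\partial_i)\bigr) \;=\; -\,\partial_i(\smc-\mathcal H),
\end{equation*}
since the $\bar\nabla_{\partial_i}\mu$-term is killed by orthogonality to $\mu$. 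Raising the index with $g^{ij}$ yields $\PBD_{\partial_s}\mu=-DF(\nabla(\smc-\mathcal H))$. To convert $-\bar g(\partial_t,\PBD_{\partial_s}\mu)=\bar g(\partial_t,DF(\nabla(\smc-\mathcal H)))$ into the vector-field term $V(\smc-\mathcal H)$, I would use the orthogonal decomposition $\partial_t = v\mu+DF(V)$, which is immediate from the definition of $V$ and the identity $-\bar g(\partial_t,\mu)=v$; this gives $\bar g(\partial_t,DF(X))=g(V,X)$ for any $X\in TM$, so the second summand equals $g(V,\nabla(\smc-\mathcal H))=V(\smc-\mathcal H)$.

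Summing the two contributions produces \eqref{EvolutionHypAngle}. The only delicate step is the variation of the normal: one must be comfortable switching between $\partial_s$ and $\partial_i$ via torsion-freeness on the pullback bundle, and must carefully identify $\PBD_{\partial_s}\mu$ as the $DF$-image of a vector field on $M$ rather than treating it as the pullback of an ambient derivative. Once this is set up, the rest reduces to bookkeeping with $\partial_t=v\mu+DF(V)$ and the explicit form of the Christoffel symbols \eqref{GRWChrisym}.
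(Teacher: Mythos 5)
Your proof is correct and follows essentially the same route as the paper: the Leibniz rule applied to $v=-\gbar(\partial_t,\mu)$, the identity $\PBD_{\partial_s}\mu=-DF(\nabla(\smc-\mathcal{H}))$ obtained from unitarity of $\mu$ and torsion-freeness of the pull-back connection (Proposition~\ref{EvolutionNORProp} in the paper), the decomposition $\partial_t=v\mu+DF(V)$, and the Christoffel symbol computation giving $\overline{\nabla}_\mu\partial_t=\tfrac{f'(u)}{f(u)}(\mu-v\partial_t)$. The only cosmetic difference is that you argue directly that $\PBD_{\partial_s}\mu$ is tangent to $F(M)$ via $\gbar(\mu,\mu)=-1$, whereas the paper decomposes it in the frame $(D\mathcal{F}(\partial_s),DF(\partial_1),\dots,DF(\partial_m))$ and kills the $D\mathcal{F}(\partial_s)$-component; these are the same observation.
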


\begin{prop}\label{ExpansionLapvProp}
The Laplacian of the gradient function $v$ can be expressed as 
\begin{equation}\label{ExpansionLapv}
\begin{split}
\Delta v=&-\frac{f'(u)}{f(u)}\smc-\frac{f'(u)}{f(u)}\smc v^2-V(\smc)+2\frac{f'(u)}{f(u)}g(\nabla u,\nabla v)\\
&-\|\ssff\|^2v-\ric^2(\mu,\mu)v+m\frac{f''(u)}{f(u)}v+\left(\frac{f'(u)}{f(u)}\right)\|\nabla u\|^2v\\
&-\frac{f''(u)}{f(u)}\|\nabla u\|^2v-2\left(\frac{f'(u)}{f(u)}\right)\|\nabla u\|^2v-m\left(\frac{f'(u)}{f(u)}\right)^2v.
\end{split}
\end{equation}
\end{prop}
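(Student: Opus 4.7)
The plan is to carry out a Simons-type computation for $\Delta v$ adapted to the warped product Lorentzian setting. First I would write the gradient function in the invariant form $v=-\overline{g}(T_F,\mu)$, where $T_F:=\partial_t\circ F \in \Gamma(F^*TN)$ denotes the pullback of the time-like vector field $T=\partial_t$ to $F^*TN$. Differentiating this inner product once, and applying the metric property \eqref{MetricProperty} of the pullback connection, gives
\begin{equation*}
\nabla_i v = -\overline{g}\bigl(\PBD_{\partial_i}T_F,\mu\bigr) - \overline{g}\bigl(T_F,\PBD_{\partial_i}\mu\bigr).
\end{equation*}
Using $DF(\partial_i)=\partial_i+u_i\partial_t$ together with the Christoffel symbols \eqref{GRWChrisym}, the first summand reduces to $-(f'(u)/f(u))\,v\,u_i$. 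For the second summand, Proposition \ref{SFF&SSFFNOR} gives $\PBD_{\partial_i}\mu=-\ssff_i{}^k DF(\partial_k)$; pairing against $T_F$ and recognising that $DF(V)$ is the tangential projection of $T_F$ (a direct check from the defining formula for $V$) produces the contribution $\ssff(V,\partial_i)$. In this way $\nabla_i v$ is expressed purely in terms of $u$-data and of the scalar second fundamental form.

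Next I would differentiate the expression for $\nabla_i v$ a second time and trace with $g^{ij}$ to obtain $-\Delta v$. The resulting computation decomposes naturally into three groups of terms. Differentiation of the warping piece $(f'(u)/f(u))\,v\,u_i$ produces contributions involving $f''(u)/f(u)$, $(f'(u)/f(u))^2$, the inner product $g(\nabla u,\nabla v)$, and the Hessian of $u$; the Hessian piece is converted via \eqref{vhij} into an expression involving $\ssff$ and, after tracing, the mean curvature $\smc$. Differentiation of the tangential piece $\ssff(V,\partial_i)$ produces on one hand a derivative of $V$ which, after using the Codazzi equation together with $\smc=g^{ij}\ssff_{ij}$, yields the term $V(\smc)$, and on the other hand a self-contraction of $\ssff$ with itself that generates the Simons quadratic $\|\ssff\|^2 v$. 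Finally, the commutator of the two covariant derivatives of $\mu$ (equivalently, the curvature correction present in the Codazzi equation on a non-flat ambient) brings in the ambient Riemann tensor $R^N(\cdot,\mu,\cdot,\mu)$, which after tracing with $g^{ij}$ produces $\ric^N(\mu,\mu)\,v$.

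The final step is to collect the three groups of terms and to convert every remaining $|\widetilde{\nabla}u|_{\widetilde{g}}^2$ into expressions involving $|\nabla u|_g^2$ and $v^2$ by means of Proposition \ref{1234}, in particular \eqref{v2nablau} and \eqref{WillBeUsefulForRic}. The main obstacle I anticipate is pure bookkeeping: several terms of comparable appearance must be combined carefully, and sign conventions have to be tracked throughout, namely that $\Delta=-\divergence\nabla$ is the positive Laplacian, that $\mu$ is future oriented with $\overline{g}(\mu,\mu)=-1$, and that $\sff=-\ssff\,\mu$ by \eqref{SFF=-hnu}. A secondary technical point is deciding at what stage to expand the explicit warped-product components of $\ric^N(\mu,\mu)$ given by Corollary \ref{RicdTdi} into elementary $f$-dependent terms; the cleanest route is to keep $\ric^N(\mu,\mu)$ abstract during the main Simons calculation and only unfold it at the very end, so that one can match the residual $f$-dependent terms directly against the explicit warping terms displayed in \eqref{ExpansionLapv}.
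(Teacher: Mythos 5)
Your plan is correct and is essentially the paper's own approach: compute $\nabla v$ once via the Leibniz rule for the pullback connection (this is exactly the identity established in the proof of Proposition~\ref{TheVeryUsefulFormulaLemma}, equation~\eqref{TheVeryUsefulFormula}, with $V=-\nabla u$), then differentiate again in a normal frame, use the ambient Codazzi relation (Lemma~\ref{LemmaDerScalMeanCurvature}) to trade $g^{ij}\nabla_i\ssff_{jk}$ for $\nabla_k\smc$ plus a Ricci term, and finally expand $\ric^N(DF(V),\mu)$ and $\ssff(\nabla u,\nabla u)$ via Propositions~\ref{RicTF(V)NORCor} and \ref{TheVeryUsefulFormulaLemma}. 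The only organizational caveat is that your ``group 2'' (the Codazzi/$V(\smc)$ term) and ``group 3'' (the $\ric^N(\mu,\mu)$ term) arise together from a single Codazzi-plus-trace step, rather than as independent contributions, but this does not affect the validity of the argument.
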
 

\noindent We will prove Proposition \ref{EvolutionHypAngleProp} and \ref{ExpansionLapvProp} in 
\S \ref{TimeDerSec} and \S \ref{LapSec}, respectively.

\subsection{Time derivative of the gradient function}\label{TimeDerSec}

It is important to notice that the unit normal $\mu = \mu(s)$ is a section of an $s$-dependent vector bundle over $M$, namely 
$F^*TN\equiv F(s)^*TN$ with $F(s) \equiv F(\cdot, s): M \to N$ being the graphical embedding given by $F(p, s) = (u(p, s), p)$ for any $p \in M$.
The pull-back connection on $F^*TN$ is denoted by $\PBD$, as introduced in \S \ref{ExtrinsicGeometrySub}.
\medskip

In order to treat the partial derivative $\partial_s$ as a vector field, we consider as in \cite[Section 3.2]{smoczyk2012mean}
the pull-back bundle $\mathcal{F}^*TN = F^*TN \times [0,T]$, where $\mathcal{F} = F: M\times [0,T] \rightarrow N$ is the 
graphical embedding as above, with the parameter $s \in [0,T]$ now considered as a coordinate on $M\times [0,T]$.
In such a way an $s$-derivative becomes a covariant derivative in the direction of $\partial_s$ with respect to 
the pull-back covariant derivative $\SPBD$.
In particular, for every $\sigma\in\Gamma(\mathcal{F}^*TN)$, given in 
local coordinates by $\sigma=\sigma^\alpha\partial_\alpha$ (recall, $\overline{\nabla}$ is the covariant derivative 
of $(N,\gbar)$)
\begin{equation}\label{NewPull-Back2}
\begin{split}
\partial_s\sigma:=&\SPBD_{\partial_s}\sigma=\frac{\partial}{\partial s}
\sigma^\alpha \cdot \partial_\alpha+\sigma^\alpha \cdot \overline{\nabla}_{D\mathcal{F}
(\partial_s)}\partial_\alpha, \\
&\SPBD_{\partial_i}\sigma=\PBD_{\partial_i}\sigma.
\end{split}
\end{equation}

\begin{rmk}
For every $i=1,\dots,m = \dim M$ we obtain for the differential of $\mathcal{F}$
\begin{equation}\label{NewPull-Back1}
\begin{split}
&D\mathcal{F}(\partial_s)=-(\smc-\mathcal{H})\mu, \\ 
&D\mathcal{F}(\partial_i)=DF(\partial_i).
\end{split}
\end{equation}
Due to the symmetry of the second fundamental form associated to $\mathcal{F}$ one has 
\begin{equation}\label{CommutativityCovDer}
\SPBD_{\partial_s}D\mathcal{F}(\partial_{i})=
\SPBD_{\partial_{i}}D\mathcal{F}(\partial_s)
=-\partial_i(\smc-\mathcal{H})\cdot \mu-(\smc-\mathcal{H})\cdot \PBD_{\partial_i}\mu.
\end{equation}
\end{rmk}

\begin{prop}\label{EvolutionNORProp}
For $u$ being a solution of \eqref{PGMCF}, the unit normal $\mu$ evolves as
\begin{equation}\label{EvolutionNOR}
\partial_s \mu:=\SPBD_{\partial_s}\mu=-DF(\nabla(\smc-\mathcal{H})).
\end{equation}
\end{prop}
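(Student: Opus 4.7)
The plan is to show that $\SPBD_{\partial_s}\mu$ is tangential (lies in $DF(TM)$) and then compute its components in the local frame $(DF(\partial_j))_{j=1}^m$ by pairing against $DF(\partial_j)$ and using the symmetry relation \eqref{CommutativityCovDer}.

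First I would establish tangentiality. Since $\mu$ is time-like unit, $\gbar(\mu,\mu) \equiv -1$, so differentiating in $s$ and using the metric property \eqref{MetricProperty} of $\SPBD$ gives
\[
0 = \partial_s \gbar(\mu,\mu) = 2 \gbar\bigl(\SPBD_{\partial_s}\mu,\, \mu\bigr).
\]
Therefore $\SPBD_{\partial_s}\mu$ has no $\mu$-component, and with respect to the local frame $(\mu, DF(\partial_1),\dots,DF(\partial_m))$ of $\mathcal{F}^*TN$ we may write $\SPBD_{\partial_s}\mu = a^j DF(\partial_j)$ for some coefficients $a^j$.

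Next I would compute the tangential components. From the orthogonality $\gbar\bigl(\mu, D\mathcal{F}(\partial_j)\bigr) = 0$ and the metric property of $\SPBD$ we have
\[
0 = \partial_s \gbar\bigl(\mu, D\mathcal{F}(\partial_j)\bigr) = \gbar\bigl(\SPBD_{\partial_s}\mu,\, D\mathcal{F}(\partial_j)\bigr) + \gbar\bigl(\mu,\, \SPBD_{\partial_s} D\mathcal{F}(\partial_j)\bigr).
\]
Invoking the symmetry relation \eqref{CommutativityCovDer},
\[
\SPBD_{\partial_s} D\mathcal{F}(\partial_j) = -\partial_j(\smc - \mathcal{H})\,\mu - (\smc - \mathcal{H})\,\PBD_{\partial_j}\mu,
\]
and noting $\gbar(\mu,\mu)=-1$ together with $\gbar(\mu,\PBD_{\partial_j}\mu) = \tfrac{1}{2}\partial_j\gbar(\mu,\mu) = 0$, the second term above becomes $\partial_j(\smc - \mathcal{H})$. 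Hence
\[
\gbar\bigl(\SPBD_{\partial_s}\mu,\, DF(\partial_j)\bigr) = -\partial_j(\smc - \mathcal{H}).
\]

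Finally, writing $\SPBD_{\partial_s}\mu = a^k DF(\partial_k)$ and pairing with $DF(\partial_j)$ against the induced metric $g_{jk} = \gbar(DF(\partial_j), DF(\partial_k))$ gives $a^k g_{kj} = -\partial_j(\smc - \mathcal{H})$. Inverting via $g^{jk}$ yields $a^k = -g^{kj}\partial_j(\smc - \mathcal{H})$, i.e.
\[
\SPBD_{\partial_s}\mu = -g^{kj}\partial_j(\smc - \mathcal{H})\,DF(\partial_k) = -DF\bigl(\nabla(\smc - \mathcal{H})\bigr),
\]
which is precisely \eqref{EvolutionNOR}. The only mildly subtle step is invoking the commutation identity \eqref{CommutativityCovDer} arising from the symmetry of the second fundamental form of $\mathcal{F}$; everything else is bookkeeping using the metric property of the pull-back connection on $\mathcal{F}^*TN$.
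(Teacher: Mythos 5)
Your proof is correct and takes essentially the same route as the paper's: show $\SPBD_{\partial_s}\mu$ is tangential by differentiating $\gbar(\mu,\mu)=-1$, then extract the tangential components by differentiating $\gbar(\mu,D\mathcal{F}(\partial_j))=0$ and invoking the commutation identity \eqref{CommutativityCovDer} together with the metric property of the pull-back connection. The only cosmetic difference is that you decompose against the frame $(\mu, DF(\partial_1),\dots,DF(\partial_m))$ rather than the paper's $(D\mathcal{F}(\partial_s), DF(\partial_1),\dots,DF(\partial_m))$; since $D\mathcal{F}(\partial_s)=-(\smc-\mathcal{H})\mu$, the two are equivalent (your choice is in fact marginally cleaner, as the paper's frame degenerates at points where $\smc=\mathcal{H}$).
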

\begin{proof}
Viewing $\mu$ as a section of the pull-back bundle $\mathcal{F}^*TN$, $\partial_s\mu$ lies in $\Gamma(\mathcal{F}^*TN)$ as well. 
Thus, taking $(\partial_1,  \dots,  \partial_m)$ as a local coordinate frame on $TM$, we get a local frame for $\mathcal{F}^*(TN)$ given by
$(D\mathcal{F}(\partial_s),D\mathcal{F}(\partial_1),\dots,D\mathcal{F}(\partial_m))$.
We can therefore express $\partial_s\mu \equiv \SPBD_{\partial_s}\mu$ with respect to that frame (recall that $\gbar$ defines an inner product on
$F^*TN$ by \eqref{GFTN})
\begin{align*}
\partial_s \mu &= |D\mathcal{F}(\partial_s)|^{-1}_{\gbar} \cdot \gbar\pr{\SPBD_{\partial_s}\mu,D\mathcal{F}(\partial_s)}
D\mathcal{F}(\partial_s) \\ &+g^{ij}\gbar\pr{\SPBD_{\partial_s}\mu,D\mathcal{F}(\partial_{i})}D\mathcal{F}(\partial_{j}).
\end{align*}
From \eqref{NewPull-Back1} the first term reads
\begin{equation}\label{auxiliary1}
\begin{split}
\gbar\pr{\SPBD_{\partial_s}\mu,D\mathcal{F}(\partial_s)}D\mathcal{F}(\partial_s)
&=(\smc-\mathcal{H})^2\gbar\pr{\SPBD_{\partial_s}\mu,\mu}\mu \\
&=\frac{1}{2} (\smc-\mathcal{H})^2 \cdot \partial_s \gbar\pr{\mu,\mu} \cdot \mu = 0,
\end{split}
\end{equation}
where the second equality follows by the metric property of the pull-back connection, and the last equality follows by $\mu$ being of unit length.
Note that $\gbar\pr{\mu,DF(\partial_{i})}=0$, since $\mu$ is normal. 
We conclude again by the metric property of the pull-back connection $\PBD$, and using 
\eqref{CommutativityCovDer} in the second equality
\begin{align*}
\partial_s \mu &= - g^{ij}\gbar\pr{\mu,\PBD_{\partial_s}DF(\partial_{i})}DF(\partial_{j}) \\
&= - g^{ij}\bigl( - \partial_i(\smc-\mathcal{H})\bigr) \gbar\pr{\mu,\mu}DF(\partial_{j}) + g^{ij}(\smc-\mathcal{H})\gbar\pr{\mu, \PBD_{\partial_i}\mu}DF(\partial_{j}) \\
&= - g^{ij}\partial_i(\smc-\mathcal{H}) DF(\partial_{j}) + \frac{1}{2}g^{ij}(\smc-\mathcal{H}) \partial_i \gbar\pr{\mu, \mu}DF(\partial_{j}),
\end{align*}
where we used $\gbar(\mu, \mu)=-1$ in the last equation. 
The second summand vanishes by unitarity of $\mu$, which is a similar argument as in \eqref{auxiliary1}, and thus the
statement follows. 
\end{proof}

\noindent We are now in the position to prove Proposition \ref{EvolutionHypAngleProp}.
\begin{proof}[Proof of Proposition \ref{EvolutionHypAngleProp}]\label{ProofOfTimeEvolGradFct}
Recall that, by definition $v=-\gbar(\mu,\partial_t)$ hence 
\begin{equation}\label{eq3}
\partial_s v=-\gbar\pr{\partial_s \mu,\partial_t}-\gbar\pr{\mu,\PBD_{\partial_s}\partial_t}.
\end{equation}
Formula \eqref{EvolutionNOR} implies that $\partial_s\mu$ lies in $\Gamma(F^*TN)$ and is tangential to the graph of $u$; that is $\gbar\pr{\partial_s\mu,\mu}=0$.
Now $(\mu,DF(\partial_1),\dots,DF(\partial_m))$ is a local frame for $F^*TN$, with $\mu$ orthogonal to 
the other frame elements and time-like. Thus we can write 
\begin{align}\label{ONDDt}
\partial_t = -\gbar(\partial_t,\mu)\mu + \partial_t^{\top}=v\mu+\partial_t^{\top}, \quad \partial_t^{\top}:=g^{ij}\gbar\pr{\partial_t,DF(\partial_i)}DF(\partial_j).
\end{align}
Defining a local vector field $V\in\Gamma(TM)$ by $V= g^{ij}\gbar\pr{\partial_t,DF(\partial_i)} \partial_j$, so that 
$DF(V)=\partial_t^{\top}$, we conclude from Proposition \ref{EvolutionNORProp} (recall $g= F^*\gbar$)
\begin{align*}
\gbar\pr{\partial_s\mu,\partial_t} &=-\gbar\pr{DF\pr{\nabla(\smc-\mathcal{H})}, DF(V)}
\\ &=-g\pr{ \nabla(\smc-\mathcal{H}), V} =-V(\smc-\mathcal{H}).
\end{align*}
For the second term in \eqref{eq3} let us express $\mu$ in the local frame $(\partial_t, \partial_1, \dots, \partial_m)$
\begin{equation}\label{NOROrthogonalDecomposition}
\mu=-\gbar(\mu, \partial_t) \partial_t + \gbar^{ij}\gbar(\mu, \partial_i) \partial_j = v\partial_t+vb^j\partial_{j}
\end{equation}
where $b^j:M\rightarrow\mathbb{R}$, $b^j:=\widetilde{g}^{ij}u_i/f(u)^2$, using \eqref{NOR} in the last equality. 
In particular one writes
$$\SPBD_{\partial_s}\partial_t=-(\smc-\mathcal{H})v\overline{\nabla}_{\partial_t}\partial_t-(\smc-\mathcal{H})vb^j\overline{\nabla}_{\partial_j}\partial_t.$$
From equation \eqref{GRWChrisym} and applying \eqref{NOROrthogonalDecomposition} one concludes
$$\SPBD_{\partial_s}\partial_t=-\frac{f'(u)}{f(u)}(\smc-\mathcal{H})\mu+v\frac{f'(u)}{f(u)}(\smc-\mathcal{H})\partial_t.$$
The result now follows by substituting the above in \eqref{eq3}.
\end{proof}

\subsection{Laplacian of the gradient function}\label{LapSec}
In order to prove Theorem \ref{EvolutionOfvTHM} it remains to 
compute $\Delta v$ (recall $\Delta$ is the Laplacian with respect to $g=F^*\gbar$) at a fixed time $s\in [0,T]$.
For simplicity we will suppress the parameter $s$. Let us consider a local
orthonormal (with respect to $g$) frame $(e_i)_{i}$ of $TM$ over an open neighbourhood $U$,
such that $\nabla_{e_i}e_j(p)=0$ for every $i,j$ at some fixed $p \in M$.
Then we can write for $\Delta v$ at $p$
\begin{equation}\label{Lapv}
\Delta v=e_ie_i\overline{g}(\mu,\partial_t)=e_i(\gbar(\PBD_{e_i}\mu,\partial_t))+e_i(\gbar(\mu,\PBD_{e_i}\partial_t)).
\end{equation}
The second summand in \eqref{Lapv} will be computed using the next proposition (in \eqref{InitialSecondSummand}). 
The first summand is computed below in Lemma \ref{FirstSummandLapvRevisedLemma}. 
\medskip

Let $u$ be a solution of \eqref{PGMCF} and $(e_i)_i$ a local parallel orthonormal frame
at $p \in M$ as above. With respect to a local coordinate frame $(\partial_k)_k$ we can write $e_i=e_i^k\partial_k$ 
for some smooth coefficients $e_i^k:U\rightarrow\mathbb{R}$. Note also that $e_i(u)=-\gbar\pr{DF(e_i),\partial_t}$.
We then obtain the following useful formulae at $p \in M$ (recall the definition in \eqref{SFFFormula} and the fact that
we assumed $\nabla_{e_i}e_j(p)=0$)
\begin{align}
\label{PBDandScalarSecondFundamentalFormNormal}
-\ssff(e_i,e_j)\mu &= \sff(e_i,e_j)=\PBD_{e_i}DF(e_j), \\
\label{NiceFormula}
DF(e_i) &=-\gbar\pr{DF(e_i),\partial_t}\partial_t+e_i^k\partial_k=e_i(u)\partial_t+e_i^k\partial_k.
\end{align}

\begin{prop}\label{useful-formulae}
Let $u$ be a solution of \eqref{PGMCF} and $F$ the corresponding family of graphical embeddings. Let $(e_i)_{i}$ be a local 
orthonormal frame such that $\nabla_{e_i}e_j(p)=0$ for every $i,j$ at some fixed $p \in M$. 
Then at $p$ we have
\begin{enumerate}
\item the covariant derivative of $\partial_t$, as a section of $F^*TN$, can be expressed as 
\begin{equation}\label{CovDerdt}
\begin{split}
\PBD_{e_i}\partial_t&=\frac{f'(u)}{f(u)}DF(e_i)+\frac{f'(u)}{f(u)}\gbar(DF(e_i),\partial_t)\partial_t\\
&=\frac{f'(u)}{f(u)}DF(e_i)-\frac{f'(u)}{f(u)}e_i(u)\partial_t.
\end{split}
\end{equation}
\item For $\mu$ being the unit normal to the graph of $u$ one has 
\begin{equation}\label{InitialSecondSummand}
\gbar\pr{\mu,\PBD_{e_i}\partial_t}=v\frac{f'(u)}{f(u)}e_i(u).
\end{equation}
\item For every $i$ and $j$ ranging between $1$ and $m=\dim M$, one finds
\begin{equation}\label{PreFinalFirstSummand}
e_i\pr{\gbar\pr{\partial_t,DF(e_j)}}=\frac{f'(u)}{f(u)}\delta_{ij}+\frac{f'(u)}{f(u)}e_i(u)e_j(u)+v\ssff(e_i,e_j);
\end{equation}
where $\delta_{ij}$ denotes the Kronecker delta.
In particular, for $i=j$, with the obvious summation convention over repeated indices, one concludes
\begin{equation}\label{ThirdSummandExpansion}
e_i\pr{\gbar\pr{\partial_t,DF(e_i)}}=v\smc+\frac{f'(u)}{f(u)}\pr{m+|\nabla u|_g^2}.
\end{equation}
\end{enumerate} 
\end{prop}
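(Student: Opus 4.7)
The plan is to prove the three formulas in order, since (1) is the computational input for both (2) and (3), and the specialization claim follows by tracing out indices.

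For (1), I would use that $\PBD$ is the pullback of $\overline{\nabla}$, so $\PBD_{e_i}\partial_t = \overline{\nabla}_{DF(e_i)}\partial_t$, and decompose $DF(e_i) = e_i(u)\partial_t + e_i^k\partial_k$ via \eqref{NiceFormula}. Applying the Christoffel symbols from \eqref{GRWChrisym} gives $\overline{\nabla}_{\partial_t}\partial_t = 0$ (since $\overline{\Gamma}^\alpha_{00}=0$) and $\overline{\nabla}_{\partial_k}\partial_t = \frac{f'(u)}{f(u)}\partial_k$ (from $\overline{\Gamma}^j_{k0}=\frac{f'}{f}\delta^j_k$, $\overline{\Gamma}^0_{k0}=0$), so that
\[
\PBD_{e_i}\partial_t \;=\; \frac{f'(u)}{f(u)}\, e_i^k\partial_k.
\]
Invoking \eqref{NiceFormula} in reverse writes $e_i^k\partial_k = DF(e_i) - e_i(u)\partial_t$, and since $\gbar(DF(e_i),\partial_t) = -e_i(u)$, both forms of the asserted identity follow.

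For (2), I would simply pair (1) with $\mu$ in $\gbar$. The term $\gbar(\mu, DF(e_i))$ vanishes because $\mu$ is normal, while $\gbar(\mu,\partial_t) = -v$ by Definition \ref{GradFctDef}, so only the second summand survives, producing the claimed $v\,\frac{f'(u)}{f(u)}\,e_i(u)$.

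For (3), I would apply the metric property \eqref{MetricProperty} of the pullback connection to get
\[
e_i\bigl(\gbar(\partial_t, DF(e_j))\bigr) \;=\; \gbar(\PBD_{e_i}\partial_t, DF(e_j)) + \gbar(\partial_t, \PBD_{e_i}DF(e_j)).
\]
Plugging (1) into the first term and using $\gbar(DF(e_i),DF(e_j)) = g(e_i,e_j) = \delta_{ij}$ together with $\gbar(DF(e_j),\partial_t) = -e_j(u)$ produces the first two summands of the target. For the second term I use the hypothesis $\nabla_{e_i}e_j(p)=0$: at $p$, the definition \eqref{SFFFormula} collapses to $\PBD_{e_i}DF(e_j) = \sff(e_i,e_j)$, which by \eqref{SFF=-hnu} equals $-\ssff(e_i,e_j)\mu$; pairing with $\partial_t$ and using $\gbar(\mu,\partial_t)=-v$ yields the third summand $v\ssff(e_i,e_j)$. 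Finally, taking the trace $i=j$ with summation over the $g$-orthonormal frame gives $\sum_i \delta_{ii} = m$, $\sum_i e_i(u)^2 = |\nabla u|_g^2$, and $\sum_i \ssff(e_i,e_i) = \smc$, delivering \eqref{ThirdSummandExpansion}.

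No step is genuinely hard; the only point demanding care is tracking where the orthonormality hypothesis $\nabla_{e_i}e_j(p)=0$ is actually used, namely to drop the tangential $DF(\nabla_{e_i}e_j)$ contribution in the second fundamental form formula and thereby identify $\PBD_{e_i}DF(e_j)$ with a purely normal vector at $p$.
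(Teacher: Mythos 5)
Your argument is correct and follows essentially the same route as the paper's proof: item (1) via the decomposition \eqref{NiceFormula} and the Christoffel symbols \eqref{GRWChrisym}, item (2) by pairing with $\mu$ and using normality, and item (3) via the metric property \eqref{MetricProperty} together with \eqref{PBDandScalarSecondFundamentalFormNormal} and the normal-frame hypothesis at $p$. The sign bookkeeping ($\gbar(\partial_t,\mu)=-v$, $\gbar(\partial_t,DF(e_j))=-e_j(u)$, $\gbar(\partial_t,\partial_t)=-1$) and the trace identification all check out.
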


\begin{proof}
\begin{enumerate}
\item From equation \eqref{NiceFormula} we see that 
$$\PBD_{e_i}\partial_t=e_i(u)\overline{\nabla}_{\partial_t}\partial_t+e_i^k\overline{\nabla}_{\partial_k}\partial_t.$$
Equation \eqref{CovDerdt} now follows by substituting the appropriate values of the covariant derivatives on 
the right hand side, described by the Christoffel symbols of $(N,\gbar)$ in \eqref{GRWChrisym}, and using \eqref{NiceFormula} once more.
\item Equation \eqref{InitialSecondSummand} is a direct consequence of \eqref{CovDerdt} and the fact that $\mu$ is normal 
to the graph of $u$, that is $\gbar\pr{\mu,DF(e_i)}=0$ for every $i=1,\dots,m$.
\item The metric property of the pull-back connection $\PBD$ gives
$$e_i\pr{\gbar\pr{\partial_t,DF(e_j)}}=\gbar\pr{\PBD_{e_i}\partial_t,DF(e_i)}+\gbar\pr{\partial_t,\PBD_{e_i}DF(e_j)}.$$
From equations \eqref{CovDerdt} and \eqref{PBDandScalarSecondFundamentalFormNormal} we deduce
\begin{align*}
e_i\pr{\gbar\pr{\partial_t,DF(e_j)}}&=\frac{f'(u)}{f(u)}\gbar\pr{DF(e_i),DF(e_j)}\\
&-\frac{f'(u)}{f(u)}e_i(u)\gbar\pr{\partial_t,DF(e_j)}-\ssff(e_i,e_j)\gbar\pr{\partial_t,\mu}.
\end{align*}
By assumption, $(e_i)_i$ is a local orthonormal frame, with respect to the metric $g=F^*\gbar$, thus $\gbar\pr{DF(e_i),DF(e_j)}=\delta_{ij}$.
Moreover, from equation \eqref{NiceFormula} we compute $-\gbar\pr{\partial_t,DF(e_j)}=e_j(u)$. 
Finally, the result follows by recalling the definition of the gradient function (cf. Definition \ref{GradFctDef}).
\end{enumerate}
\end{proof}
\begin{rmk}\label{NiceRMK}
Notice that equation \eqref{ThirdSummandExpansion} is nothing but (at $p\in M$)
$$\Delta u=-e_i\pr{e_i(u)}=v\smc+\frac{f'(u)}{f(u)}\pr{m+|\nabla u|_g^2}=v\smc+\frac{f'(u)}{f(u)}(m+v^2-1)$$
where we have used (i) in Proposition \ref{1234}. 
This is exactly the same result as Proposition 2.12 in \cite{mio}.
\end{rmk}

\begin{lem}\label{FirstSummandLapvRevisedLemma}
In the notation of Proposition \ref{useful-formulae} we have at $p$
\begin{equation}\label{FirstSummandLapvRevised}
\begin{split}
e_i(\gbar(\PBD_{e_i}\mu,\partial_t))=&-e_i(\gbar(\partial_t,DF(e_j))\ssff(e_i,e_j)
\\&-\gbar(\partial_t,DF(e_j))e_i(\ssff(e_i,e_j)).
\end{split}
\end{equation}
\end{lem}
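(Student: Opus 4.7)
The plan is to reduce everything to the scalar second fundamental form via Proposition \ref{SFF&SSFFNOR} and the decomposition \eqref{ONDDt} of $\partial_t$ along and orthogonal to $\mu$. All identities are pointwise, so the normality condition $\nabla_{e_i}e_j(p)=0$ only matters at the very last step for ensuring the differentiations on the right-hand side land on the quantities in the stated form.

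First I would observe that $\mu$ has constant (unit) length with respect to $\overline{g}$, so differentiating $\overline{g}(\mu,\mu)=-1$ along $e_i$ and using the metric property \eqref{MetricProperty} yields $\overline{g}(\PBD_{e_i}\mu,\mu)=0$ for each $i$. Next, writing $\partial_t$ in the local frame $(\mu, DF(e_1),\dots,DF(e_m))$ of $\mathcal{F}^*TN$, where $(e_i)$ is $g$-orthonormal and $\mu$ is $\overline{g}$-unit time-like, one obtains as in \eqref{ONDDt}
\begin{equation*}
\partial_t = v\,\mu + \overline{g}(\partial_t,DF(e_j))\,DF(e_j).
\end{equation*}
Taking the $\overline{g}$-inner product with $\PBD_{e_i}\mu$ and using the vanishing from the first step together with \eqref{SOandSFF} from Proposition \ref{SFF&SSFFNOR} gives
\begin{equation*}
\overline{g}(\PBD_{e_i}\mu,\partial_t) = \overline{g}(\partial_t,DF(e_j))\,\overline{g}(\PBD_{e_i}\mu,DF(e_j)) = -\overline{g}(\partial_t,DF(e_j))\,\ssff(e_i,e_j).
\end{equation*}

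Finally, I would apply $e_i$ to this pointwise identity at $p$ and expand by the Leibniz rule. Because the identity holds in a neighbourhood of $p$ (not just at $p$), the derivative of the product splits honestly into the two summands claimed in \eqref{FirstSummandLapvRevised}, yielding
\begin{equation*}
e_i\bigl(\overline{g}(\PBD_{e_i}\mu,\partial_t)\bigr) = -e_i\bigl(\overline{g}(\partial_t,DF(e_j))\bigr)\,\ssff(e_i,e_j) - \overline{g}(\partial_t,DF(e_j))\,e_i(\ssff(e_i,e_j)),
\end{equation*}
which is exactly \eqref{FirstSummandLapvRevised}. There is no real obstacle here beyond bookkeeping; the only subtle point is that the decomposition of $\partial_t$ used above depends on $(e_i)$ being $g$-orthonormal (ensuring $\overline{g}(DF(e_i),DF(e_j))=\delta_{ij}$), which is already assumed, and that the Leibniz expansion is legitimate because the identity for $\overline{g}(\PBD_{e_i}\mu,\partial_t)$ derived above is valid on all of the coordinate neighbourhood, not only at the basepoint $p$.
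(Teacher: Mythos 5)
Your proof is correct and takes essentially the same route as the paper: decompose $\partial_t$ in the $\overline{g}$-orthonormal frame $(\mu, DF(e_1),\dots,DF(e_m))$, kill the $\mu$-component using $\overline{g}(\PBD_{e_i}\mu,\mu)=0$, reduce $\overline{g}(\PBD_{e_i}\mu,DF(e_j))$ to $-\ssff(e_i,e_j)$, and then apply Leibniz. You have, however, cleaned up one small but genuine infelicity: the paper cites \eqref{PBDandScalarSecondFundamentalFormNormal} (which uses $\nabla_{e_i}e_j(p)=0$ and so holds only at $p$) to get the intermediate identity $\overline{g}(\PBD_{e_i}\mu,\partial_t)=-\overline{g}(\partial_t,DF(e_j))\ssff(e_i,e_j)$, and then says ``the result follows by Leibniz'' --- but a product rule requires the identity to hold on a neighbourhood of $p$, not merely at $p$. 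Your appeal to \eqref{SOandSFF}, which is valid everywhere, supplies exactly the neighbourhood-valid identity that makes the Leibniz step honest, which you correctly flag as the only subtle point.
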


\begin{proof}
The result follows by the Leibniz rule once we prove that 
\begin{equation}\label{LemFirstSummanEq}
\gbar(\PBD_{e_i}\mu,\partial_t)=-\gbar(\partial_t,DF(e_j))\ssff(e_i,e_j).
\end{equation}
To this end, notice that $\partial_t$, as a section of $F^*TN$, decomposes with respect to the orthonormal frame $\pr{\mu,DF(e_1),\dots,DF(e_m)}$ as
$$\partial_t=-\gbar(\partial_t,\mu)\mu+\gbar(\partial_t,DF(e_j))DF(e_j).$$
Substituting this into the left hand side of \eqref{LemFirstSummanEq} one finds
\begin{align*}
\gbar(\PBD_{e_i}\mu,\partial_t) =-\gbar\pr{\partial_t,\mu}\gbar\pr{\PBD_{e_i}\mu,\mu}+\gbar\pr{\partial_t,DF(e_j)}\gbar\pr{\PBD_{e_i}\mu,DF(e_j)}.
\end{align*}
The first summand now vanishes, since $\mu$ is of unit length. Using \eqref{PBDandScalarSecondFundamentalFormNormal} we now conclude 
at $p \in M$
\begin{align*}
\gbar(\PBD_{e_i}\mu,\partial_t) =-\gbar\pr{\partial_t,DF(e_j)}\ssff(e_i,e_j).
\end{align*}
\end{proof}

\begin{lem}\label{LemmaDerScalMeanCurvature}
In the notation of Proposition \ref{useful-formulae} we have at $p$
\footnote{$\ric^N$ applies to $DF(e_j) \in F^*TN$ similar to \eqref{GFTN}.}
\begin{equation}
e_i(\ssff(e_i,e_j))=e_j(\smc)-\ric^N(DF(e_j),\mu);
\end{equation}
with the obvious summation convention on repeated indices.
\end{lem}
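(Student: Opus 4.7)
The plan is to establish this identity as a traced Codazzi equation for the Lorentzian ambient manifold, using the simplification afforded by the parallel orthonormal frame at $p$.

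First, I would derive the unscaled Codazzi equation for the scalar second fundamental form $\ssff$. The key inputs are the Gauss-type decomposition $\PBD_X DF(Y)=DF(\nabla_X Y)+\sff(X,Y)$ with $\sff(X,Y)=-\ssff(X,Y)\mu$ from Proposition \ref{SFF&SSFFNOR}, and the fact that $\PBD_X\mu$ is tangential: since $\gbar(\mu,\mu)=-1$ is constant, the metric property of $\PBD$ forces $\gbar(\PBD_X\mu,\mu)=0$, and combining this with \eqref{SOandSFF} yields the Weingarten-type formula
\begin{equation*}
\PBD_X\mu=-\sum_j\ssff(X,e_j)\,DF(e_j).
\end{equation*}
Applying $\PBD_X$ twice to $DF(Z)$, antisymmetrizing in $X,Y$, subtracting $\PBD_{[X,Y]}DF(Z)$, and then pairing the result with $\mu$ (which kills tangential terms and flips sign on the $-\ssff(\cdot,\cdot)\mu$ components because $\gbar(\mu,\mu)=-1$), the tangential contributions from $\PBD_X\mu$ cancel by symmetry of $\ssff$, leaving
\begin{equation*}
\gbar\bigl(R^N(DF(X),DF(Y))DF(Z),\mu\bigr)=(\nabla_X\ssff)(Y,Z)-(\nabla_Y\ssff)(X,Z).
\end{equation*}

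Next, I would trace this identity at $p$ with the choice $X=e_i$, $Y=e_j$, $Z=e_i$ and sum over $i$. Since $\nabla_{e_i}e_j(p)=0$, the covariant derivatives of $\ssff$ reduce at $p$ to directional derivatives; using symmetry $\ssff(e_j,e_i)=\ssff(e_i,e_j)$, the left-hand side becomes $e_i(\ssff(e_i,e_j))-e_j(\ssff(e_i,e_i))=e_i(\ssff(e_i,e_j))-e_j(\smc)$ (summation convention).

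Finally, I would identify the summed curvature term with $-\ric^N(DF(e_j),\mu)$. Working in the $\gbar$-orthonormal basis $(\mu,DF(e_1),\ldots,DF(e_m))$ with $\epsilon_0=-1$ and $\epsilon_i=+1$, the trace defining the Ricci tensor reads
\begin{equation*}
\ric^N(DF(e_j),\mu)=\sum_\alpha\epsilon_\alpha R^N(E_\alpha,DF(e_j),\mu,E_\alpha).
\end{equation*}
The $\alpha=0$ contribution $R^N(\mu,DF(e_j),\mu,\mu)$ vanishes by the antisymmetry $R^N(\cdot,\cdot,Z,W)=-R^N(\cdot,\cdot,W,Z)$, so only the spatial indices remain. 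Using that same antisymmetry once more converts $R^N(DF(e_i),DF(e_j),\mu,DF(e_i))$ into $-R^N(DF(e_i),DF(e_j),DF(e_i),\mu)=-\gbar(R^N(DF(e_i),DF(e_j))DF(e_i),\mu)$, delivering the desired sign and proving the claim.

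I expect the sign bookkeeping in the last step to be the main obstacle: one must correctly track (i) the $-1$ appearing because $\mu$ is time-like, (ii) the antisymmetry that annihilates the normal-normal Ricci term, and (iii) the swap between the third and fourth Riemann slots that produces the overall minus sign matching $\ric^N(DF(e_j),\mu)$ on the right-hand side. The derivation of Codazzi itself, and the simplification afforded by $\nabla_{e_i}e_j(p)=0$, are routine once the Weingarten-type identity for $\PBD_X\mu$ is in hand.
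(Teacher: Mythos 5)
Your proof is correct, and the signs all work out: in particular the normal–normal contribution $R^N(\mu,DF(e_j),\mu,\mu)=0$ and the slot swap $R^N(DF(e_i),DF(e_j),DF(e_i),\mu)=-R^N(DF(e_i),DF(e_j),\mu,DF(e_i))$ do produce exactly the $-\ric^N(DF(e_j),\mu)$ term with the correct sign. Where your argument differs from the paper's is in how the curvature commutator is deployed. You derive the full Codazzi equation by applying the commutation identity for $\PBD$ to $DF(Z)$, and then trace it at $p$ with $X=Z=e_i$, $Y=e_j$. The paper instead differentiates the Weingarten identity $\ssff(e_i,e_j)=-\gbar(\PBD_{e_j}\mu,DF(e_i))$ directly, so that the commutator $\PBD_{e_i}\PBD_{e_j}-\PBD_{e_j}\PBD_{e_i}$ is applied to $\mu$ rather than to $DF(Z)$; after an integration by parts of the remaining double derivative, $e_j(\smc)$ and the Ricci term drop out. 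The two routes are essentially dual: yours routes through the standard Codazzi equation (which the paper does state later, as Corollary \ref{CodazziEqProp}, but does not invoke here), while the paper's is a self-contained inline computation. Your version has the advantage of reusing a general identity; the paper's avoids an extra layer of abstraction and keeps the sign bookkeeping more local, but the end result and the key ideas (parallel frame at $p$, curvature of the pull-back connection, orthonormal decomposition of $\ric^N$ with $\epsilon_0=-1$) are the same.
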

\begin{proof}
From equation \eqref{SOandSFF} in Proposition \ref{SFF&SSFFNOR} we compute, by making use of the metric property of the pull-back connection $\PBD$,
\begin{equation*}
\begin{split}
e_i(\ssff(e_i,e_j))=&-\gbar(\PBD_{e_i}DF(e_i),\PBD_{e_j}\mu)-\gbar(DF(e_i),\PBD_{e_i}\PBD_{e_j}\mu)\\
=&-\gbar(DF(e_i),\PBD_{e_i}\PBD_{e_j}\mu),
\end{split}
\end{equation*}
where the second equality follows from 
the fact that $\gbar(\PBD_{e_i}DF(e_i),\PBD_{e_j}\mu) = 0$
due to \eqref{PBDandScalarSecondFundamentalFormNormal} and the fact that $\mu$ is of unit length, i.e. $\gbar\pr{\mu,\mu}=-1$.
\medskip

Recall that, at $p$, the curvature form of the pull-back connection is the pull-back of the curvature of the connection, that is
$$\PBD_{e_i}\PBD_{e_j}\mu-\PBD_{e_j}\PBD_{e_i}\mu-\PBD_{[e_i,e_j]}\mu=R^N\pr{DF(e_i),DF(e_j)}\mu.$$
Thus, using $\PBD_{[e_i,e_j]}\mu=0$ due to naturality of the pull-back and the computations being performed at $p$, we obtain
(summing over double indices $i$)
\begin{align*}
e_i\pr{\ssff(e_i,e_j)}=&-\gbar\pr{\PBD_{e_i}\PBD_{e_j}\mu ,DF(e_i)}\\
=&-R^N\pr{DF(e_i),DF(e_j),\mu,DF(e_i)}-\gbar\pr{\PBD_{e_j}\PBD_{e_i}\mu,DF(e_i)}\\
=&-\ric^N\pr{DF(e_j),\mu}-e_j\pr{\gbar\pr{\PBD_{e_i}\mu,DF(e_i)}}+\gbar\pr{\PBD_{e_i}\mu,\PBD_{e_j}\mu}\\
=&-\ric^N\pr{DF(e_j),\mu}+e_j(\smc).
\end{align*}
In the above, the second equality is obtained by making use of \eqref{RiemannCurvatureTensorFormula}. 
The first term in the third equality is a consequence of $\pr{\mu,DF(e_1),\dots,DF(e_m)}$ being an orthonormal basis of $T_{F(p)}N$ with $\mu$ time-like.
The second term is instead a mere application of the metric property of the connection $\PBD$.
Finally the fourth identity is the result of the formula \eqref{SOandSFF}, Definition \ref{MCVDef} and of $\mu$ being of unit length.
\end{proof}

We now conclude with the following expression for \eqref{Lapv}.

\begin{prop}\label{FirstLapExpansionProp}
Let $u$ be a solution of \eqref{PGMCF}.
Then the Laplacian of the gradient function $v$ can be expressed in terms of a local 
vector field $V= g^{ij}\gbar\pr{\partial_t,DF(\partial_i)} \partial_j \in\Gamma(TM)$, such that 
$DF(V)=\partial_t^{\top}$, as follows:
\begin{equation}\label{FirstLapExpansion}
\begin{split}
\Delta v=&-\frac{f'(u)}{f(u)}\smc-\frac{f'(u)}{f(u)}\ssff(\nabla u,\nabla u)
-v\|\ssff\|^2-V(H)+\ric^N(DF(V),\mu)\\
&+\frac{f'(u)}{f(u)}g(\nabla u,\nabla v)+\frac{f''(u)}{f(u)}\|\nabla u\|^2v-2\left(\frac{f'(u)}{f(u)}\right)^2\|\nabla u\|^2v\\
&-\frac{f'(u)}{f(u)}\smc v^2-m\left(\frac{f'(u)}{f(u)}\right)^2v.
\end{split}
\end{equation}
\end{prop}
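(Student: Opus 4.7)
The starting point is the decomposition \eqref{Lapv} of $\Delta v$ at a point $p \in M$, evaluated in a local orthonormal frame $(e_i)_i$ with $\nabla_{e_i}e_j(p)=0$. That decomposition expresses $\Delta v$ as the sum of two terms, $e_i(\gbar(\PBD_{e_i}\mu,\partial_t))$ and $e_i(\gbar(\mu,\PBD_{e_i}\partial_t))$, and my plan is to treat each separately and then add the results.

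For the first summand I would invoke Lemma \ref{FirstSummandLapvRevisedLemma}, which rewrites it as
\[
-e_i\bigl(\gbar(\partial_t,DF(e_j))\bigr)\ssff(e_i,e_j)-\gbar(\partial_t,DF(e_j))\,e_i(\ssff(e_i,e_j)).
\]
Into the first piece I substitute \eqref{PreFinalFirstSummand}, giving three contributions: the Kronecker term produces $-\tfrac{f'(u)}{f(u)}\smc$ after the trace, the $e_i(u)e_j(u)$ term produces $-\tfrac{f'(u)}{f(u)}\ssff(\nabla u,\nabla u)$ (after recognising $e_i(u)e_j(u)\ssff(e_i,e_j)$ as $\ssff(\nabla u,\nabla u)$ in the orthonormal frame), and the $v\ssff(e_i,e_j)$ term produces $-v\|\ssff\|^2$. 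Into the second piece I insert Lemma \ref{LemmaDerScalMeanCurvature}. The identification of $V$ in \eqref{ONDDt}, namely $DF(V)=\gbar(\partial_t,DF(e_j))DF(e_j)$ in the orthonormal frame, lets me rewrite $\gbar(\partial_t,DF(e_j))e_j(\smc)=V(\smc)$ and $\gbar(\partial_t,DF(e_j))\ric^N(DF(e_j),\mu)=\ric^N(DF(V),\mu)$, yielding the terms $-V(\smc)+\ric^N(DF(V),\mu)$.

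For the second summand I apply \eqref{InitialSecondSummand} to get $e_i\!\left(v\tfrac{f'(u)}{f(u)}e_i(u)\right)$, and expand by Leibniz into three pieces. The first piece, $e_i(v)\,e_i(u)\tfrac{f'(u)}{f(u)}$, gives $\tfrac{f'(u)}{f(u)}g(\nabla u,\nabla v)$. The second piece uses the chain rule $e_i\!\left(\tfrac{f'(u)}{f(u)}\right)=e_i(u)\!\left(\tfrac{f''(u)}{f(u)}-\left(\tfrac{f'(u)}{f(u)}\right)^2\right)$, producing $v\tfrac{f''(u)}{f(u)}\|\nabla u\|^2-v\!\left(\tfrac{f'(u)}{f(u)}\right)^2\!\|\nabla u\|^2$. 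The third piece, $v\tfrac{f'(u)}{f(u)}e_i(e_i(u))=-v\tfrac{f'(u)}{f(u)}\Delta u$, is handled by substituting the formula for $\Delta u$ from Remark \ref{NiceRMK}, and then using $v^2-1=\|\nabla u\|^2$ from Proposition \ref{1234}(i) to regroup. This produces $-v^2\tfrac{f'(u)}{f(u)}\smc-m v\!\left(\tfrac{f'(u)}{f(u)}\right)^2-v\!\left(\tfrac{f'(u)}{f(u)}\right)^2\!\|\nabla u\|^2$, which combined with the contribution from the second piece gives exactly the $-2v(f'/f)^2\|\nabla u\|^2$ coefficient in the claimed formula.

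Adding the two summands term by term reproduces \eqref{FirstLapExpansion}. The one step that demands genuine care, rather than mechanical computation, is the bookkeeping across the two summands: in particular, making sure the role of the tangential projection $V$ of $\partial_t$ is consistent with the definition used in Proposition \ref{EvolutionHypAngleProp}, and that the three distinct occurrences of $\|\nabla u\|^2$ (from Leibniz, chain rule, and the substitution $v^2-1=\|\nabla u\|^2$ in $\Delta u$) combine with the correct signs and multiplicities. Everything else is a direct application of the formulas gathered in Proposition \ref{useful-formulae} and Lemma \ref{LemmaDerScalMeanCurvature}, together with the fact that the identity is tensorial and therefore the choice of a parallel orthonormal frame at $p$ entails no loss of generality.
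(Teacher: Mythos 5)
Your proposal is correct and follows the same route as the paper: both start from the decomposition \eqref{Lapv}, feed the first summand through Lemma \ref{FirstSummandLapvRevisedLemma}, \eqref{PreFinalFirstSummand}, and Lemma \ref{LemmaDerScalMeanCurvature} (with the identification $V=\gbar(\partial_t,DF(e_k))e_k$), feed the second summand through \eqref{InitialSecondSummand}, Leibniz, the chain rule, and Remark \ref{NiceRMK}, and then combine the two $\|\nabla u\|_g^2$ contributions into the coefficient $-2v(f'/f)^2$. The bookkeeping you flag as needing care is exactly the point the paper's computation hinges on, and your term-by-term accounting matches theirs.
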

\begin{proof}
Plugging \eqref{InitialSecondSummand}, Lemma \ref{FirstSummandLapvRevisedLemma} and \ref{LemmaDerScalMeanCurvature}
into \eqref{Lapv} yields the following intermediate expression that holds at $p\in M$
\begin{equation}
\begin{split}
\Delta v = & -e_i(\gbar(\partial_t,DF(e_j))\ssff(e_i,e_j)
\\&-\gbar(\partial_t,DF(e_j)) \Bigl( e_j(\smc)-\ric^N(DF(e_j),\mu) \Bigr)
+e_i \Bigl(v\frac{f'(u)}{f(u)}e_i(u)\Bigr).
\end{split}
\end{equation}
Noticing that $V=\gbar\pr{\partial_t,DF(e_k)}e_k$ with summation over $k$, we conclude 
from formula \eqref{PreFinalFirstSummand}, Lemma \ref{FirstSummandLapvRevisedLemma} and Lemma \ref{LemmaDerScalMeanCurvature} 
\begin{equation}\label{almost-there}
\begin{split}
\Delta v = \, & - \Bigl( \frac{f'(u)}{f(u)}\delta_{ij}+\frac{f'(u)}{f(u)}e_i(u)e_j(u)+v\ssff(e_i,e_j) \Bigr) \ssff(e_i,e_j)
\\ &-V(\smc)+\ric^N\pr{DF(V),\mu}
+e_i \Bigl(v\frac{f'(u)}{f(u)}e_i(u)\Bigr) \\
=&-\frac{f'(u)}{f(u)}\smc-\frac{f'(u)}{f(u)}\ssff(\nabla u,\nabla u)-v\|\ssff\|^2\\
&-V(\smc)+\ric^N\pr{DF(V),\mu} +e_i \Bigl(v\frac{f'(u)}{f(u)}e_i(u)\Bigr).
\end{split}
\end{equation}
In order to conclude the statement, it remains to study the last term in 
\eqref{almost-there}. We compute using Remark \ref{NiceRMK}, arriving at an expression 
that holds globally
\begin{align*}
e_i \Bigl(v\frac{f'(u)}{f(u)}e_i(u)\Bigr)= \, &e_i(v)\frac{f'(u)}{f(u)}e_i(u)+ve_i\pr{\frac{f'(u)}{f(u)}}e_i(u)-v\frac{f'(u)}{f(u)}\Delta u\\
= \, &e_i(v)e_i(u)\frac{f'(u)}{f(u)}+v\frac{f''(u)}{f(u)}e_i(u)e_i(u)-v\pr{\frac{f'(u)}{f(u)}}^2e_i(u)e_i(u)\\
&-v^2\frac{f'(u)}{f(u)}\smc-v\pr{\frac{f'(u)}{f(u)}}^2m-v\pr{\frac{f'(u)}{f(u)}}^2|\nabla u|_g^2\\
= \, &g\pr{\nabla u,\nabla v}\frac{f'(u)}{f(u)}+v\frac{f''(u)}{f(u)}|\nabla u|^2_g-2v\pr{\frac{f'(u)}{f(u)}}^2|\nabla u|^2_g\\
&-v^2\frac{f'(u)}{f(u)}\smc-mv\pr{\frac{f'(u)}{f(u)}}^2.
\end{align*}
\end{proof}

\noindent Notice that Proposition \ref{FirstLapExpansionProp} is not yet a proof for Proposition \ref{ExpansionLapvProp}. 
In particular the terms $\ssff\pr{\nabla u,\nabla u}$ and $\ric^N\pr{DF(V),\mu}$ appearing in \eqref{FirstLapExpansion} need to be simplified.

\begin{prop}\label{TheVeryUsefulFormulaLemma}
Let $u$ be a solution of \eqref{PGMCF}. Then
\begin{equation}\label{h(grad,grad)}
\ssff(\nabla u,\nabla u)=-g(\nabla u,\nabla v)-\frac{f'(u)}{f(u)}|\nabla u|^2v.
\end{equation}
\end{prop}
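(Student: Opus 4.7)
The plan is to start from the local formula \eqref{vhij}, namely
\[
v\,\ssff_{ij}=-\bigl(u_{ij}-\Gamma^{k}_{ij}u_{k}\bigr)-f(u)f'(u)\,\widetilde{g}_{ij}
=-(\nabla^{2}u)_{ij}-f(u)f'(u)\,\widetilde{g}_{ij},
\]
and contract both sides with $(\nabla u)^{i}(\nabla u)^{j}$, where $\nabla u$ denotes the $g$–gradient. This reduces the claim to evaluating the two contractions
$\nabla^{2}u(\nabla u,\nabla u)$ and $\widetilde{g}_{ij}(\nabla u)^{i}(\nabla u)^{j}$ and then dividing by $v\ge 1$.

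For the Hessian term, I would use the fact that $\nabla^{2}u$ is symmetric and the basic identity
$\nabla^{2}u(X,Y)=g(\nabla_{X}\nabla u,Y)$, so that
\[
\nabla^{2}u(\nabla u,\nabla u)=\tfrac{1}{2}\,\nabla u\bigl(|\nabla u|_{g}^{2}\bigr)
=\tfrac{1}{2}\,g\bigl(\nabla u,\nabla|\nabla u|_{g}^{2}\bigr).
\]
Now invoke Proposition \ref{1234}(i), which gives $|\nabla u|_{g}^{2}=v^{2}-1$, hence $\nabla |\nabla u|_{g}^{2}=2v\,\nabla v$, and therefore
\[
\nabla^{2}u(\nabla u,\nabla u)=v\,g(\nabla u,\nabla v).
\]

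For the second contraction, the induced metric formula \eqref{indmetr} reads $f(u)^{2}\widetilde{g}_{ij}=g_{ij}+u_{i}u_{j}$, so
\[
f(u)^{2}\widetilde{g}_{ij}(\nabla u)^{i}(\nabla u)^{j}
=g_{ij}(\nabla u)^{i}(\nabla u)^{j}+\bigl(u_{i}(\nabla u)^{i}\bigr)^{2}
=|\nabla u|_{g}^{2}+|\nabla u|_{g}^{4}=|\nabla u|_{g}^{2}\,v^{2},
\]
where in the last step I again used Proposition \ref{1234}(i). Consequently
\[
f(u)f'(u)\,\widetilde{g}_{ij}(\nabla u)^{i}(\nabla u)^{j}=\frac{f'(u)}{f(u)}\,|\nabla u|_{g}^{2}\,v^{2}.
\]

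Assembling the two pieces, the contracted form of \eqref{vhij} becomes
\[
v\,\ssff(\nabla u,\nabla u)=-v\,g(\nabla u,\nabla v)-\frac{f'(u)}{f(u)}\,|\nabla u|_{g}^{2}\,v^{2},
\]
and dividing by $v$ (legitimate since $v\ge 1$ by Proposition \ref{1234}(ii)) yields \eqref{h(grad,grad)}. The only point that requires a bit of care is the first contraction, where one must recognise that $|\nabla u|_{g}^{2}=v^{2}-1$ turns the Hessian term into a clean expression involving $g(\nabla u,\nabla v)$; everything else is a direct algebraic manipulation using \eqref{indmetr} and \eqref{vhij}.
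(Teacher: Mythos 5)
Your proof is correct. You contract \eqref{vhij} directly against $(\nabla u)^i(\nabla u)^j$, handle the Hessian piece via $\nabla^2 u(\nabla u,\nabla u)=\tfrac12\nabla u(|\nabla u|_g^2)$ together with $|\nabla u|_g^2=v^2-1$, and treat the $\widetilde g$-piece through \eqref{indmetr}; all steps check out, and dividing by $v\ge 1$ is legitimate. The paper's own proof uses the same two ingredients (\eqref{vhij} and $|\nabla u|_g^2=v^2-1$) but organizes them slightly differently: it first establishes the \emph{pointwise, one-index} identity $v_i=-g^{jk}u_j\ssff_{ki}-\frac{f'(u)}{f(u)}vu_i$, rewriting $\frac12\partial_i|\nabla u|^2=g^{jk}u_j(u_{ki}-\Gamma_{ik}^lu_l)$ and substituting \eqref{vhij} together with $g^{jk}u_j\widetilde g_{ki}=\frac{v^2}{f(u)^2}u_i$, and only then contracts once more with $g^{im}u_m$. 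The net effect is the same scalar formula you obtain, but the paper's intermediate $v_i$-identity is reused elsewhere (e.g.\ in Proposition \ref{g(du,dv)}), whereas your argument proves only the scalar contraction — entirely adequate for the statement at hand and marginally shorter.
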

\begin{proof}
Notice that the statement is a direct consequence of a local indentity
\begin{equation}\label{TheVeryUsefulFormula}
v_i=-g^{jk}u_jh_{ki}-\frac{f'(u)}{f(u)}v u_i.
\end{equation}
Indeed, assuming \eqref{TheVeryUsefulFormula} to hold locally, we find
\begin{align*}
g(\nabla u,\nabla v)&=g^{im}u_mv_i
=-g^{im}u_m g^{jk}u_j \ssff_{ik}-g^{im}u_m\frac{f'(u)}{f(u)}v u_i\\
&=-\ssff(\nabla u,\nabla u)-\frac{f'(u)}{f(u)}|\nabla u|_g^2 v.
\end{align*}
This is precisely the statement after rearrangement.
Let us therefore prove \eqref{TheVeryUsefulFormula}.
By making use of \eqref{v2nablau} one has
$$vv_i=\frac{1}{2}\partial_i v^2=\frac{1}{2}\partial_i(1+|\nabla u|^2)=g(\nabla_{\partial_i}\nabla u,\nabla u).$$
Furthermore, $\nabla_{\partial_i}\nabla u$ can be expressed locally as 
$$\nabla_{\partial_i}\nabla u=\partial_i(g^{jk}u_k)\partial_j+g^{jk}u_k\Gamma_{ij}^l\partial_l.$$
By keeping in mind that $\partial_i {g^{jk}} =-g^{jl}\Gamma_{il}^k-\Gamma_{il}^jg^{lk}$, 
one finds 
$$g(\nabla_{\partial_i}\nabla u,\nabla u)=g^{jk}u_j(u_{ki}-\Gamma_{ik}^lu_l).$$
The result now follows by substituting \eqref{vhij} and by noticing that 
$$g^{jk}u_j\widetilde{g}_{ki}=\frac{v^2}{f(u)^2}u_i.$$
\end{proof}

\noindent The only thing left to prove Proposition \ref{ExpansionLapvProp} is a formula for $\ric^N\pr{DF(V),\mu}$. 

\begin{prop}\label{RicTF(V)NORCor}
Let $u$ be a solution for \eqref{PGMCF}
and $F$ the corresponding family of graphical embeddings.
Then we have the following formula for the
vector field $V= g^{ij}\gbar\pr{\partial_t,DF(\partial_i)} \partial_j \in\Gamma(TM)$, with
$DF(V)=\partial_t^{\top}$
\begin{equation}\label{RicTF(V)NOR}
\ric^N(DF(V),\mu)=-\frac{f''(u)}{f(u)}mv-v\ric^N(\mu,\mu).
\end{equation}
\end{prop}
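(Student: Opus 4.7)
The plan is to exploit the orthogonal decomposition of $\partial_t$ in the frame $(\mu, DF(\partial_1), \dots, DF(\partial_m))$, which was already established in \eqref{ONDDt}, namely
\begin{equation*}
\partial_t = v\mu + \partial_t^{\top} = v\mu + DF(V).
\end{equation*}
Solving for $DF(V)$ gives $DF(V) = \partial_t - v\mu$. The key observation is that $\ric^N$ is a symmetric bilinear $(0,2)$-tensor on $F^*TN$ (using the convention \eqref{GFTN}), so it extends linearly in each entry. Thus
\begin{equation*}
\ric^N(DF(V), \mu) = \ric^N(\partial_t, \mu) - v\, \ric^N(\mu, \mu).
\end{equation*}
The term $-v\, \ric^N(\mu,\mu)$ is already exactly what appears on the right-hand side of \eqref{RicTF(V)NOR}, so the task reduces to proving $\ric^N(\partial_t, \mu) = -m v\, f''(u)/f(u)$.

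To handle that remaining term, I would expand $\mu$ in the ambient coordinate frame $(\partial_t, \partial_1, \dots, \partial_m)$, using the explicit formula \eqref{NOR} (equivalently \eqref{NOROrthogonalDecomposition}):
\begin{equation*}
\mu = v\partial_t + v b^j \partial_j, \qquad b^j = \widetilde{g}^{ij} u_i / f(u)^2.
\end{equation*}
By linearity in the second slot,
\begin{equation*}
\ric^N(\partial_t, \mu) = v\, \ric^N(\partial_t, \partial_t) + v b^j\, \ric^N(\partial_t, \partial_j).
\end{equation*}
Now Corollary \ref{RicdTdi} gives both pieces outright: $\ric^N(\partial_t, \partial_j) = 0$ for every $j = 1, \dots, m$, so the mixed terms drop out, and $\ric^N(\partial_t, \partial_t) = -m\, f''(u)/f(u)$. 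Combining these yields
\begin{equation*}
\ric^N(\partial_t, \mu) = -m v\, \frac{f''(u)}{f(u)},
\end{equation*}
and plugging this back into the displayed decomposition establishes \eqref{RicTF(V)NOR}.

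There is no real obstacle here, as this is essentially a bookkeeping argument: once one recognizes that $DF(V)$ is just $\partial_t$ minus its normal component, the claim reduces to the already-computed Ricci components of the GRWST metric from Corollary \ref{RicdTdi}. The only thing to be careful about is the sign convention coming from $\mu$ being time-like (so that $\gbar(\mu,\mu) = -1$ and $v = -\gbar(\mu, \partial_t)$), which is precisely what makes the coefficient in front of $\partial_t$ in the expansion of $\mu$ come out as $+v$ rather than $-v$.
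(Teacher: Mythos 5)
Your proof is correct and follows essentially the same path as the paper: write $DF(V)=\partial_t-v\mu$, decompose $\mu=v\partial_t+vb^j\partial_j$ in the ambient coordinate frame, and invoke Corollary \ref{RicdTdi} to kill the mixed terms and evaluate $\ric^N(\partial_t,\partial_t)$. Your write-up is in fact slightly cleaner than the paper's, whose final displayed line has a sign typo ($+v\ric^N(\mu,\mu)$ where both the statement and the correct computation give $-v\ric^N(\mu,\mu)$).
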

\begin{proof}
By definition of $V$ we have $\partial_t ^\top=DF(V)=\partial_t+\gbar(\partial_t,\mu)\mu$. 
Thus
\begin{align*}
\ric^N(DF(V),\mu)= \, &\ric^N(\partial_t,\mu)-v\ric^N(\mu,\mu)\\
=\, &v\ric^N(\partial_t,\partial_t)+vb^i\ric^N(\partial_t,\partial_i)-v\ric^N(\mu,\mu)\\
=\,  &-vm\frac{f''(u)}{f(u)}+v\ric^N(\mu,\mu).
\end{align*}
The second identity is obtained by considering the orthogonal decomposition of the unit normal 
$\mu$ with respect to the local frame $\pr{\partial_t,\partial_1,\dots,\partial_m}$ of $F^*TN$ 
(cf. formula \eqref{NOROrthogonalDecomposition}); in particular $b^i=\widetilde{g}^{ij}u_j/f(u)^2$. 
The last identity is a consequence of the values for the Ricci tensor described in Corollary \ref{RicdTdi}.
\end{proof}

\section{Evolution equation for the mean curvature}\label{EvolutionEquationSec}

As before, let $u(\cdot, s)$ be a solution to the prescribed graphical mean curvature flow 
\eqref{PGMCF} of an $m$-dimensional space-like Cauchy hypersurface. 
The solution induces a family of embeddings $F(s):M \to N$ with $F(p ,s) = (u(p,s),p)$
for any $p \in M$. The induced metric $g$ is defined by the pullback $g=F(s)^*\gbar$.
We begin with the following basic evolution equations for the metric tensor. 

\begin{prop}\label{EvolutionOfInverse-Prop}
The metric tensor $(g_{ij})$ and its inverse $(g^{ij})$, written in local coordinates, 
satisfy the following evolution equations along \eqref{PGMCF}
\begin{align}
&\partial_s g_{ij}=2(\smc-\mathcal{H})\ssff_{ij}, \label{EvolutioOfMetric} \\
&\partial_s g^{ij}=-2(\smc-\mathcal{H})g^{ik}\ssff_{kl}g^{lj}. \label{EvolutionOfInverse}
\end{align}
\end{prop}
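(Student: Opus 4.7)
The plan is to differentiate the two defining relations in $s$, making essential use of the pull-back connection $\SPBD$ on $\mathcal{F}^*TN$ introduced in \S\ref{TimeDerSec}, the metric property \eqref{MetricProperty} (which carries over to $\SPBD$), and the commutation identity \eqref{CommutativityCovDer}. For \eqref{EvolutioOfMetric} I would start from the pointwise identity
\begin{equation*}
g_{ij} = \gbar(D\mathcal{F}(\partial_i), D\mathcal{F}(\partial_j)),
\end{equation*}
and apply $\SPBD_{\partial_s}$ to obtain
\begin{equation*}
\partial_s g_{ij} = \gbar\bigl(\SPBD_{\partial_s}D\mathcal{F}(\partial_i),D\mathcal{F}(\partial_j)\bigr) + \gbar\bigl(D\mathcal{F}(\partial_i),\SPBD_{\partial_s}D\mathcal{F}(\partial_j)\bigr).
\end{equation*}

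Next I would substitute \eqref{CommutativityCovDer}, namely
$\SPBD_{\partial_s}D\mathcal{F}(\partial_i) = -\partial_i(\smc-\mathcal{H})\mu - (\smc-\mathcal{H})\PBD_{\partial_i}\mu$.
The $\partial_i(\smc-\mathcal{H})\mu$-part drops out since $\mu$ is normal to $D\mathcal{F}(\partial_j) = DF(\partial_j)$, and the remaining term is evaluated by \eqref{SOandSFF}, which gives $-\gbar(\PBD_{\partial_i}\mu,DF(\partial_j)) = \ssff_{ij}$. Hence each of the two summands contributes $(\smc-\mathcal{H})\ssff_{ij}$, and \eqref{EvolutioOfMetric} follows immediately by symmetry of $\ssff$.

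For \eqref{EvolutionOfInverse} I would differentiate the identity $g^{ij}g_{jk} = \delta^i_k$ in $s$, obtaining
\begin{equation*}
\partial_s g^{ij} = -g^{ik}(\partial_s g_{kl})g^{lj},
\end{equation*}
and then plug in \eqref{EvolutioOfMetric}. The only non-routine step in the whole argument is keeping the bookkeeping of \eqref{CommutativityCovDer} straight (which piece is normal, which is tangential), but once this is handled the computation reduces to a direct application of identities already established in \S\ref{ExtrinsicGeometrySub} and \S\ref{TimeDerSec}; I expect no substantive obstacle.
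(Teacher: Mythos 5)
Your proposal is correct and follows essentially the same route as the paper: express $g_{ij}=\gbar(D\mathcal{F}(\partial_i),D\mathcal{F}(\partial_j))$, differentiate via the metric property of $\SPBD$, apply \eqref{CommutativityCovDer} (the paper first swaps $\SPBD_{\partial_s}D\mathcal{F}(\partial_i)=\SPBD_{\partial_i}D\mathcal{F}(\partial_s)$ and then expands, which is exactly what that identity packages), discard the normal piece, and read off $\ssff_{ij}$ from \eqref{SOandSFF}; the formula for $\partial_s g^{ij}$ is then obtained from $g^{ij}g_{jk}=\delta^i_k$ exactly as you say.
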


\begin{proof}
Notice that \eqref{EvolutionOfInverse} is a direct consequence of \eqref{EvolutioOfMetric}. 
So we only need to prove \eqref{EvolutioOfMetric}.
Using the same notation as in \S \ref{TimeDerSec} we compute (see right below for the explanation
of the individual steps)
\begin{align*}
\partial_s g_{ij} & = \partial_s\gbar\pr{DF(\partial_i),DF(\partial_j)}\\
&= \gbar\pr{\SPBD_{\partial_s}DF(\partial_i),DF(\partial_j)}+\gbar\pr{DF(\partial_i),\SPBD_{\partial_s}DF(\partial_j)}\\
&= \gbar\pr{\SPBD_{\partial_i}D\mathcal{F}(\partial_s),DF(\partial_j)}+\gbar\pr{DF(\partial_i),\SPBD_{\partial_i}D\mathcal{F}(\partial_s)}\\
&= - \pr{\smc-\mathcal{H}} \Bigl( \gbar (\PBD_{\partial_i}\mu,DF(\partial_j)) + \gbar (DF(\partial_i),\PBD_{\partial_j}\mu)  \Bigr)
= 2\pr{\smc-\mathcal{H}}\ssff_{ij}.
\end{align*}
In the above the first identity follows by definition of the induced metric tensor $g=F^*\gbar$.
The second is just a consequence of the metric property of the pull-back derivative $\SPBD$.
The third line comes from the commutativity $[\partial_s,\partial_i]=0$ of local coordinate fields.
Finally the last equality is a consequence of \eqref{CommutativityCovDer} and the fact that $\mu$ is normal, 
i.e. $\gbar\pr{DF(X),\mu}=0$ for any vector field $X$ over $M$.
\end{proof}

\noindent Next we study the evolution of the scalar second fundamental form.

\begin{prop}\label{EvolutionOfh_ij-Prop}
The tensor $(h_{ij})$ of the scalar second fundamental form 
satisfies the following evolution equation (summing over double indices as usual)
along \eqref{PGMCF}
\begin{equation}\label{EvolutionOfh_ij}
\begin{split}
\partial_s \ssff_{ij} =& \pr{\smc-\mathcal{H}}\pr{g^{kl}\ssff_{ik}\ssff_{jl}-R^N(\mu,DF(\partial_i),DF(\partial_j),\mu)}\\
&+\pr{\smc-\mathcal{H}}_{ij}-\Gamma_{ij}^k\partial_k\pr{\smc-\mathcal{H}}\\
=&\pr{\smc-\mathcal{H}}\pr{g^{kl}\ssff_{ik}\ssff_{jl}-R^N(\mu,DF(\partial_i),DF(\partial_j),\mu)}+\nabla^2_{ij}\pr{\smc-\mathcal{H}}.
\end{split}
\end{equation}
\end{prop}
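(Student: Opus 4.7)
The plan is to derive \eqref{EvolutionOfh_ij} by starting from the intrinsic identity
$$
\ssff_{ij} = \gbar\bigl(\PBD_{\partial_i}DF(\partial_j),\mu\bigr),
$$
which follows from \eqref{SFFFormula}, \eqref{SFF=-hnu} and \eqref{SOandSFF} (note that the Christoffel correction drops out against $\mu$ because $\gbar(DF(\nabla_{\partial_i}\partial_j),\mu)=0$). Differentiating in $s$, viewed through $\SPBD$ on $\mathcal{F}^*TN$, and using the metric property \eqref{MetricProperty}, I would split
$$
\partial_s \ssff_{ij}
= \gbar\bigl(\SPBD_{\partial_s}\SPBD_{\partial_i}DF(\partial_j),\mu\bigr)
+\gbar\bigl(\PBD_{\partial_i}DF(\partial_j),\SPBD_{\partial_s}\mu\bigr),
$$
and handle each summand separately.

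For the second summand, I would expand $\PBD_{\partial_i}DF(\partial_j) = \sff(\partial_i,\partial_j)+DF(\nabla_{\partial_i}\partial_j) = -\ssff_{ij}\mu + \Gamma_{ij}^k DF(\partial_k)$ using \eqref{SFF=-hnu}. Plugging Proposition \ref{EvolutionNORProp}, i.e. $\SPBD_{\partial_s}\mu = -DF(\nabla(\smc-\mathcal{H}))$, the $\mu$-piece dies because $\mu\perp DF(\cdot)$, and the Christoffel piece gives precisely $-\Gamma_{ij}^k\,\partial_k(\smc-\mathcal{H})$. For the first summand, I would use the commutativity $[\partial_s,\partial_i]=0$ and the definition \eqref{RiemannCurvatureInLocalCoordinates} of $R^N$ on the pull-back bundle to swap the covariant derivatives, which introduces
$$
R^N\bigl(D\mathcal{F}(\partial_s),DF(\partial_i)\bigr)DF(\partial_j)
= -(\smc-\mathcal{H})\,R^N\bigl(\mu,DF(\partial_i)\bigr)DF(\partial_j),
$$
so that its $\gbar(\cdot,\mu)$-pairing contributes the Riemann term $-(\smc-\mathcal{H})R^N(\mu,DF(\partial_i),DF(\partial_j),\mu)$ on the right-hand side of \eqref{EvolutionOfh_ij}. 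The remaining piece $\SPBD_{\partial_i}\SPBD_{\partial_s}DF(\partial_j)$ is rewritten via the symmetry \eqref{CommutativityCovDer}, yielding
$$
-\partial_j(\smc-\mathcal{H})\,\mu - (\smc-\mathcal{H})\,\PBD_{\partial_j}\mu,
$$
to which I apply $\PBD_{\partial_i}$ once more.

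Pairing the resulting expression with $\mu$ using $\gbar(\mu,\mu)=-1$, the identity $\gbar(\PBD_{\partial_i}\mu,\mu)=0$ (so the cross terms $\partial_i(\smc-\mathcal{H})$ and $\partial_j(\smc-\mathcal{H})$ vanish), and the Weingarten-type formula $\PBD_{\partial_j}\mu = -g^{kl}\ssff_{jk}DF(\partial_l)$ that follows from \eqref{SOandSFF}, the quadratic term arises as
$$
-(\smc-\mathcal{H})\gbar\bigl(\PBD_{\partial_i}\PBD_{\partial_j}\mu,\mu\bigr)
= (\smc-\mathcal{H})\gbar\bigl(\PBD_{\partial_j}\mu,\PBD_{\partial_i}\mu\bigr)
= (\smc-\mathcal{H})\,g^{kl}\ssff_{ik}\ssff_{jl},
$$
while the $\mu$-part of $\SPBD_{\partial_i}\SPBD_{\partial_s}DF(\partial_j)$ contributes $\partial_i\partial_j(\smc-\mathcal{H})$. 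Combining all pieces gives $\partial_i\partial_j(\smc-\mathcal{H}) - \Gamma_{ij}^k\partial_k(\smc-\mathcal{H}) = \nabla^2_{ij}(\smc-\mathcal{H})$ together with the curvature and quadratic terms, proving the proposition. The main bookkeeping challenge is the careful sign tracking caused by $\mu$ being time-like (hence $\gbar(\mu,\mu)=-1$, which reverses several signs compared to the Riemannian Simons-type computation) and keeping the ambient Riemann term separated cleanly from the purely intrinsic Hessian $\nabla^2_{ij}$.
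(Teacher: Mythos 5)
Your proof is correct and follows essentially the same route as the paper's: the same splitting of $\partial_s\ssff_{ij}$ via the metric property of $\SPBD$, the same curvature-swap to extract the $R^N$ term, the same use of \eqref{CommutativityCovDer} and the Weingarten relation to produce the Hessian and quadratic $\ssff\cdot\ssff$ terms. The only cosmetic difference is that you substitute the explicit right-hand side of \eqref{CommutativityCovDer} and apply $\PBD_{\partial_i}$ directly (then observe two cross terms vanish), whereas the paper peels off the $\partial_i$-derivative by the metric property before substituting; both computations are line-for-line equivalent.
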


\begin{proof}
We compute with respect to a local coordinate frame $(\partial_i)$
\begin{align*}
\partial_s h_{ij}&=\partial_s\ssff(\partial_i,\partial_j)=\partial_s\gbar\pr{\PBD_{\partial_i}DF(\partial_j),\mu}\\
&=\gbar\pr{\SPBD_{\partial_s}\PBD_{\partial_i}DF(\partial_j),\mu}+\gbar\pr{\PBD_{\partial_i}DF(\partial_j),\SPBD_{\partial_s}\mu}\\
&=\gbar\pr{\SPBD_{\partial_s}\PBD_{\partial_i}DF(\partial_j),\mu}-h_{ij}\gbar\pr{\mu,\SPBD_{\partial_s}\mu} + \gbar\pr{DF(\nabla_{\partial_i}\partial_j),\SPBD_{\partial_s}\mu}\\
&=\gbar\pr{\SPBD_{\partial_s}\PBD_{\partial_i}DF(\partial_j),\mu}+ \gbar\pr{DF(\nabla_{\partial_i}\partial_j),\SPBD_{\partial_s}\mu}.
\end{align*} 
In the first line we just used \eqref{SOandSFF}.
The second line is obtained by making use of the metric property of the pull-back connection $\SPBD$.
The third is a consequence of \eqref{SFFFormula}.
The last equality follows from the fact that $\mu$ is of unit length, which implies vanishing of the
second term in the third line.
\medskip

\noindent We shall now compute these two terms above. 
By Proposition \ref{EvolutionNORProp} 
\begin{align*}
\gbar\pr{DF(\nabla_{\partial_i}\partial_j),\SPBD_{\partial_s}\mu} &\equiv 
\gbar\pr{DF(\nabla_{\partial_i}\partial_j), \partial_s \mu}  \\ &= 
- g \Bigl( \nabla_{\partial_i}\partial_j , \nabla \pr{\smc-\mathcal{H}} \Bigr)
= -\Gamma_{ij}^k\partial_k\pr{\smc-\mathcal{H}}.
\end{align*} 
This computes the last term. For the first term we proceed as follows. Noting that that $[\partial_i,\partial_s]=0$, we 
obtain from the definition of the Riemann curvature tensor
\begin{equation*}
\SPBD_{\partial_s}\SPBD_{\partial_i}D\mathcal{F}(\partial_j)-\SPBD_{\partial_i}\SPBD_{\partial_s}D\mathcal{F}(\partial_j)
=R^N\pr{D\mathcal{F}(\partial_s),D\mathcal{F}(\partial_i)}D\mathcal{F}(\partial_j).
\end{equation*}  
This implies directly 
\begin{equation}\label{EvOfH-H1}
\begin{split}
&\gbar\pr{\SPBD_{\partial_s}\PBD_{\partial_i}DF(\partial_j),\mu} \\
& = R^N\pr{D\mathcal{F}(\partial_s),D\mathcal{F}(\partial_i),D\mathcal{F}(\partial_j),\mu}+\gbar\pr{\SPBD_{\partial_i}\SPBD_{\partial_s}D\mathcal{F}(\partial_j),\mu}\\
& = -(\smc-\mathcal{H})R^N\pr{\mu,D\mathcal{F}(\partial_i),D\mathcal{F}(\partial_j),\mu}+\partial_i\pr{\gbar\pr{\SPBD_{\partial_j}D\mathcal{F}(\partial_s),\mu}}\\
& \quad -\gbar\pr{\SPBD_{\partial_j}D\mathcal{F}(\partial_s),\SPBD_{\partial_i}\mu},
\end{split}
\end{equation}
where the second equality is a consequence of the \eqref{NewPull-Back1}, \eqref{CommutativityCovDer} and
the metric property of the pull-back connection. Let us now describe the second term on the right hand side of 
the second equation in \eqref{EvOfH-H1}. From \eqref{CommutativityCovDer} we write
\begin{equation}\label{EvOfH-H2}
\begin{split}
&\partial_i\pr{\gbar\pr{\SPBD_{\partial_j}D\mathcal{F}(\partial_s),\mu}} \\ 
&= -\partial_i\Bigl( \partial_j\pr{\smc-\mathcal{H}}\gbar(\mu,\mu) \Bigr)
-\partial_i\pr{\pr{\smc-\mathcal{H}}\gbar\pr{\SPBD_{\partial_j}\mu,\mu}}\\
&=\partial_i \partial_j\pr{\smc-\mathcal{H}},
\end{split}
\end{equation}
where in the second equation we used the fact that $\gbar\pr{\mu,\mu} =-1$.
\medskip

To conclude the computation of \eqref{EvOfH-H1}, we need $\gbar\pr{\SPBD_{\partial_j}D\mathcal{F}(\partial_s),\SPBD_{\partial_i}\mu}$.
To express this we will use equation \eqref{CommutativityCovDer} once more. 
Before presenting the expression let us notice the following. 
In view of \eqref{NewPull-Back1}, $\SPBD_{\partial_i}\mu=\PBD_{\partial_i}\mu$ is a section of the pull-back bundle $F^*TN$. 
Hence it can be linearly decomposed in terms of the local frame $(\mu,DF(\partial_1),\dots,DF(\partial_m))$.
In particular, by keeping in mind that $\mu$ is a unit length time-like vector we conclude 
$$\SPBD_{\partial_i}\mu=g^{jk}\gbar\pr{\PBD_{\partial_i}\mu,DF(\partial_j)}DF(\partial_k)=-g^{jk}\ssff_{ij}DF(\partial_k)$$ 
with the obvious summation over the indices $j$ and $k$. 
Thus we find by \eqref{NewPull-Back1}
\begin{equation}\label{EvOF(H-H)3}
\begin{split}
&\gbar\pr{\SPBD_{\partial_j}D\mathcal{F}(\partial_s),\SPBD_{\partial_i}\mu} \\
& = -\partial_j\pr{\smc-\mathcal{H}} \cdot \gbar\pr{\mu,\PBD_{\partial_i}\mu} -(\smc-\mathcal{H})\gbar\pr{\PBD_{\partial_j}\mu,\PBD_{\partial_i}\mu}\\
& = -(\smc-\mathcal{H}) g^{kl}\ssff_{ik} \ssff_{jl} ,
\end{split}
\end{equation}
where we used $\gbar\pr{\mu,\PBD_{\partial_i}\mu} = 0$ by the metric property of the pull-back connection
and the fact that $\mu$ is of unit length.
Equation \eqref{EvolutionEquationForH-scriptH} now follows by substituting \eqref{EvOF(H-H)3} and \eqref{EvOfH-H2} in \eqref{EvOfH-H1}.
\end{proof}

\begin{cor} The mean curvature evolves along \eqref{PGMCF} by
\begin{equation}\label{EvolutionEquationForH-scriptH}
\begin{split}
(\partial_s + \Delta) (\smc - \mathcal{H}) & = - (\smc - \mathcal{H}) \Bigl( \|\ssff\|^2 + \ric^N(\mu, \mu) \Bigr), \\
(\partial_s + \Delta) (\smc-\mathcal{H})^2 & =-2\pr{\smc-\mathcal{H}}^2\Bigl( \|\ssff\|^2 + \ric^N(\mu, \mu) \Bigr)
\\ & \quad -2|\nabla\pr{\smc-\mathcal{H}}|^2.
\end{split}
\end{equation}
\end{cor}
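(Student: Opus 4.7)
The plan is to derive the two evolution equations as direct algebraic consequences of Propositions \ref{EvolutionOfInverse-Prop} and \ref{EvolutionOfh_ij-Prop}, by taking a $g$-trace of the evolution equation of $\ssff_{ij}$. Throughout we use that $\mathcal{H}:M\to\R$ is prescribed and $s$-independent, so $\partial_s (\smc-\mathcal{H})=\partial_s\smc$ and $\nabla(\smc-\mathcal{H})$ only involves spatial derivatives.

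First I would compute $\partial_s \smc = \partial_s \pr{g^{ij}\ssff_{ij}}$ via the Leibniz rule, inserting \eqref{EvolutionOfInverse} and \eqref{EvolutionOfh_ij}. The $\partial_s g^{ij}$ contribution gives
\[
(\partial_s g^{ij})\ssff_{ij} = -2(\smc-\mathcal{H}) g^{ik}\ssff_{kl}g^{lj}\ssff_{ij} = -2(\smc-\mathcal{H})\|\ssff\|^2.
\]
The $g^{ij}\partial_s\ssff_{ij}$ contribution yields three terms: $(\smc-\mathcal{H})\|\ssff\|^2$ from $g^{ij}g^{kl}\ssff_{ik}\ssff_{jl}$; a curvature term $-(\smc-\mathcal{H})g^{ij}R^N(\mu,DF(\partial_i),DF(\partial_j),\mu)$; and $g^{ij}\nabla^2_{ij}(\smc-\mathcal{H})$.

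The key identification is
\[
g^{ij}R^N\bigl(\mu, DF(\partial_i), DF(\partial_j),\mu\bigr) = \ric^N(\mu,\mu).
\]
This follows from the definition of $\ric^N$ in \eqref{RicdTdi}, applied in the local $\gbar$-orthonormal frame $(\mu, DF(e_1),\dots,DF(e_m))$ of $F^*TN$: the $\mu$-$\mu$ summand in the trace vanishes by antisymmetry of $R^N$, and the pair-swap symmetry $R^N(X,Y,Z,W)=R^N(Z,W,X,Y)$ converts the expression into the standard Ricci trace. For the Laplacian term I use the sign convention of Notation \ref{notation}(iv), which gives $g^{ij}\nabla^2_{ij}(\smc-\mathcal{H}) = -\Delta(\smc-\mathcal{H})$. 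Assembling these,
\[
\partial_s \smc \,=\, -(\smc-\mathcal{H})\bigl(\|\ssff\|^2+\ric^N(\mu,\mu)\bigr) \,-\, \Delta(\smc-\mathcal{H}).
\]
Adding $\Delta(\smc-\mathcal{H})$ to both sides yields the first formula.

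For the second equation I would apply the Bochner-type identity
\[
\bigl(\partial_s+\Delta\bigr)(u^2) \,=\, 2u\,\bigl(\partial_s+\Delta\bigr)u \,-\, 2|\nabla u|^2,
\]
which follows from $\Delta = -\divergence \nabla$ and the product rule, with $u=\smc-\mathcal{H}$. Substituting the first evolution equation into this identity gives precisely the stated formula. The only delicate step in the whole argument is the sign bookkeeping when identifying the Ricci trace and when applying the paper's sign convention for $\Delta$; both are handled once by working in a local $\gbar$-orthonormal frame for $F^*TN$ where the timelike $\mu$ contributes a sign that cancels the antisymmetry-induced vanishing.
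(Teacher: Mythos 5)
Your proof is correct and follows essentially the same route as the paper: trace the evolution of $\ssff_{ij}$ from Propositions \ref{EvolutionOfInverse-Prop} and \ref{EvolutionOfh_ij-Prop}, identify $g^{ij}R^N(\mu,DF(\partial_i),DF(\partial_j),\mu)=\ric^N(\mu,\mu)$, and recall that $g^{ij}\nabla^2_{ij}=-\Delta$ under the sign convention of Notation \ref{notation}(iv); the paper then treats the second identity as an immediate consequence of the first, which is exactly your Bochner-type step. One small wording quibble: the closing sentence about ``the timelike $\mu$ contributes a sign that cancels the antisymmetry-induced vanishing'' is a bit garbled — the $\mu$-$\mu$ summand is simply $R^N(\mu,\mu,\mu,\mu)=0$, so the $\gbar^{00}=-1$ factor is multiplied by zero rather than cancelling anything — but this does not affect the validity of the argument.
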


\begin{proof}
The second evolution equation is a direct consequence of the first one. For the first equation we compute by
Propositions \ref{EvolutionOfInverse-Prop} and \ref{EvolutionOfh_ij-Prop}
\begin{align*}
\partial_s \smc=&\partial_s \pr{g^{ij}\ssff_{ij}}=\partial_s g^{ij}\cdot\ssff_{ij}+g^{ij}\cdot\partial_s\ssff_{ij}\\
=&-2\pr{\smc-\mathcal{H}}\|\ssff\|^2+g^{ij}\partial_s\ssff_{ij}\\
=&-2\pr{\smc-\mathcal{H}}\|\ssff\|^2+\pr{\smc-\mathcal{H}}\pr{\|\ssff\|^2-\ric^N(\mu,\mu)}-\Delta\pr{\smc-\mathcal{H}}.
\end{align*}
\end{proof}

\begin{rmk}
We want to point out a difference between the first equation in \eqref{EvolutionEquationForH-scriptH} 
and the same evolution equation in the proof of \cite[Proposition 4.6]{ecker1991parabolic}.
In the latter one sees an extra term $\gbar\pr{\overline{\nabla}\mathcal{H},\mu}$. Its presence is due to the function $\mathcal{H}$ being defined in 
\cite{ecker1991parabolic} on the ambient Lorentzian manifold $(N,\gbar)$ while in our case $\mathcal{H}$ is defined on $(M,\widetilde{g})$.
In particular, in our case $\partial_s \mathcal{H}$ is just vanishing.
\end{rmk}

\section{Evolution of the scalar second fundamental form}\label{EvolutionEquationSec2}

In this section we derive an evolution equation for the norm (with respect to $g$) of 
the scalar second fundamental form. This will play an essential role for the
uniform $C^0$ and $C^2$-estimates of $u$.  We begin by recalling some useful formulae, 
to be consistent with other references we will also write them in abstract index notation.
\medskip

\noindent First we recall the Codazzi-Mainardi equation, cf. \cite[Theorem 8.9]{lee2018introduction}.

\begin{prop}\label{CodazziEqProp1}
For every $X,Y,Z\in \Gamma(TM)$ one has
\begin{equation}\label{CodazziEq1}
\nabla_X \sff (Y,Z)-\nabla_Y\sff (X,Z)= R^N(DF(X),DF(Y))DF(Z)-DF(R(X,Y)Z).
\end{equation}
\end{prop}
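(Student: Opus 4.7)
The plan is to derive the Codazzi-Mainardi identity by a direct computation that compares the two natural ways of forming a ``second covariant derivative'' of $DF(Z)$: one going through the second fundamental form, the other via the pull-back connection $\PBD$ whose curvature is (the pullback of) $R^N$. The discrepancy between the two is exactly what produces $R^N(DF(X),DF(Y))DF(Z) - DF(R(X,Y)Z)$ once one antisymmetrizes in $X,Y$.

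First I would record the Gauss-type formula implicit in \eqref{SFFFormula}:
\begin{equation}\label{GaussAux}
\PBD_X DF(W) = \sff(X,W) + DF(\nabla_X W), \qquad X,W \in \Gamma(TM),
\end{equation}
which is just the definition of $\sff$ rearranged. Next I would spell out the covariant derivative of $\sff$, viewed as a $F^*TN$-valued $(0,2)$-tensor on $M$, namely
\[
\nabla_X \sff(Y,Z) = \PBD_X\bigl(\sff(Y,Z)\bigr) - \sff(\nabla_X Y, Z) - \sff(Y, \nabla_X Z).
\]
Applying \eqref{GaussAux} twice, the first term on the right becomes
\[
\PBD_X \PBD_Y DF(Z) - \sff(X,\nabla_Y Z) - DF(\nabla_X\nabla_Y Z),
\]
so that altogether
\[
\nabla_X \sff(Y,Z) = \PBD_X \PBD_Y DF(Z) - DF(\nabla_X\nabla_Y Z) - \sff(X,\nabla_Y Z) - \sff(\nabla_X Y, Z) - \sff(Y,\nabla_X Z).
\]

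The second step is to antisymmetrize in $X$ and $Y$. The last three summands almost cancel with their swapped counterparts; what survives, using torsion-freeness $\nabla_X Y - \nabla_Y X = [X,Y]$, is exactly $-\sff([X,Y],Z)$. Therefore
\[
\nabla_X \sff(Y,Z) - \nabla_Y \sff(X,Z) = \bigl(\PBD_X \PBD_Y - \PBD_Y \PBD_X\bigr) DF(Z) - DF\bigl(\nabla_X \nabla_Y Z - \nabla_Y \nabla_X Z\bigr) - \sff([X,Y],Z).
\]

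The third step invokes the naturality of the pull-back connection: its curvature is the pullback of $R^N$, so
\[
\bigl(\PBD_X \PBD_Y - \PBD_Y \PBD_X - \PBD_{[X,Y]}\bigr) DF(Z) = R^N(DF(X),DF(Y))\, DF(Z).
\]
Applying \eqref{GaussAux} to $\PBD_{[X,Y]}DF(Z) = \sff([X,Y],Z) + DF(\nabla_{[X,Y]}Z)$ and substituting, the $\sff([X,Y],Z)$ contributions cancel, leaving
\[
\nabla_X \sff(Y,Z) - \nabla_Y \sff(X,Z) = R^N(DF(X),DF(Y))DF(Z) - DF\bigl(\nabla_X\nabla_Y Z - \nabla_Y\nabla_X Z - \nabla_{[X,Y]}Z\bigr),
\]
and the bracket in the last term is precisely $R(X,Y)Z$, yielding \eqref{CodazziEq1}.

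There is no real obstacle here, the computation is a formal consequence of \eqref{SFFFormula}, the metric property \eqref{MetricProperty}, naturality of curvature under pull-back, and torsion-freeness of $\nabla$ and $\overline{\nabla}$. The only thing one must be careful with is the bookkeeping of how many tensorial corrections (from $\nabla_X Y$, $\nabla_X Z$, $[X,Y]$) appear, and to keep track that $\sff$ takes values in $F^*TN$ while $R(X,Y)Z$ is a vector field on $M$ which is then pushed forward by $DF$.
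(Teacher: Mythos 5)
Your proof is correct. The paper itself does not prove this proposition: it simply cites \cite[Theorem 8.9]{lee2018introduction} for the Codazzi--Mainardi equation, so there is no in-paper argument to compare against. Your derivation is the standard one and is carried out cleanly: rearranging \eqref{SFFFormula} into the Gauss formula $\PBD_X DF(W) = \sff(X,W) + DF(\nabla_X W)$, expanding $\nabla_X\sff(Y,Z)$ via the product connection on $T^*M\otimes T^*M\otimes F^*TN$, antisymmetrizing in $X,Y$ (where the $\sff(X,\nabla_Y Z)$ and $\sff(Y,\nabla_X Z)$ terms cancel and torsion-freeness collapses $\sff(\nabla_X Y,Z)-\sff(\nabla_Y X,Z)$ into $\sff([X,Y],Z)$), and finally invoking that the curvature of the pull-back connection is $F^*R^N$. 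The bookkeeping checks out, the $\sff([X,Y],Z)$ terms cancel exactly as you say after converting $\PBD_{[X,Y]}DF(Z)$ by the Gauss formula, and the leftover $\nabla_X\nabla_Y Z-\nabla_Y\nabla_X Z-\nabla_{[X,Y]}Z$ is $R(X,Y)Z$. This self-contained argument is a reasonable substitute for the textbook reference, and it has the mild advantage of making explicit which structure is used at each step (naturality of pull-back curvature, torsion-freeness of both $\nabla$ and $\overline{\nabla}$), which the paper leaves implicit.
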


\begin{cor}\label{CodazziEqProp}
For every $X,Y,Z\in \Gamma(TM)$ one has
\begin{equation}\label{CodazziEq}
\nabla \ssff(X,Y,Z)-\nabla\ssff(Y,X,Z)=R^N(DF(X),DF(Y),DF(Z),\mu).
\end{equation}
\end{cor}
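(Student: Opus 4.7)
The plan is to derive the scalar Codazzi-Mainardi identity of Corollary \ref{CodazziEqProp} from the vector-valued version in Proposition \ref{CodazziEqProp1} by simply pairing both sides with the unit normal $\mu$ via the ambient metric $\gbar$. The main task is therefore bookkeeping: I must check that the pairing commutes with the covariant derivative $\nabla$ in the appropriate sense, and that the tangential error term $DF(R(X,Y)Z)$ drops out.

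First I would interpret the left-hand side: $\nabla\ssff(X,Y,Z) := (\nabla_X \ssff)(Y,Z)$, where $\ssff \in \Gamma(T^*M\otimes T^*M)$ is a genuine $(0,2)$-tensor on $M$. Expanding
\[
(\nabla_X \ssff)(Y,Z) = X\bigl(\ssff(Y,Z)\bigr) - \ssff(\nabla_X Y, Z) - \ssff(Y, \nabla_X Z),
\]
and using $\ssff(Y,Z) = \gbar(\sff(Y,Z),\mu)$ from Definition \ref{SSFFDef}, the metric property \eqref{MetricProperty} of $\PBD$ yields
\[
X\bigl(\ssff(Y,Z)\bigr) = \gbar\bigl(\PBD_X \sff(Y,Z),\mu\bigr) + \gbar\bigl(\sff(Y,Z),\PBD_X\mu\bigr).
\]
The crucial small observation is that the second summand vanishes: by \eqref{SFF=-hnu}, $\sff(Y,Z) = -\ssff(Y,Z)\mu$, and since $\mu$ has unit length the metric property gives $\gbar(\mu,\PBD_X\mu) = \tfrac12 X\bigl(\gbar(\mu,\mu)\bigr) = 0$. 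Combining with the same identity applied to $\ssff(\nabla_X Y, Z)$ and $\ssff(Y,\nabla_X Z)$, I obtain the key intermediate formula
\[
(\nabla_X \ssff)(Y,Z) = \gbar\bigl((\nabla_X \sff)(Y,Z),\mu\bigr),
\]
where $(\nabla_X \sff)(Y,Z)$ on the right denotes the covariant derivative of $\sff$ viewed as a section of $T^*M\otimes T^*M\otimes F^*TN$ (using $\nabla$ on the $T^*M$ factors and $\PBD$ on the $F^*TN$ factor).

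With this identity in hand, subtracting the analogous expression with $X$ and $Y$ swapped gives
\[
\nabla\ssff(X,Y,Z) - \nabla\ssff(Y,X,Z) = \gbar\bigl((\nabla_X\sff)(Y,Z) - (\nabla_Y\sff)(X,Z),\, \mu\bigr).
\]
Now Proposition \ref{CodazziEqProp1} identifies the bracket on the right with $R^N(DF(X),DF(Y))DF(Z) - DF(R(X,Y)Z)$. Since $DF(R(X,Y)Z) \in DF(TM)$ is tangential to the hypersurface $F(M)$ and $\mu$ is $\gbar$-orthogonal to $DF(TM)$, the term $\gbar(DF(R(X,Y)Z),\mu)$ vanishes. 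The remaining pairing is, by the definition \eqref{RiemannCurvatureTensorFormula} of the $(0,4)$-Riemann tensor on $N$, precisely $R^N(DF(X),DF(Y),DF(Z),\mu)$, which finishes the proof.

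No real obstacle is anticipated here; the argument is a short formal computation that reduces Corollary \ref{CodazziEqProp} to Proposition \ref{CodazziEqProp1}. The only subtle point worth stating explicitly is the unit-length cancellation $\gbar(\mu,\PBD_X\mu)=0$, which is what allows the scalar $\nabla\ssff$ to be expressed cleanly as the $\mu$-component of the vector-valued $\nabla\sff$.
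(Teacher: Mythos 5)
Your proposal is correct and matches the paper's (terse) proof, which simply says to take the inner product with $\mu$ on both sides of \eqref{CodazziEq1} and use the formula for the covariant derivative of tensors. You have just spelled out the details — the unit-length cancellation $\gbar(\mu,\PBD_X\mu)=0$ and the vanishing of the tangential term $\gbar(DF(R(X,Y)Z),\mu)$ — that the paper leaves implicit.
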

\begin{proof}
The result follows by taking the inner product with the unit normal on both sides of \eqref{CodazziEq1} and using the formula for the covariant derivative of tensors.
\end{proof}

\noindent Next we recall Gau\ss' Theorema Egregium.
\begin{thm}
For every $X,Y,Z,W\in \Gamma(TM)$ one has 
\begin{equation}\label{Gauss}
\begin{split}
&R^N\pr{DF(X),DF(Y),DF(Z),DF(W)} \\ &=R^M(X,Y,Z,W)
+\ssff(Y,Z)\ssff(X,W)-\ssff(X,Z)\ssff(Y,W).
\end{split}
\end{equation}
\end{thm}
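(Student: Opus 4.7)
The plan is to exploit the Gauss decomposition of the pull-back connection together with the definition of curvature in terms of iterated covariant derivatives, then extract the tangential part by pairing with $DF(W)$.

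First, I would rewrite the defining identity \eqref{SFFFormula} of the second fundamental form as a Gauss-type splitting
\begin{equation*}
\PBD_X DF(Y) = DF(\nabla_X Y) + \sff(X,Y) = DF(\nabla_X Y) - \ssff(X,Y)\mu,
\end{equation*}
using \eqref{SFF=-hnu} in the second equality. The right-hand side of \eqref{Gauss} has a tangential term plus two $\ssff\cdot\ssff$ terms, which strongly suggests that we should compute $R^N(DF(X),DF(Y))DF(Z)$ from the pull-back connection via
\begin{equation*}
R^N(DF(X),DF(Y))DF(Z) = \PBD_X \PBD_Y DF(Z) - \PBD_Y \PBD_X DF(Z) - \PBD_{[X,Y]}DF(Z),
\end{equation*}
and then pair the result with $DF(W)$.

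Next, I would apply the Gauss splitting once to $\PBD_Y DF(Z)$ and a second time to the outer $\PBD_X$, repeating symmetrically for the $(Y,X)$ and $[X,Y]$ terms. The tangential pieces combine into $DF\bigl(R^M(X,Y)Z\bigr)$ by the very definition of $R^M$. The normal pieces assemble into $-\bigl(\nabla\ssff(X,Y,Z) - \nabla\ssff(Y,X,Z)\bigr)\mu$ (which we do not need explicitly here, though it would reprove Corollary \ref{CodazziEqProp}). The remaining terms are precisely $-\ssff(Y,Z)\PBD_X\mu + \ssff(X,Z)\PBD_Y\mu$, arising from differentiating $-\ssff(Y,Z)\mu$ and $-\ssff(X,Z)\mu$ in the $\mu$-direction.

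Taking $\gbar(\,\cdot\,,DF(W))$ then kills the normal contributions (since $\mu \perp DF(W)$) and produces
\begin{equation*}
R^N(DF(X),DF(Y),DF(Z),DF(W)) = R^M(X,Y,Z,W) - \ssff(Y,Z)\gbar(\PBD_X\mu, DF(W)) + \ssff(X,Z)\gbar(\PBD_Y\mu, DF(W)).
\end{equation*}
Finally, the Weingarten-type identity \eqref{SOandSFF} gives $\gbar(\PBD_X\mu,DF(W)) = -\ssff(X,W)$ and analogously in the other slot, yielding the claimed formula. The only subtle point—and the one place where a sign error could easily creep in—is the bookkeeping forced by $\gbar(\mu,\mu) = -1$: the minus sign in $\sff = -\ssff\,\mu$ and the minus sign in \eqref{SOandSFF} must combine to reproduce the standard Riemannian pattern $+\ssff(Y,Z)\ssff(X,W) - \ssff(X,Z)\ssff(Y,W)$ rather than introducing spurious Lorentzian signs. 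This is the main (and essentially only) obstacle.
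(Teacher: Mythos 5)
Your derivation is correct and uses the same Gauss/Weingarten decomposition that underlies \cite[Theorem~8.5]{lee2018introduction}, which is all the paper offers in lieu of a written-out proof. The sign bookkeeping you flagged does work out: the mixed terms $-\ssff(Y,Z)\PBD_X\mu+\ssff(X,Z)\PBD_Y\mu$, paired against $DF(W)$ via the Weingarten identity $\gbar(\PBD_X\mu,DF(W))=-\ssff(X,W)$ from \eqref{SOandSFF}, land on $+\ssff(Y,Z)\ssff(X,W)-\ssff(X,Z)\ssff(Y,W)$ exactly as asserted; the one caveat is that this is not the ``standard Riemannian pattern'' as you phrase it, but rather the opposite sign on the $\ssff\cdot\ssff$ terms, a flip produced precisely by the single Lorentzian minus in $\sff=-\ssff\,\mu$ (the Weingarten identity itself carries the same sign in both signatures), which is why the paper's reference to the Riemannian textbook statement is supplemented by explicitly invoking \eqref{SFF=-hnu}.
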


\begin{proof}
For a proof of this we refer to \cite[Theorem 8.5]{lee2018introduction}, where we used \eqref{SFF=-hnu} and \eqref{ScalarSecondFundamentalForm}.
\end{proof}

\noindent Let $A$ now be a $(0,2)$-tensor over $(M,g)$. Setting for every $X,Y,Z,W\in\Gamma(TM)$
$$\nabla^2 A(X,Y,Z,W)=\nabla (\nabla A)(X,Y,Z,W),$$
one has by direct computation of $\nabla^2 A(X,Y,Z,W)$ and $\nabla^2 A(Y,X,Z,W)$
\begin{equation}
\begin{split}
\nabla^2 A(X,Y,Z,W)-\nabla A(Y,X,Z,W) &=A\pr{R(Y,X)Z,W}+A\pr{Z,R(Y,X)W}\\
&=-A\pr{R(X,Y)Z,W}-A\pr{Z,R(X,Y)W}.
\end{split}
\end{equation}
Following now the same argument as in the proof of \cite[Theorem 10.1]{Zhu02},
we find the following expression for the second derivative of the scalar second fundamental form $\ssff$
\begin{equation}\label{PreSimons}
\begin{split}
\nabla_k\nabla_l \ssff_{ij}=&\nabla_i\nabla_j \ssff_{kl}+g^{pq}R^N_{iklp}\ssff_{qj}+g^{pq}R^N_{ikjp}\ssff_{ql}-\ssff_{kl}R^N_{0ij0}\\
&-\ssff_{kp}g^{pq}R^N_{lijq} -\ssff_{ij}R^N_{k0l0}-\ssff_{ip}g^{pq}R^N_{kjlq}+\nabla_k R^N_{lij0}+\nabla_i R^N_{kjl0}\\
&-g^{pq}\ssff_{kl}\ssff_{ip}\ssff_{qj}+g^{pq}\ssff_{il}\ssff_{kp}\ssff_{qj}-g^{pq}\ssff_{kj}\ssff_{ip}\ssff_{ql}+g^{pq}\ssff_{ij}\ssff_{kp}\ssff_{ql}.
\end{split}
\end{equation}
We want to point out a difference in signs with the result in \cite{Zhu02} due to a different 
sign convention for the scalar second fundamental form $\ssff$ and due to the fact that the ambient space is a Riemannian manifold.
By taking the trace of \eqref{PreSimons} we find the so called Simons identity.

\begin{equation}\label{Simons}
\begin{split}
\Delta \ssff_{ij}=&-\nabla_i\nabla_j\smc-\ssff_{ij}\pr{\|\ssff\|^2+\ric^N(\mu,\mu)}+\smc \ssff_{ik}\ssff_{kj}\\
&+2\ssff_{kl}R^N_{kijl}-\ssff_{pj}R^N_{ikkp}+\ssff_{ip}R^N_{kjkp}\\
&+\smc R^N_{0ij0}-\nabla_k R^N_{kij0}-\nabla_i R^N_{kjk0}.
\end{split}
\end{equation}
By summing the above with \eqref{EvolutionOfh_ij} we find 
\begin{equation}\label{FullEvohij}
\begin{split}
\pr{\partial_s+\Delta}\ssff_{ij}=&-\nabla_i\nabla_j\mathcal{H}-\mathcal{H}\pr{\ssff_{ik}\ssff_{kj}-R^N_{0ij0}}\\
&+2\smc\ssff_{ik}\ssff_{kj}-\ssff_{ij}\pr{\|\ssff\|^2+\ric^N(\mu,\mu)}\\
&+2\ssff_{kl}R^N_{kijl}-\ssff_{jl}R^N_{ikkl}+\ssff_{il}R^N_{kjkl}-\nabla_k R^N_{kij0}-\nabla_i R^N_{kjk0}
\end{split}
\end{equation}

\begin{rmk}
Due to different sign conventions, equation \eqref{FullEvohij} has slight differences in signs with the one in \cite[Proposition 3.2 (i)]{ecker1991parabolic}. 
\end{rmk}

Although the slight change in signs between equation \eqref{FullEvohij} and 
the corresponding one in \cite{ecker1991parabolic} we can conclude by straightforward estimates the same inequality as 
\cite[Proposition 3.2 (iii)]{ecker1991parabolic},  which is the assertion of the final result in this section. 

\begin{prop}
\begin{equation}\label{TheInequality}
\begin{split}
\pr{\partial_s+\Delta}\|\ssff\|^2 &\le -2\|\nabla\ssff\|^2-\|\ssff\|^4+c_0\cdot\pr{1+\|\ssff\|^2+\|\nabla^2\mathcal{H}\|^2} \\
&\le -2\|\nabla\ssff\|^2-\|\ssff\|^4+c_1\cdot\pr{1+\|\ssff\| + \|\ssff\|^2}
\end{split}
\end{equation}
where the constants 
\begin{align*}
&c_0 = c_0\pr{m,v,\|R^N\|,\|\nabla R^N\|,\|\mathcal{H}\|_\infty}, \\
&c_1 = c_1\pr{m,v,\|R^N\|,\|\nabla R^N\|,\|\mathcal{H}\|_\infty, \|\mathcal{H}\|_{C^2}},
\end{align*}
depend on the entries in the brackets.
\end{prop}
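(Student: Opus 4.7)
The plan is to apply $(\partial_s + \Delta)$ to $\|\ssff\|^2 = g^{ik}g^{jl}\ssff_{ij}\ssff_{kl}$ and to substitute \eqref{FullEvohij} term by term. For the time part I would use Proposition \ref{EvolutionOfInverse-Prop}: since $\partial_s g^{ij} = -2(\smc - \mathcal{H})\ssff^{ij}$, the product rule produces, besides $2\ssff^{ij}\partial_s \ssff_{ij}$, an extra cubic contribution $-4(\smc-\mathcal{H})\operatorname{tr}(\ssff^3)$. For the spatial part, since $\nabla g = 0$, the standard Bochner-type identity
\[
\Delta\|\ssff\|^2 = 2\ssff^{ij}\Delta \ssff_{ij} - 2\|\nabla\ssff\|^2
\]
immediately delivers the $-2\|\nabla\ssff\|^2$ term of the target inequality.

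The decisive step is the cubic cancellation in the combined expression. Collecting coefficients of $\operatorname{tr}(\ssff^3)$ from $2\ssff^{ij}\cdot(-\mathcal{H}\ssff_{ik}\ssff_{k}{}^{j} + 2\smc\ssff_{ik}\ssff_{k}{}^{j})$ in \eqref{FullEvohij}, together with the metric-variation term, the total coefficient works out to $-4\smc + 4\mathcal{H} - 2\mathcal{H} + 4\smc = 2\mathcal{H}$, so the $\smc$-dependence cancels and only $2\mathcal{H}\operatorname{tr}(\ssff^3)$ survives. Hence the only quartic term is $-2\|\ssff\|^4$, coming from $2\ssff^{ij}\cdot(-\ssff_{ij}\|\ssff\|^2)$, and all remaining contributions are at most cubic. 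I expect this cancellation to be the main obstacle: without it, $|\smc|\le\sqrt{m}\|\ssff\|$ would force a positive $\|\ssff\|^4$ contribution too large to be absorbed by the single $-2\|\ssff\|^4$, and the whole subsequent $L^\infty$-analysis of $\|\ssff\|$ would collapse.

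The remaining terms are then estimated by Cauchy--Schwarz and Young's inequalities together with the tensor bound \eqref{TensorNormInequality}: $|2\ssff^{ij}\nabla_i\nabla_j\mathcal{H}| \le \|\ssff\|^2 + \|\nabla^2\mathcal{H}\|^2$; Young applied to the surviving cubic gives $2|\mathcal{H}|\cdot|\operatorname{tr}(\ssff^3)| \le \|\ssff\|^4 + C\|\mathcal{H}\|_\infty^4$; the curvature contractions of $\ssff$ against $R^N$ and $\nabla R^N$ are controlled by $C(\|R^N\|+\|\nabla R^N\|)(1+\|\ssff\|+\|\ssff\|^2)$; and $-2\|\ssff\|^2\ric^N(\mu,\mu)$ by $C(v)\|\ssff\|^2$. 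Uniform boundedness of $\|R^N\|$, $\|\nabla R^N\|$ and $\ric^N(\mu,\mu)$ follows from Lemma \ref{SomeVanishingofRicciLem} and Corollary \ref{RicdTdi}, combined with the standing bounds on $f, f', f''$ and on $v$ via the decomposition \eqref{NOROrthogonalDecomposition}. Absorbing one copy of $\|\ssff\|^4$ into the negative quartic yields the first inequality with $c_0$ depending on the stated quantities; the second follows by further bounding $\|\nabla^2\mathcal{H}\|^2 \le \|\mathcal{H}\|_{C^2}^2$ and hiding it inside $c_1$. This reproduces, mutatis mutandis, the estimate in \cite[Proposition 3.2 (iii)]{ecker1991parabolic}.
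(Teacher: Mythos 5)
Your overall strategy matches the paper's: derive the full evolution equation for $\|\ssff\|^2$ by combining \eqref{EvolutionOfh_ij} with the Simons identity, observe that the quartic $\smc\operatorname{tr}(\ssff^3)$ contributions cancel (leaving only $-2\|\ssff\|^4$ and a residual cubic $2\mathcal{H}\operatorname{tr}(\ssff^3)$), and then run Young/Cauchy--Schwarz to absorb everything into $-\|\ssff\|^4$. The bookkeeping you do for the cubic cancellation and the first displayed inequality is correct and is what the paper does in \eqref{lapNormhij}--\eqref{EvolutionOfNormh}.

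There is, however, a genuine gap in your passage from the first to the second inequality. You bound $\|\nabla^2\mathcal{H}\|^2 \le \|\mathcal{H}\|_{C^2}^2$, treating $\nabla^2\mathcal{H}$ as if $\nabla$ were the fixed background connection. But $\nabla$ is the Levi-Civita connection of the \emph{evolving} metric $g(s)=F(s)^*\gbar$, and in local coordinates $\nabla^2\mathcal{H}(\partial_i,\partial_j)=\mathcal{H}_{ij}-\Gamma_{ij}^k\mathcal{H}_k$, where the Christoffel symbol $\Gamma_{ij}^k$ of $g$ contains a term proportional to $\ssff_{ij}$ (cf.\ \cite[(2.6), (2.15)]{mio} and equation \eqref{vhij}). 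Concretely, the paper shows
\[
\Gamma_{ij}^k\mathcal{H}_k=\widetilde{\Gamma}_{ij}^k\mathcal{H}_k+\frac{f'(u)}{f(u)}\pr{u_i\mathcal{H}_j+u_j\mathcal{H}_i}+\frac{v}{f(u)^2}\widetilde{g}\pr{\widetilde{\nabla}u,\widetilde{\nabla}\mathcal{H}}\ssff_{ij},
\]
so that $\|\nabla^2\mathcal{H}\|\le c\,v^2\pr{\|\widetilde{\nabla}^2\mathcal{H}\|_{\widetilde{g}}+v^2\|\ssff\|\,\|\widetilde{\nabla}\mathcal{H}\|_{\widetilde{g}}}$. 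Hence $\|\nabla^2\mathcal{H}\|^2$ is \emph{not} controlled by $\|\mathcal{H}\|_{C^2}^2$ alone; it grows quadratically in $\|\ssff\|$, and that is precisely why the paper's second inequality carries a $c_1\pr{1+\|\ssff\|+\|\ssff\|^2}$ right-hand side rather than a bare constant. Your claimed inequality is false as stated, and without the correct bound you cannot close the argument; once you insert the $\ssff$-dependent estimate for $\|\nabla^2\mathcal{H}\|$ your argument does go through and reproduces the stated second inequality.
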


\begin{proof} We only indicate the proof idea. We conclude first from \eqref{Simons}
\begin{equation}\label{lapNormhij}
\begin{split}
\Delta \|\ssff\|^2=&-2\|\nabla \ssff\|^2-2\ssff_{ij}\nabla_i\nabla_j\smc-2\|\ssff\|^2\pr{\|\ssff\|^2+\ric^N(\mu,\mu)}\\
&+2\smc\ssff_{ij}\ssff_{jk}\ssff_{ki}+4\ssff_{ij}\ssff_{kl}R^N_{kijl}-2\ssff_{ij}\ssff_{lj}RN_{ikkl}+2\ssff_{ij}\ssff_{il}R^N_{kjkl}\\
&+2\smc\ssff_{ij}R^N_{0ij0}-2\ssff_{ij}\nabla_kR^N_{kij0}-2\ssff_{ij}\nabla_iR^N_{kjk0}.
\end{split}
\end{equation}
With similar arguments by \eqref{EvolutionOfh_ij} we infer
\begin{equation}\label{s-derNormhij}
\begin{split}
\partial_s \|\ssff\|^2=-2\pr{\smc-\mathcal{H}}\pr{\ssff_{ij}\ssff_{jk}\ssff_{ki}+R^N_{0ij0}\ssff_ij}+2\ssff_{ij}\nabla_i\nabla_j\pr{\smc-\mathcal{H}}.
\end{split}
\end{equation}
Summing up \eqref{lapNormhij} and \eqref{s-derNormhij} we find the following evolution equation for the $g$-norm of the scalar second fundamental form.
\begin{equation}\label{EvolutionOfNormh}
\begin{split}
\pr{\partial_s+\Delta}\|\ssff\|^2=&-2\|\nabla\ssff\|^2-2\|\ssff\|^2\pr{\|\ssff\|^2+\ric^N(\mu,\mu)}-2\ssff_{ij}\nabla_i\nabla_j\mathcal{H}\\
&+2\mathcal{H}\pr{\ssff_{ij}\ssff_{jk}\ssff_{ki}+R^N_{0ij0}\ssff_{ij}}+4\ssff_{ij}\ssff_{kl}R^N_{kijl}-2\ssff_{ij}\ssff_{jl}R^N_{ikkl}\\
&+2\ssff_{ij}\ssff_{il}R^N_{kjkl}-2\ssff_{ij}\nabla_k R^N_{kij0}-2\ssff_{ij}\nabla R^N_{kjk0}.
\end{split}
\end{equation}
From here the first inequality follows by bounded geometry. \medskip

For the second inequality, the problem is controlling $\|\nabla^2\mathcal{H}\|^2$. We may want to use the second displayed equation in the proof of 
\cite[Proposition 4.7]{ecker1991parabolic}, however in their setting $\mathcal{H}$ is a function on $N$. Instead, note that
\begin{align*}
\|\nabla^2\mathcal{H}\|^2 & = g^{ik}g^{jl}\nabla^2\mathcal{H}(\partial_i,\partial_j)\nabla^2\mathcal{H}(\partial_k,\partial_l),
\\ \nabla^2\mathcal{H}(\partial_i,\partial_j) & = \partial_i \partial_j \mathcal{H}-\nabla_{\nabla_{\partial_i} \partial_j}\mathcal{H}
=\mathcal{H}_{ij}-\Gamma_{ij}^k\mathcal{H}_k.
\end{align*}
From \cite[(2.6),  (2.15)]{mio} we can conclude
$$\Gamma_{ij}^k\mathcal{H}_k=\widetilde{\Gamma}_{ij}^k\mathcal{H}_k+\frac{f'(u)}{f(u)}\pr{u_i\mathcal{H}_j+u_j\mathcal{H}_i}+\frac{v}{f(u)^2}\widetilde{g}\pr{\widetilde{\nabla}u,\widetilde{\nabla}\mathcal{H}}\ssff_{ij}.$$
Thus we find for some uniform constant $c>0$, using \eqref{indmetr} and uniform bounds on $f$ and its derivatives
$$
\|\nabla^2\mathcal{H}\| \leq c v^2 \Bigl( \|\widetilde{\nabla}^2\mathcal{H}\|_{\widetilde{g}} + 
v^2 \|\ssff\|\|\widetilde{\nabla}\mathcal{H}\|_{\widetilde{g}} \Bigr).
$$
This yields the second inequality and proves the statement.
\end{proof}

\section{$C^0$-estimates: Uniform bounds on the solution $u$}\label{uniform-section}

Consider now a solution $u\in C^{4,\alpha}(M\times[0,T])$ of \eqref{PGMCF}, which
exists for $T>0$ sufficiently small by Theorem \ref{short-time-theorem}, provided
$\mathcal{H} \in C^{2,\alpha}(M)$. We first prove a uniform upper bound.

\begin{prop}\label{upper-bound-u} Consider Setting \ref{setting} and 
impose Assumptions \ref{assumptions} (1), (2). Then $u$ is bounded uniformly from above by $u_{\sup}(0)$.
\end{prop}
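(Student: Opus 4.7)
I would first rewrite the flow \eqref{PGMCF} in a form whose sign is directly controlled by $\smc - \mathcal{H}$, and then propagate the initial positivity of $\smc - \mathcal{H}$ via a maximum principle argument using the Omori--Yau and enveloping machinery of \S\ref{O-YSec}.

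Substituting the identity $\Delta u = v\,\smc + (f'(u)/f(u))(m+v^2-1)$ of Remark \ref{NiceRMK} into \eqref{PGMCF}, and using the identities $|\widetilde{\nabla}u|_{\widetilde{g}}^2/(f(u)^2 - |\widetilde{\nabla}u|_{\widetilde{g}}^2) = v^2 - 1$ and $f(u)/\sqrt{f(u)^2 - |\widetilde{\nabla}u|_{\widetilde{g}}^2} = v$ from Proposition \ref{1234}, the right-hand side of \eqref{PGMCF} simplifies to $(f'(u)/f(u))(m + v^2 - 1) + \mathcal{H}\,v$ and a clean cancellation yields
\[
\partial_s u = -v\,(\smc - \mathcal{H}).
\]
Since $v \geq 1 > 0$ by Proposition \ref{1234}(ii), it suffices to prove that $w := \smc - \mathcal{H}$ stays nonnegative along the flow: then $\partial_s u \leq 0$ pointwise, hence $u(\cdot, s) \leq u(\cdot, 0) \leq u_{\sup}(0)$.

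The propagation of $w \geq 0$ from the initial bound $w(\cdot, 0) \geq \delta$ of Assumption \ref{assumptions}(2) rests on the evolution equation \eqref{EvolutionEquationForH-scriptH},
\[
(\partial_s + \Delta)\,w = -Q\,w, \qquad Q := \|\ssff\|^2 + \ric^N(\mu,\mu).
\]
On any compact subinterval $[0,T]$ of the short-time existence range, the $C^{4,\alpha}$-regularity of $u$, the uniform space-likeness inherited from $u_0$ for short time, and the uniform bounds on $f, f', f''$ from Setting \ref{setting}(2) (together with Corollary \ref{RicdTdi}) yield a uniform bound $|Q| \leq K$. The gauge $W := w\,e^{Ks}$ then produces the auxiliary equation $(\partial_s + \Delta) W = (K-Q)\,W$ with the structural bound $K - Q \geq 0$ and initial datum $W(\cdot, 0) \geq \delta$. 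I argue $W \geq 0$ on $M \times [0,T]$ by contradiction: if $W_{\inf}(s^*) < 0$ for some $s^* \in (0,T]$, continuity of $W_{\inf}$ (Proposition \ref{envelope}) produces a first time $s_0 \in (0, s^*)$ with $W_{\inf}(s_0) = 0$ and $W_{\inf} < 0$ on $(s_0, s^*]$. For a.e.\ such $s$, the weak Omori--Yau principle \eqref{omori-inf} on the stochastically complete $(M, g(s))$ delivers $p'_k$ with $-\Delta W(p'_k, s) > 1/k$ and $W(p'_k, s) \to W_{\inf}(s)$; combined with the envelope estimate \eqref{dini} and $|K-Q| \leq 2K$, this yields the Gronwall-type inequality $W_{\inf}'(s) \geq 2K\,W_{\inf}(s)$ a.e.\ on $(s_0, s^*)$. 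Integration from $W_{\inf}(s_0) = 0$ forces $W_{\inf}(s) \geq 0$ there, a contradiction. Hence $w = e^{-Ks}\,W \geq 0$ on $M \times [0,T]$, as required.

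The main obstacle is precisely this maximum principle step. Since Assumption \ref{assumptions}(3) is \emph{not} imposed, $\ric^N(\mu,\mu)$ has no a priori sign and the potential $Q$ may be negative, so the naive parabolic maximum principle of Theorem \ref{POYMP} does not directly preserve the positivity of $w$. The exponential gauge $W = w\,e^{Ks}$, which relies only on the soft uniform bound $|Q| \leq K$ available from the interior $C^{4,\alpha}$-regularity on compact time intervals, is the device that --- in conjunction with the Omori--Yau and enveloping machinery --- converts the parabolic inequality into a Gronwall inequality on $W_{\inf}$ that closes with only the structural bound $K - Q \geq 0$.
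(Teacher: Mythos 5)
Your proof is correct and follows essentially the same route as the paper: reduce to preserving positivity of $\smc - \mathcal{H}$ via $\partial_s u = -v(\smc-\mathcal{H})$, bound the potential $Q = \|\ssff\|^2 + \ric^N(\mu,\mu)$ uniformly on $[0,T]$ using $C^{4,\alpha}$-regularity, and close with an Omori--Yau/enveloping Gronwall estimate. The paper phrases the continuation step as a maximality-of-interval argument on the set where $\smc-\mathcal{H}>0$, whereas you phrase it as a first-hitting-time contradiction after an exponential gauge $W = w e^{Ks}$; these are equivalent bookkeeping devices for the same Gronwall inequality, so there is no substantive difference in approach.
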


\begin{proof}
Notice that the prescribed mean curvature flow \eqref{PGMCF} can be written as 
\begin{equation}\label{uHv}
\partial_s u=-\pr{\smc-\mathcal{H}}v.
\end{equation}
The statement will follow once we prove that $\pr{\smc-\mathcal{H}} \geq 0$. Indeed, due to $v\geq 1$,
$\partial_s u \leq 0$ and thus $u$ is non-increasing with upper bound $u_{\sup}(0)$.\medskip

Since $(\smc - \mathcal{H}) (s=0) > 0$, there exists some maximal interval $[0,\varepsilon) \subseteq [0,T]$ such that $(\smc - \mathcal{H}) (s) > 0$
for $s \in [0,\varepsilon)$. If $\varepsilon = T$, then the right hand side in \eqref{uHv} is negative and the statement follows. 
Let us now assume that $\varepsilon < T$. From \eqref{EvolutionEquationForH-scriptH} we see that
\begin{align*}
(\partial_s+\Delta)(\smc-\mathcal{H})\ge -c(\smc-\mathcal{H}),
\end{align*}
for some positive constant $c$, depending on bounded geometry and $\| h(s)\|$ for $s \in [0,\varepsilon]$.
Since $u \in C^{4,\alpha}(M\times [0,T])$, we note that $\| h(s)\|$ is uniformly bounded for $s \in [0,\varepsilon]$. 
These bounds need not be uniform in $T$ (we have not proved this yet), but this is not necessary for the argument here.
\medskip

Using now the Omori-Yau maximum principle in the form \eqref{omori-inf},
we conclude from the enveloping theorem in Proposition \ref{envelope} that 
$$
\partial_s (\smc-\mathcal{H})_{\inf} \geq -c (\smc-\mathcal{H})_{\inf}.
$$
Integrating this differential inequality yields for $s \in [0,\varepsilon]$
$$
(\smc-\mathcal{H})_{\inf}(s) \geq e^{-cs}  (\smc-\mathcal{H})_{\inf}(0).
$$
Thus, $(\smc-\mathcal{H})(s=\varepsilon) >0$ and hence by maximality of the interval $[0,\varepsilon)$,
we conclude that $\varepsilon = T$ and thus $(\smc - \mathcal{H}) > 0$ on $M\times [0,T]$.
The statement now follows from \eqref{uHv}.
\end{proof}

For the uniform lower bounds the following lemma is useful. 

\begin{lem}\label{UnifromBoundForPowers}
Let $\theta\in C^{2,\alpha}(M\times[0,T])$. 
If $\theta$ satisfies the differential inequality 
\begin{equation}
\pr{\partial_s+\Delta}\theta \le -a^2\theta^2+b,
\end{equation}
with $a>0$ and $b$ constants, then $\theta$ is uniformly bounded from above.
\end{lem}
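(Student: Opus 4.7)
My plan is to combine the Omori–Yau parabolic maximum principle established in Theorem \ref{POYMP} (more precisely, the enveloping observation in Proposition \ref{envelope}) with a one-dimensional ODE comparison argument.

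First, I would reduce the spatial PDE inequality to a one-dimensional inequality for the envelope $\theta_{\sup}(s):=\sup_M\theta(\cdot,s)$. Note that $\theta_{\sup}$ is finite because $\theta\in C^{2,\alpha}(M\times[0,T])$ is in particular bounded. Since the flow stays stochastically complete as long as the gradient function $v$ is finite, we may apply the Omori–Yau principle \eqref{omori-sup} to $\theta(\cdot,s)$ to produce a sequence $\{p_k(s)\}\subset M$ with
\begin{equation*}
\theta(p_k(s),s)>\theta_{\sup}(s)-\tfrac{1}{k},\qquad -\Delta\theta(p_k(s),s)<\tfrac{1}{k}.
\end{equation*}
Using the hypothesis $(\partial_s+\Delta)\theta\le-a^2\theta^2+b$ at the points $(p_k(s),s)$ gives
\begin{equation*}
\partial_s\theta(p_k(s),s)\le \tfrac{1}{k}-a^2\theta(p_k(s),s)^2+b.
\end{equation*}
Passing to the limit $k\to\infty$ and then $\epsilon\to 0^+$ in Proposition \ref{envelope}, and using continuity of $\theta$ to identify $\lim_k\theta(p_k(s+\epsilon),s+\epsilon)=\theta_{\sup}(s+\epsilon)$ followed by continuity of $\theta_{\sup}$, one obtains for a.e.\ $s\in(0,T)$
\begin{equation*}
\frac{d}{ds}\theta_{\sup}(s)\le -a^2\,\theta_{\sup}(s)^2+b.
\end{equation*}

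Next I would carry out an elementary ODE comparison. Set $K:=\max\{\theta_{\sup}(0),\sqrt{b}/a\}$ and claim $\theta_{\sup}(s)\le K$ for all $s\in[0,T]$. Suppose for contradiction that $\theta_{\sup}(s_0)>K$ at some $s_0$, and let $s_1:=\sup\{s\in[0,s_0]:\theta_{\sup}(s)\le K\}$. By continuity $\theta_{\sup}(s_1)=K$, and on $(s_1,s_0]$ we have $\theta_{\sup}(s)>\sqrt{b}/a$, hence the right-hand side $-a^2\theta_{\sup}^2+b$ is strictly negative. Because $\theta_{\sup}$ is locally Lipschitz (again by Proposition \ref{envelope}), it is absolutely continuous and we may integrate the differential inequality to get
\begin{equation*}
\theta_{\sup}(s_0)=\theta_{\sup}(s_1)+\int_{s_1}^{s_0}\tfrac{d}{ds}\theta_{\sup}(s)\,ds\le K,
\end{equation*}
contradicting $\theta_{\sup}(s_0)>K$. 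Thus the uniform upper bound $\theta\le\max\{\theta_{\sup}(0),\sqrt{b}/a\}$ holds on $M\times[0,T]$, and crucially this bound is independent of $T$.

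The main subtle point I would be careful about is the justification of the envelope inequality: the Omori–Yau principle yields information only along sequences that may drift to infinity, and Proposition \ref{envelope} must be invoked exactly as stated (with the $\epsilon\to 0^+$ outer limit) to legitimately transfer the pointwise inequality to $\theta_{\sup}$. Everything after that is ODE comparison on an absolutely continuous function, which is standard.
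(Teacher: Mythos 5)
Your proof is correct and follows essentially the same strategy as the paper's: apply the Omori--Yau maximum principle via the envelope result of Proposition \ref{envelope} to reduce to the ODE inequality $\frac{d}{ds}\theta_{\sup}\le -a^2\theta_{\sup}^2+b$ almost everywhere, and then argue by ODE comparison that $\theta_{\sup}$ cannot climb past the threshold at which the right-hand side becomes negative. Your write-up is slightly more explicit than the paper's in two respects: you write out the envelope inequality as a genuine differential inequality for $\theta_{\sup}$ (the paper instead argues pointwise at the Omori--Yau sequence), and you state the bound as $\max\{\theta_{\sup}(0),\sqrt{b}/a\}$, which correctly covers the case $s_1=0$ that the paper's final display glosses over.

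One minor technical gap: the lemma allows $b$ to be \emph{any} constant, possibly negative, in which case $\sqrt{b}/a$ in your choice of $K$ is not real and the phrase ``the right-hand side is strictly negative once $\theta_{\sup}>\sqrt{b}/a$'' is vacuous. The paper disposes of this in its first sentence by replacing $b$ with a positive constant $b^2>0$ (always possible, since $-a^2\theta^2+b\le -a^2\theta^2+(|b|+1)$ only weakens the hypothesis). You should include an analogous normalization --- e.g.\ observe that WLOG $b\ge0$, since enlarging $b$ only weakens the differential inequality, and when $b=0$ one simply has $K=\max\{\theta_{\sup}(0),0\}$. With that one-line fix the argument is complete.
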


\begin{proof}
We begin by noticing in the inequality above we can always replace $b$ 
by some positive non-zero $b^2>0$, which we henceforth do. 
Furthermore, if $\sup_M\theta(s) \le b/a$ there is nothing to prove.
Hence let us assume there exists some $s_0\in [0,T]$ so that $\sup_M \theta(s_0) > b/a$.
\medskip

Since $\theta\in C^{2,\alpha}(M\times[0,T])$, from Proposition \ref{envelope}, 
$\theta_{\sup}(s)$ is a locally Lipschitz function and hence positive in a neighbourhood $(s_1,s_2)\subset [0,T]$ containing $s_0$.
Let us then consider the minimal such $s_1\ge 0$. 
Now, by continuity either $s_1=0$ or $\theta_{\sup}(s_1)=b/a$.
\medskip

Let $s\in(s_1,s_2)$ and $\pr{p_k(s)}_k\subset M$ a sequence satisfying the estimates \eqref{omori-inf} for the Omori-Yau maximum principle.
For $k\in\mathbb{N}$ large enough $\theta\pr{p_k(s),s)}>b/a$.
In particular, at these points, $\theta$ satisfies the differential inequality 
$$(\partial_s+\Delta)\theta\pr{p_k(s),s}\le 0.$$
In conclusion, in view of Theorem \ref{POYMP},
$$\theta(\cdot,s)\le\theta_{\sup}(s)\le\theta_{\sup}(s_1)=\frac{b}{a};$$
thus providing the required uniform upper bound.
\end{proof}

Now we establish a lower bound on $u$ for any finite $T$.

\begin{prop}\label{lower-bound-u-1} Consider Setting \ref{setting} and 
impose Assumptions \ref{assumptions} (1), (2). Then $\|\ssff\|$ and $H$ are uniformly bounded. Moreover, $u$ is bounded uniformly for finite times.
\end{prop}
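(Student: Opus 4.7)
The plan is to establish the three claims in sequence. I would start from the second inequality in \eqref{TheInequality},
\begin{equation*}
(\partial_s + \Delta)\|\ssff\|^2 \leq -\|\ssff\|^4 + c_1 \bigl(1 + \|\ssff\| + \|\ssff\|^2\bigr),
\end{equation*}
and apply Young's inequality twice (e.g.\ $c_1\|\ssff\|^2 \leq \tfrac{1}{2}\|\ssff\|^4 + \tfrac{1}{2}c_1^2$, and analogously $c_1\|\ssff\| \leq \tfrac{1}{4}\|\ssff\|^2 + c_1^2$) to absorb the sub-quartic terms into $\tfrac{1}{2}\|\ssff\|^4$ plus a constant $C$ depending on $c_1$. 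After a further $\|\ssff\|^2 \leq \tfrac{1}{4}\|\ssff\|^4 + 1$, this yields
\begin{equation*}
(\partial_s + \Delta)\|\ssff\|^2 \leq -\tfrac{1}{4}\bigl(\|\ssff\|^2\bigr)^2 + C',
\end{equation*}
and Lemma \ref{UnifromBoundForPowers} applied to $\theta := \|\ssff\|^2$ with $a^2 = 1/4$ and $b = C'$ produces the uniform bound on $\|\ssff\|$.

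For the mean curvature, it suffices to note that $\smc = g^{ij}\ssff_{ij} = \trace_g \ssff$, so that the Cauchy--Schwarz inequality applied to the trace in a local $g$-orthonormal frame gives $|\smc|^2 \leq m\|\ssff\|^2$, which is uniformly bounded by the previous step.

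For the bound on $u$ over finite time intervals, the upper bound is immediate from Proposition \ref{upper-bound-u}: the conclusion $\smc - \mathcal{H} \geq 0$ together with $v \geq 1$ forces $\partial_s u = -(\smc - \mathcal{H})v \leq 0$, hence $u(\cdot, s) \leq u_{\sup}(0)$. For the lower bound, I would combine the uniform bound on $\smc - \mathcal{H}$ (from the two steps above and $\mathcal{H} \in C^{\ell,\alpha}(M) \subset L^\infty(M)$) with the finite-time boundedness of $v$, which on any interval $[0,T]$ follows from the $C^{\ell+2,\alpha}$-regularity produced by Theorem \ref{short-time-theorem} via the explicit formula \eqref{expressionofv} for $v$ in terms of $u$ and $\widetilde{\nabla}u$. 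This gives $|\partial_s u| \leq C(T)$ on $M \times [0,T]$, and integration in $s$, combined with $u_0 \in L^\infty(M)$ (which is part of $u_0 \in C^{3,\alpha}(M)$), yields $u(p,s) \geq u_{\inf}(0) - T\cdot C(T)$.

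The main obstacle is that the constant $c_1$ in \eqref{TheInequality} itself depends on $\|v\|_\infty$, so the bound on $\|\ssff\|$ (and hence on $\smc$) produced above is uniform in $s$ only to the extent that $v$ is controlled. Genuinely $T$-independent bounds therefore require coupling the current argument with the $C^1$-estimates on $v$ developed in \S \ref{Final}; within the present section, the constant $C(T)$ controlling $|\partial_s u|$ may a priori grow with $T$, which is precisely why the $u$-bound is stated only for finite times.
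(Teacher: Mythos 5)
Your argument has a genuine gap in how it controls the gradient function $v$, which is the linchpin of the whole estimate. You correctly observe that the constant $c_1$ in \eqref{TheInequality} depends on $v$, so the bound on $\|\ssff\|$ (hence on $\smc$) is only as good as the bound on $v$. You then try to supply this bound by appealing to the $C^{\ell+2,\alpha}$-regularity of $u$ from Theorem \ref{short-time-theorem} together with the formula \eqref{expressionofv}. This does not work, for two separate reasons. First, Theorem \ref{short-time-theorem} gives regularity only for $T>0$ \emph{sufficiently small}, not on an arbitrary finite interval, so it cannot supply a bound on $v$ up to the maximal existence time. Second, and more fundamentally, $u\in C^{\ell+2,\alpha}(M\times[0,T])$ does \emph{not} imply that $v$ is bounded: $v = f(u)/\sqrt{f(u)^2 - |\widetilde{\nabla}u|_{\widetilde{g}}^2}$ involves the reciprocal of $f(u)^2 - |\widetilde{\nabla}u|_{\widetilde{g}}^2$, which on a non-compact $M$ can have infimum zero even when $u$ and $|\widetilde{\nabla}u|_{\widetilde{g}}$ are uniformly bounded and the graph is pointwise space-like. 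Uniform space-likeness is precisely what must be established, and H\"older bounds on $u$ alone do not give it.

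The paper's proof handles this by an explicit forward reference to Theorem \ref{MainTheorem} in \S\ref{Final}, which proves (independently of anything in \S\ref{uniform-section}) that the uniform upper bound on $u$ from Proposition \ref{upper-bound-u} implies a uniform, $T$-independent bound on $v$. With that in hand, $a,b$ in Lemma \ref{UnifromBoundForPowers} are uniform, $\|\ssff\|$ and $\smc$ are uniformly bounded, and $\|\partial_s u\|_\infty = \|(\smc-\mathcal{H})v\|_\infty$ is uniformly bounded \emph{independently of $T$}. The finite-time restriction on the $u$-bound then comes solely from integrating $\partial_s u$ over $[0,T]$. This means your concluding diagnosis is inverted: it is not that $C(T)$ controlling $|\partial_s u|$ might grow with $T$ and hence the $u$-bound is finite-time; rather, the derivative bound is uniform and only the time integral introduces the $T$-dependence. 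Your Young's-inequality absorption to reach the form of Lemma \ref{UnifromBoundForPowers}, and the trace estimate $|\smc|\le\sqrt{m}\,\|\ssff\|$, are both fine, but they float without the $v$-bound, so you should cite Theorem \ref{MainTheorem} explicitly at the point where $v$-dependence first enters, as the paper does.
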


\begin{proof}
In Theorem \ref{MainTheorem} in the next section we will prove that, as a consequence 
of the upper bound in Proposition \ref{upper-bound-u}, $v$ is uniformly bounded.

\noindent
By playing with binomial formulae we find from inequality \eqref{TheInequality}
\begin{equation}
\begin{split}
\pr{\partial_s+\Delta}\|\ssff\|^2 \le -a^2 \|\ssff\|^4+b^2,
\end{split}
\end{equation}
for some uniform $a,b>0$. Note that $a,b$ depend on $v$ and thus uniform bounds on $v$ from 
Theorem \ref{MainTheorem} below are crucial. By Lemma \ref{UnifromBoundForPowers}, 
we conclude that $\|\ssff\|$ is uniformly bounded and hence $H$ is bounded uniformly as well. 
Thus the right hand side of \eqref{uHv} is uniformly bounded and thus $u$ is bounded 
uniformly for finite times. 
\end{proof}

Deriving a uniform time-independent lower bound for $u$ is harder and is usually done by a barrier argument.
In the non-compact setting the barrier argument is somewhat intricate and we present here
a different approach without using barriers. 

\begin{prop} Consider Setting \ref{setting} and 
impose Assumptions \ref{assumptions} (1)-(3). Then $\| \partial_s u \|_\infty$ is exponentially decreasing. In particular
$u$ is bounded uniformly.
\end{prop}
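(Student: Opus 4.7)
The plan is to turn the flow equation $\partial_s u = -(\smc - \mathcal{H})\, v$ from \eqref{uHv} into an exponential decay estimate by differentiating $\theta := (\smc - \mathcal{H})^2$ along the flow and applying the Omori–Yau maximum principle. Once $\theta_{\sup}$ decays exponentially, exponential decay of $\|\partial_s u\|_\infty$ will follow from the uniform bound on $v$ (which is provided by Theorem~\ref{MainTheorem}, invoked as in Proposition~\ref{lower-bound-u-1}), and uniform boundedness of $u$ will come from integrating $\partial_s u$ in $s$.

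The central ingredient is the second identity in \eqref{EvolutionEquationForH-scriptH}:
\begin{equation*}
(\partial_s + \Delta)\theta = -2\theta\bigl(\|\ssff\|^2 + \ric^N(\mu,\mu)\bigr) - 2|\nabla(\smc-\mathcal{H})|^2.
\end{equation*}
The terms $-2\theta \|\ssff\|^2$ and $-2|\nabla(\smc-\mathcal{H})|^2$ are non-positive, so the content of the argument lies in extracting a uniform positive lower bound $\ric^N(\mu,\mu) \geq c_0 > 0$ from Assumption \ref{assumptions}(3). Combining the strict time-like convergence assumption with the uniform upper bound on $u$ from Proposition~\ref{upper-bound-u}, the uniform bounds on $f,f',f''$ from Setting~\ref{setting}, the uniform bound $v \leq V_0$ from Theorem~\ref{MainTheorem}, and bounded geometry of $(M,\widetilde{g})$, the expansion of $\ric^N(\mu,\mu)$ from Corollary~\ref{RicdTdi} in the frame $(\partial_t,\partial_i)$ has coefficients varying in a precompact range, so pointwise strict positivity upgrades to a uniform lower bound $\ric^N(\mu,\mu) \geq c_0 > 0$. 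Consequently
\begin{equation*}
(\partial_s + \Delta)\theta \leq -2c_0\, \theta.
\end{equation*}

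Next, I would apply the Omori-Yau maximum principle for stochastically complete manifolds (\eqref{omori-sup}) together with the enveloping theorem in Proposition~\ref{envelope}, in the same manner as in the proof of Theorem~\ref{POYMP}. For almost every $s$, a sequence $(p_k(s))_k \subset M$ with $\theta(p_k,s) > \theta_{\sup}(s) - 1/k$ and $-\Delta \theta(p_k,s) < 1/k$ yields
\begin{equation*}
\partial_s \theta(p_k,s) \leq -\Delta\theta(p_k,s) - 2c_0\, \theta(p_k,s) \leq \tfrac{1}{k} - 2c_0\, \theta(p_k,s).
\end{equation*}
Passing to the limit $k \to \infty$ via Proposition~\ref{envelope} gives $\partial_s \theta_{\sup}(s) \leq -2c_0\, \theta_{\sup}(s)$ almost everywhere in $s$, and Gronwall's inequality yields $\theta_{\sup}(s) \leq \theta_{\sup}(0)\, e^{-2c_0 s}$. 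Combined with $|\partial_s u| = |\smc - \mathcal{H}|\, v \leq V_0\sqrt{\theta_{\sup}(0)}\, e^{-c_0 s}$, this proves the exponential decay of $\|\partial_s u\|_\infty$. Uniform boundedness of $u$ then follows from
\begin{equation*}
|u(p,s) - u_0(p)| \leq \int_0^s \|\partial_\tau u\|_\infty\, d\tau \leq \frac{V_0\sqrt{\theta_{\sup}(0)}}{c_0},
\end{equation*}
which is independent of $s$.

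The expected main obstacle is the passage from the pointwise strict inequality $\ric^N(\mu,\mu) > 0$ to a uniform lower bound along the flow, since $M$ is non-compact. This is where it becomes essential that the preceding a~priori estimates yield uniform control on the components of $\mu$ (through $v$ and $\widetilde{\nabla}u$) and on all coefficients of $\ric^N$ (through bounded geometry of $\widetilde{g}$ and uniform bounds on $f$), so that $\ric^N(\mu,\mu)$ can only be evaluated on a \emph{bounded} family of time-like unit vectors, along which strict positivity persists uniformly.
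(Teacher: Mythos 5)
Your argument coincides step by step with the paper's: both use the second identity in \eqref{EvolutionEquationForH-scriptH}, drop the non-positive $\|\ssff\|^2$ and gradient terms, extract a uniform lower bound on $\ric^N(\mu,\mu)$ from the time-like convergence assumption together with the a priori estimates (uniform $u$ from above, uniform $v$ from Theorem~\ref{MainTheorem}), apply the Omori--Yau enveloping argument to integrate the resulting differential inequality, and then translate exponential decay of $(\smc-\mathcal{H})^2_{\sup}$ into exponential decay of $\|\partial_s u\|_\infty$ via \eqref{uHv}. You also spell out the final integration in $s$ that converts $\|\partial_s u\|_\infty \in L^1([0,\infty))$ into the uniform bound on $u$, a step the paper leaves implicit, so your write-up is if anything slightly more complete.
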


\begin{proof}
As explained in Proposition \ref{lower-bound-u-1}, $\|\ssff\|$ is uniformly bounded. 
Anticipating uniform space-likeness as asserted in Theorem \ref{MainTheorem}, by the time-like convergence assumption
we may take $\delta'>0$ small enough such that $\ric^N(\mu, \mu) \geq \delta'>0$. By 
\eqref{EvolutionEquationForH-scriptH} we conclude 
\begin{equation}
(\partial_s + \Delta) (\smc - \mathcal{H})^2 \leq - \delta' (\smc - \mathcal{H})^2.
\end{equation}
By the Omori-Yau estimates \eqref{omori-sup} and Proposition. \ref{envelope}, we find
\begin{equation}
\partial_s (\smc - \mathcal{H})^2_{\sup} \leq - \delta' (\smc - \mathcal{H})^2_{\sup}.
\end{equation}
This differential inequality can be integrated and yields 
\begin{equation}\label{H-exp}
0 \leq (\smc - \mathcal{H})^2_{\sup}(s) \leq e^{-s\delta'} (\smc - \mathcal{H})^2_{\sup}(0).
\end{equation}
As already noted in the previous proposition, Theorem \ref{MainTheorem} asserts that as a consequence 
of the upper bound in Proposition \ref{upper-bound-u}, $v$ is uniformly bounded. Thus, by \eqref{uHv}
there exists a uniform constant $c>0$ such that
\begin{equation}
\| \partial_s u\|_\infty \leq c e^{-s\delta'}.
\end{equation}
This proves the statement.
\end{proof}

\section{$C^1$-estimates: Preserving the spacelike property}\label{Final}

In this section we will prove the first main result of this paper, namely that a prescribed mean curvature 
flow stays uniformly space-like for as long as the flow exists, if $u$ is uniformly bounded from above. The argument presented here follows 
in spirit the work of Gerhardt in \cite{gerhardt2000hypersurfaces} and is concluded by an 
application of the parabolic maximum principle. We begin by noticing the following.

\begin{prop}
If the gradient function $v$ is uniformly bounded along the flow \eqref{PGMCF}, then the prescribed mean curvature flow 
\eqref{PGMCF} stays space-like.
\end{prop}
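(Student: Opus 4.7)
The plan is to observe that this proposition is really an algebraic tautology once one unwinds the definition of the gradient function, together with the hypothesis that $f$ is uniformly bounded away from zero (Setting \ref{setting}(2)). Indeed, the space-likeness criterion recalled in equation \eqref{space-like} reads $|\widetilde{\nabla}u|^2_{\widetilde{g}} < f(u)^2$, and the explicit formula \eqref{expressionofv} gives
\begin{equation*}
v^2 = \frac{f(u)^2}{f(u)^2 - |\widetilde{\nabla}u|^2_{\widetilde{g}}},
\end{equation*}
so the two quantities are directly linked. I would therefore not invoke the evolution equation for $v$ from Theorem \ref{EvolutionOfvTHM} nor the parabolic maximum principle at this stage; the statement is purely about the instantaneous relation between $v$ and the slope of $u$, not about preservation in time.

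The key step is to rearrange the displayed identity to isolate $|\widetilde{\nabla}u|^2_{\widetilde{g}}$. Assuming $v \leq V_0$ uniformly along the flow, with $V_0 \geq 1$ by Proposition \ref{1234}(ii), one obtains
\begin{equation*}
|\widetilde{\nabla}u|^2_{\widetilde{g}} = f(u)^2 \Bigl(1 - \frac{1}{v^2}\Bigr) \leq f(u)^2 \Bigl(1 - \frac{1}{V_0^2}\Bigr).
\end{equation*}
Since the factor $(1 - V_0^{-2})$ is strictly less than $1$, this yields the strict inequality $|\widetilde{\nabla}u|^2_{\widetilde{g}} < f(u)^2$ at every point of $M$ and every time $s \in [0,T]$ for which the flow exists and $v$ is bounded.

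I would then observe, as a remark rather than as a separate step, that the bound is in fact quantitative and uniform: using the lower bound $f \geq \varepsilon > 0$ from Setting \ref{setting}(2), one has
\begin{equation*}
f(u)^2 - |\widetilde{\nabla}u|^2_{\widetilde{g}} \geq \frac{f(u)^2}{V_0^2} \geq \frac{\varepsilon^2}{V_0^2},
\end{equation*}
so the graphs $F(s)(M)$ stay \emph{uniformly} space-like in the strongest sense — the causal wedge between the tangent plane and the light cone does not collapse. There is no real obstacle here; the proposition serves to translate the $C^1$-bound on $u$ (encoded by the boundedness of $v$) into the geometric property of space-likeness, setting up the application of the parabolic maximum principle to the evolution equation \eqref{EvolutionOfv} in the subsequent theorem, where the uniform bound on $v$ itself will actually have to be established.
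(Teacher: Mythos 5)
Your proof is correct and is essentially identical to the paper's own argument: both rearrange the explicit formula \eqref{expressionofv} for $v$ and use the assumed uniform bound $v \le K$ (your $V_0$) to conclude $|\widetilde{\nabla}u|^2_{\widetilde{g}} \le (1-K^{-2})f(u)^2 < f(u)^2$, which is the space-likeness criterion \eqref{space-like}. Your closing remark on the quantitative lower bound $f(u)^2 - |\widetilde{\nabla}u|^2_{\widetilde{g}} \ge \varepsilon^2/V_0^2$ is a harmless and accurate addition not present in the paper.
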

\begin{proof}
Assume there exists some $K>1$ so that $v=v(p,s)\le K$ for every $(p,s)\in M\times [0,T]$.
Note that the requirement $K>1$ follows from Proposition \ref{1234} $(ii)$.
Equation \eqref{expressionofv} implies 
$$
f(u)\le K\sqrt{f(u)^2-|\widetilde{\nabla}u|_{\widetilde{g}}^2},
$$
where $\widetilde{\nabla}u$ is as before the gradient of $u$ with respect to $\widetilde{g}$.
We conclude
$$|\widetilde{\nabla}u|_{\widetilde{g}}^2\le \left(1-\frac{1}{K^2}\right)f(u)^2< f(u)^2.$$
Notice that the above is precisely the condition required for a graph to be space-like as pointed out in \cite[Remark 2.6]{mio}.
\end{proof}

In order to prove that the flow stays space-like, it is therefore enough to prove that the gradient function $v$ is uniformly bounded along the flow.
This is precisely the main conclusion of this section, which we now put as a separate theorem 
\begin{thm}\label{MainTheorem}
Consider the flow \eqref{PGMCF} with $\mathcal{H} \in C^{1,\alpha}(M)$ and solution $u \in C^{2,\alpha}(M\times [0,T])$.
Assume that $u$ is uniformly bounded from above.
Then the gradient function $v$ is uniformly bounded along the flow, with the bound depending only on the 
upper bound of $u$. In particular, the prescribed mean curvature flow stays space-like as long as the flow exists.
\end{thm}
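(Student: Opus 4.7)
My plan is to apply the parabolic Omori-Yau maximum principle, Theorem \ref{POYMP}, to a well-chosen test function built from $v$ and to conclude uniform boundedness via a variant of Lemma \ref{UnifromBoundForPowers}. A useful preliminary observation is that, since $u$ is uniformly bounded above by hypothesis and non-increasing in $s$ by \eqref{uHv} together with Proposition \ref{upper-bound-u}, the functions $f(u), f'(u), f''(u)$, and indeed any smooth function of $u$, are uniformly bounded along the flow; this keeps all coefficient functions of $u$ appearing in the evolution of $v$ under uniform control.

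Next I would consider a test function of the form $\phi = v\,\eta(u)$ for a positive smooth $\eta$, and compute its evolution via the product rule
\[
(\partial_s+\Delta)\phi = \eta(u)(\partial_s+\Delta)v + v\eta'(u)(\partial_s+\Delta)u - v\eta''(u)|\nabla u|^2_g - 2\eta'(u)g(\nabla v,\nabla u).
\]
Substituting the evolution equation \eqref{EvolutionOfv} for $v$, the simplified identity $(\partial_s+\Delta)u = \tfrac{f'(u)}{f(u)}(m+v^2-1)+\mathcal{H}v$ coming from \eqref{PGMCF} and Remark \ref{NiceRMK}, together with the basic relation $|\nabla u|^2_g = v^2-1$ of Proposition \ref{1234}, the natural first choice $\eta(u) = f(u)$ yields several precise cancellations: the $\pm\tfrac{f'(u)}{f(u)}\mathcal{H}v^2$ contributions annihilate exactly, the mixed gradient terms $\pm 2f'(u)g(\nabla u,\nabla v)$ cancel, and the $(f'/f)^2$-summands telescope, leaving an identity of the schematic form
\[
(\partial_s+\Delta)(vf(u)) = -f(u)\|\ssff\|^2 v - f(u)\ric^N(\mu,\mu)v + f''(u)v(m+2-2v^2) - 2f'(u)\smc + f'(u)\mathcal{H} - f(u)V(\mathcal{H}).
\]

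The main obstacle is the appearance of genuine $v^3$ terms with potentially indefinite sign on the right-hand side, arising both from $-2f''(u)v^3$ and, via the decomposition $\mu = v\partial_t + v b^j\partial_j$ of Corollary \ref{RicdTdi}, from $\ric^N(\mu,\mu) = O(v^2)$. To handle them I would exploit bounded geometry of $(M,\widetilde{g})$ to control the remaining Ricci contributions, the identity $|V|_g = \sqrt{v^2-1}$ obtained from $\partial_t = v\mu + \partial_t^\top$ in \eqref{ONDDt}, and the $C^1$-bound on $\mathcal{H}$ to see that $|V(\mathcal{H})| = O(v^2)$; the cross term $-2f'(u)\smc$ can be absorbed into the good term $-f(u)\|\ssff\|^2 v$ via $|\smc|\leq\sqrt{m}\|\ssff\|$ and Young's inequality. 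If the residual $v^3$ coefficient still fails to have a uniform sign for some values of $u$, I would upgrade the ansatz to $\phi = v\,f(u)\,e^{\lambda u}$: the additional factor contributes a term $-\lambda^2 v|\nabla u|^2_g = -\lambda^2 v^3 + \lambda^2 v$, so for $\lambda$ sufficiently large (allowed since $u$ ranges in a bounded interval) the net $v^3$ coefficient becomes strictly negative uniformly in $u$.

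With such a $\phi$, the preceding analysis reduces to a differential inequality of the form $(\partial_s+\Delta)\phi \leq -c_1 \phi^{\alpha} + c_2$ for uniform positive constants $c_1,c_2$ and some exponent $\alpha > 1$. Combining the enveloping theorem, Proposition \ref{envelope}, with the Omori-Yau estimate \eqref{omori-sup}, the contradiction argument of Lemma \ref{UnifromBoundForPowers}, which adapts with no essential change from $\theta^2$ to $\theta^{\alpha}$, shows that $\phi_{\sup}(s)$ stays uniformly bounded along the flow. Since $\eta(u) = f(u)e^{\lambda u}$ is bounded between positive constants once $u$ is bounded, this delivers the desired uniform bound on $v$, and the claim on space-likeness follows from the preceding proposition.
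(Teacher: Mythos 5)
Your test-function strategy, the application of Proposition \ref{envelope} with the Omori--Yau estimate, and the preliminary observations are all sound, and the cancellations you obtain for $\eta(u)=f(u)$ are real: one can indeed verify that the $\pm\tfrac{f'}{f}\mathcal{H}v^2$ pair, the mixed $\pm 2f' g(\nabla u,\nabla v)$ pair, and the $(f'/f)^2$ terms all cancel exactly, yielding the identity you display. The gap is in the claimed fix for the residual $v^3$ terms. Upgrading to $\eta(u)=f(u)e^{\lambda u}$ first of all spoils the cancellations you rely on (e.g. $\eta'/\eta = f'/f + \lambda \neq f'/f$, so the $\mathcal{H}v^2$ terms no longer annihilate and a new $\lambda f e^{\lambda u}\mathcal{H}v^2$ survives, and similarly $-2\lambda f e^{\lambda u}g(\nabla u,\nabla v)$ reappears); this is repairable. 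What is not repairable with a single exponential is the balance at order $v^3$. The only structurally good $v^3$ term is $-\lambda^2 f e^{\lambda u}|\nabla u|_g^2\,v$ from $-\eta''(u)|\nabla u|_g^2\,v$. To handle the surviving $-2\lambda f e^{\lambda u}g(\nabla u,\nabla v)$ you must invoke Proposition \ref{g(du,dv)}, i.e.\ $|g(\nabla u,\nabla v)|\le\|\ssff\||\nabla u|_g^2 + c|\nabla u|_g^2 v$, and then a Young inequality to move the $\|\ssff\|$ factor onto the $-f e^{\lambda u}\|\ssff\|^2 v$ term. But the optimal Young split $2\lambda f e^{\lambda u}\|\ssff\||\nabla u|_g^2 \le f e^{\lambda u}\|\ssff\|^2 v + \lambda^2 f e^{\lambda u}|\nabla u|_g^4/v$ fully consumes the good $\|\ssff\|^2$-term while leaving a residual that, since $|\nabla u|_g^2 = v^2-1$, satisfies $|\nabla u|_g^4/v = (1-v^{-2})|\nabla u|_g^2 v$. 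The net of good and residual is therefore only $-\lambda^2 f e^{\lambda u}|\nabla u|_g^2/v$, which is $O(\lambda^2 v)$, not $O(\lambda^2 v^3)$. This cannot dominate the genuinely positive $O(v^3)$ contributions from $\ric^N(\mu,\mu) = O(v^2)$ (Proposition \ref{RicciEstimateProp}) and from the $c|\nabla u|_g^2 v$ part of Proposition \ref{g(du,dv)} multiplied by $\lambda$. No choice of $\lambda$ fixes this, so the inequality $(\partial_s+\Delta)\phi\le -c_1\phi^\alpha + c_2$ you are aiming at is not obtained.

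The paper circumvents exactly this degeneracy by taking a \emph{double} exponential, $\varphi=e^{\rho e^{\lambda u}}$. There $\Delta\varphi$ produces the coefficient $\rho\lambda^2 e^{\lambda u}(1+\rho e^{\lambda u})$, while the Young residual scales like $\rho^2\lambda^2 e^{2\lambda u}/(1-\varepsilon')$; choosing $\varepsilon'=e^{-\lambda u}$ and $\rho=1/2$ the net coefficient of $-|\nabla u|_g^2\varphi v$ is bounded below by $\tfrac14\lambda^2 e^{\lambda u}$, i.e.\ a strictly negative $O(\lambda^2 v^3)$ term survives, which for $\lambda$ large overwhelms all remaining $O(\lambda v^3)$ and $O(v^3)$ positive contributions once $v\ge 2$. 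Equivalently, the single exponential is marginal because $\log(f(u)e^{\lambda u})$ is only affine in $u$ up to a bounded perturbation ($(\log\varphi)''=0$ at leading order, so $\varphi''\varphi-(\varphi')^2$ vanishes), while $\log\varphi = \rho e^{\lambda u}$ is strictly convex, which is precisely what creates room in the Young inequality. If you wish to salvage your cleaner first-stage cancellation, you could keep $\eta(u)=f(u)\psi(u)$ with any $\psi>0$ whose logarithm is strictly convex (e.g.\ $\psi = e^{\rho e^{\lambda u}}$ or $\psi = e^{\lambda u^2}$, the latter being admissible here since $u$ is bounded); what is essential is $(\log\psi)''$ uniformly positive, which $\psi=e^{\lambda u}$ does not provide.
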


The remainder of the section is concerned with proving Theorem \ref{MainTheorem}. 
Following \cite{gerhardt2000hypersurfaces}, the proof idea consists on a clever application of the parabolic maximum principle. 
Before doing that some preparation is needed.

\subsection{Preliminaries}

First we recall the evolution equation \eqref{PGMCF} for $u$.
\begin{prop}\label{EvolutionOfuProp}
Let $\mathcal{H}:M\rightarrow\mathbb{R}$ be a fixed prescribing function.
Under the prescribed mean curvature flow the function $u$ evolves as
\begin{equation}\label{EvolutionOfu}
(\partial_s+\Delta)u=\mathcal{H}v+\frac{f'(u)}{f(u)}(m+v^2-1).
\end{equation}
\end{prop}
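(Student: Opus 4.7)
The proof is essentially an algebraic manipulation: I would combine the time-derivative formula for $u$ with the already-computed expression for $\Delta u$, and show that the mean curvature term $\smc v$ cancels out.

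First, I would recall from \eqref{uHv} that the flow equation can be rewritten as
\begin{equation*}
\partial_s u = -(\smc - \mathcal{H})v = -\smc\, v + \mathcal{H}\, v.
\end{equation*}
This rewriting uses only the definition $v = f(u)/\sqrt{f(u)^2-|\widetilde{\nabla} u|_{\widetilde{g}}^2}$ and the original PDE \eqref{PGMCF}, since the factor $f(u)/\sqrt{f(u)^2-|\widetilde{\nabla} u|_{\widetilde{g}}^2}$ appearing with $\mathcal{H}$ in \eqref{PGMCF} is precisely $v$.

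Second, I would invoke Remark \ref{NiceRMK}, which already established (as an immediate consequence of \eqref{ThirdSummandExpansion} together with part (i) of Proposition \ref{1234}) that
\begin{equation*}
\Delta u = v \smc + \frac{f'(u)}{f(u)}\bigl(m + v^2 - 1\bigr).
\end{equation*}
This is really the geometric content of the result: $\Delta u$ decomposes into a mean-curvature contribution and a warping contribution, with the $f'/f$-term reflecting the ambient warped-product geometry.

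Adding these two identities, the $\smc\, v$ terms cancel, and one obtains
\begin{equation*}
(\partial_s + \Delta) u = -\smc\, v + \mathcal{H}\, v + v\, \smc + \frac{f'(u)}{f(u)}\bigl(m + v^2 - 1\bigr) = \mathcal{H}\, v + \frac{f'(u)}{f(u)}\bigl(m + v^2 - 1\bigr),
\end{equation*}
which is exactly \eqref{EvolutionOfu}. There is no real obstacle here, since both inputs have been derived earlier in the paper; the proposition is essentially a bookkeeping consequence of Remark \ref{NiceRMK} and the defining flow equation rewritten through the gradient function. An alternative, equally direct route would be to substitute \eqref{v2nablau} (rewriting $|\widetilde{\nabla} u|_{\widetilde{g}}^2/(f(u)^2 - |\widetilde{\nabla} u|_{\widetilde{g}}^2) = v^2 - 1$) directly into the right-hand side of \eqref{PGMCF}, which yields the claim without going through $\smc$ at all.
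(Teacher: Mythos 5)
Your proposal is correct, and in fact you offer two valid routes. The paper's own proof is the one you mention at the very end: it is a one-line remark that \eqref{EvolutionOfu} is just \eqref{PGMCF} rewritten by substituting $v = f(u)/\sqrt{f(u)^2-|\widetilde{\nabla} u|_{\widetilde{g}}^2}$ and $|\widetilde{\nabla} u|_{\widetilde{g}}^2/(f(u)^2 - |\widetilde{\nabla} u|_{\widetilde{g}}^2) = v^2-1$ (i.e.\ \eqref{v2nablau}) directly into the right-hand side of \eqref{PGMCF}. Your main argument instead passes through the geometric form $\partial_s u = -(\smc-\mathcal{H})v$ from \eqref{uHv} together with $\Delta u = v\smc + \frac{f'(u)}{f(u)}(m+v^2-1)$ from Remark \ref{NiceRMK}, cancelling the $\smc\,v$ terms. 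Both are legitimate since \eqref{uHv} and Remark \ref{NiceRMK} were established earlier in the paper, so there is no circularity; the paper's route is a single purely algebraic substitution, while yours makes the geometric decomposition into a mean-curvature part and a warping part explicit before the cancellation. The two differ only in bookkeeping.
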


\begin{proof}
This is a just \eqref{PGMCF} written in terms of $v$. 
\end{proof}

We will now proceed by presenting some estimates which will be useful for the proof of Theorem \ref{MainTheorem},
and hold for any given graphical embedding (not necessarily along the \eqref{PGMCF} flow).
\begin{prop}\label{g(du,dv)} Consider Setting \ref{setting}. 
Assume the embeddings $F(M) \equiv F(s)(M)$ are space-like for $s\in [0,T]$.
Recall, $g=F^*\gbar$ denotes the induced metric on $M$, $\ssff$ the scalar second fundamental and $v$ the gradient function.
Then there exists a constant $c>0$ independent of $u$, such that
\begin{equation}
|g(\nabla u,\nabla v)|\le \|\ssff\||\nabla u|_g^2+c|\nabla u|_g^2 v.
\end{equation}
\end{prop}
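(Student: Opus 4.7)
The plan is to invoke the identity from Proposition \ref{TheVeryUsefulFormulaLemma}, namely
\begin{equation*}
\ssff(\nabla u, \nabla u) = -g(\nabla u, \nabla v) - \frac{f'(u)}{f(u)} |\nabla u|_g^2 \, v,
\end{equation*}
which was derived by a purely local computation from \eqref{vhij} and \eqref{v2nablau}, and does not require the evolution equation \eqref{PGMCF}. Rearranging yields an exact expression for $g(\nabla u, \nabla v)$ in terms of $\ssff(\nabla u, \nabla u)$ and a lower-order term. Thus the proof reduces to two ingredients: bounding $|\ssff(\nabla u, \nabla u)|$ by $\|\ssff\|\,|\nabla u|_g^2$, and bounding the coefficient $|f'(u)/f(u)|$ by a constant independent of $u$.

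First I would apply the tensor norm inequality \eqref{TensorNormInequality} from the lemma preceding Corollary \ref{EquivalentDistancesCor}, which gives
\begin{equation*}
|\ssff(\nabla u, \nabla u)| \le \|\ssff\|_g \, |\nabla u|_g \, |\nabla u|_g = \|\ssff\| \, |\nabla u|_g^2.
\end{equation*}
Then, by Setting \ref{setting} (2), $f$ is uniformly bounded away from zero and $f'$ is uniformly bounded on $\R$; consequently there exists a constant $c>0$, independent of the point on $M$ and of the time $s$, such that $|f'(t)/f(t)| \le c$ for all $t \in \R$. (In view of Remark \ref{f-uniform-bound}, once $u$ is uniformly bounded this reduces to a bound on a compact range of $f$, but uniform boundedness of $f$ on $\R$ already suffices.)

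Combining these, the triangle inequality applied to the rearranged identity above yields
\begin{equation*}
|g(\nabla u, \nabla v)| \le |\ssff(\nabla u, \nabla u)| + \left|\frac{f'(u)}{f(u)}\right| |\nabla u|_g^2 \, v \le \|\ssff\| \, |\nabla u|_g^2 + c \, |\nabla u|_g^2 \, v,
\end{equation*}
which is precisely the claim. No obstacle arises here: the entire content of the statement is already encoded in Proposition \ref{TheVeryUsefulFormulaLemma}, and the estimate is just a clean combination of that identity with the tensor Cauchy--Schwarz bound and the uniform bounds on $f$ from the standing assumptions. The only point worth flagging is that the constant $c$ depends solely on $\sup |f'|$ and $\inf f > 0$, hence is indeed independent of $u$, as required.
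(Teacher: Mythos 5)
Your proof is correct and follows essentially the same route as the paper: both invoke the identity of Proposition \ref{TheVeryUsefulFormulaLemma} (you use the stated form \eqref{h(grad,grad)}, the paper re-contracts the underlying local identity \eqref{TheVeryUsefulFormula}), bound $|\ssff(\nabla u,\nabla u)|$ by $\|\ssff\|\,|\nabla u|_g^2$, and use the uniform bound on $|f'/f|$ from Setting \ref{setting}. Your explicit citation of the tensor Cauchy--Schwarz inequality \eqref{TensorNormInequality} and your remark that the identity is purely local (independent of the flow equation) are both accurate, minor clarifications of what the paper does implicitly.
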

\begin{proof}
In local coordinates one has
$$g(\nabla u,\nabla v)=g^{ik}u_k v_i.$$
From equation \eqref{TheVeryUsefulFormula} we infer 
\begin{align*}
g(\nabla u,\nabla v) &=-g^{ik}u_k g^{jm}u_m\ssff_{ij}-\frac{f'(u)}{f(u)}g^{im}u_mu_i v
\\ &=-g^{ik}u_k g^{jm}u_m\ssff_{ij}-\frac{f'(u)}{f(u)}|\nabla u|_g^2.
\end{align*}
Recall, by Setting \ref{setting} there exists some constant $c>0$ so that $\left\|f'/f\right\|_\infty \le c$.
Thus we conclude
\begin{align*}
|g(\nabla u,\nabla v)| \le |g^{ik}u_k g^{jm}u_m \ssff_{ij}|+\left|\frac{f'(u)}{f(u)}\right||\nabla u|_g^2v\le  \|\ssff\||\nabla u|_g^2+c|\nabla u|_g^2 v,
\end{align*}
where $\|\ssff\|$ denotes the norm of the scalar second fundamental form $\ssff$ with respect to the metric $g$.
\end{proof}

\noindent Next we present an estimate for $\ric^N(\mu,\mu)$.

\begin{prop}\label{RicciEstimateProp}
We continue as in Proposition \ref{g(du,dv)}.
Then there exists a constant $c>0$ independent of $u$, such that
\begin{equation}\label{RicciEstimate}
|\ric^N(\mu,\mu)|\le cv^2.
\end{equation}
\end{prop}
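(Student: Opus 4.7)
The plan is to compute $\ric^N(\mu,\mu)$ directly using the decomposition of $\mu$ in \eqref{NOROrthogonalDecomposition}, together with the formulas for the components of $\ric^N$ from Corollary \ref{RicdTdi}. Recall that $\mu = v\partial_t + v b^j \partial_j$ with $b^j = \widetilde{g}^{ij}u_i/f(u)^2$; expanding bilinearly and using $\ric^N(\partial_t,\partial_i)=0$ yields
\begin{equation*}
\ric^N(\mu,\mu) = -v^2 m \frac{f''(u)}{f(u)} + v^2 b^i b^j \Bigl( \widetilde{\ric}_{ij} + f(u)f''(u)\widetilde{g}_{ij} + (m-1)(f'(u))^2 \widetilde{g}_{ij}\Bigr).
\end{equation*}

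The first term on the right is immediately bounded by $cv^2$ using the uniform bounds on $f$ and $f''$ and the lower bound $f\geq\varepsilon>0$ from Setting \ref{setting} (2). For the remaining three terms, the key observation is that $v^2 b^i b^j \widetilde{g}_{ij}$ is already bounded by $cv^2$: indeed,
\begin{equation*}
v^2 b^i b^j \widetilde{g}_{ij} = v^2 \, \frac{|\widetilde{\nabla}u|_{\widetilde{g}}^2}{f(u)^4} = \frac{|\nabla u|_g^2}{f(u)^2} \leq \frac{v^2}{f(u)^2},
\end{equation*}
where the middle equality uses \eqref{WillBeUsefulForRic} and the final inequality is \eqref{UpperBoundNormGradu}. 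This handles the two terms involving $f(u)f''(u)\widetilde{g}_{ij}$ and $(f'(u))^2\widetilde{g}_{ij}$ once one multiplies by the uniformly bounded $f f''$ and $(f')^2$ respectively.

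For the Ricci term, I would view $V := b^i \partial_i$ as a tangent vector on $(M,\widetilde{g})$. The bounded geometry assumption in Definition \ref{bounded-geom-def} gives $|\widetilde{\ric}(V,V)| \leq c\,\widetilde{g}(V,V) = c \, b^i b^j \widetilde{g}_{ij}$, and hence
\begin{equation*}
\bigl| v^2 b^i b^j \widetilde{\ric}_{ij}\bigr| = v^2 |\widetilde{\ric}(V,V)| \leq c\, v^2 b^i b^j \widetilde{g}_{ij} \leq c' v^2,
\end{equation*}
by the same computation as above. Summing the four contributions absorbs all constants (depending only on $m$, on the uniform bounds of $f,f',f''$, on $\inf f$, and on the bounded-geometry constant of $\widetilde{g}$) into a single $c>0$, and the stated estimate $|\ric^N(\mu,\mu)|\leq cv^2$ follows. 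There is no genuine obstacle; the only point that must be handled carefully is the bookkeeping that $v^2 b^i b^j$ acts essentially as $|\nabla u|_g^2/f(u)^2$, which via \eqref{UpperBoundNormGradu} never exceeds $cv^2$.
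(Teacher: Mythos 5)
Your proof is correct and follows essentially the same route as the paper: decompose $\mu=v\partial_t+vb^j\partial_j$, expand $\ric^N(\mu,\mu)$ bilinearly using Corollary \ref{RicdTdi}, then control each term via the identity \eqref{WillBeUsefulForRic} (Proposition \ref{1234}(iv)), the bound $|\nabla u|_g^2\le v^2$, the bounded geometry of $(M,\widetilde{g})$, and the uniform bounds on $f,f',f''$. The only cosmetic difference is that the paper first writes the intermediate expression with $\widetilde{\nabla}u$ in place of the coefficients $b^j$, then substitutes; the substance and key lemmas are identical.
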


\begin{proof}
From the local expression of the unit normal $\mu$ in \eqref{NOR} we find
$$\ric^N(\mu,\mu)=v^2\ric^N(\partial_t,\partial_t)+\frac{2v^2}{f(u)^2}\ric^N(\partial_t,\widetilde{\nabla}u)+\frac{v^2}{f(u)^4}\ric^N(\widetilde{\nabla}u,\widetilde{\nabla}u).$$
Proposition \ref{RicdTdi} gives
$$\ric^N(\mu,\mu)=-mv^2\frac{f''(u)}{f(u)}+\frac{2v^2}{f(u)^2}\ric^N(\partial_t,\widetilde{\nabla}u)+\frac{v^2}{f(u)^4}\ric^N(\widetilde{\nabla}u,\widetilde{\nabla}u).$$
The second term vanishes due to Proposition \ref{RicdTdi}.
Again, from Proposition \ref{RicdTdi} we infer for the third term
$$\frac{v^2}{f(u)^4}\ric^N(\widetilde{\nabla}u,\widetilde{\nabla}u)=\frac{v^2}{f(u)^4}\widetilde{\ric}(\widetilde{\nabla}u,\widetilde{\nabla}u)+\frac{f''(u)}{f(u)^3}v^2|\widetilde{\nabla}u|_{\widetilde{g}}^2+(m-1)\frac{f'(u)^2}{f(u)^4}v^2|\widetilde{\nabla}u|_{\widetilde{g}}^2.$$
We plug this back into the expression for $\ric^N(\mu,\mu)$ and conclude from $(iv)$ in Proposition \ref{1234}
$$\ric^N(\mu,\mu)=mv^2\frac{f''(u)}{f(u)}+\frac{v^2}{f(u)^4}\widetilde{\ric}(\widetilde{\nabla}u,\widetilde{\nabla}u)+\frac{f''(u)}{f(u)}|{\nabla}u|_g^2+(m-1)\left(\frac{f'(u)}{f(u)}\right)^2|\nabla u|_g^2.$$
By Setting \ref{setting} there exist some constants $c_1,c_2,c_3>0$ so that 
\begin{equation}\label{AssumptionForRicci}
|f(t)|\ge c_1;\;\;\;\;\left|\frac{f'(t)}{f(t)}\right|\le c_2;\;\;\;\;\left|\frac{f''(t)}{f(t)}\right|\le c_3;\;\;\forall\,t\in\mathbb{R},
\end{equation}
By taking the absolute value and keeping in mind that $(M,\widetilde{g})$ is of bounded geometry (i.e. in particular 
$\widetilde{\ric}(X,X) \leq c_4 \widetilde{g}(X,X)$ for any vector field $X$ and some uniform constant $c_4>0$), we obtain the following estimate
$$|\ric^N(\mu,\mu)|\le m c_3 v^2+\frac{c_4}{f(u)^4}v^2+c_3|\nabla u|_g^2+(m-1)c_2^2|\nabla u|_g^2.$$
The statement now follows by noticing that $|\nabla u|_g^2 \leq v^2$ by 
Proposition \ref{1234} and since $|f(t)| \geq c_1 >0$ is bounded uniformly from below away from zero.
\end{proof}

Next we prove that hypersurfaces of $(N,\gbar)$ arising as graphs of some 
H\"older regular functions satisfy the mean curvature structure condition, cf. \cite[chapter 3]{bartnik1984existence}.

\begin{prop}\label{StructureCondition}
We continue as in Proposition \ref{g(du,dv)}.
Recall, $\smc$ denotes the scalar mean curvature and $h$ the scalar second fundamental form. Then for any $\varepsilon>0$ 
and some uniform constant $c>0$ (independent of $u$) we have
\begin{equation}
| \smc + \ssff(\nabla u,\nabla u) | \leq \varepsilon v \| h\| + c \varepsilon^{-1} v^3.
\end{equation}
\end{prop}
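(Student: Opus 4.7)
My plan is to rewrite $\smc + \ssff(\nabla u,\nabla u)$ as a single trace-type contraction of the scalar second fundamental form against an explicit symmetric tensor, control its norm in terms of $v$, apply Cauchy-Schwarz, and finally use a Young/Peter-Paul type inequality to interpolate into the stated $\varepsilon$-dependent form.

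Since $\smc = g^{ij}\ssff_{ij}$ and $\ssff(\nabla u,\nabla u) = g^{ia}u_a g^{jb}u_b \ssff_{ij}$, I would first observe the identity
\[
\smc + \ssff(\nabla u,\nabla u) = A^{ij}\ssff_{ij}, \qquad A^{ij} := g^{ij} + (\nabla u)^i(\nabla u)^j.
\]
Passing to a $g$-orthonormal frame $(e_0,\dots,e_{m-1})$ with $e_0$ aligned with $\nabla u/|\nabla u|_g$ (well-defined when $\nabla u \neq 0$, and with $|\nabla u|_g^2 = v^2 - 1$ by Proposition~\ref{1234}(i)), the tensor $A$ is diagonal with eigenvalues $(v^2,1,\dots,1)$, so $\|A\|_g^2 = v^4 + (m-1) \leq c_m v^4$ for $v \geq 1$ (Proposition~\ref{1234}(ii)). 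Cauchy-Schwarz then yields
\[
|\smc + \ssff(\nabla u,\nabla u)| \;\leq\; \|A\|_g\, \|h\| \;\leq\; c'_m\, v^2 \|h\|.
\]

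To recast this in the claimed $\varepsilon$-form, I would split the factor $v^2 = v\cdot v$ asymmetrically and apply Young's inequality (Peter-Paul) to trade one power of $v$ against an $\varepsilon$-weighted $v\|h\|$ term and the remaining geometric piece against a $\varepsilon^{-1} v^3$ term. The $f'/f$ and $f''/f$ contributions that appear when one instead uses the identity \eqref{h(grad,grad)} together with Proposition~\ref{g(du,dv)} are absorbed into the constant $c$ via the uniform bounds in Setting~\ref{setting}(2).

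The main obstacle is precisely this last interpolation step: a naive AM-GM applied to $v^2\|h\|$ produces a term quadratic in $\|h\|$, breaking the linearity of $\|h\|$ on the right-hand side. To avoid this, I would decompose
\[
\smc + \ssff(\nabla u,\nabla u) = \smc - g(\nabla u,\nabla v) - \tfrac{f'(u)}{f(u)}\,v(v^2-1)
\]
(using \eqref{h(grad,grad)}), then bound $|\smc| \leq \sqrt{m}\|h\|$, bound $|g(\nabla u,\nabla v)|$ by Proposition~\ref{g(du,dv)}, and organize the resulting mixed terms so that Young's inequality interpolates them into exactly one $\varepsilon v\|h\|$ summand and one $c\varepsilon^{-1}v^3$ summand. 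This is the structure-condition-style argument familiar from \cite{bartnik1984existence} and \cite{gerhardt2000hypersurfaces}.
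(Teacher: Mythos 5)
Your opening step --- writing $\smc + \ssff(\nabla u,\nabla u) = A^{ij}\ssff_{ij}$ with $A = g + \nabla u \otimes \nabla u$, diagonalizing $A$, computing $\|A\|_g^2 = v^4 + (m-1) \leq c_m v^4$, and applying Cauchy--Schwarz to get $|\smc + \ssff(\nabla u,\nabla u)| \leq c'_m v^2 \|h\|$ --- is correct, and in fact it already suffices. The detour you then take (invoking \eqref{h(grad,grad)} and Proposition~\ref{g(du,dv)} to avoid a $\|h\|^2$) is a wrong turn, caused by a typo in the statement: the proposition as used in the paper (see \eqref{w-intermediate-estimate} in the proof of Theorem~\ref{MainTheorem}, where the contribution is absorbed into $(1 - 2|f'/f|\varepsilon)\|\ssff\|^2 \varphi v$) and as proved by the authors actually reads
\begin{equation*}
|\smc + \ssff(\nabla u,\nabla u)| \leq \varepsilon v \|h\|^2 + c\,\varepsilon^{-1} v^3,
\end{equation*}
with $\|h\|^2$, not $\|h\|$. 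The linear-in-$\|h\|$ version cannot hold, since $v^2\|h\|$ dominates $\varepsilon v \|h\| + c\varepsilon^{-1}v^3$ whenever $\|h\|$ is large relative to $v$, and there is no a priori bound on $\|h\|$ at this stage of the argument; your instinct that ``a naive AM--GM produces a term quadratic in $\|h\|$'' was detecting this, but the resolution is to accept the $\|h\|^2$, not to fight it.

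With that correction, finish your own route in one line: from $c'_m v^2 \|h\|$, Young's inequality in the Peter--Paul form gives
\begin{equation*}
c'_m v^2 \|h\| = \Bigl(\tfrac{c'_m v}{\sqrt{\varepsilon v}}\Bigr)\bigl(\sqrt{\varepsilon v}\,\|h\|\bigr) \leq (c'_m)^2 \varepsilon^{-1} v + \varepsilon v \|h\|^2 \leq (c'_m)^2 \varepsilon^{-1} v^3 + \varepsilon v \|h\|^2,
\end{equation*}
using $v \geq 1$ in the last step. This is a clean proof and genuinely differs in bookkeeping from the paper's: the authors diagonalize $h$ rather than $A$, write $\smc + \ssff(\nabla u,\nabla u) = \sum_i h_i(1 + (\nabla u)_i^2)$ in the $h$-eigenbasis, and apply Young componentwise to each summand $h_i(1 + (\nabla u)_i^2)$ before summing and using $|\nabla u|_g^2 \leq v^2$. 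Both are the same Cauchy--Schwarz/Young interpolation; yours contracts $A$ against $h$ globally, the paper's does it eigenvalue by eigenvalue. Neither needs \eqref{h(grad,grad)} or Proposition~\ref{g(du,dv)} at this point.
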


\begin{proof}
At any fixed $(p,s) \in M \times [0,T]$ there exists an orthonormal (with respect to $g$) 
basis $\{e_i\}$ of $h$-eigenvectors, i.e. for the Kronecker delta $\delta_{ij}$
$$
h(e_i,e_j) = h_i \delta_{ij}, \quad g(e_i,e_j) = \delta_{ij}.
$$
With respect to that basis we compute at $(p,s)$ (writing $(\nabla u)_i := g(\nabla u, e_i)$)
\begin{align*}
|\smc + \ssff(\nabla u,\nabla u)| &= \left| \sum_{i=1}^m h_i +  \sum_{i=1}^m h_i (\nabla u)_i^2 \right|
\leq \sum_{i=1}^m \Bigl( \frac{1 +  (\nabla u)_i^2}{\sqrt{\varepsilon v}} \Bigr) \sqrt{\varepsilon v} |h_i|
\\ &\leq \sum_{i=1}^m (v \varepsilon)^{-1} \Bigl( 1 +  (\nabla u)_i^2  \Bigr)^2 + \varepsilon v \sum_{i=1}^m h_i^2
\\ &\leq (v \varepsilon)^{-1} \Bigl( m +  2 | \nabla u |_g^2 + | \nabla u |_g^4 \Bigr) + \varepsilon v \| h\|^2.
\end{align*}
By \eqref{UpperBoundNormGradu} we conclude for some $c>0$ (independent of $u$
and $(p,s)$)
\begin{align*}
\smc + \ssff(\nabla u,\nabla u) \leq c \varepsilon^{-1}  v^3 + \varepsilon v \| h\|^2.
\end{align*}\end{proof}

We will need one last estimate.

\begin{prop}\label{EstimateV(H)}
We continue as in Proposition \ref{g(du,dv)}.
Consider as above the (local) vector field $V$ on $M$, so that $DF(V)=\partial_t^{\top}$.
Then for every function $\mathcal{H}\in C^{1,\alpha}(M)$ there exists some uniform
constant $c>0$ (independent of $u$) such that
\begin{equation}
|V(\mathcal{H})|\le c \| \nabla u \|_g \|\mathcal{H}\|_{1,\alpha}.
\end{equation}
\end{prop}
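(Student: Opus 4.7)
The plan is to identify $V$ explicitly as a vector field on $M$, rewrite $V(\mathcal{H})$ entirely in terms of $\widetilde{g}$-objects via the inverse metric formula \eqref{invindmet}, and conclude by Cauchy--Schwarz. First I would compute $V$ in local coordinates. Since $F(p) = (u(p), p)$, the differential is $DF(\partial_i) = u_i \partial_t + \partial_i$, and the warped-product form of $\overline{g}$ gives $\overline{g}(\partial_t, DF(\partial_i)) = -u_i$. Substituting into the definition of $V$ yields
\begin{equation*}
V^j = g^{ij}\overline{g}(\partial_t, DF(\partial_i)) = -g^{ij} u_i = -(\nabla u)^j,
\end{equation*}
so $V = -\nabla u$ as a vector field on $M$, and in particular $V(\mathcal{H}) = -g^{ij} u_i \mathcal{H}_j$.

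Next I would plug the explicit inverse metric \eqref{invindmet} into this expression. The diagonal part contributes $\widetilde{g}(\widetilde{\nabla}u,\widetilde{\nabla}\mathcal{H})/f(u)^2$ and the rank-one correction contributes $|\widetilde{\nabla}u|_{\widetilde{g}}^2\,\widetilde{g}(\widetilde{\nabla}u,\widetilde{\nabla}\mathcal{H})/[f(u)^2(f(u)^2-|\widetilde{\nabla}u|_{\widetilde{g}}^2)]$, and the two terms telescope via the identity $1+|\widetilde{\nabla}u|_{\widetilde{g}}^2/(f(u)^2-|\widetilde{\nabla}u|_{\widetilde{g}}^2)=v^2$, which is item (i) of Proposition \ref{1234}. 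This produces the clean formula
\begin{equation*}
V(\mathcal{H}) = -\frac{v^2}{f(u)^2}\,\widetilde{g}(\widetilde{\nabla}u,\widetilde{\nabla}\mathcal{H}).
\end{equation*}
Applying Cauchy--Schwarz for $\widetilde{g}$ on the right, and then using the relation $|\widetilde{\nabla}u|_{\widetilde{g}} = (f(u)/v)|\nabla u|_g$ from the same proposition together with $|\widetilde{\nabla}\mathcal{H}|_{\widetilde{g}} \leq \|\mathcal{H}\|_{1,\alpha}$, one obtains $|V(\mathcal{H})| \leq (v/f(u))|\nabla u|_g\|\mathcal{H}\|_{1,\alpha}$. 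The uniform lower bound $f \geq \varepsilon > 0$ from Setting \ref{setting} then reduces this to the claimed estimate, with the remaining factor of $v$ absorbed into the uniform constant $c$ in the same convention as in the companion bounds \eqref{RicciEstimate} and Proposition \ref{g(du,dv)}, all of which are used in parallel in the proof of Theorem \ref{MainTheorem}.

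The main obstacle is the telescoping in step two: bounding the two summands of \eqref{invindmet} separately by Cauchy--Schwarz would leave an awkward mixed expression and spoil the final estimate, whereas recognising the cancellation through Proposition \ref{1234}(i) produces the clean single factor $v^2/f(u)^2$. Once this observation is made, the remainder of the argument is a direct application of Cauchy--Schwarz together with the uniform bounds on $f$ from Setting \ref{setting} and the definition of the H\"older norm $\|\mathcal{H}\|_{1,\alpha}$.
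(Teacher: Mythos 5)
Your computation of $V$ is careful and, in fact, corrects an inaccuracy in the paper's own proof. From $V^j = g^{ij}\,\overline{g}(\partial_t, DF(\partial_i)) = -g^{ij}u_i$ you correctly obtain $V = -\nabla u$, and the telescoping via \eqref{invindmet} together with Proposition \ref{1234}(i) yields $V(\mathcal{H}) = -(v^2/f(u)^2)\,\widetilde{g}(\widetilde{\nabla}u,\widetilde{\nabla}\mathcal{H})$, hence
\begin{equation*}
|V(\mathcal{H})| \leq \frac{v}{f(u)}\,|\nabla u|_g\,\|\mathcal{H}\|_{1,\alpha}.
\end{equation*}
The paper's proof instead asserts $V = -\widetilde{\nabla}u/f(u)^2$, which is off by a factor of $v^2$ given the defining condition $DF(V) = \partial_t^{\top}$; your formula is the correct one.

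The genuine gap is in your final step: the factor of $v$ cannot be absorbed into the uniform constant $c$. The gradient function $v$ is not a constant; it depends pointwise on $u$ via $v^2 = 1 + |\nabla u|_g^2$, and the proposition explicitly demands $c$ independent of $u$. More seriously, the entire purpose of the argument in which this estimate is used (Theorem \ref{MainTheorem}) is to prove that $v$ is uniformly bounded, so presupposing a $u$-uniform bound on $v$ here would be circular. The companion estimates you invoke as precedent do not in fact absorb such factors: \eqref{RicciEstimate} reads $|\ric^N(\mu,\mu)| \leq c\,v^2$ with an explicit $v^2$, and Proposition \ref{g(du,dv)} carries explicit factors of $\|\ssff\|$ and $v$. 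What your (correct) derivation actually establishes is $|V(\mathcal{H})| \leq c\,v\,\|\nabla u\|_g\,\|\mathcal{H}\|_{1,\alpha}$, which is weaker than the stated proposition by a factor of $v$. That weaker bound does still suffice for the use made of the proposition in the proof of Theorem \ref{MainTheorem}, where the $V(\mathcal{H})\varphi$ contribution is eventually dominated by a term of order $v^3$, but it is not the estimate as stated, and the claim that $v$ may be treated as a constant must be removed.
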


\begin{proof}
It is easy to see that the condition $DF(V)=\partial_t^{\top}$ gives 
$V=-\frac{\widetilde{\nabla}u}{f(u)^{2}}$.

\noindent Therefore in local coordinates, we obtain using \eqref{NormGradMg} in the last estimate
$$
|V(\mathcal{H}) | = \left| \frac{1}{f(u)^2}\widetilde{g}^{ij}u_i\mathcal{H}_j \right| 
\leq c | \widetilde{\nabla} u |_{\widetilde{g}}\|\mathcal{H}\|_{1,\alpha} \leq c | \nabla u|_g \|\mathcal{H}\|_{1,\alpha},
$$
where we used the fact that $f>0$ is uniformly bounded away from zero.
\end{proof}

We are now ready to prove Theorem \ref{MainTheorem}.

\subsection{Proof of Theorem \ref{MainTheorem}}

We will use the ideas of the argument of \cite{gerhardt2000hypersurfaces} with some adaptations due to non-compact geometry.
In the upcoming computations we will systematically suppress the point $(p,s) \in M \times [0,T]$ from notation.  
We consider some constants $\lambda, \rho > 0$, which we will specify later.
\medskip

Let $\varphi=e^{\rho e^{\lambda u}}$. Assume, without loss of generality that $u>1$, 
if it is not the case we can consider $u+C$ for some constant $C>0$ large enough.
An easy computation gives
\begin{equation}\label{Evphi}
(\partial_s+\Delta)\varphi=-\rho\lambda^2 e^{\lambda u}(1+\rho e^{\lambda u})\varphi |\nabla u|_g^2+\rho\lambda e^{\lambda u}\varphi(\partial_s+\Delta)u.
\end{equation}
Let us now set $w=\varphi v$. Therefore we find (recall $\mu$ is defined in \eqref{NOR})
\begin{align*}
(\partial_s+\Delta)w &= v(\partial_s+\Delta)\varphi+\varphi(\partial_s+\Delta)v-2g(\nabla\varphi,\nabla v) \\
&=v(\partial_s+\Delta)\varphi+\varphi(\partial_s+\Delta)v-2\rho\lambda e^{\lambda u}\varphi g(\nabla u,\nabla v).
\end{align*}
Substituting \eqref{Evphi} and \eqref{EvolutionOfv} in the above, we obtain
\begin{align*}
(\partial_s+\Delta)w=I_1 + I_2.
\end{align*}
where $I_1$ and $I_2$ are explicitly given as follows (recall $\mu$ is defined in \eqref{NOR})
\begin{align*}
I_1:= \ & -\rho\lambda^2e^{\lambda u}(1+\rho e^{\lambda u})|\nabla u|_g^2\varphi v-\|\ssff\|^2\varphi v-V(\mathcal{H})\varphi-2\frac{f'(u)}{f(u)}\smc\varphi 
\\ &-2\left(\rho\lambda e^{\lambda u}-\frac{f'(u)}{f(u)}\right)g(\nabla u,\nabla v)\varphi 
-\left(\frac{f'(u)}{f(u)}\right)^2|\nabla u|_g^2\varphi v,\\
I_2:= \ &\rho\lambda e^{\lambda u}\varphi v(\partial_s+\Delta)u -\ric^N(\mu,\mu)\varphi +\frac{f'(u)}{f(u)}\mathcal{H}\varphi-\frac{f'(u)}{f(u)}\mathcal{H}\varphi v^2 \\ 
&+m\frac{f''(u)}{f(u)}\varphi v-\frac{f''(u)}{f(u)}|\nabla u|_g^2\varphi v-m\left(\frac{f'(u)}{f(u)}\right)^2\varphi v
\end{align*}
First, we estimate $I_2$ from above.
By Setting \ref{setting} there exist some constants $c_1, c_2>0$ such that the warping function $f:\mathbb{R}\rightarrow\mathbb{R}^+$ satisfies for any $t \in \R$
$$|f(t)|\ge c_1, \quad \left|\frac{f'(t)}{f(t)}\right|\le c_2, \quad \left|\frac{f''(t)}{f(t)}\right|\le c_2.$$
From equation \eqref{EvolutionOfu} we now deduce for some $c_3>0$ depending on $\|\mathcal{H}\|_\infty$
$$(\partial_s+\Delta)u\le c_3 v^2.$$
Since $|\nabla u|_g \leq v$ by $(iii)$ in Proposition \ref{1234}, we arrive by 
Propositions \ref{RicciEstimateProp} and \ref{EstimateV(H)} at the following estimate of $I_2$ (we write 
$c>0$ for any uniform positive constant)
\begin{align*}
I_2 \leq c \rho\lambda e^{\lambda u}\varphi v^2 + c |\nabla u|_g^2\varphi v + c |\nabla u|_g \varphi  \leq c \rho\lambda e^{\lambda u}\varphi v^3.
\end{align*}
The estimate of $I_1$ is slightly more involved. 
Using the formula from Proposition \eqref{TheVeryUsefulFormulaLemma} 
\begin{equation*}
\ssff(\nabla u,\nabla u)=-g(\nabla u,\nabla v)-\frac{f'(u)}{f(u)}|\nabla u|^2v,
\end{equation*}
we can rewrite $I_1$ as follows
\begin{align*}
I_1=&-\rho\lambda^2e^{\lambda u}(1+\rho e^{\lambda u})|\nabla u|_g^2\varphi v-\|\ssff\|^2\varphi v-2\frac{f'(u)}{f(u)} \Bigl( \smc + \ssff(\nabla u,\nabla u) \Bigr) \varphi\\
&-3\left(\frac{f'(u)}{f(u)}\right)^2|\nabla u|_g^2 \varphi v - m\left(\frac{f'(u)}{f(u)}\right)^2\varphi v
-2 \rho\lambda e^{\lambda u} g(\nabla u,\nabla v)\varphi.
\end{align*}
By Proposition \ref{StructureCondition} we find for some uniform constant $c>0$
(in fact we will not differentiate between positive uniform constants and denote them all by $c$)
\begin{equation}\label{w-intermediate-estimate}
\begin{split}
I_1 \leq &-\rho\lambda^2e^{\lambda u}(1+\rho e^{\lambda u})|\nabla u|_g^2\varphi v \\
&- \Bigl( 1- 2\left|\frac{f'(u)}{f(u)}\right| \varepsilon \Bigl) \|\ssff\|^2\varphi v-3\left(\frac{f'(u)}{f(u)}\right)^2|\nabla u|_g^2 \varphi v \\
&+2c \left|\frac{f'(u)}{f(u)}\right| \varepsilon^{-1} \varphi v^3 -2 \rho\lambda e^{\lambda u} g(\nabla u,\nabla v)\varphi.
\end{split}
\end{equation}
We now want to estimate the last term above. 
By Proposition \ref{g(du,dv)} we have for some uniform constant $c>0$
\begin{align*}
-2 \rho\lambda e^{\lambda u} g(\nabla u,\nabla v)\varphi
& \le 2\rho\lambda e^{\lambda u}|g(\nabla u,\nabla v)|\varphi \\
& \le 2\rho\lambda e^{\lambda u} \Bigl(\|h\| |\nabla u |^2_g + c |\nabla u |^2_g v \Bigr)\varphi.
\end{align*}
We estimate this further for any $\varepsilon'>0$ and using \eqref{UpperBoundNormGradu} in the last step
\begin{align*}
-2 \rho\lambda e^{\lambda u} g(\nabla u,\nabla v)\varphi 
& \le \frac{2\rho\lambda e^{\lambda u} |\nabla u |^2_g}{\sqrt{2(1-\varepsilon') v} } 
\sqrt{2(1-\varepsilon') v} \|\ssff\| \varphi + 2c \rho\lambda e^{\lambda u} |\nabla u |^2_g \varphi v \\
& \le \frac{\rho^2\lambda^2 e^{2\lambda u} |\nabla u |^4_g}{(1-\varepsilon') v} \varphi +
(1-\varepsilon') \|\ssff\|^2 \varphi v + 2c \rho\lambda e^{\lambda u} |\nabla u |^2_g \varphi v \\
& \le \frac{\rho^2\lambda^2 e^{2\lambda u}}{(1-\varepsilon')} |\nabla u |^2_g \varphi v +
(1-\varepsilon') \|\ssff\|^2 \varphi v + 2c \rho\lambda e^{\lambda u} |\nabla u |^2_g \varphi v.
\end{align*}
Choosing, for any given $\varepsilon' \in (0,1)$, an $\varepsilon > 0$ sufficiently small such that  
$\varepsilon' > 2\left|\frac{f'(u)}{f(u)}\right| \varepsilon$ and plugging the
last estimate into \eqref{w-intermediate-estimate}, we arrive at
\begin{equation}\label{ChoiceOfLambda}
\begin{split}
I_1 \leq \ &-\rho\lambda e^{\lambda u} \Bigl( \lambda - 2c\Bigr) |\nabla u|_g^2\varphi v 
-3\left(\frac{f'(u)}{f(u)}\right)^2|\nabla u|_g^2 \varphi v \\
&+ 2c \left|\frac{f'(u)}{f(u)}\right| \varepsilon^{-1}  \varphi v^3 + \frac{\varepsilon'}{(1-\varepsilon')} \rho^2\lambda^2 e^{2\lambda u} |\nabla u |^2_g \varphi v.
\end{split}
\end{equation}
Set $\varepsilon'= e^{-\lambda u}$ and $\rho=1/2$. Choose $\overline{\lambda}>0$ 
so that for every $\lambda>\overline{\lambda}$ 
$$
\frac{\rho}{1-e^{-\lambda u}}\le \frac{3}{4}.
$$
Then we can estimate $I_1$ even further by (recall $|\nabla u|_g \leq v$ by Proposition \ref{1234})
\begin{equation*}
\begin{split}
I_1 \leq \ &- \frac{1}{8}\lambda e^{\lambda u} \Bigl( \lambda - c \Bigr) |\nabla u|_g^2\varphi v 
+c \lambda e^{\lambda u} \varphi v^3. \end{split}
\end{equation*}
We want to point out that the above estimates follows by considering $\overline{\lambda}$ to be large enough so that the second and third term in \eqref{ChoiceOfLambda} can be estimated by the second term in the equation above.

\noindent
Summarizing, we arrive at the following intermediate estimate
\begin{align}\label{almostfinal}
(\partial_s+\Delta)w \leq - \frac{1}{8}\lambda e^{\lambda u}  \Bigl( \Bigl( \lambda - c \Bigr) |\nabla u|_g^2 -cv^2 \Bigr) w.
\end{align}
We want to turn this into a differential inequality for the supremum 
$$
v_{\sup}(s) = \sup_{p\in M} v(p,s).
$$
Let us assume that there exists some $s_0 \in [0,T]$ such that $v_{\sup}(s_0)>2$, otherwise the statement is trivial. 
Since by Proposition \ref{envelope},  $v_{\sup}(s)$ is locally Lipschitz,
$v_{\sup}(s) > 2$ in an open intervall $I=(a,b) \subset [0,T]$ containing $s_0$. We take the minimal possible such $a\geq 0$, such that
by continuity of $v_{\sup}(s)$ we have either $a=0$ or $v_{\sup}(a) = 2$. \medskip

Consider $s \in (a,b)$ and 
a sequence $(p_k(s)) \subset M$ satisfying \eqref{omori-sup}. Then for $k\in \N$ sufficiently large, 
$v(p_k(s),s) > 2$ and we establish a differential evolution inequality for $v$ at those points as follows.
We consider $v$ and $w$ evaluated at $(p_k(s),s)$ without making it notationally explicit.
Since $v \geq 2$, we have $-4 \geq -v^2$ and from $(i)$ in Proposition \ref{1234} we find
\begin{equation}\label{du-lower-estimate}
|\nabla u|_g^2=v^2-1\ge v^2 - \frac{v^2}{4}= \frac{3}{4}v^2.
\end{equation}
Choosing $\lambda>\overline{\lambda}$ sufficiently large (note that these choices do not depend on $u$)
the right hand side of \eqref{almostfinal}, evaluated at $(p_k(s))$ for $k \in \N$ sufficiently large, turns negative and we conclude
$$
(\partial_s+\Delta)w(p_k(s),s) \le 0.
$$
This implies by Proposition \ref{envelope} for any $s \in (a,b)$ in the limit $k \to \infty$
$$
\partial_s w_{\sup}(s) \le 0.
$$
Thus for any $s \in (a,b)$ we conclude $w(\cdot,s)\le w_{\sup}(s) \le w_{\sup}(a)$. 
In particular, we find for any $(p,s) \in M \times (a,b)$ and some constant $c>0$, depending
only on $\mathcal{H}$, $u(s=a)$ and the ambient geometry, that (note that $e^{\rho e^{\lambda u}} > 1$)
\begin{align}\label{vu}
v(p,s)\le \exp \Bigl( \rho e^{\lambda u_{\sup}(a)} \Bigr) v_{\sup}(a) < c v_{\sup}(a),
\end{align}
where the second estimate holds, provided $u$ is bounded uniformly from above. 
Now, since we have either $a=0$ or $v_{\sup}(a) = 2$, we conclude that $v$ is uniformly bounded. 

\begin{cor}\label{c1-corr}
We continue in the Setting \ref{setting}. Assume that $u$ is uniformly bounded from above. 
Then $v$ is uniformly bounded and hence, provided $f$ is uniformly bounded, as assumed in Setting \ref{setting}, 
$|\NT u \, |_{\widetilde{g}}$ is uniformly bounded as well.
\end{cor}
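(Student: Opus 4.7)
The plan is essentially to invoke Theorem \ref{MainTheorem} and then translate the resulting bound on the gradient function $v$ into a bound on $|\widetilde{\nabla} u|_{\widetilde{g}}$ by an algebraic manipulation of \eqref{expressionofv}.

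First, since we are in Setting \ref{setting} and assume that $u$ is uniformly bounded from above, the hypotheses of Theorem \ref{MainTheorem} are satisfied. That theorem then directly yields a uniform constant $K\ge 1$ (depending only on $u_{\sup}$, $\mathcal{H}$, $f$ and the ambient geometry) such that $v(p,s)\le K$ for all $(p,s)\in M\times[0,T]$. This already gives the first assertion of the corollary.

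For the second assertion, I would rearrange the defining identity \eqref{expressionofv}, $v=f(u)/\sqrt{f(u)^2-|\widetilde{\nabla}u|_{\widetilde{g}}^2}$, to obtain
\begin{equation*}
|\widetilde{\nabla}u|_{\widetilde{g}}^2 \;=\; f(u)^2\Bigl(1-\tfrac{1}{v^2}\Bigr).
\end{equation*}
By Setting \ref{setting} the warping function $f$ is uniformly bounded above by some $C_f>0$, and from Proposition \ref{1234}(ii) we have $v\ge 1$, so $1-1/v^2\in[0,1)$. Combined with the uniform bound $v\le K$ obtained above, this yields
\begin{equation*}
|\widetilde{\nabla}u|_{\widetilde{g}}^2 \;\le\; C_f^2\Bigl(1-\tfrac{1}{K^2}\Bigr) \;<\; C_f^2,
\end{equation*}
which is the claimed uniform bound on $|\widetilde{\nabla}u|_{\widetilde{g}}$.

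There is no real obstacle here: the substantive work is all contained in Theorem \ref{MainTheorem}, and the remaining deduction is a one-line algebraic consequence of the definition of $v$ together with the assumed uniform bounds on the warping function $f$. The only thing one has to be mindful of is that the resulting bound on $|\widetilde{\nabla}u|_{\widetilde{g}}$ depends, through $K$, on $u_{\sup}$, so any use of this corollary downstream should only assume a uniform upper bound on $u$, which is exactly what was established in the $C^0$-estimates section.
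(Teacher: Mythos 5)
Your proof is correct and matches what the paper intends: Corollary \ref{c1-corr} is stated without a separate proof precisely because it is an immediate consequence of Theorem \ref{MainTheorem} combined with the algebraic rearrangement $|\widetilde{\nabla}u|_{\widetilde{g}}^2 = f(u)^2(1-1/v^2)$ that you supply. Your derivation, including the use of Proposition \ref{1234}(ii) for $v\ge 1$ and the uniform bound on $f$ from Setting \ref{setting}, is exactly the intended route.
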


\section{$C^2$-estimates: Bounds of the second fundamental form}\label{C2EstimatesSec}

Uniform boundedness of $\| h \|$ and hence also of the mean curvature $H$ has been established already in
Proposition \ref{lower-bound-u-1}. Now, as computed in the preceeding work by the first named author \cite[(2.15)]{mio}
\begin{equation}
\ssff_{ij}=-\frac{f(u)}{\sqrt{f(u)^2-|\widetilde{\nabla} u|_{\widetilde{g}}^2}}\left(u_{ij}-\widetilde{\Gamma}^k_{ij}u_k-2\frac{f(u)f'(u)}{f(u)^2}u_iu_j+f(u)f'(u)\widetilde{g}_{ij}\right).
\end{equation}
From here it is clear in view of uniform bounds of $f$ and its derivatives, as well as Corollary \ref{c1-corr} that
each $u_{ij}$ is uniformly bounded. We thus arrive at the $C^2$ estimates

\begin{prop}\label{c2-prop}
We continue in the Setting \ref{setting}. Assume that $u$ is uniformly bounded from above. 
Then $\| h \|$ and hence also of the mean curvature $H$ are uniformly bounded and hence
$|\NT^2 u \, |_{\widetilde{g}}$ is uniformly bounded as well.
\end{prop}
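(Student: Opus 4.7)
The plan is to invert the coordinate formula for $\ssff_{ij}$ displayed immediately above the proposition and read off the classical Hessian $\widetilde{\nabla}^2 u$ from quantities already controlled. First, Proposition~\ref{lower-bound-u-1} provides a uniform bound on the $g$-norm $\|\ssff\|$, and hence on $\smc=\mathrm{tr}_g \ssff$. That proposition was obtained from the $C^0$-bound of Proposition~\ref{upper-bound-u} together with the $C^1$-bound of Theorem~\ref{MainTheorem}, and in particular does not rely on the $C^2$-statement we are now proving. It therefore remains only to convert this intrinsic tensor bound into a coordinatewise bound for $\widetilde{\nabla}^2 u$ compatible with the reference metric $\widetilde{g}$.

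Since $u_{ij}-\widetilde{\Gamma}^k_{ij}u_k$ is precisely $\widetilde{\nabla}^2 u(\partial_i,\partial_j)$ in local coordinates, solving the recalled formula for the Hessian yields
\[
\widetilde{\nabla}^2 u(\partial_i,\partial_j) = -\frac{\sqrt{f(u)^2-|\widetilde{\nabla}u|_{\widetilde{g}}^2}}{f(u)}\,\ssff_{ij} + 2\,\frac{f'(u)}{f(u)}\,u_i u_j - f(u)f'(u)\,\widetilde{g}_{ij}.
\]
The last summand has $\widetilde{g}$-norm bounded by $|f(u)f'(u)|\sqrt{m}$, uniformly controlled by Setting~\ref{setting}. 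The middle summand is bounded in $\widetilde{g}$-norm by $2\,|f'(u)/f(u)|\cdot|\widetilde{\nabla}u|_{\widetilde{g}}^2$, uniformly controlled thanks to Corollary~\ref{c1-corr} together with Setting~\ref{setting}. The scalar coefficient of $\ssff_{ij}$ in the first summand is uniformly bounded since $f$ is bounded above and bounded below away from zero.

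The one point that requires some care, and what I expect to be the main obstacle, is passing from the uniform $g$-norm bound on $\ssff$ to a bound on its components expressible in terms of $\widetilde{g}$. This rests on the uniform equivalence of the two metrics along the flow. Indeed, the upper bound $v\leq C'$ from Theorem~\ref{MainTheorem}, the lower bound $v\geq 1$ from Proposition~\ref{1234}, and the uniform bounds on $f$ together imply, via the local expression \eqref{indmetr} for $g$, that $c_1\widetilde{g}\leq g\leq c_2\widetilde{g}$ as bilinear forms, uniformly on $M\times[0,T]$. A routine two-tensor computation in the spirit of Corollary~\ref{EquivalentDistancesCor} then yields $\|\ssff\|_{\widetilde{g}}\leq c\,\|\ssff\|_g$, which in the bounded-geometry coordinates provided by the quasi-isometries $\Psi_x$ translates into a uniform componentwise bound for $\ssff_{ij}$. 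Assembling the three estimates produces the claimed uniform bound on $|\widetilde{\nabla}^2 u|_{\widetilde{g}}$, and the proposition follows.
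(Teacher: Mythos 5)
Your proof is correct and follows the same route as the paper: cite Proposition~\ref{lower-bound-u-1} for the uniform $g$-norm bound on $\ssff$, then invert the displayed formula $\ssff_{ij}=-v\bigl((\widetilde{\nabla}^2u)_{ij}-2\tfrac{f'(u)}{f(u)}u_iu_j+f(u)f'(u)\widetilde{g}_{ij}\bigr)$ together with the $C^0$ and $C^1$ estimates to control $\widetilde{\nabla}^2 u$. The one place where you are more explicit than the paper --- the passage from the uniform bound on $\|\ssff\|_g$ to a bound on the $\widetilde{g}$-components of $\ssff$ via the uniform metric equivalence $c_1\widetilde{g}\leq g\leq c_2\widetilde{g}$ coming from $1\leq v\leq C'$ and the bounds on $f$ --- is exactly the step the paper compresses into \emph{``it is clear in view of uniform bounds of $f$ and its derivatives, as well as Corollary~\ref{c1-corr}''}, so spelling it out is a sound addition rather than a deviation.
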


Taken altogether, results on the last three sections yield the following

\begin{thm}\label{C012}
Consider Setting \ref{setting} and a solution $u \in C^{4,\alpha}(M\times[0,T])$ to \eqref{PGMCF}.

\begin{enumerate}
\item Impose Assumptions \ref{assumptions} (1) and (2). 
Then $u, |\NT u \, |_{\widetilde{g}}$ and $|\NT^2 u \, |_{\widetilde{g}}$ are bounded uniformly for finite $T>0$,
with bounds possibly depending on $T$.

\item Impose Assumptions \ref{assumptions} (1)-(3). 
Then $u, |\NT u \, |_{\widetilde{g}}$ and $|\NT^2 u \, |_{\widetilde{g}}$ are bounded uniformly independent of $T>0$.
Moreover, $\| \partial_s u \|_\infty$ is exponentially decreasing. 
\end{enumerate} 
\end{thm}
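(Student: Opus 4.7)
The plan is to view this theorem as a consolidation of the estimates established in the preceding three sections, chained together in the order dictated by their mutual dependencies. The dependency graph is linear: $C^0$ upper bound $\Rightarrow$ $C^1$ bound on $v$ $\Rightarrow$ $C^0$ lower bound (finite-time, then uniform) $\Rightarrow$ $C^2$ bound.

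For part (1), the first step is to invoke Proposition \ref{upper-bound-u}, which under Assumptions \ref{assumptions} (1)-(2) yields $u \leq u_{\sup}(0)$ via the sign argument for $(\smc-\mathcal{H})$ and the identity $\partial_s u = -(\smc-\mathcal{H})v$. Crucially, Theorem \ref{MainTheorem} requires only a one-sided upper bound on $u$, which is precisely what has just been produced; applying it gives a uniform bound on the gradient function $v$. Corollary \ref{c1-corr} then converts this into a uniform bound on $|\NT u|_{\widetilde{g}}$, using that $f$ is bounded away from zero. With $v$ bounded, Proposition \ref{lower-bound-u-1} applies Lemma \ref{UnifromBoundForPowers} to the inequality $(\partial_s+\Delta)\|\ssff\|^2 \leq -a^2\|\ssff\|^4 + b^2$ from \eqref{TheInequality} to bound $\|\ssff\|$ and hence $\smc$; by \eqref{uHv} this in turn bounds $\partial_s u$ uniformly, yielding the finite-$T$ lower bound on $u$. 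Finally, Proposition \ref{c2-prop} promotes the $C^1$ control to a uniform bound on $|\NT^2 u|_{\widetilde{g}}$ through the algebraic expression for $\ssff_{ij}$ in terms of $u_{ij}$.

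For part (2), the additional time-like convergence Assumption \ref{assumptions} (3) upgrades the (possibly $T$-dependent) lower bound on $u$ to a $T$-independent one. The key input is the exponential decay proposition at the end of \S \ref{uniform-section}: under (3), the inequality $(\partial_s+\Delta)(\smc-\mathcal{H})^2 \leq -\delta'(\smc-\mathcal{H})^2$ combined with the Omori-Yau principle and the enveloping theorem (Proposition \ref{envelope}) gives $(\smc-\mathcal{H})^2_{\sup}(s) \leq e^{-\delta' s}(\smc-\mathcal{H})^2_{\sup}(0)$. Together with the uniform bound on $v$ from Theorem \ref{MainTheorem}, the identity \eqref{uHv} then yields $\|\partial_s u\|_\infty \leq c\, e^{-s\delta'/2}$, which is integrable in $s$ and therefore produces a uniform bound on $u$ independent of $T$. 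The same chain Theorem \ref{MainTheorem} $\to$ Corollary \ref{c1-corr} $\to$ Proposition \ref{c2-prop} then delivers $T$-independent bounds on $|\NT u|_{\widetilde{g}}$ and $|\NT^2 u|_{\widetilde{g}}$.

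The main subtlety in this synthesis is a matter of bookkeeping rather than new analysis: one must verify that the chain contains no circular dependency, and in particular that the uniform bound on $v$ is invoked only after an upper bound on $u$ has been secured, while the uniform bound on $\|\ssff\|$ is invoked only after the bound on $v$ enters the constants of \eqref{TheInequality}. The genuinely hard work has already been done upstream, namely the construction of the auxiliary function $w=\varphi v$ with $\varphi=e^{\rho e^{\lambda u}}$ driving the proof of Theorem \ref{MainTheorem}, and the verification that the Omori-Yau maximum principle remains available along the flow via the preservation of stochastic completeness established in \S \ref{O-YSec}. Once those are in hand, Theorem \ref{C012} follows by assembling the previous propositions in the order described above.
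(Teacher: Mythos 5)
The proposal is correct and follows the same route as the paper: Theorem \ref{C012} is presented there without an explicit proof as a consolidation of \S\S\ref{uniform-section}--\ref{C2EstimatesSec}, and your chain of Proposition \ref{upper-bound-u} $\to$ Theorem \ref{MainTheorem} $\to$ Corollary \ref{c1-corr} $\to$ Proposition \ref{lower-bound-u-1} $\to$ Proposition \ref{c2-prop} (with the unlabelled exponential-decay proposition upgrading to $T$-independence for part (2)) is exactly the intended dependency order, including the acknowledgement of the forward reference inside Proposition \ref{lower-bound-u-1}. Your exponent $e^{-s\delta'/2}$ is in fact the correct consequence of \eqref{H-exp} and \eqref{uHv}; the displayed $e^{-s\delta'}$ in the paper is a harmless typo, and neither affects the conclusion.
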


\section{Long time existence and convergence}\label{conv-section}

As before, we continue in the Setting \ref{setting} and consider a local solution $u$ to \eqref{PGMCF}
extended to a maximal time intervall $u \in C^{4,\alpha}(M\times[0,T_{\max}))$. Let us 
assume without loss of generality that $T_{\max}>0$ is finite. The H\"older norm is
bounded for each compact intervall in $[0,T_{\max})$, but may a priori blow up the closer we get to $T_{\max}$.
The main point of this section is show that a posteriori this does not happen. \medskip

We first use uniform estimates from Theorem \ref{C012} to establish uniform ellipticity 
in the sense of \eqref{uniform-elliptic} for the Laplacian $\Delta$ of $g=g(s) = F(s)^*\gbar$.
Recall also the constant $\Lambda>0$ in the definition of uniform ellipticity in \eqref{uniform-elliptic}.

\begin{prop}\label{uni-ell-thm}
Consider a solution $u\in C^{4,\alpha}(M\times[0,T_{\max}))$ to \eqref{PGMCF}. 
\begin{enumerate}
\item If $u, |\NT u \, |_{\widetilde{g}}$ and $|\NT^2 u \, |_{\widetilde{g}}$ are bounded uniformly for any finite $T_{\max}>0$,
then $\Delta$ is uniformly elliptic for each $s \in [0,T_{\max})$ with $\Lambda>0$ bounded for any finite 
maximal time $T_{\max}$.\medskip

\item If $u, |\NT u \, |_{\widetilde{g}}$ and $|\NT^2 u \, |_{\widetilde{g}}$ are bounded uniformly independent of $T_{\max}>0$,
then $\Delta$ is uniformly elliptic for each $s \in [0,T_{\max})$ where $\Lambda>0$ can be chosen independent of $T_{\max}$.
\end{enumerate}
\end{prop}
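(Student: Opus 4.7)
The plan is to verify the conditions \eqref{uniform-elliptic} directly, working in the bounded-geometry charts $\Psi_x: B_\delta(0) \to B_\delta(x)$ used to define the norm \eqref{local-norm2}. In such a chart the background metric $\widetilde{g}$ is uniformly comparable to the Euclidean metric on $B_\delta(0)$ and its Christoffel symbols $\widetilde{\Gamma}^k_{ij}$ are uniformly bounded, so it suffices to bound $g^{ij}$ uniformly from above and below in terms of $\widetilde{g}^{ij}$, and to bound the first-order coefficient of $\Delta$ uniformly.

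First I would read off the principal symbol from \eqref{invindmet}:
\begin{equation*}
g^{ij} = \frac{1}{f(u)^2}\widetilde{g}^{ij} + \frac{1}{f(u)^2}\frac{\widetilde{g}^{jl}u_l\widetilde{g}^{im}u_m}{f(u)^2 - |\widetilde{\nabla}u|_{\widetilde{g}}^2}.
\end{equation*}
By \eqref{expressionofv} the denominator equals $f(u)^2/v^2$, so Corollary \ref{c1-corr} together with the uniform positive lower bound on $f$ from Setting \ref{setting} yields a uniform positive lower bound on $f(u)^2 - |\widetilde{\nabla}u|_{\widetilde{g}}^2$. Combined with the uniform upper bound on $f$ and on $|\widetilde{\nabla}u|_{\widetilde{g}}$ (also from Corollary \ref{c1-corr}), this shows that the first summand above is uniformly comparable to $\widetilde{g}^{ij}$, while the second summand is positive semi-definite and uniformly bounded. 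Passing through the quasi-isometry $\Psi_x$, we obtain the two-sided estimate $\Lambda^{-1}\|\xi\|^2 \leq a^{ij}\xi_i\xi_j \leq \Lambda\|\xi\|^2$ for $a^{ij}:= g^{ij}$ and a uniform $\Lambda$.

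Next, writing $\Delta = -g^{ij}\partial_i\partial_j + g^{ij}\Gamma^k_{ij}\partial_k$ in the chart, the first-order coefficient is $b^k = -g^{ij}\Gamma^k_{ij}$ and there is no zeroth-order term, so $c \equiv 0$. The Christoffel symbols $\Gamma^k_{ij}$ of $g$ are polynomials in $g^{pq}$ and $\partial_\ell g_{pq}$; by differentiating \eqref{indmetr}, the latter involve $u, \widetilde{\nabla}u, \widetilde{\nabla}^2 u$, the symbols $\widetilde{\Gamma}^k_{ij}$, and $f(u), f'(u)$. All of these are uniformly bounded: the first three by Theorem \ref{C012}, $\widetilde{\Gamma}$ by bounded geometry of $\widetilde{g}$, and $f, f'$ by Setting \ref{setting}. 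Enlarging $\Lambda$ to accommodate $\|b\| \leq \Lambda^{-1}$ completes the verification.

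The distinction between (i) and (ii) is only in the nature of the input: Theorem \ref{C012} (i) supplies $C^0$--$C^2$ bounds on $u$ that may depend on $T_{\max}$, producing a $T_{\max}$-dependent $\Lambda$, whereas Theorem \ref{C012} (ii) supplies $T_{\max}$-independent bounds, producing a $T_{\max}$-independent $\Lambda$; the algebraic manipulations are identical. The only genuinely delicate point is the uniform lower bound on $f(u)^2 - |\widetilde{\nabla}u|_{\widetilde{g}}^2$ — i.e., converting pointwise space-likeness into \emph{uniform} space-likeness — and this is precisely what Theorem \ref{MainTheorem} and Corollary \ref{c1-corr} provide. Once this is in hand, every other estimate is a bookkeeping exercise tracing how the constants from Theorem \ref{C012}, Setting \ref{setting}, and bounded geometry propagate through \eqref{indmetr}--\eqref{invindmet}.
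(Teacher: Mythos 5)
Your proposal is correct and follows essentially the same approach as the paper: both reduce uniform ellipticity to the uniform positivity of $f(u)^2-|\widetilde{\nabla}u|_{\widetilde{g}}^2$, which Theorem \ref{MainTheorem} (via Corollary \ref{c1-corr}) converts from a pointwise to a uniform bound. The paper extracts the principal symbol from the decomposition $\widetilde{\Delta}+\widehat{\Delta}$ in \eqref{laplacian-g}, whereas you read it off directly from $g^{ij}$ in \eqref{invindmet} — these are the same symbol — and you are slightly more explicit about bounding the first-order coefficient $b^k=-g^{ij}\Gamma^k_{ij}$ via the $C^2$ estimates, a step the paper leaves implicit.
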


\begin{proof}
From \eqref{laplacian-g} we obtain after cancellations
\begin{equation}\label{laplacian-g2}
\begin{split}
\Delta u&=\frac{1}{f(u)^2-|\NT u \, |_{\widetilde{g}}^2}
\Bigl( \widetilde{\Delta} + \widehat{\Delta} \Bigr) u\\
&+\frac{|\NT u\, |_{\widetilde{g}}^2}{(f(u)^2-|\NT u\, |_{\widetilde{g}}^2)} \left(
\frac{f(u)f'(u)}{f(u)^2-|\NT u\, |_{\widetilde{g}}^2} -(m-1)\frac{f'(u)}{f(u)}\right).
\end{split}
\end{equation}
Thus, in view of uniform bounds, it suffices to prove uniform ellipticity for $(\widetilde{\Delta} + \widehat{\Delta})$.
We compute from \eqref{deltahat} in local coordinates
\begin{equation*}
\begin{split}
\widetilde{\Delta} + \widehat{\Delta} = \frac{1}{f(u)^2-|\NT u\, |_{\widetilde{g}}^2}
\left( -\widetilde{g}^{ij} - \frac{\widetilde{g}^{iq} u_q \widetilde{g}^{jm}u_m}{f(u)^2-|\NT u\, |_{\widetilde{g}}^2}\right) 
\left(u_{ij}-\widetilde{\Gamma}_{ij}^ku_k\right)
\end{split}
\end{equation*}
From here we obtain for the symbol of $(\widetilde{\Delta} + \widehat{\Delta})$ in local coordinates 
\begin{equation}
\begin{split}
\sigma( \widetilde{\Delta} + \widehat{\Delta}) (p,\xi) &=
 \frac{1}{f(u)^2-|\NT u\, |_{\widetilde{g}}^2}
\left( \widetilde{g}^{ij} \xi_i \xi_j + \frac{\widetilde{g}^{iq} u_q \xi_i \widetilde{g}^{jm}u_m \xi_j}{f(u)^2-|\NT u\, |_{\widetilde{g}}^2}\right) 
\\ &= \frac{1}{f(u)^2-|\NT u\, |_{\widetilde{g}}^2}
\left( \| \xi \|^2_{\widetilde{g}} + \frac{\widetilde{g}(\di u,  \xi)^2}{f(u)^2-|\NT u\, |_{\widetilde{g}}^2}\right) 
\end{split}
\end{equation}
This implies uniform ellipticity as asserted.
\end{proof}

We can now establish H\"older regularity of the gradient function.

\begin{prop}\label{uv-hoelder} \ \\[-2mm]

\noindent Consider a solution $u \in C^{4,\alpha}(M\times[0,T_{\max}))$ to \eqref{PGMCF} and assume that $\mathcal{H}$ is bounded.

\begin{enumerate}
\item If $u, |\NT u \, |_{\widetilde{g}}$ and $|\NT^2 u \, |_{\widetilde{g}}$ are bounded uniformly for finite $T_{\max}>0$,
then $$u, v \in C^{\alpha}(M\times[0,T_{\max}]),$$ with the H\"older norm bounded for finite $T_{\max}>0$. \medskip

\item If $u, |\NT u \, |_{\widetilde{g}}$ and $|\NT^2 u \, |_{\widetilde{g}}$ are bounded uniformly independent of $T_{\max}>0$,
then $$u, v \in C^{\alpha}(M\times[0,T_{\max}]),$$ with a $T_{\max}$-independent bound for the H\"older norm. 
\end{enumerate}
\end{prop}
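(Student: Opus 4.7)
The strategy is to read \eqref{EvolutionOfu} and \eqref{EvolutionOfv} as parabolic equations with uniformly elliptic principal part and uniformly bounded source, then invoke the Krylov--Safonov estimate from Proposition \ref{krylov-safonov-lemma} (1). Uniform ellipticity of $\Delta$, with the appropriate dependence on $T_{\max}$, has already been obtained in Proposition \ref{uni-ell-thm}. The remaining work is to cast the equations for $u$ and $v$ in the required form and to verify that the right-hand sides are uniformly bounded.

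\textbf{H\"older regularity of $u$.} Starting from \eqref{EvolutionOfu},
$$
(\partial_s+\Delta)u=\mathcal{H}\, v+\frac{f'(u)}{f(u)}\bigl(m+v^2-1\bigr),
$$
the $C^1$-estimate (Corollary \ref{c1-corr}) gives $v$ uniformly bounded; combined with uniform bounds on $\mathcal{H}$ and on $f, f'$ from Setting \ref{setting}, the source is uniformly bounded. Proposition \ref{krylov-safonov-lemma} (1) then gives $u\in C^{\alpha}(M\times[0,T_{\max}])$.

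\textbf{H\"older regularity of $v$.} In \eqref{EvolutionOfv} the only term containing a derivative of $v$ is $2\tfrac{f'(u)}{f(u)}g(\nabla u,\nabla v)$. Moving it to the left, I write
$$
(\partial_s+L)\,v=\varphi,\qquad L := \Delta - 2\tfrac{f'(u)}{f(u)}\, g(\nabla u,\nabla\,\cdot\,).
$$
The drift coefficient is controlled by $|\nabla u|_g$ and $|f'/f|$, both uniformly bounded, so $L$ remains uniformly elliptic in the sense of \eqref{uniform-elliptic} with constants of the same $T_{\max}$-dependence as $\Delta$ itself. The source $\varphi$ gathers $\|\ssff\|^2 v$, $\ric^N(\mu,\mu)v$, $H$, $V(\mathcal{H})$, and the various algebraic expressions in $u,v,|\nabla u|_g$ multiplied by bounded functions of $f, f', f''$. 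Each of these is uniformly bounded: $\|\ssff\|$ and $H$ by Proposition \ref{lower-bound-u-1}, $|\ric^N(\mu,\mu)|\leq c v^2$ by Proposition \ref{RicciEstimateProp}, $|V(\mathcal{H})|\leq c |\nabla u|_g \|\mathcal{H}\|_{1,\alpha}$ by Proposition \ref{EstimateV(H)}, together with the uniform $C^0, C^1$ bounds from Theorem \ref{C012} and Setting \ref{setting}. Applying Proposition \ref{krylov-safonov-lemma} (1) therefore yields $v\in C^{\alpha}(M\times[0,T_{\max}])$.

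\textbf{Tracking the $T_{\max}$-dependence.} The Krylov--Safonov constant in Proposition \ref{krylov-safonov-lemma} (1) depends only on $m$ and the ellipticity constant $\Lambda$ of the operator, while the H\"older bound depends linearly on $\|\omega\|_\infty+\|\varphi\|_\infty$. In case (1) all of these quantities, by Theorem \ref{C012} (1) and Proposition \ref{uni-ell-thm} (1), are finite for each finite $T_{\max}$, yielding the bound claimed. In case (2), Theorem \ref{C012} (2) and Proposition \ref{uni-ell-thm} (2) make the bound $T_{\max}$-independent, proving the second assertion.

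The main obstacle I expect is not analytical but combinatorial: making sure every coefficient on the right-hand side of \eqref{EvolutionOfv} is indeed covered by one of the already-established bounds, and that the first-order piece $g(\nabla u,\nabla v)$ is genuinely a bounded drift rather than hiding any uncontrolled second-order contribution. Once this is checked carefully, Proposition \ref{krylov-safonov-lemma} (1) closes both parts of the statement with no further work.
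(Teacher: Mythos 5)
Your argument is correct and follows the paper's line: both read \eqref{EvolutionOfu} and \eqref{EvolutionOfv} as uniformly parabolic equations with bounded data and then apply the Krylov--Safonov estimate from Proposition \ref{krylov-safonov-lemma}~(1) together with the uniform ellipticity of $\Delta$ from Proposition \ref{uni-ell-thm}. You differ on one technical point. The term $2\tfrac{f'(u)}{f(u)}g(\nabla u,\nabla v)$ in \eqref{EvolutionOfv} involves $\nabla v$; you move it to the left-hand side and treat it as a bounded drift inside the operator $L$, which is admissible under the uniform-ellipticity convention \eqref{uniform-elliptic} since the drift coefficient is controlled by $|\nabla u|_g$ and $|f'/f|$, both uniformly bounded. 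The paper instead keeps this term on the right-hand side as a bounded \emph{source}, which is justified because Proposition~\ref{g(du,dv)} bounds $|g(\nabla u,\nabla v)|$ by $\|\ssff\|\,|\nabla u|_g^2 + c\,|\nabla u|_g^2 v$, and these quantities are all uniformly controlled by Theorem \ref{C012}. Both routes give the same conclusion: yours bypasses Proposition~\ref{g(du,dv)} entirely, while the paper's avoids modifying the operator. The worry you flag at the end --- that the first-order piece might conceal an uncontrolled second-order contribution --- does not materialize, since $g(\nabla u,\nabla v)$ is a genuine drift term and your decomposition $(\partial_s + L)v = \varphi$ is clean.
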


\begin{proof}
Consider the evolution equation for the gradient function $v$, as derived in Theorem \ref{EvolutionOfvTHM}.
Since the right hand side of \eqref{EvolutionOfv} is bounded uniformly (with bounds possibly depending 
on $T$ depending on whether $u, |\NT u \, |_{\widetilde{g}}$ and $|\NT^2 u \, |_{\widetilde{g}}$ are bounded 
independent of $T$ or not), H\"older regularity follows by uniform ellipticity of $\Delta$ in Proposition
\ref{uni-ell-thm} and the Krylov-Safonov estimates in the first statement of Proposition \ref{krylov-safonov-lemma}.
The statement for $u$ follows in exactly the same way from the evolution equation \eqref{PGMCF}.
\end{proof}

We can now bootstrap to improve upon regularity of $u$. 

\begin{prop}\label{v-hoelder}
Consider a solution $u \in C^{4,\alpha}(M\times[0,T_{\max}))$ to \eqref{PGMCF}. Assume that
$\mathcal{H} \in C^{\ell,\alpha}(M)$ for some $\ell\in\mathbb{N}_0$. Then the following is true.

\begin{enumerate}
\item If $u, |\NT u \, |_{\widetilde{g}}$ and $|\NT^2 u \, |_{\widetilde{g}}$ are bounded uniformly for finite $T_{\max}>0$,
then $$u \in C^{2+\ell,\alpha}(M\times[0,T_{\max}]),$$ with the H\"older norm bounded for finite $T_{\max}>0$. \medskip

\item If $u, |\NT u \, |_{\widetilde{g}}$ and $|\NT^2 u \, |_{\widetilde{g}}$ are bounded uniformly independent of $T_{\max}>0$,
then $$u\in C^{2+\ell,\alpha}(M\times[0,T_{\max}]),$$ with a $T_{\max}$-independent bound for the H\"older norm. 
\end{enumerate}
\end{prop}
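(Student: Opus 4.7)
The plan is to bootstrap from the uniform H\"older regularity $u, v \in C^\alpha(M \times [0,T_{\max}])$ provided by Proposition \ref{uv-hoelder}, together with the uniform $L^\infty$ bounds on $u$, $|\NT u|_{\widetilde{g}}$, $|\NT^2 u|_{\widetilde{g}}$ from Theorem \ref{C012}, and the uniform ellipticity of $\Delta = \Delta_{g(s)}$ from Proposition \ref{uni-ell-thm}. Since in all three inputs the distinction between cases (1) and (2) of the proposition is respected, the corresponding dichotomy in the conclusion will be inherited automatically. The argument itself is a two-stage bootstrap based on Proposition \ref{krylov-safonov-lemma}.

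First I would upgrade from $C^\alpha$ to $C^{1,\alpha}$. In each bounded-geometry chart $\Psi_x$ around $x \in M$, I differentiate \eqref{PGMCF} with respect to a spatial coordinate $x^k$ to obtain a linear parabolic equation $(\partial_s + L_k) \omega_k = \varphi_k$ for $\omega_k := \partial_k u$. The principal part of $L_k$ is just $-g^{ij}\partial_i\partial_j$ and is uniformly elliptic by Proposition \ref{uni-ell-thm}; the commutator $[\partial_k, \Delta]u$ contributes a first-order operator in $\omega_k$ whose coefficients, although they involve $\NT^2 u$, are still uniformly bounded by Proposition \ref{c2-prop}. Similarly $\varphi_k$ collects only uniformly bounded quantities in $u, \NT u, \NT^2 u, \mathcal{H}$ and $\partial_k \mathcal{H}$. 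Applying Proposition \ref{krylov-safonov-lemma} (i), which requires only $L^\infty$ coefficients and inhomogeneity, then yields $\omega_k \in C^\alpha$ with a chart-uniform bound; the equivalent norm \eqref{local-norm2} assembles these into $u \in C^{1,\alpha}(M \times [0,T_{\max}])$.

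Once $u \in C^{1,\alpha}$ is known, the coefficients $g^{ij}$ of $\Delta$ lie in $C^\alpha$ by \eqref{invindmet}, and so does the right hand side of \eqref{PGMCF} (using only $\mathcal{H} \in C^{0,\alpha}$). Proposition \ref{krylov-safonov-lemma} (ii) with $k=0$ then gives $u \in C^{2,\alpha}$. From here I iterate in a purely Schauder fashion: assuming $u \in C^{2+j,\alpha}$ for some $0 \leq j < \ell$, both the coefficients of $\Delta$ and the right hand side of \eqref{PGMCF} lie in $C^{1+j,\alpha}$, and Proposition \ref{krylov-safonov-lemma} (ii) promotes $u$ to $C^{3+j,\alpha}$. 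After $\ell$ such steps I reach $u \in C^{\ell+2,\alpha}$, which is the claimed regularity.

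The main obstacle will be the initial gain of one derivative, i.e.\ the passage from $C^\alpha$ to $C^{1,\alpha}$, because differentiating the quasilinear equation \eqref{PGMCF} forces bounded but not a priori H\"older continuous second derivatives of $u$ into the linearised equation for $\omega_k$. The resolution is precisely that Krylov--Safonov asks only for $L^\infty$ coefficients, a hypothesis which is guaranteed by the $C^2$ estimates of Section \ref{C2EstimatesSec}. All subsequent bootstrapping is pure Schauder and creates no new difficulties, and the $T_{\max}$-dependence of the resulting H\"older norm matches that of the a priori inputs, yielding both parts of the proposition simultaneously.
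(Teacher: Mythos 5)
Your proof is correct but takes a more careful route than the paper's. The paper passes directly from $u,v \in C^{\alpha}$ (Proposition \ref{uv-hoelder}) to Schauder, asserting that the coefficients of $\Delta$ are then $C^{\alpha}$; but $u$ and $v$ only control the scalar $|\NT u|^{2}_{\widetilde{g}} = f(u)^{2}(1-v^{-2})$, whereas $g^{ij}$ in \eqref{invindmet} contains the tensor $\widetilde{g}^{jl}u_l\widetilde{g}^{im}u_m$, whose H\"older regularity does not follow from that of the norm alone (one can recover it by interpolation between $u\in C^{\alpha}$ and $\|\NT^{2}u\|_{\infty}<\infty$, but only at a reduced exponent). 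Your intermediate step supplies precisely the missing $\NT u\in C^{\alpha}$: differentiating \eqref{PGMCF} in a spatial direction produces an equation for $\omega_k=\partial_k u$ with principal part $\Delta$ and, thanks to the $C^{2}$ bounds, an $L^{\infty}$ inhomogeneity, so Krylov--Safonov (i) applies. One small inaccuracy: the commutator $[\partial_k,\Delta]u$ contributes $(\partial_k g^{ij})\partial_i\partial_j u$, which involves first derivatives of all the $\omega_j$ rather than of $\omega_k$ alone, so it is not a lower-order operator acting on $\omega_k$; since $\NT^{2}u$ is uniformly bounded, however, this term simply belongs to the $L^{\infty}$ source $\varphi_k$, which is exactly what Krylov--Safonov (i) tolerates, and your conclusion stands. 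The subsequent Schauder bootstrap coincides with the paper's (which itself defers to the bootstrap at the end of Theorem \ref{short-time-theorem}). In short, your approach trades the paper's brevity for an airtight passage from Krylov--Safonov to Schauder at the full exponent $\alpha$.
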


\begin{proof}
Consider the evolution equation \eqref{PGMCF} for the solution $u$. By 
Proposition \ref{uv-hoelder}, the right hand side of \eqref{PGMCF} as well as 
the coefficients of $\Delta$ (cf. \eqref{laplacian-g2}) lie in $C^{\alpha}(M\times[0,T_{\max}])$.
Thus, by Proposition \ref{krylov-safonov-lemma} (ii), we conclude 
$$u \in C^{2,\alpha}(M\times[0,T_{\max}]).$$
Now we can bootstrap exactly as at the end of the proof of Theorem \ref{short-time-theorem}.
\end{proof}

Therefore, assuming that $\mathcal{H} \in C^{2,\alpha}(M)$, we have $u \in C^{4,\alpha}(M\times[0,T_{\max}])$
and hence by Theorem \ref{short-time-theorem} we can restart the flow with $u(T_{\max}) \in C^{4,\alpha}(M)$ as a new initial condition. We arrive 
at the following main result. 

\begin{thm}\label{long-time-result}
Consider the Setting \ref{setting}. Then the mean curvature flow \eqref{PGMCF}.

\begin{enumerate}
\item admits a global solution $u \in C^{\ell + 2,\alpha}(M\times[0,\infty))$ with in $[0,\infty)$ locally uniformly bounded
H\"older norm, if Assumptions \ref{assumptions} (1) and (2) hold;

\item admits a global solution $u \in C^{\ell + 2,\alpha}(M\times[0,\infty))$ with uniformly bounded H\"older norm, if 
Assumptions \ref{assumptions} (1) - (3) hold. Moreover, $\| \partial_s u \|_\infty$ is exponentially decreasing. 
\end{enumerate} 
\end{thm}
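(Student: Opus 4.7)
The plan is to combine the short-time existence result Theorem \ref{short-time-theorem} with the a priori estimates from \S \ref{uniform-section}--\S \ref{C2EstimatesSec} through a standard continuation argument. First I would let $T_{\max} \in (0,\infty]$ denote the supremum of times $T$ for which a solution $u \in C^{\ell+2,\alpha}(M \times [0,T])$ to \eqref{PGMCF} with initial condition $u_0$ exists; by Theorem \ref{short-time-theorem} we have $T_{\max} > 0$. The goal is to show $T_{\max} = \infty$ in both cases, which is a contradiction argument.

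For part (1), suppose that $T_{\max}<\infty$. Under Assumptions \ref{assumptions} (1) and (2), Theorem \ref{C012} (i) gives that $u, |\widetilde{\nabla}u|_{\widetilde{g}}, |\widetilde{\nabla}^2 u|_{\widetilde{g}}$ are bounded uniformly on $M \times [0,T_{\max})$ with bounds that may depend on $T_{\max}$ but are finite. Proposition \ref{uni-ell-thm} (i) then asserts uniform ellipticity of the time-dependent Laplacian $\Delta$ on $[0,T_{\max})$ with finite ellipticity constant $\Lambda$. A Krylov--Safonov and parabolic Schauder bootstrap, carried out in Propositions \ref{uv-hoelder} and \ref{v-hoelder}, upgrades this to $u \in C^{\ell+2,\alpha}(M \times [0,T_{\max}])$ with finite H\"older norm. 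I would then use $u(\cdot,T_{\max}) \in C^{\ell+2,\alpha}(M)$ as a fresh initial datum and apply Theorem \ref{short-time-theorem} once more to obtain a solution extending past $T_{\max}$, contradicting the maximality of $T_{\max}$. Hence $T_{\max}=\infty$, and local boundedness of the H\"older norm on any compact sub-intervall of $[0,\infty)$ is already built into Proposition \ref{v-hoelder} (i).

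For part (2), under the additional Assumption \ref{assumptions} (3), Theorem \ref{C012} (ii) produces uniform bounds on $u, |\widetilde{\nabla}u|_{\widetilde{g}}, |\widetilde{\nabla}^2 u|_{\widetilde{g}}$ that are independent of any time horizon. The second parts of Propositions \ref{uni-ell-thm} and \ref{v-hoelder} then deliver a $T_{\max}$-independent H\"older bound on $u$, so that the global solution obtained in part (1) even has uniformly bounded H\"older norm on $[0,\infty)$. Exponential decay of $\|\partial_s u\|_\infty$ was already established in \S \ref{uniform-section} as an output of the time-like convergence assumption together with the identity $\partial_s u = -(\smc-\mathcal{H})v$ from \eqref{uHv} and the uniform upper bound on $v$ from Theorem \ref{MainTheorem}.

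The principal obstacle here is not the closing argument itself, which is a clean continuation, but rather the fact that \emph{every} a priori estimate feeding into Step 2 is genuinely non-trivial in the non-compact setting and had to be worked out separately in the preceding sections; most notably, the $C^1$-estimate keeping the flow uniformly space-like (Theorem \ref{MainTheorem}) is delicate since one cannot attain maxima on $M$ and must instead proceed via the Omori--Yau principle for stochastically complete manifolds, preserved along the flow by Corollary \ref{EquivalentDistancesCor} and Proposition \ref{VolumeFormComparison}. With those ingredients in hand, the long time existence result reduces to the bootstrap and restart procedure outlined above.
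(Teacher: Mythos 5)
Your proposal is correct and follows essentially the same route as the paper: define the maximal existence time $T_{\max}$, use the a priori estimates of Theorem \ref{C012} together with Propositions \ref{uni-ell-thm}, \ref{uv-hoelder} and \ref{v-hoelder} to obtain H\"older control up to and including $T_{\max}$, and then restart via Theorem \ref{short-time-theorem} to contradict maximality; the distinction between parts (1) and (2) via time-dependence of the constants, and the source of the exponential decay, are identified just as in the paper.
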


\noindent This proves Theorem \ref{theorem-main-long}. \medskip

It remains to discuss convergence under the conditions of Theorem \ref{long-time-result} (ii).
First we note that exponential decay of $\| \partial_s u \|_\infty$ implies that $u(s)$
admits a well-defined limit $u^* \in L^\infty(M)$ as $s \to \infty$. We need to conclude 
at least that $u^* \in C^2(M)$ in order for $u^*$ to admit a well-defined mean curvature $H^*$,  
which can then be shown to equal $\mathcal{H}$. This is then our final main result.

\begin{thm}\label{convergence-theorem}
Consider the Setting \ref{setting} and impose Assumptions \ref{assumptions} (1) - (3). Assume that $M$ is 
the open interior of a compact manifold $\overline{M}$ with boundary $\partial M$.
Then the prescribed mean curvature flow \eqref{PGMCF}, starting at $u_0$ exists for 
all times and converges to $u^* \in L^\infty(M)$ as $s \to \infty$. Moreover, $u^* \in C^{\ell+2}$ in
the open interior $M$ with well-defined mean curvature $H^*\equiv \mathcal{H}$.
\end{thm}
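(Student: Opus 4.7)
My plan is to build on Theorem \ref{long-time-result} (ii), which already provides a global solution
$u \in C^{\ell+2,\alpha}(M \times [0,\infty))$ with uniformly bounded H\"older norm and exponentially
decaying $\| \partial_s u \|_\infty$. The strategy has three steps: first, pin down an $L^\infty$ limit
using the exponential decay; second, upgrade this to convergence of the two-jet of $u(s)$ using a compact
embedding of H\"older spaces that becomes available precisely because $M$ is the interior of a compact
manifold with boundary; third, pass to the limit in the flow equation to identify the mean curvature of
the limiting graph.

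The first step is immediate: the estimate $\|\partial_s u\|_\infty \le c\, e^{-\delta' s}$ makes the family
Cauchy in $L^\infty(M)$, since for $s_2 > s_1$,
\begin{equation*}
\| u(s_2) - u(s_1)\|_\infty \leq \int_{s_1}^{s_2} \|\partial_s u(r)\|_\infty \, dr \leq \frac{c}{\delta'} \, e^{-\delta' s_1}.
\end{equation*}
This yields a limit $u^* \in L^\infty(M)$ whose range lies in the uniform range of $u$.

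For the second step, because $M = \mathrm{int}(\overline M)$ and $(M,\widetilde g)$ has bounded geometry,
the H\"older spaces $C^{k,\alpha}(M)$ used throughout the a priori estimates can be identified, via a boundary
defining function on $\overline M$, with weighted H\"older spaces on the compact manifold with boundary $\overline M$.
In that weighted setting a compact embedding $C^{\ell+2,\alpha} \hookrightarrow C^{\ell+2,\beta}$ holds for $\beta < \alpha$,
in direct analogy with the classical Arzel\`a--Ascoli theorem on compact manifolds. Applied to the uniformly
bounded family $\{u(s)\}_{s \ge 0}$, this yields for any sequence $s_k \to \infty$ a subsequence convergent
in $C^{\ell+2,\beta}$. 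Uniqueness of the $L^\infty$ limit from the previous step then forces the whole family
to converge $u(s) \to u^*$ in $C^{\ell+2,\beta}$, so in particular $u^* \in C^{\ell+2}(M)$.

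Finally, rewriting the flow as $\partial_s u = -(\smc - \mathcal{H}) v$ using \eqref{uHv}, and recalling that
$v \ge 1$ is uniformly bounded, the exponential decay of $\partial_s u$ forces $\smc \to \mathcal{H}$ in
$L^\infty(M)$. On the other hand, $\smc$ is a continuous function of the two-jet of $u$ by \eqref{vhij} and
the formulae of \S\ref{GRWSTGeoSec}, so the $C^{\ell+2,\beta}$ convergence of the second step implies
$\smc(s) \to \smc^*$, the mean curvature of the graph of $u^*$. Matching the two limits yields
$\smc^* \equiv \mathcal{H}$. The uniform bound on $v$ along the flow passes to the limit, giving
$|\widetilde{\nabla} u^*|_{\widetilde g}^2 < f(u^*)^2$, so the graph of $u^*$ is indeed a space-like Cauchy
hypersurface in $N$. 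I expect the principal obstacle to lie in the second step: setting up the correct
weighted H\"older spaces on $\overline M$ so that the bounded-geometry norms are captured exactly, and
proving the compact embedding in that setting---the very difficulty flagged in the introduction in point (iv).
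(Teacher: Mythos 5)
Your strategy is essentially the same as the paper's: use exponential decay of $\|\partial_s u\|_\infty$ to get an $L^\infty$ limit, invoke a compact embedding of H\"older spaces available because $M$ is the interior of a compact manifold with boundary to upgrade a subsequence to $C^{\ell+2}$ convergence, identify the limit with $u^*$ by uniqueness of the $L^\infty$ limit, and then match $H^* = \mathcal{H}$ by passing to the limit in the flow equation. All of this is what the paper does.

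The one place where you should be more careful is the exact form of the compact embedding in step two. You propose $C^{\ell+2,\alpha} \hookrightarrow C^{\ell+2,\beta}$ with $\beta < \alpha$, i.e.\ loosening the H\"older exponent. On a non-compact manifold of bounded geometry this embedding is \emph{not} compact -- that is precisely the failure highlighted in point (iv) of the introduction, and rewriting the norms in weighted form on $\overline{M}$ does not by itself repair it, since the weight on both sides is the same. The paper instead gains compactness by \emph{changing the weight}: it uses that $\iota: C^{\ell + 2,\alpha}(M) \hookrightarrow x^{-\varepsilon} C^{\ell + 2,\alpha}(M)$ is compact for any $\varepsilon > 0$ (cf.\ the cited Proposition 11.2 of \cite{bruno}), where $x$ is a boundary defining function. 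The extra polynomial weight near $\partial M$ is what allows the localization/cut-off argument to close. You flagged this as the likely obstacle, and indeed it is: replace the exponent-loosening embedding by a weight-loosening one and your argument matches the paper's line by line. A minor additional remark: your derivation of $\smc - \mathcal{H} \to 0$ from $|\partial_s u| = |\smc - \mathcal{H}| v \ge |\smc - \mathcal{H}|$ is fine, but the paper already has the stronger exponential decay of $(\smc - \mathcal{H})^2_{\sup}$ in \eqref{H-exp}, which it invokes directly, so both routes work.
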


\begin{proof}
As mentioned above, convergence to $u^* \in L^\infty(M)$ follows from the exponential decay of $\| \partial_s u \|_\infty$; therefore it
remains to prove that the limit is twice differentiable in $M$. 
Let $x: \overline{M} \to \R^+$ be a defining function of $\partial M$. Then, cf.  
\cite[Proposition 11.2]{bruno}, for any $\varepsilon > 0$ the inclusion of weighted H\"older spaces 
$$
\iota: C^{\ell + 2,\alpha}(M) \hookrightarrow x^{-\varepsilon} C^{\ell + 2,\alpha}(M),
$$
is compact. Consider the global solution $u \in C^{\ell + 2,\alpha}(M \times [0,\infty))$,
whose existence follows by the previous Theorem \ref{long-time-result}. Since the sequence $(u(s))_s \subset C^{\ell + 2,\alpha}(M)$
is uniformly bounded, by compactness of $\iota$, there exists a convergent subsequence $(u(s_n))_n\subset x^{-\varepsilon}C^{\ell + 2,\alpha}(M)$.
Consequently the pointwise limit $u^*$ lies in $x^{-\varepsilon}C^{\ell + 2,\alpha}(M)$. In particular it admits a well-defined 
mean curvature $H^*$. By \eqref{H-exp}, $H(s_n) - \mathcal{H}$ converges to zero and hence indeed $H^* = \mathcal{H}$.
\end{proof}
 
\noindent This proves Theorem \ref{theorem-main}.
 
\bibliographystyle{amsalpha-lmp}

\end{document}